\newcommand \bfG{{\mathbf G}}
\newcommand \bfT{{\mathbf T}}
\newcommand \bfI{{\mathbf I}}
\newcommand \bfP{{\mathbf P}}
\newcommand \bfU{{\mathbf U}}
\newcommand \bfL{{\mathbf L}}
\newcommand \sfk{{\mathsf k}}
\newcommand \bark{{\bar{\mathsf k}}}
\newcommand \CX{{\mathcal X}}
\title[Wavefront Sets of Unipotent Representations, I]{Wavefront Sets of Unipotent Representations of Reductive $p$-adic Groups I}
\begin{document}

\maketitle

\begin{abstract}
   The wavefront set is a fundamental invariant arising from the Harish-Chandra-Howe local character expansion of an admissible representation.  We prove a precise formula for the wavefront set of an irreducible Iwahori-spherical representation with `real infinitesimal character' and determine a lower bound for this invariant in terms of the Deligne-Langlands-Lusztig parameters. 
In particular, for the Iwahori-spherical representations with real infinitesimal character, we deduce that the algebraic wavefront set is a singleton, as conjectured by M\oe glin and Waldspurger.

As a corollary, we obtain an explicit description of the wavefront set of an irreducible spherical representation with real Satake parameter.
\end{abstract}

\tableofcontents

%Notations:
%\begin{itemize}
%    \item p-adic field = $\mathsf{k}$
%    \item finite field = $\mathbb{F}_q$
%    \item alg closure = $\overline{\mathsf{k}}$
%    \item max unramified extension in $\overline{\mathsf{k}}$ = $K$
%    \item reductive algebraic group over $\mathbb{Z}$ = $\mathbf{G}$ 
%    \item algebraic subgroups of $\mathbf{G}$ = $\mathbf{T}$, $\mathbf{B}$ etc.
%    \item Langlands dual group = $\mathbf{G}^{\vee}$ 
%    \item $\mathsf{k}$-points = $G(\mathsf{k})$, $T(\mathsf{k})$, etc.
%    \item $\CC$-points = $G$, $G^{\vee}$, etc.
%    \item 
%\end{itemize}

\section{Introduction}

%{The wavefront set of an irreducible admissible representation $X$ of a reductive group over a local field is a fundamental invariant defined in terms of the Harish-Chandra character distribution of $X$.  In this paper, we prove several results relating the wavefront set of Iwahori-spherical admissible representations of a split $p$-adic group and the local Langlands parameters.}

\subsection{The local Langlands classification}\label{sec:Arthur}
We begin with a brief overview of the local Langlands classification. 
Let $\mathsf k$ be a nonarchimedean local field of characteristic $0$ with ring of integers $\mathfrak o$, finite residue field $\mathbb F_q$ of cardinality $q$ and valuation $\mathsf{val}_{\mathsf k}$. Fix an algebraic closure $\bar{\mathsf k}$ of $\mathsf k$ and let $K\subset \bar{\mathsf k}$ be the maximal unramified extension of $\mathsf k$ in $\bar{\mathsf k}$. 

Let $W_{\mathsf k}$ be the Weil group of $\mathsf k$ \cite[(1.4)]{Tate1979}. Let $\mathbf{G}(\mathsf k)$ be the group of $\mathsf k$-rational points of a connected reductive algebraic group $\mathbf{G}$ defined and split over $\mathsf k$. Let $G^{\vee}$ denote the complex Langlands dual group associated to $\mathbf{G}$, see \cite[\S2.1]{Borel1979}. (Since we assume throughout that $\mathbf G$ is $\mathsf k$-split, we can work with $G^\vee$ in place of the L-group.) Let $W_{\mathsf k}'=W_{\mathsf k}\ltimes \mathbb C$ denote the Weil-Deligne group associated to $\mathsf k$ \cite[\S 8.3.6]{Deligne1972}.
The semidirect product is defined via the action
\[w x w^{-1}=\|w\| x,\qquad x\in \mathbb C,\ w\in W_k,
\]
where $\|w\|$ is the norm of $w\in W_k$.

\begin{definition}\label{def:Langlandsparams}
A \emph{Langlands parameter} is a continuous homomorphism
\begin{equation}
    \phi: W_{\mathsf k}'\rightarrow G^\vee
\end{equation}
which respects the Jordan decompositions in $W_{\mathsf k}'$ and $G^\vee$ (\cite[\S8.1]{Borel1979}).
\end{definition}
For any Langlands parameter $\phi$, let $\Pi^{\mathsf{Lan}}_{\phi}(\mathbf{G}(\mathsf k))$ denote the associated $L$-packet of irreducible admissible $\mathbf{G}(\mathsf k)$-representations \cite[\S10]{Borel1979}. %For $k$ Archimedean, the $L$-packets were defined in the seminal paper of Langlands \cite[Section 3]{Langlands}. 
%In the non-Archimedean case, 
The $L$-packets have not yet been defined in general. A discussion of this problem is beyond the scope of this paper---we refer the reader to \cite{Vogan1993}, \cite{Arthur2013}, or \cite{Kaletha2022}. 

Of particular interest for us is the following special case. %If $k$ is non-Archimedean of characteristic $0$, 
We say that a Langlands parameter is \emph{unramified} if it is trivial when restricted to the inertia subgroup $I_{\mathsf k}$ of $W_{\mathsf k}$. If we fix a generator $\mathsf{Fr}$ of the infinite cyclic group $W_{\mathsf k}/I_{\mathsf k}$, we get a bijection between the set of unramified Langlands parameters and pairs of the form
\begin{equation}\label{e:unramified-Langlands}
   (s,u) \in G^{\vee} \times G^{\vee},\qquad s\text{ semisimple},\ u \text{ unipotent}, \ sus^{-1}=u^q.
\end{equation}
This bijection is defined by $\phi \mapsto (s,u) = (\phi(\mathsf{Fr}),\phi(1))$, where $1\in\mathbb{C}\subset W'_{\mathsf k}$. Replacing $u$ with $n \in \fg^{\vee}$ such that $q^n=u$, we get a further bijection onto pairs of the form
\begin{equation}\label{e:unramified-Langlands2}(s, n) \in G^{\vee} \times \fg^{\vee}, \qquad s\text{ semisimple},\ n \text{ nilpotent}, \ \Ad(s)n=qn.\end{equation}
If $\phi$ is an unramified Langlands parameter which corresponds to the pair $(s,n)$, the $L$-packet $\Pi_{\phi}^{\mathsf{Lan}}(\mathbf{G}({\mathsf k}))$ is in bijection with the set of irreducible representations $\rho$ of the component group $A(s,n)$ of the mutual centralizer in $G^\vee$ of $s$ and $n$, such that the center $Z(G^\vee)$ acts trivially on $\rho$. This is known as the Deligne-Langlands-Lusztig correspondence, see Theorem \ref{thm:Langlands} below.

%Let us return to the general setting of Definition \ref{def:Langlandsparams}. 
Of fundamental importance are the \emph{tempered} Langlands parameters. %If $k$ is Archimedean, a Langlands parameter is tempered if the image of $W_k$ is bounded. If $k$ is non-Archimedean, the definition of `tempered' is more subtle in general. 
For unramified parameters, `tempered' means that the semisimple element $s$ in (\ref{e:unramified-Langlands}) is `relatively compact', see Theorem \ref{thm:Langlands}(1) for a more precise condition.

\subsection{Unipotent representations: wavefront sets}\label{sec:results}

The main result of the paper is a formula for the wavefront set in terms of the Langlands parameters for the class of representations with \emph{unipotent cuspidal support} introduced by Lusztig \cite{Lu-unip1} (see Section \ref{s:unip-cusp} for the precise definition). In terms of the Langlands correspondence, see Section \ref{sec:Arthur}, these are exactly the representations which correspond to unramified Langlands parameters. %Recall that the set of unramified Langlands parameters is in bijection pairs $(s,n)$ of the form described in (\ref{e:unramified-Langlands2}), and the $L$-packet corresponding to the pair $(s,n)$ is in bijection with irreducible representations $\rho$ of $A(s,n)$ on which the center $Z(G^{\vee})$ acts trivially. 
Hence, via (\ref{e:unramified-Langlands2}), the set of irreducible representations with unipotent cuspidal support is in bijection with $G^{\vee}$-conjugacy classes of triples $(s,n,\rho)$. Write $X(s,n,\rho)$ for the irreducible representation corresponding to $(s,n,\rho)$. Since triples are considered up to conjugation by $G^{\vee}$, we may assume without loss of generality that $s$ belongs to a fixed maximal torus $T^\vee$ of $G^\vee$. There is a polar decomposition 
$T^\vee=T_c^\vee T_{\mathbb R}^\vee$,
where $T^{\vee}_c$ is compact and $T^{\vee}_{\RR}$ is a vector group, see (\ref{eq:real}). We say that $s$ is \emph{real} if $s\in T^\vee_{\mathbb R}$, and in this case, we say that $X(s,n,\rho)$ has \emph{real infinitesimal character}. 

As is well-known, the category of smooth $\bfG(\mathsf k)$-representation decomposes as a product of full subcategories, called \emph{Bernstein blocks}, see \cite[(2.10)]{Bernstein1984}. The category of representations with unipotent cuspidal support is a finite product of Bernstein blocks. The block containing the trivial representation (known as the \emph{principal block}) is precisely the category of \emph{Iwahori-spherical representations} (i.e. smooth representations generated by their Iwahori-fixed vectors). Under the Deligne-Langlands-Lusztig correspondence, the irreducible Iwahori-spherical representations correspond to the triples $(s,n,\rho)$ for which $\rho$ is of \emph{Springer type}. This means that $\rho$ occurs in the permutation representation of $A(s,n)$ in the top cohomology group of the variety of Borel subalgebras of $\mathfrak g^\vee$ that contain $n$ and are invariant under $\mathrm{Ad}(s)$. Notably, the set of irreducible representations with unipotent cuspidal support is the smallest class of representations which is a union of $L$-packets and contains all Iwahori-spherical representations, see Theorem \ref{thm:Langlands}.

An important role will be played by the \emph{Aubert-Zelevinsky} involution \cite{Au}, denoted $X \mapsto \mathrm{AZ}(X)$, see Section \ref{sec:AZduality}. This is an involution on the Grothendieck group of finite-length smooth $\mathbf{G}(\sfk)$-representations which preserves  the Grothendieck group of each Bernstein block and carries irreducibles to irreducibles (up to a sign). In the case of representations with unipotent cuspidal support, it preserves the semisimple parameter $s$. For example, $\mathrm{AZ}$ takes the trivial representation to the Steinberg discrete series representation, and more generally, a spherical representation to the unique generic representation (in the sense of admitting Whittaker models) with the same semisimple parameter. 

A fundamental invariant attached to an admissible representation $X$ is its \emph{wavefront set}. In its classical form, the wavefront set $\WF(X)$ of $X$ is a collection of nilpotent $\bfG(\sfk)$-orbits in the Lie algebra $\mathfrak g(\sfk)$: these are the maximal orbits for which the Fourier transforms of the corresponding orbital integrals contribute to the Harish-Chandra-Howe local character expansion of the distribution character of $X$, \cite[Theorem 16.2]{HarishChandra1999}. In this paper, we consider two coarser invariants (see Section \ref{s:wave} for the precise definitions). The first of these invariants is the \emph{algebraic wavefront set}, denoted $\hphantom{ }^{\bar{\sfk}}\WF(X)$. This is a collection of nilpotent orbits in $\mathfrak{g}(\bark)$, see for example \cite[p. 1108]{Wald18} (where it is simply referred to as the `wavefront set' of $X$). The second invariant is a natural refinement $^K\WF(X)$ of $\hphantom{ }^{\bar{\sfk}}\WF(X)$ called the \emph{canonical unramified wavefront set}, defined recently in \cite{okada2021wavefront}. This is a collection of nilpotent orbits $\mathfrak g(K)$ (modulo a certain equivalence relation $\sim_A$). The relationship between these three invariants is as follows: the algebraic wavefront set $\hphantom{ }^{\bar{\sfk}}\WF(X)$ is deducible from the usual wavefront set $\WF(X)$ as well as the canonical unramified wavefront set $^K\WF(X)$. It is not known whether the canonical unramified wavefront set is deducible from the usual wavefront set, but we expect this to be the case, see \cite[Section 5.1]{okada2021wavefront} for a careful discussion of this topic. 

The final ingredient that we need in order to state our main result is the duality map $d$ defined by Spaltenstein in \cite[Proposition 10.3]{Spaltenstein} (see also \cite[\S13.3]{Lusztig1984} and \cite[Appendix A]{BarbaschVogan1985}), and its refinement $d_A$ due to Achar \cite{Acharduality}. Suppose $G$ is the complex reductive group with dual $G^\vee$. Let $\mathcal N_o$ be the set of nilpotent orbits in the Lie algebra $\mathfrak g$ and let $\mathcal N_{o,\bar c}$ be the set of pairs $(\mathbb O,\bar C)$ consisting of a nilpotent orbit $\mathbb O\in \mathcal N_o$ and a conjugacy class $\bar C$ in $\bar A(\mathbb O)$, Lusztig's canonical quotient of the $G$-equivariant fundamental group $A(\OO)$ of $\OO$. Let $\mathcal N^\vee_o$ and $\mathcal N^\vee_{o,\bar{c}}$ be the corresponding sets for $G^\vee$. The duality $d$ is a map $d: \cN^{\vee}_o \to \cN_o$ whose image consists of Lusztig's special orbits.  The duality $d_A$ is a map
\[d_A: \mathcal N^\vee_{o,\bar c}\rightarrow \mathcal N_{o,\bar c}
\]
satisfying certain properties, see section \ref{subsec:nilpotent}. One of these properties is:
\[d_A(\OO^\vee,1)=(d(\OO^\vee),\bar C'),
\]
where $\bar{C}'$ is a conjugacy class in $\bar{A}(d(\OO^{\vee}))$ (which is trivial in the case when $\OO^\vee$ is special in the sense of Lusztig).

In \cite[Section 5.1]{okada2021wavefront}, it is shown that there is bijection between $\mathcal N_{o,\bar c}$ and the set of $\sim_A$-classes of unramified nilpotent orbits $\mathcal N_o(K)$ of $\bfG(K)$. This is recalled in section \ref{subsec:nilpotent}. This way, we can think of $d_A(\OO^\vee, \bar{C})$ as a class in $\mathcal N_o(K)/\sim_A$.

%Our first main result is the following (note the analogy with Theorem \ref{thm:WFBV}).

\begin{theorem}[See Theorems \ref{thm:realwf}, \ref{cor:wfbound} below]\label{t:main} Let $X=X(s,n,\rho)$ be an irreducible smooth Iwahori-spherical $\bfG(\mathsf k)$-representation with real infinitesimal character and let $\AZ(X) = X(s,n',\rho')$. 
%Suppose in addition that $s'$ is real, in the sense of Remark \ref{r:real}. 
\begin{enumerate}
    \item The canonical unramified wavefront set $^K\WF(X)$ is a singleton, and
\[^K\WF(X) = d_A(\OO^{\vee}_{\AZ(X)},1),\]
where $\OO^{\vee}_{\AZ(X)}$ is the $G^\vee$-orbit of $n'$ and $d_A$ is the duality map defined by Achar. In particular,
\[\hphantom{ }^{\bar{\sfk}}\WF(X) = d(\OO^\vee_{\AZ(X)}).\]
\item Suppose $X=X(q^{\frac 12 h^\vee},n,\rho)$ where $h^\vee$ is the neutral element of a Lie triple attached to a nilpotent orbit $\OO^\vee\subset \mathfrak g^\vee$. Then
\[d_A(\OO^{\vee}, 1) \leq_A \hphantom{ } ^K\WF(X),
\]
where $\leq_A$ is the partial order defined by Achar. In particular,
\[d(\OO^{\vee}) \le \hphantom{ }^{\bar{\sfk}}\WF(X).\]
\end{enumerate}
\end{theorem}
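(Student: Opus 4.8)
The plan is to pin down ${}^K\WF(X)$ by proving matching lower and upper bounds, from which the statement for ${}^{\bar{\sfk}}\WF(X)$ follows via the normalisation $d_A(\OO^\vee,1)=(d(\OO^\vee),\bar{C}')$. Two sets of formal tools drive the argument. On the representation-theoretic side: Okada's functoriality of the canonical unramified wavefront set --- its compatibility with parabolic induction, with the Jacquet restrictions $r_M$ (the canonical-unramified sharpening of Barbasch--Moy's bound $\WF(X)\subseteq\overline{\mathrm{Ind}_M^G\WF(r_M X)}$), with the Aubert--Zelevinsky involution, and with Lusztig's reduction to the root datum of the centraliser of the compact part of $s$. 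On the combinatorial side: Achar's axioms for $d_A$ --- reversal of the closure order, compatibility with Lusztig--Spaltenstein induction of orbit/local-system pairs, and the normalisation above --- together with the Kazhdan--Lusztig--Lusztig parametrisation and Lusztig's description of the Springer-type $\rho$. The reason the parameter of the \emph{Aubert dual} $\AZ(X)$, rather than that of $X$, governs ${}^K\WF(X)$ is best seen through Lusztig's geometric model: there $\AZ$ is realised by a Fourier--Deligne transform, which interchanges the support of the Kazhdan--Lusztig--Lusztig sheaf of $X$ (recording $\OO^\vee_X$) with that of the sheaf of $\AZ(X)$ (namely $\overline{\OO^\vee_{\AZ(X)}}$); since ${}^K\WF$ is controlled microlocally --- in the precise form due to Okada --- by the characteristic cycle of the sheaf of $X$, which by Fourier duality has leading term the conormal variety of $\OO^\vee_{\AZ(X)}$, it is the latter orbit that enters, transported to $\fg$ by $d_A$.

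I would prove Part (2) first, as it supplies the crucial lower bound and the mechanism behind it recurs in Part (1). When $s=q^{\frac12 h^\vee}$ with $(n,h^\vee,f)$ an $\mathfrak{sl}_2$-triple (so $n\in\OO^\vee$), the semisimple parameter is precisely the one attached to the Arthur $\mathrm{SL}_2\hookrightarrow G^\vee$ through $\OO^\vee$, and Lusztig realises $X(s,n,\rho)$ inside the equivariant homology of the variety of $\Ad(s)$-stable Borel subalgebras of $\fg^\vee$ containing $n$. The plan is to use this Arthur $\mathrm{SL}_2$ --- equivalently the $h^\vee$-grading --- to construct a nonzero degenerate Whittaker functional on $X$ of type $d(\OO^\vee)$: concretely, that the restriction of $X$ to an appropriate Moy--Prasad subgroup (read off from $d(\OO^\vee)$ and a point of the building) contains a generalised Gelfand--Graev representation with attached orbit $d(\OO^\vee)$. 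By the Barbasch--Moy/Okada machinery this forces $d(\OO^\vee)\le{}^{\bar{\sfk}}\WF(X)$, and tracking the unramified structure and the component-group bookkeeping upgrades it to $d_A(\OO^\vee,1)\le_A{}^K\WF(X)$.

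For Part (1), the upper bound ${}^K\WF(X)\le_A d_A(\OO^\vee_{\AZ(X)},1)$ is proved by induction on semisimple rank: writing $X$ as the Langlands quotient $\bar{J}(P,\sigma,\nu)$, with $\sigma$ a tempered Iwahori-spherical representation of a Levi $M$, the Jacquet-restriction bound and the inductive hypothesis handle $M\ne G$ (here one uses that $\AZ$ commutes with parabolic induction, so $\AZ(X)$ is built from $\AZ_M(\sigma)$), while the base case of Iwahori-spherical discrete series and their Aubert duals is settled by the explicit Kazhdan--Lusztig--Lusztig parametrisation, the geometry of the associated affine Springer fibre, and the compatibility of $d_A$ with the relevant inductions. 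The matching lower bound ${}^K\WF(X)\ge_A d_A(\OO^\vee_{\AZ(X)},1)$ is the microlocal counterpart of Part (2): the characteristic cycle of the Kazhdan--Lusztig--Lusztig sheaf of $X$ has leading term the conormal of $\OO^\vee_{\AZ(X)}$ with positive multiplicity, and Okada's microlocal control of ${}^K\WF$ then places $d_A(\OO^\vee_{\AZ(X)},1)$ in ${}^K\WF(X)$. Since ${}^K\WF(X)$ is an antichain squeezed between $\le_A$ and $\ge_A$ the single datum $d_A(\OO^\vee_{\AZ(X)},1)$, it equals that singleton, which is Part (1). The main obstacle, in both parts, is the lower bound. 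For Part (2), the degenerate-Whittaker construction must be carried out uniformly in $\rho$: when $\rho$ is not of Springer type, $\OO^\vee$ enters the geometry only through a nontrivial local system, so one must control the multiplicity in $X$ of the relevant parahoric-level representation --- this is where the generalised Springer correspondence and Green-function combinatorics do the real work. For Part (1), one must establish the microlocal identification of ${}^K\WF(X)$ with the leading term of the characteristic cycle precisely enough to exclude cancellation in the Harish-Chandra--Howe local character expansion that would push ${}^K\WF(X)$ strictly below $d_A(\OO^\vee_{\AZ(X)},1)$.
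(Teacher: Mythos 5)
Your outline identifies the right answer and the right formal scaffolding (order-reversal of $d_A$, the normalisation $d_A(\OO^\vee,1)=(d(\OO^\vee),\bar C')$, the role of $\AZ$), but both of your lower bounds rest on steps that you flag as ``obstacles'' and never supply, and these are precisely where the content of the theorem lives. For Part (1), the claim that ${}^K\WF(X)$ is ``controlled microlocally by the characteristic cycle of the Kazhdan--Lusztig--Lusztig sheaf,'' whose leading term under Fourier--Deligne duality is the conormal of $\OO^\vee_{\AZ(X)}$, is a heuristic with no theorem behind it: there is no established identification of the $p$-adic (canonical unramified) wavefront set with a characteristic cycle on the dual side, and excluding cancellation in the local character expansion is exactly the problem, not a routine verification. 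Likewise for Part (2), a uniform construction of a nonzero degenerate Whittaker functional of type $d(\OO^\vee)$ for arbitrary $\rho$ (including non-Springer-type local systems) is an open problem of M\oe glin--Waldspurger type; it is known only in special cases. As written, the proposal is a programme, not a proof.

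The paper's route is genuinely different and avoids both difficulties. The engine is Okada's formula computing $[{}^K\WF_c(X)]$ from the restriction of the $q\to1$ specialisation $X_{q\to1}$ to the finite parahoric Weyl groups $W_c$, combined with a deformation argument (Lemma \ref{l:std-real}) showing that for real infinitesimal character this restriction factors through $X|_W$. The identity $Y(s,n',\rho')|_W=H^\bullet(\mathcal B^\vee_{n'})^{\rho'}\otimes\mathrm{sgn}$ and Borho--MacPherson then reduce everything to computing the ``wavefront set'' of a single irreducible $W$-representation, and the matching of upper and lower bounds there is the purely combinatorial Theorem \ref{thm:faithful} (faithfulness of every nilpotent orbit), proved via symbols and Achar's description of the fibres of $\bar s$. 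Part (2) is then a two-line consequence of Part (1) and Lemma \ref{lem:orbitclosure}(iii) ($G^\vee\fg^\vee[2]\subseteq\overline{\OO^\vee}$), with no Whittaker models anywhere. If you want to salvage your plan, the honest move is to replace your two microlocal/Whittaker lower bounds with this finite-Weyl-group reduction, at which point the remaining work is the Springer-correspondence combinatorics.
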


The proof relies on the local character expansion ideas of Barbasch and Moy \cite{barmoy} and the subsequent refinements of the third-named author \cite{okada2021wavefront}, together with a detailed analysis of the Springer correspondence, nilpotent orbits, and the combinatorics of the Achar duality.

We believe that Theorem \ref{t:main} should remain true for all irreducible representations with unipotent cuspidal support with real infinitesimal character. {However, the combinatorics in the more general case are more involved as they rely on the generalized Springer correspondence; moreover, in order to obtain the necessary branching results to parahoric subgroups, one needs to use the correspondences with Lusztig's graded affine Hecke algebras, and this adds another layer of complexity.}  
{Additionally, there are examples that show that the statement of Theorem \ref{t:main}(1) does not hold if $s$ is not real.} We intend to address these generalizations in future work. 

There is a vast literature on the computation of the (mainly algebraic) wavefront set of representations of $p$-adic groups. We mention only two examples related to the unipotent representations. In \cite{MW87}, M\oe glin and Waldspurger introduced the notion of generalized Whittaker models, which they used to compute the wavefront sets for representations of $\mathrm{GL}(n)$, small-rank (in the sense of Howe) representations of $\mathrm{Sp}(2n)$, and, in the notation of Theorem \ref{t:main}, the case $\frac 12 h^\vee=\rho$, the infinitesimal character where the trivial $\mathbf G(\mathsf k)$-representation occurs. For $\bfG=\mathrm{SO}(2n+1)$, Waldspurger  has computed the algebraic wavefront sets of  all irreducible tempered representations  with unipotent cuspidal support \cite[Th\'eor\`eme 2]{Wald20} and of their $\AZ$-duals \cite[Th\'eor\`eme, p.1108]{Wald18}. 

\smallskip

To use Theorem \ref{t:main} as a tool for computing wavefront sets, we need an algorithm for computing the $\AZ$-dual of an irreducible representation. 
For example, if $X$ is an irreducible spherical representation (in the sense of having nonzero fixed vectors under the action of $\mathbf G(\mathfrak o)$) with Satake parameter $s$), then $\mathsf{AZ}(X)=X(s,n',\rho')$ admits nonzero Whittaker models, and it is known that this is equivalent to the condition that $G^\vee(s)n'$ is the unique open orbit in $\mathfrak g^\vee_q$ (section \ref{s:unip-cusp}). Denote $\OO^\vee_s=G^\vee n'$, i.e., the $G^\vee$-saturation of the open $G^\vee(s)$-orbit in $\mathfrak g^\vee_q$. Then an immediate consequence of Theorem \ref{t:main} is
the following.

\begin{cor}
The algebraic wavefront set of the irreducible spherical $\mathbf G(\mathsf k)$-representation $X(s)$ with real Satake parameter $s$ is
\[\hphantom{ }^{\bar{\sfk}}\WF(X(s)) = d(\OO^\vee_s).\]
\end{cor}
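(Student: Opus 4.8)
The plan is to derive the Corollary directly from Theorem \ref{t:main}(1) by unwinding the definitions of the spherical representation and its Aubert--Zelevinsky dual. First I would observe that if $X = X(s)$ is irreducible spherical with real Satake parameter $s$, then, viewing $X$ as an Iwahori-spherical representation, it is of the form $X(s,n_0,\rho_0)$ where $n_0 = 0$ (equivalently, the corresponding Langlands parameter is unramified with trivial monodromy) and $\rho_0$ is the trivial representation of $A(s,0)$. This is the standard fact that the spherical member of an unramified principal series is the one attached to the zero nilpotent element; since $s$ is real, $X(s)$ has real infinitesimal character in the sense of the excerpt, so Theorem \ref{t:main}(1) applies.

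Next I would identify $\AZ(X(s))$. By the properties of the Aubert--Zelevinsky involution recalled in the excerpt, $\AZ$ preserves the semisimple parameter $s$, so $\AZ(X(s)) = X(s,n',\rho')$ for some nilpotent $n'$ and some $\rho'$. The key input, stated in the discussion preceding the Corollary, is that $\AZ$ carries a spherical representation to the unique generic (Whittaker-type) representation with the same semisimple parameter, and that genericity of $X(s,n',\rho')$ is equivalent to $G^\vee(s)n'$ being the open orbit in $\mathfrak g^\vee_q$. Thus the $G^\vee$-orbit $\OO^\vee_{\AZ(X(s))} = G^\vee n'$ coincides with $\OO^\vee_s$, the $G^\vee$-saturation of the open $G^\vee(s)$-orbit in $\mathfrak g^\vee_q$, which is exactly the notation set up in the excerpt.

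With these two identifications in hand, the Corollary is immediate: Theorem \ref{t:main}(1) gives
\[
\hphantom{ }^{\bar{\sfk}}\WF(X(s)) = d(\OO^\vee_{\AZ(X(s))}) = d(\OO^\vee_s),
\]
and one also reads off the finer statement that $^K\WF(X(s)) = d_A(\OO^\vee_s, 1)$ is a singleton. I do not expect any serious obstacle here; the content is entirely in Theorem \ref{t:main}. The only point requiring a little care is to make sure the background facts invoked—namely that the spherical constituent corresponds to $n = 0$, that $\AZ$ sends spherical to generic with the same $s$, and that generic Iwahori-spherical representations are characterized by the open-orbit condition in $\mathfrak g^\vee_q$—are each cited to their proper sources (the discussion in Section \ref{s:unip-cusp} and the references on the Aubert--Zelevinsky involution in Section \ref{sec:AZduality}), so that the deduction is logically self-contained. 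Hence the proof is a short paragraph: cite the structural facts, conclude $\OO^\vee_{\AZ(X(s))} = \OO^\vee_s$, and apply Theorem \ref{t:main}(1).
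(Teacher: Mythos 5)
Your argument is correct and is essentially the paper's own: the corollary is deduced by noting that $\AZ(X(s)) = X(s,n',\rho')$ is generic, that genericity is equivalent to $G^\vee(s)n'$ being the open orbit in $\mathfrak g^\vee_q$ (so $\OO^\vee_{\AZ(X(s))} = \OO^\vee_s$ by definition), and then applying Theorem \ref{t:main}(1). The additional identification $X(s) = X(s,0,\mathrm{triv})$ is consistent with the paper's conventions but is not actually needed for the deduction, since the theorem applies to any irreducible Iwahori-spherical representation with real infinitesimal character.
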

While this result was expected for a long time (by analogy with real groups for example), as far as we know, this is the first proof for all split $p$-adic groups.

\medskip

In general, it is a difficult problem to compute the $\mathsf{AZ}$-dual. Evens and Mirkovi\'c \cite[Theorem 0.1]{EM} proved that for Iwahori-spherical representations, $\AZ$ admits a geometric description in terms of the Fourier-Deligne transform on irreducible perverse sheaves which is computable in principle via the algorithms in \cite[\S2]{Lusztig2010}. For $\bfG=\mathrm{GL}(n)$, an algorithm involving Zelevinsky multisegments was given in \cite{MW-duality}.
%; this is the only case in which $\AZ$ induces an involution on (graded) orbits, the Pyasetsky involution, rather than on irreducible local systems\todo{I think this sentence is not clear}. 
For split symplectic and odd orthogonal groups, an explicit combinatorial algorithm for computing $\AZ$ was recently announced in \cite{AtobeMinguez}.  We also mention the recent progress by Waldspurger \cite{Wald19} on the generalized Springer correspondence for $\mathrm{Sp}(2n)$ which provides another perspective on $\AZ$-duality (more in line with our methods in this paper).

In a sequel to this paper, we will use the wavefront set results in order to give a new characterization of the anti-tempered unipotent Arthur packets and a $p$-adic analogue of the weak Arthur packets defined for real groups in \cite{AdamsBarbaschVogan}.

\subsection{Acknowledgments}

The authors would like to thank Kevin McGerty and David Vogan for many helpful conversations. The authors would also like to thank Anne-Marie Aubert, Colette M\oe glin, David Renard, and Maarten Solleveld for their helpful comments and corrections on an earlier draft of this paper. The first and second authors were partially supported by the Engineering and Physical Sciences Research Council under grant EP/V046713/1.
The third author was supported by Aker Scholarship.

\section{Preliminaries}\label{sec:preliminaries}

%Let $\mathsf{k}$ be a nonarchimedean local field of characteristic $0$ with residue field $\mathbb{F}_q$ of sufficiently large characteristic, ring of integers $\mathfrak{o} \subset \mathsf{k}$, and valuation $\mathsf{val}_{\mathsf{k}}$. Fix an algebraic closure $\bar{\mathsf{k}}$ of $\mathsf{k}$ and let $K \subset \bar{\mathsf{k}}$ be the maximal unramified extension of $\mathsf{k}$ in $\bar{\mathsf{k}}$. 

Let $\bfG$ be a connected reductive algebraic group defined over $\mathbb{Z}$, and let $\bfT \subset \mathbf{G}$ be a maximal torus. For any field $F$, we write $\mathbf{G}(F)$, $\mathbf{T}(F)$, etc. for the groups of $F$-rational points. The $\CC$-points are denoted by $G$, $T$, etc. 

Write $X^*(\mathbf{T},\bark)$ (resp. $X_*(\mathbf{T},\bark)$) for the lattice of algebraic characters (resp. co-characters) of $\mathbf{T}(\bark)$, and write $\Phi(\mathbf{T},\bark)$ (resp. $\Phi^{\vee}(\mathbf{T},\bark)$) for the set of roots (resp. co-roots). Let
$$\mathcal R=(X^*(\mathbf{T},\bark), \ \Phi(\mathbf{T},\bark),X_*(\mathbf{T},\bark), \ \Phi^\vee(\mathbf{T},\bark), \ \langle \ , \ \rangle)$$
be the root datum corresponding to $\mathbf{G}$, and let $W$ the associated (finite) Weyl group. Let $\mathbf{G}^\vee$ be the Langlands dual group of $\bfG$, i.e. the connected reductive algebraic group corresponding to the root datum 
$$\mathcal R^\vee=(X_*(\mathbf{T},\bark), \ \Phi^{\vee}(\mathbf{T},\bark),  X^*(\mathbf{T},\bark), \ \Phi(\mathbf{T},\bark), \ \langle \ , \ \rangle).$$
Set $T^\vee=X^*(\bfT,\bark)\otimes_\ZZ \CC^\times$, regarded as a maximal torus in $G^\vee$ with Lie algebra $\mathbf{\mathfrak t}^\vee=X^*(\bfT,\bark)\otimes_{\mathbb Z} \mathbb C$, a Cartan subalgebra of the Lie algebra $\mathbf{\mathfrak g}^\vee$ of $\bfG^\vee$. Define
\begin{align}\label{eq:real}
\begin{split}
    T^\vee_{\mathbb R} &=X^*(\bfT,\bark)\otimes_{\mathbb Z} {\mathbb R}_{>0}\\
    \mathbf{\mathfrak t}_{\mathbb R}^\vee &= X^*(\bfT,\bark)\otimes_{\mathbb Z} \mathbb R\\
    T^\vee_c &=X^*(\bfT,\bark)\otimes_{\mathbb Z} S^1
\end{split}
\end{align}
There is a polar decomposition $T^\vee=T^\vee_c T ^\vee_{\mathbb R}$.

If $H$ is a complex group and $x$ is an element of $H$ or $\fh$, we write $H(x)$ for the centralizer of $x$ in $H$, and $A_H(x)$ for the group of connected components of $H(x)$. If $S$ is a subset of $H$ or $\fh$ (or indeed, of $H \cup \fh$), we can similarly define $H(S)$ and $A_H(S)$. We will sometimes write $A(x)$, $A(S)$ when the group $H$ is implicit.

Write $\mathcal B^\vee$ for the flag variety of $G^\vee$, i.e. the variety of Borel subgroups $B^{\vee} \subset G^{\vee}$. Note that $\mathcal{B}^{\vee}$ has a natural left $G^{\vee}$-action. 
For $g\in G^\vee$, write
$$\mathcal B^\vee_g = \{B^\vee\in \mathcal B^\vee \mid g\in B^\vee \}.$$
(this coincides with the subvariety of Borels fixed by $g$). Similarly, for $x\in \mathfrak g^\vee$, write
$$\mathcal B^\vee_x = \{B^\vee\in \mathcal B^\vee \mid x\in \mathfrak b^\vee \}.$$
If $S$ is a subset of $G^{\vee}$ or $\fg^{\vee}$ (or indeed of $G^{\vee} \cup \fg^{\vee}$), write
$$\mathcal B^\vee_S = \bigcap_{x\in S} \mathcal{B}^{\vee}_x.$$
The singular cohomology group $H^i(\mathcal B^\vee_S,\CC) = H^i(\mathcal{B}^{\vee}_S)$ carries an action of $A(S)=A_{G^\vee}(S)$. For an irreducible representation $\rho\in\mathrm{Irr}(A(S)))$, let $H^i(\mathcal B^\vee_S)^\rho := \Hom_{A(S)}(\rho,H^i(\mathcal{B}^{\vee}_S))$. Write $H^{\mathrm{top}}(\mathcal{B}^{\vee}_S)$ for the top-degree nonzero cohomology group and $H^{\bullet}(\mathcal{B}_S^{\vee})$ for the alternating sum of all cohomology groups. We will often consider the subset
\begin{equation}\label{eq:defofIrr0}
    \mathrm{Irr}(A(S))_0 := \{\rho \in \mathrm{Irr}(A(S)) \mid H^{\mathrm{top}}(\mathcal{B}_S^{\vee})^{\rho} \neq 0\}.
\end{equation}

\medskip

Let $\mathcal C(\bfG(\mathsf k))$ be the category of smooth complex $\bfG(\mathsf k)$-representations and let $\Pi(\mathbf{G}(\mathsf k)) \subset \mathcal C(\bfG(\mathsf k))$ be the set of irreducible objects. Let $R(\bfG(\mathsf k))$ denote the Grothendieck group of $\mathcal C(\bfG(\mathsf k))$.

\subsection{Nilpotent orbits}\label{subsec:nilpotent}

Let $\mathcal N$ be the functor which takes a field $F$ to the set of nilpotent elements of $\mf g(F)$.
By `nilpotent' in this context we mean the unstable points (in the sense of GIT) with respect to the adjoint action of $\bfG(F)$, see \cite[Section 2]{debacker}.
For $F$ algebraically closed this coincides with all the usual notions of nilpotence.
Let $\mathcal N_o$ be the functor which takes $F$ to the set of orbits in $\mathcal N(F)$ under the adjoint action of $\bfG(F)$.
When $F$ is $\sfk$ or $K$, we view $\mathcal N_o(F)$ as a partially ordered set with respect to the closure ordering in the topology induced by the topology on $F$.
When $F$ is algebraically closed, we view $\mathcal N_o(F)$ as a partially ordered set with respect to the closure ordering in the Zariski topology.
For brevity we will write $\mathcal N(F'/F)$ (resp. $\mathcal N_o(F'/F)$) for $\mathcal N(F\to F')$ (resp. $\mathcal N_o(F\to F')$) where $F\to F'$ is a morphism of fields.
For $(F,F')=(\sfk,K)$ (resp. $(\sfk,\bark)$, $(K,\bark)$), the map $\mathcal N_o(F'/F)$ is strictly increasing (resp. strictly increasing, non-decreasing).
We will simply write $\mathcal N$ for $\mathcal N(\CC)$ and $\mathcal N_o$ for $\mathcal N_o(\CC)$.
In this case we also define $\mathcal N_{o,c}$ (resp. $\mathcal N_{o,\bar c}$) to be the set of all pairs $(\OO,C)$ such that $\OO\in \mathcal N_o$ and $C$ is a conjugacy class in the fundamental group $A(\OO)$ of $\OO$ (resp. Lusztig's canonical quotient $\bar A(\OO)$ of $A(\OO)$, see \cite[Section 5]{Sommers2001}). There is a natural map 
\begin{equation}
    \mf Q:\mathcal N_{o,c}\to\mathcal N_{o,\bar c}, \qquad (\OO,C)\mapsto (\OO,\bar C)
\end{equation}
where $\bar C$ is the image of $C$ in $\bar A(\OO)$ under the natural homomorphism $A(\OO)\twoheadrightarrow \bar A(\OO)$. There are also projection maps $\pr_1: \cN_{o,c} \to \cN_o$, $\pr_1: \cN_{o,\bar c} \to \cN_o$. We will typically write $\mathcal N^\vee$, $\mathcal N^\vee_o, \cN^{\vee}_{o,c}$, and $\cN^{\vee}_{o,\bar c}$ for the sets $\mathcal N$, $\mathcal N_o, \cN_{o,c}$, and $\cN_{o,\bar c}$ associated to the Langlands dual group $G^\vee$. When we wish to emphasise the group we are working with we include it as a superscript e.g. $\mathcal N_o^\bfG(k)$.

Recall the following classical result.

\begin{lemma}[Corollary 3.5, \cite{Pommerening} and Theorem 1.5, \cite{Pommerening2}]\label{lem:Noalgclosed}
    Let $F$ be algebraically closed with good characteristic for $\bfG$.
    Then there is canonical isomorphism of partially ordered sets $\Theta_F:\mathcal N_o(F)\xrightarrow{\sim}\mathcal N_o$.
\end{lemma}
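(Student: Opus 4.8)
The plan is to reduce to the Bala--Carter classification of nilpotent orbits, in the form valid in good characteristic due to Pommerening. For $F$ algebraically closed of good characteristic, that theory gives a bijection between $\mathcal N_o(F)$ and the set $\mathcal{BC}$ of $\bfG(F)$-conjugacy classes of pairs $(\bfL,\bfP_{\bfL})$, where $\bfL$ is a Levi subgroup of a parabolic of $\bfG$ and $\bfP_{\bfL}$ is a distinguished parabolic subgroup of $\bfL$; the orbit $\OO^F_\xi$ attached to $\xi=(\bfL,\bfP_{\bfL})$ is the $\bfG(F)$-saturation of the Richardson orbit of $\bfP_{\bfL}$ in $\mathfrak l(F)$. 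The crucial point is that $\mathcal{BC}$ is visibly independent of $F$: conjugacy classes of Levi subgroups correspond to $W$-association classes of subsets of the simple roots, and, for a fixed $\bfL$, the distinguished parabolics are singled out by an explicit condition on root systems alone. Composing the Bala--Carter bijection for $F$ with the inverse of the classical one for $\CC$ therefore produces the map $\Theta_F\colon\mathcal N_o(F)\xrightarrow{\sim}\mathcal N_o$. It is canonical because $\mathcal{BC}$ is intrinsic to $\bfG$; the same argument applied to a field extension $F\hookrightarrow F'$ of good characteristic shows in addition that the various $\Theta$'s are compatible with the comparison maps $\mathcal N_o(F'/F)$.

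The remaining point, which I expect to be the main obstacle, is that $\Theta_F$ preserves and reflects the closure order; this is the content of the second Pommerening reference. I would argue it by spreading out. Fix a Chevalley $\ZZ$-form of $\mathfrak g$, choose $N>0$ so that every prime not dividing $N$ is good for $\bfG$, and set $R=\ZZ[1/N]$. Each Bala--Carter representative is a sum of Chevalley root vectors, so it defines an element $e_\xi\in\mathfrak g(R)$ for every $\xi\in\mathcal{BC}$; let $\mathcal Z_\xi\subset\mathfrak g$ be the Zariski closure over $R$ of $\bfG\cdot(\mathbb A^1_R\cdot e_\xi)$. Its generic fibre is the closure of $\OO^{\CC}_\xi$, while for each good prime $p$ the geometric special fibre over $\overline{\mathbb F}_p$ is a $\bfG$-stable closed subvariety of the nilpotent cone of dimension at most $\dim\OO^{\CC}_\xi$ containing $\OO^{\overline{\mathbb F}_p}_\xi$. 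Granting that nilpotent orbit closures are flat over $R$ --- equivalently, that $\dim\OO^F_\xi$ is independent of the characteristic --- and that these geometric special fibres remain irreducible, one deduces that each such fibre of $\mathcal Z_\xi$ is exactly the closure of $\OO^{\overline{\mathbb F}_p}_\xi$. Then $\OO^{\CC}_\eta\le\OO^{\CC}_\xi$ if and only if $e_\eta$ lies in the generic fibre of $\mathcal Z_\xi$, and by $R$-flatness this holds if and only if $e_\eta$ lies in $\mathcal Z_\xi$ in every good characteristic; hence $\Theta_F$ is an isomorphism of posets.

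The delicate input is precisely this flatness and irreducibility of orbit closures under reduction modulo a good prime: one needs the true-but-nontrivial facts that the nilpotent cone is a normal complete intersection of the expected dimension in every good characteristic, that orbit dimensions do not jump, and that the stratification neither collapses nor splits. An alternative, and the route actually taken in the literature, sidesteps these geometric issues and checks order-preservation directly on $\mathcal{BC}$: for classical $\bfG$ the closure order is the dominance order on partitions in every good characteristic, and for exceptional $\bfG$ it is given by the same finite Hasse diagram as over $\CC$, as tabulated by Spaltenstein and Carter. Either route establishes the Lemma.
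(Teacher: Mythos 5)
The paper does not actually prove this lemma: it is quoted as a classical result with citations to Pommerening's two papers, so there is no in-paper argument to compare against. Your sketch is a fair account of what those references establish. The first paragraph is correct and is precisely the content of the citation: Pommerening's extension of the Bala--Carter classification to good characteristic identifies $\mathcal N_o(F)$ with a set of root-theoretic data $\mathcal{BC}$ that is manifestly independent of $F$, and composing the two bijections gives the canonical $\Theta_F$. For the closure order, however, your spreading-out argument is not yet a proof: the statements you ask the reader to grant --- that orbit dimensions do not jump, that the special fibres of $\mathcal Z_\xi$ stay irreducible of the right dimension, and that the stratification neither collapses nor splits --- are essentially equivalent to the order-comparison you are trying to establish, so as written that paragraph is circular. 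You recover by correctly identifying the route actually taken in the literature (dominance order on partitions in classical types, identical Hasse diagrams in exceptional types, as in Spaltenstein and Carter), and either that case-by-case verification or a genuine proof of the flatness inputs would complete the argument. In short: your reduction to Bala--Carter matches the paper's (implicit) reliance on Pommerening, and your sketch is correct in outline, but the geometric variant of the order-preservation step should either be fleshed out or replaced by the explicit combinatorial comparison you mention at the end.
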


Write
\begin{equation}\label{eq:dBV}
d: \cN_0 \to \cN_0^{\vee}, \qquad d: \cN_0^{\vee} \to \cN_0.
\end{equation}
for the \emph{Barbasch-Lusztig-Spaltenstein-Vogan duality maps} (see \cite[Appendix A]{BarbaschVogan1985}). If $F$ is algebraically closed, we will also write
\begin{equation}
d: \cN_o(F) \to \cN_o^{\vee}(F), \qquad d: \cN_o^{\vee}(F) \to \cN_o(F)
\end{equation}
for the maps obtained by composing the maps (\ref{eq:dBV}) with the natural identifications $\mathcal N_o(F)\simeq \mathcal N_o$, $\mathcal N_o^\vee(F)\simeq \mathcal N_o^\vee$ of Lemma \ref{lem:Noalgclosed}. Write 
\begin{equation}
    d_S: \cN_{o,c} \twoheadrightarrow \cN^{\vee}_o, \qquad d_S: \cN^{\vee}_{o,c} \twoheadrightarrow \cN_o
\end{equation}
for the duality maps defined by Sommers in \cite[Section 6]{Sommers2001} and 
\begin{equation}
    d_A: \cN_{o,\bar c} \to \cN^{\vee}_{o,\bar c}, \qquad d_A: \cN^{\vee}_{o,\bar c} \to \cN_{o,\bar c}
\end{equation}
for the duality maps defined by Achar in (\cite[Section 1]{Acharduality}). We have the following compatibilities, which are clear from the definitions:
\begin{itemize}
    \item $d(\OO) = d_S(\OO,1)$ .
    \item $d_S(\OO,C) = \pr_1\circ d_A(\OO,C)$.
\end{itemize}
There is a natural pre-order $\leq_A$ on $\mathcal N_{o,c}$ defined by 
$$(\OO,C)\le_A(\OO',C') \iff \OO\le \OO' \text{ and } d_S(\OO,C)\ge d_S(\OO',C').$$
Write $\sim_A$ for the equivalence relation on $\cN_{o,c}$ induced by this pre-order, i.e. 
$$(\OO_1,C_1) \sim_A (\OO_2,C_2) \iff (\OO_1,C_1) \leq_A (\OO_2,C_2) \text{ and } (\OO_2,C_2) \leq_A (\OO_1,C_1)$$
Write $[(\OO,C)]$ for the equivalence class of $(\OO,C) \in \cN_{o,c}$. The $\sim_A$-equivalence classes in $\cN_{o,c}$ coincide with the fibres of the projection map $\mf Q:\mathcal N_{o,c}\to\mathcal N_{o,\bar c}$ \cite[Theorem 1]{Acharduality}. So $\le_A$ descends to a partial order on $\mathcal N_{o,\bar c}$, also denoted by $\le_A$. The maps $d,d_S,d_A$ are all order reversing with respect to the relevant pre/partial orders. We also have the following easy result.

\begin{lemma}
    \label{lem:injectiveachar}
    Let $\OO^\vee_1,\OO^\vee_2\in\mathcal N_o^\vee$.
    Then the following are true:
    \begin{enumerate}
        \item $\OO^\vee_1\le\OO^\vee_2$ if and only if $d_A(\OO^\vee_1,1)\ge_Ad_A(\OO^\vee_2,1)$,
        \item $\OO^\vee_1=\OO^\vee_2$ if and only if $d_A(\OO^\vee_1,1)=d_A(\OO^\vee_2,1)$.
    \end{enumerate}
\end{lemma}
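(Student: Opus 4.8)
The plan is to reduce both assertions to the single identity $d_S\bigl(d_A(\OO^\vee,1)\bigr)=\OO^\vee$ for all $\OO^\vee\in\cN_o^\vee$, i.e. to the statement that applying $\pr_1\circ d_A$ to the pair $d_A(\OO^\vee,1)$ returns $\OO^\vee$. First the formal parts. In (1), if $\OO^\vee_1\le\OO^\vee_2$ then $(\OO^\vee_1,1)\le_A(\OO^\vee_2,1)$: the orbit inequality holds by hypothesis, and the remaining condition $d_S(\OO^\vee_1,1)\ge d_S(\OO^\vee_2,1)$ is exactly $d(\OO^\vee_1)\ge d(\OO^\vee_2)$, which holds since $d$ is order-reversing; applying the order-reversing map $d_A$ gives $d_A(\OO^\vee_1,1)\ge_A d_A(\OO^\vee_2,1)$. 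This is the ``only if'' half of (1). The ``only if'' half of (2) is trivial, and its ``if'' half follows from (1) by feeding the two inequalities $d_A(\OO^\vee_1,1)\ge_A d_A(\OO^\vee_2,1)$ and $d_A(\OO^\vee_1,1)\le_A d_A(\OO^\vee_2,1)$ extracted from the assumed equality into the ``if'' half of (1). So everything comes down to the ``if'' half of (1).

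For that, write $d_A(\OO^\vee_i,1)=(\OO_i,\bar C_i)$, so that $\OO_i=d(\OO^\vee_i)$ is a special orbit by the stated property of $d_A$. Unwinding the definition of $\le_A$ on $\cN_{o,\bar c}$ and using the compatibility $d_S=\pr_1\circ d_A$, the hypothesis $d_A(\OO^\vee_1,1)\ge_A d_A(\OO^\vee_2,1)$ contains in particular the inequality $d_S\bigl(d_A(\OO^\vee_1,1)\bigr)\le d_S\bigl(d_A(\OO^\vee_2,1)\bigr)$. Granting the identity $d_S\bigl(d_A(\OO^\vee,1)\bigr)=\OO^\vee$, this reads $\OO^\vee_1\le\OO^\vee_2$, which is exactly what is wanted. (Note that the orbit part of $\le_A$, namely $d(\OO^\vee_1)\ge d(\OO^\vee_2)$, does not suffice on its own, since $d$ is far from injective on non-special orbits; the $d_S$-part of the pre-order is what carries the information.)

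It then remains to prove the identity $d_S\bigl(d_A(\OO^\vee,1)\bigr)=\OO^\vee$, and this is where the real input lies. When $\OO^\vee$ is special the conjugacy class $\bar C'$ in $d_A(\OO^\vee,1)=(d(\OO^\vee),\bar C')$ is trivial and $d(\OO^\vee)$ is again special, so the identity collapses to $d\bigl(d(\OO^\vee)\bigr)=\OO^\vee$, the standard fact that $d$ restricts to an involution on special orbits. For arbitrary $\OO^\vee$ one must invoke Achar's construction more seriously: by Sommers' work, $d_S$ restricts to a bijection from $\{(\OO,\bar C):\OO\in\cN_o\text{ special},\ \bar C\in\mathrm{Conj}(\bar A(\OO))\}$ onto $\cN^\vee_o$, and Achar's $d_A$ is built so that $\OO^\vee\mapsto d_A(\OO^\vee,1)=(d(\OO^\vee),\bar C')$ is precisely the inverse of this bijection; the identity $d_S\circ d_A(\OO^\vee,1)=\OO^\vee$ follows at once. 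Pinning this down from the definitions in \cite{Acharduality} (and, underneath, \cite{Sommers2001} together with Lusztig's parametrization of special pieces) is the main obstacle; everything else is bookkeeping with the pre-order $\le_A$.
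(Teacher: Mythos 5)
Your proof is correct and follows essentially the same route as the paper's: both halves of (1) reduce to the definition of $\le_A$ together with the identity $d_S\bigl(d_A(\OO^\vee,1)\bigr)=\OO^\vee$, and (2) follows formally from (1). The only difference is that you additionally sketch why that identity holds, whereas the paper simply imports it from Achar's Theorem 1 (where $d_S\circ d_A=\pr_1$ is one of the defining properties of $d_A$).
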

\begin{proof}
\begin{enumerate}
    \item $(\Rightarrow)$ Suppose $\OO^\vee_1\le\OO^\vee_2$.
    Then $d(\OO^\vee_1)\ge d(\OO^\vee_2)$.
    So we have 
    \begin{align*}\pr_1(d_A(\OO^\vee_1,1)) &= d(\OO^\vee_1)\ge d(\OO^\vee_2) = \pr_1(d_A(\OO^\vee_2,1))\\
    d_S(d_A(\OO^\vee_1,1)) &= \OO^\vee_1 \le \OO^\vee_2 = d_S(d_A(\OO^\vee_2,1)).\end{align*}
    Thus indeed $d_A(\OO^\vee_1,1)\ge_Ad_A(\OO^\vee_2,1)$.
    
    $(\Leftarrow)$ If $d_A(\OO^\vee_1,1)\ge_Ad_A(\OO^\vee_2,1)$ then $\OO^\vee_1 = d_S(d_A(\OO^\vee_1,1)) \le d_S(d_A(\OO^\vee_2,1)) = \OO^\vee_2$.
    \item This follows from part (i).
\end{enumerate}
\end{proof}

\subsection{The Bruhat-Tits building}
Let $\mathcal B(\bfG)$ denote the (enlarged) Bruhat-Tits building for $\bfG(\sfk)$. We use the notation  $c\subseteq \mathcal B(\bfG)$ to indicate that $c$ is a face of $\mathcal B$.
Given a maximal torus $\bfT$ defined and split over $\sfk$, write $\mathcal A(\bfT,\sfk)$ for the corresponding apartment in $\mathcal B(\bfG)$.
For an apartment $\mathcal A$ of $\mathcal B(\bfG)$ and $\Omega\subseteq \mathcal A$ we write $\mathcal A(\Omega,\mathcal A)$ for the smallest affine subspace of $\mathcal A$ containing $\Omega$.
We write $\Phi(\bfT,\sfk)$ (resp. $\Psi(\bfT,\sfk)$) for the set of roots of $\bfG(\sfk)$ (resp. affine roots) of $\bfT(\sfk)$ on $\bfG(\sfk)$. 
For $\psi\in \Psi(\bfT,\sfk)$ write $\dot\psi\in \Phi(\bfT,\sfk)$ for the gradient of $\psi$, and
$W=W(\bfT,\sfk)$ for the Weyl group of $\bfG(\sfk)$ with respect to $\bfT(\sfk)$.
Let $\widetilde W=W\ltimes X_*(\mathbf{T},\sfk)$ be the (extended) affine Weyl group. 
Write 
\begin{equation}
    \widetilde W\to W, \qquad w\mapsto \dot w
\end{equation}
for the natural projection map.
Fix a special point $x_0$ of $\mathcal A(\bfT,k)$.
The choice of special point $x_0$ of $\mathcal B(\bfG,K)$ fixes an inclusion $\Phi(\bfT_1,K)\to \Psi(\bfT_1,K)$ and an isomorphism between $\widetilde W$ and $N_{\bfG(K)}(\bfT_1(K))/\bfT_1(\mf O^\times)$.
For a face $c\subseteq \mathcal A$ let $W_c$ be the subgroup of $\widetilde{W}$ generated by reflections in the hyperplanes through $c$ (equivalently $\mathcal A(c,\mathcal A)$).
For a face $c\subseteq \mathcal B(\bfG)$ there is a subgroup $\bfP_c^+$ of $\bfG$ defined over $\mf o$ such that $\bfP_c^+(\mf o)$ is the stabiliser of $c$ in $\bfG(k)$. There is an exact sequence
\begin{equation}\label{eq:parahoricses}
    1 \to \bfU_c(\mf o) \to  \bfP_c^+(\mf o) \to  \bfL_c^+(\mathbb F_q) \to 1,
\end{equation}
where $\bfU_c(\mf o)$ is the pro-unipotent radical of $\bfP_c^+(\mf o)$ and $\bfL_c^+$ is the reductive quotient of the special fibre of $\bfP_c^+$ .
Let $\bfL_c$ denote the identity component of $\bfL_c^+$, and let $\bfP_c$ be the subgroup of $\bf P_c^+$ defined over $\mf o$ such that $\bfP_c(\mf o)$ is the inverse image of $\bfL_c(\mathbb F_q)$ in $\bfP_c^+(\mf o)$.
The torus $\bfT$ is in fact defined over $\mf o$, is a subgroup of $\bfP_c$ and the special fibre of $\bfT$, denoted $\bar\bfT$, is a $\mathbb F_q$-split torus of $\bfL_c$.
Write $\Phi_c(\bar\bfT,\mathbb F_q)$ for the root system of $\bfL_c$ with respect to $\bar\bfT$.
Then $\Phi_c(\bar\bfT,\mathbb F_q)$ naturally identifies with the set of $\psi\in\Psi(\bfT,k)$ that vanish on $\mathcal A(c,\mathcal A)$, and the Weyl group of $\bar\bfT$ in $\bfL_c$ is naturally isomorphic to $W_c$. The groups $\bfP_c$ obtained in this manner are called ($\sfk$-)\emph{parahoric subgroups} of $\bfG$.
When $c$ is a chamber in the building, then we call $\bfP_c$ an \emph{Iwahori subgroup} of $G$. 

Now suppose $\mathbf{T}$ is defined over $\ZZ$ and let $\mathcal A = \mathcal A(\bfT,\sfk)$.
Fix a special point $x_0$ in $\mathcal A$.
Choose a set of simple roots $\Delta \subset \Phi(\bfT,\sfk)$, and let $\tilde\Delta\subseteq \Psi(\bfT,\sfk)$ be the corresponding set of extended simple roots (this depends on the choice of $x_0$).
Let $c_0$ denote the chamber cut out by $\tilde\Delta$.
Each subset $J\subseteq \tilde\Delta$ cuts out a face of $c_0$ which we denote by $c(J)$.
We will need the following result from \cite{okada2021wavefront}.
Recall that a \emph{pseudo-Levi} subgroup $L$ of $G$ is a subgroup arising as the centraliser of a semisimple group element. A pseudo-levi subgroup is \emph{standard} if contains our fixed maximal torus $T=\mathbf{T}(\CC)$. We will write $Z_L$ for the center of $L$.
\begin{lemma}
    \cite[Proposition 4.17]{okada2021wavefront}
    There is a natural $W$-equivariant map
    \begin{equation}
        \Xi:\{\text{faces of } \mathcal A\} \rightarrow\{(L,tZ_L^\circ) \mid L\text{ a standard pseudo-Levi}, \  Z_{G}^\circ(tZ_L^\circ)=L\}
    \end{equation}
    where $c_1,c_2$ lie in the same fibre iff $\mathcal A(c_1,\mathcal A)+X_*(\bfT, \sfk)=\mathcal A(c_2,\mathcal A)+X_*(\bfT,\sfk)$.
    Moreover, if $\Xi(c) =(L,tZ_L^\circ)$ then $L$ is the complex reductive group with the same root datum as $\bfL_c(\mathbb F_q)$ (with respect to the tori $T$ and $\bar \bfT$ respectively).
\end{lemma}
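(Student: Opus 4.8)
The plan is to realise $\Xi$ explicitly through an ``exponential'' map from the apartment into the torus $T$. Taking the fixed special point $x_0$ as origin, I would identify $\mathcal A = \mathcal A(\bfT,\sfk)$ with the real vector space $V := X_*(\bfT,\sfk)\otimes_{\mathbb Z}\mathbb R$, so that every affine root $\psi\in\Psi(\bfT,\sfk)$ becomes an affine-linear function $\psi = \dot\psi + n_\psi$ with $\dot\psi\in\Phi(\bfT,\sfk)$ and $n_\psi\in\mathbb Z$. Let $\mathsf e\colon V\to T$ be the homomorphism induced by $\mathbb R\to\mathbb C^\times$, $r\mapsto e^{2\pi i r}$; it maps $V$ onto the compact part $T_c$ with kernel $X_*(\bfT,\sfk)$, and $\alpha(\mathsf e(x)) = e^{2\pi i\langle\alpha,x\rangle}$ for $\alpha\in\Phi(\bfT,\sfk)$. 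Given a face $c$ of $\mathcal A$, I would choose a point $x$ in its relative interior and put $L_c := Z_G(\mathsf e(x))^\circ$. Since $\alpha(\mathsf e(x)) = 1$ exactly when $\langle\alpha,x\rangle\in\mathbb Z$, i.e.\ when some affine root with gradient $\alpha$ vanishes on $\mathcal A(c,\mathcal A)$, the root system of $L_c$ is precisely the set of gradients $\dot\psi$ of affine roots vanishing on $\mathcal A(c,\mathcal A)$; in particular it is independent of $x$, and as $L_c$ is generated by $T$ together with the corresponding root subgroups it is a well-defined standard pseudo-Levi. Writing $V_c := \{v\in V \mid \langle\alpha,v\rangle = 0 \text{ for all roots }\alpha\text{ of }L_c\} = X_*(Z_{L_c}^\circ)\otimes_{\mathbb Z}\mathbb R$, we have $\mathcal A(c,\mathcal A) = x + V_c$; hence for a second interior point $x'$ the difference $x - x'$ lies in $V_c$, so $\mathsf e(x)\mathsf e(x')^{-1}\in Z_{L_c}^\circ$ and the coset $t_c Z_{L_c}^\circ := \mathsf e(x)Z_{L_c}^\circ$ is well-defined. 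I would then set $\Xi(c) := (L_c,\,t_c Z_{L_c}^\circ)$.

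Next I would check that $\Xi(c)$ lies in the stated target and that $\Xi$ is $W$-equivariant. Centralising the coset $t_c Z_{L_c}^\circ$ pointwise is the same as centralising $t_c$ together with all of $Z_{L_c}^\circ$, so $Z_G(t_c Z_{L_c}^\circ) = Z_G(t_c)\cap Z_G(Z_{L_c}^\circ)$; since $L_c$ centralises $t_c\in T\subset L_c$ and its own connected centre, $L_c\subseteq Z_G^\circ(t_c Z_{L_c}^\circ)$, whereas conversely $Z_G^\circ(t_c Z_{L_c}^\circ)$ is connected and contained in $Z_G(t_c)$, hence in $Z_G(t_c)^\circ = L_c$; therefore $Z_G^\circ(t_c Z_{L_c}^\circ) = L_c$, as required. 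Equivariance is then formal: $W = W(\bfT,\sfk)$ fixes $x_0$, acts linearly on $V$ permuting the affine-root functions $\dot\psi + n$ (hence the faces of $\mathcal A$), and satisfies $\mathsf e(wx) = w\,\mathsf e(x)$, so $L_{wc} = wL_cw^{-1}$ and $t_{wc}Z_{L_{wc}}^\circ = w(t_cZ_{L_c}^\circ)w^{-1}$, i.e.\ $\Xi(wc) = w\cdot\Xi(c)$.

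For the fibres I would exploit $\mathsf e$ once more. Because $V_c = X_*(Z_{L_c}^\circ)\otimes_{\mathbb Z}\mathbb R$ is a rational subspace of $V$, the group $X_*(\bfT,\sfk)\cap V_c$ is a full lattice in $V_c$ and $\mathsf e(V_c) = Z_{L_c}^\circ\cap T_c$ is the maximal compact subtorus of $Z_{L_c}^\circ$; as $\mathsf e$ is a homomorphism with kernel $X_*(\bfT,\sfk)$, we get $\mathsf e(\mathcal A(c,\mathcal A)) = t_c\,(Z_{L_c}^\circ\cap T_c)$ and $\mathsf e^{-1}\bigl(\mathsf e(\mathcal A(c,\mathcal A))\bigr) = \mathcal A(c,\mathcal A) + X_*(\bfT,\sfk)$. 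Hence $\mathcal A(c_1,\mathcal A) + X_*(\bfT,\sfk) = \mathcal A(c_2,\mathcal A) + X_*(\bfT,\sfk)$ holds iff $t_{c_1}(Z_{L_{c_1}}^\circ\cap T_c) = t_{c_2}(Z_{L_{c_2}}^\circ\cap T_c)$ in $T_c$. Comparing a coset with its underlying subgroup, the latter equality forces $Z_{L_{c_1}}^\circ\cap T_c = Z_{L_{c_2}}^\circ\cap T_c$, hence $Z_{L_{c_1}}^\circ = Z_{L_{c_2}}^\circ =: Z$ (a complex torus is determined by its maximal compact subgroup), and then $t_{c_1}t_{c_2}^{-1}\in Z\cap T_c$ gives $t_{c_1}Z = t_{c_2}Z$; conversely these two facts recover the left-hand side. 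Finally, using $L_{c_i} = Z_G^\circ(t_{c_i}Z_{L_{c_i}}^\circ)$ from the previous step, the equality $t_{c_1}Z_{L_{c_1}}^\circ = t_{c_2}Z_{L_{c_2}}^\circ$ already forces $L_{c_1} = L_{c_2}$. So the displayed condition is equivalent to $\Xi(c_1) = \Xi(c_2)$.

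For the ``moreover'' statement I would invoke the Bruhat-Tits description recalled above: $\Phi_c(\bar\bfT,\mathbb F_q)$, the root system of $\bfL_c$ relative to $\bar\bfT$, is canonically the set of affine roots in $\Psi(\bfT,\sfk)$ vanishing on $\mathcal A(c,\mathcal A)$; on this set the gradient map $\psi\mapsto\dot\psi$ is injective (two affine roots sharing a gradient and a common zero coincide) with image the root system of $L_c$ found in the first step; and the $\sfk$-split structure identifies $X^*(\bar\bfT)$, $X_*(\bar\bfT)$ with $X^*(\bfT)$, $X_*(\bfT,\sfk)$ compatibly with (co)roots. Hence $L_c$ and $\bfL_c(\mathbb F_q)$ have the same based root datum. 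I expect the main obstacle to be the bookkeeping in the third step---passing between the affine-root combinatorics on $\mathcal A$, the vector space $V$, the compact torus $T_c$, and the complex torus $T$---together with the input, used repeatedly, that a connected standard pseudo-Levi is recovered from its root system as the subgroup generated by $T$ and the corresponding root subgroups.
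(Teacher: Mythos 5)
Your construction is correct and is essentially the one used in the cited source: the paper itself gives no proof of this lemma (it is quoted verbatim from \cite[Proposition 4.17]{okada2021wavefront}), and that reference builds $\Xi$ in exactly this way, by exponentiating a point of the relative interior of $c$ to a compact torus element $t_c$ and taking $L_c = Z_G(t_c)^\circ$ together with the coset $t_cZ_{L_c}^\circ$. All the key verifications in your write-up -- that $\mathcal A(c,\mathcal A)=x+V_c$ so the coset is well defined, that $Z_G^\circ(t_cZ_{L_c}^\circ)=L_c$, the fibre description via $\ker\mathsf e=X_*(\bfT,\sfk)$, and the root-datum comparison through the gradient map on affine roots vanishing on $\mathcal A(c,\mathcal A)$ -- are sound and match the intended argument.
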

\begin{rmk}\label{rmk:rootdata}
    In the statement of the lemma, the weight lattices naturally identify since $T=\bfT(\CC)$ and $\bar\bfT(\mathbb F_q)=\bfT(\mf o)/\bfT(1+\mf p)$.
    The root data identify via the map $$\Phi_c(\bar\bfT,\mathbb F_q)\xrightarrow{\sim}\{\dot\psi \mid \psi\in\Psi(\bfT,\sfk), \ \psi(c)=0\}\subseteq\Phi(\bfT,k)\xrightarrow{\sim}\Phi(\bfT,\CC).$$
\end{rmk}
For a face $c\subseteq\mathcal B(\bfG)$ and $\Xi(c) = (L,tZ_L^\circ)$ there is a natural bijection of partially ordered sets 
$$\Theta_c:\mathcal N_o^{\bfL_c}(\overline{\mathbb F}_q)\xrightarrow{\sim} \mathcal N_o^{L}$$
induced by the isomorphism of root data in Remark \ref{rmk:rootdata}.

\subsection{Nilpotent orbits over a maximal unramified extension}

Let $\bfT$ be a maximal $\sfk$-split torus of $\bfG$ and $x_0$ be a special point in $\mathcal A(\bfT,K)$.
In \cite[Section 2.1.5]{okada2021wavefront} the third-named author constructs a bijection
$$\theta_{x_0,\bfT}:\mathcal N_o^{\bfG}(K)\xrightarrow{\sim}\mathcal N_{o,c}.$$
\begin{theorem}
    \label{lem:paramNoK}
    \cite[Theorem 2.20, Theorem 2.27, Proposition 2.29]{okada2021wavefront}
    The bijection 
    $$\theta_{x_0,\bfT}:\mathcal N_o^{\bfG}(K)\xrightarrow{\sim}\mathcal N_{o,c}$$
    is natural in $\bfT$, equivariant in $x_0$, and makes the following diagram commute:
    \begin{equation}
        \begin{tikzcd}[column sep = large]
            \mathcal N_o^{\bfG}(K) \arrow[r,"\theta_{x_0,\bfT}"] \arrow[d,"\mathcal N_o(\bar k/K)",swap] & \mathcal N_{o,c} \arrow[d,"\pr_1"] \\
            \mathcal N_o(\bar k) \arrow[r,"\Lambda^{\bar k}"] & \mathcal N_o.
        \end{tikzcd}
    \end{equation}
\end{theorem}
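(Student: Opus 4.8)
The plan is to realise $\theta_{x_0,\bfT}$ as the composite of two parametrisations already available in the literature: DeBacker's description of nilpotent orbits via the Bruhat-Tits building \cite{debacker}, applied over the field $K$ whose residue field $\overline{\mathbb F}_q$ is algebraically closed, and the Bala-Carter-Sommers parametrisation of $\mathcal N_{o,c}$ by $G$-conjugacy classes of pairs $(L,\OO_L)$ with $L$ a pseudo-Levi of $G$ and $\OO_L$ a distinguished nilpotent orbit of $\mathfrak l$ \cite[\S6]{Sommers2001}.

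First I would set up the map itself. Given $\mathcal O_K\in\mathcal N_o^{\bfG}(K)$, DeBacker's theory---after the choices of $x_0$ and $\bfT$---attaches to $\mathcal O_K$ a face $c$ of the apartment $\mathcal A(\bfT,K)$ together with a nilpotent orbit of the reductive quotient $\bfL_c$; the advantage of working over $K$ rather than $\sfk$ is that, $\overline{\mathbb F}_q$ being algebraically closed, this orbit may be taken \emph{distinguished} in $\bfL_c$. Applying $\Theta_c$ and the identification $\Xi(c)=(L,tZ_L^\circ)$ of the previous subsection, under which $\bfL_c$ matches the standard pseudo-Levi $L\subset G$, one obtains a pair $(L,\OO_L)$ with $\OO_L$ distinguished in $\mathfrak l$, well defined up to $G$-conjugacy; the Bala-Carter-Sommers theorem then produces the element $(\OO,C)\in\mathcal N_{o,c}$, which is $\theta_{x_0,\bfT}(\mathcal O_K)$.

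Next I would establish bijectivity by matching the two relevant equivalence relations: the relation in DeBacker's parametrisation (telling when $(c,\OO)$ and $(c',\OO')$ determine the same $\bfG(K)$-orbit) should correspond, via $\Xi$ and $\Theta_\bullet$, to $G$-conjugacy of the associated pseudo-Levi/distinguished-orbit pairs. This comes down to comparing the description of the fibres of $\Xi$ (faces with equal affine span modulo $X_*(\bfT,\sfk)$, up to $W$) with the conjugacy separation built into DeBacker's relation. Naturality in $\bfT$ and equivariance in $x_0$ would then follow from functoriality of the ingredients---DeBacker's construction is $\bfG(K)$-equivariant, $\Theta_c$ and $\Xi$ are defined through root-datum identifications, and the Sommers parametrisation is canonical---together with the transitivity of $\bfG(K)$ on the relevant building data, so that changing $\bfT$ or $x_0$ only alters the intermediate pair by a conjugation absorbed into $G$-conjugacy in the target.

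For the commuting square, I would argue that base change to $\bar k$ corresponds, on building data, to forgetting the $\mathbb F_q$-structure: the distinguished element remains distinguished in $\mathfrak l(\bar k)$, so its $\bfG(\bar k)$-saturation is the induced orbit $\mathrm{Ind}_{\mathfrak l}^{\mathfrak g}\OO_L$, which under the Pommerening identification of Lemma \ref{lem:Noalgclosed}---compatible with Bala-Carter induction---is the complex orbit $\OO$; since $\pr_1(\OO,C)=\OO$ by the construction of the Sommers parametrisation from $(L,\OO_L)$, the square commutes with $\Lambda^{\bar k}$ the Pommerening map. The step I expect to be the main obstacle is the matching of equivalence relations: tracking the extended affine Weyl group action on faces against the $N_G(L)$-action on $(L,\OO_L)$, and checking that the component-group label $C\in A(\OO)$ is produced correctly, is exactly where the residue-characteristic hypotheses and the precise normalisations of $\Theta_c$ and $\Xi$ enter.
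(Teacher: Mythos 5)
This statement is imported by citation---the paper gives no proof of Theorem \ref{lem:paramNoK}, deferring entirely to \cite[Theorems 2.20, 2.27, Proposition 2.29]{okada2021wavefront}---so there is no in-paper argument to compare against. That said, your outline does track the construction that the cited work (and the surrounding machinery here) actually uses: the map is built from a DeBacker-type parametrisation of $\mathcal N_o^{\bfG}(K)$ by pairs (face, distinguished orbit of the reductive quotient), transported through $\Xi$ and $\Theta_c$ to pseudo-Levi data, and then fed into the Sommers parametrisation of $\mathcal N_{o,c}$; this is visible in the paper's own maps $\mathscr L$, $\mathbb L$ and in Proposition \ref{prop:square}, which records exactly the compatibility $\theta_{x_0,\bfT}\circ\mathscr L=\mathbb L$ that your plan relies on. The caveat is that your proposal is a plan rather than a proof: the two points you flag as ``the main obstacle''---matching DeBacker's equivalence relation on pairs $(c,\OO)$ with $G$-conjugacy of $(L,\OO_L)$, and verifying that the label $C\in A(\OO)$ produced via $tZ_G^\circ(x)$ is the correct one---are precisely the content of the cited theorems, and you do not carry them out. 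You also do not address where the hypotheses on residue characteristic enter (DeBacker's theory and Lemma \ref{lem:Noalgclosed} both require good/large residue characteristic), nor the commutativity of $\Lambda^{\bar k}$ with Bala--Carter induction, which you assert without argument. So the route is the right one, but as written it is a faithful reconstruction of the statement's architecture, not a proof.
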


The composition 
$$d_{S,\bfT}:= d_S\circ \theta_{x_0,\bfT}:\mathcal N_o(K) \to \mathcal N^\vee_o$$
is independent of the choice of $x_0$ and natural in $\bfT$ \cite[Proposition 2.32]{okada2021wavefront}.
For $\OO_1,\OO_2\in \mathcal N_o(K)$ define $\OO_1\le_A\OO_2$ by
$$\OO_1\le_A \OO_2 \iff \mathcal N_o(\bar k/K)(\OO_1) \le \mathcal N_o(\bar k/K)(\OO_2),\text{ and } d_{S,\bfT}(\OO_1)\ge d_{S,\bfT_2}(\OO_2)$$
and let $\sim_A$ denote the equivalence classes of this pre-order.
This pre-order is independent of the choice of $\bfT$ and by Theorem \ref{lem:paramNoK}, the map 
$$\theta_{x_0,\bfT}:(\mathcal N_o(K),\le_A) \to (\mathcal N_{o,c},\le_A)$$
is an isomorphism of pre-orders.
\begin{theorem}
    \label{thm:unramclasses}
    The composition $\mf Q\circ \theta_{x_0,\bfT}:\mathcal N_o(K)\to \mathcal N_{o,\bar c}$ descends to a (natural in $\bfT$) bijection 
    $$\bar\theta_{\bfT}:\mathcal N_o(K)/\sim_A\to \mathcal N_{o,\bar c}$$
    which does not depend on $x_0$.
\end{theorem}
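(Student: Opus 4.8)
The plan is to deduce the statement formally from Theorem~\ref{lem:paramNoK} together with Achar's description of the $\sim_A$-classes. The first step is to record a convenient reformulation of \cite[Theorem 1]{Acharduality}. Unwinding the definition of $\le_A$ on $\mathcal N_{o,c}$, one sees that $(\OO_1,C_1)\sim_A(\OO_2,C_2)$ if and only if $\OO_1=\OO_2$ and $d_S(\OO_1,C_1)=d_S(\OO_2,C_2)$, i.e.\ the $\sim_A$-classes in $\mathcal N_{o,c}$ are exactly the fibres of $(\pr_1,d_S)\colon\mathcal N_{o,c}\to\mathcal N_o\times\mathcal N_o^\vee$. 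Since by \cite[Theorem 1]{Acharduality} they are also exactly the fibres of $\mf Q$, the map $\mf Q$ identifies $\mathcal N_{o,\bar c}$ with the quotient $\mathcal N_{o,c}/\!\sim_A$, and $(\pr_1,d_S)$ descends to an injection $(\pr_1,\bar d_S)\colon\mathcal N_{o,\bar c}\hookrightarrow\mathcal N_o\times\mathcal N_o^\vee$ with $d_S=\bar d_S\circ\mf Q$ (here $\pr_1\circ\mf Q=\pr_1$ is automatic). This will be used twice below.

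Next, as noted above, the map $\theta_{x_0,\bfT}\colon(\mathcal N_o(K),\le_A)\xrightarrow{\sim}(\mathcal N_{o,c},\le_A)$ is an isomorphism of pre-orders, an immediate consequence of Theorem~\ref{lem:paramNoK}, so it carries $\sim_A$-classes bijectively onto $\sim_A$-classes. Composing with the bijection $\mathcal N_{o,c}/\!\sim_A\xrightarrow{\sim}\mathcal N_{o,\bar c}$ from the previous step shows that $\mf Q\circ\theta_{x_0,\bfT}$ is constant on $\sim_A$-classes and that the induced map $\bar\theta_\bfT\colon\mathcal N_o(K)/\!\sim_A\to\mathcal N_{o,\bar c}$ is a bijection. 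Naturality in $\bfT$ is then immediate, since $\theta_{x_0,\bfT}$ is natural in $\bfT$ by Theorem~\ref{lem:paramNoK} and $\mf Q$ is canonical.

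Finally, for independence of $x_0$: although $\theta_{x_0,\bfT}$ itself depends on $x_0$, both coordinates of $\mf Q\circ\theta_{x_0,\bfT}$ under the injection $(\pr_1,\bar d_S)$ do not. Indeed the commuting square in Theorem~\ref{lem:paramNoK} gives $\pr_1\circ\theta_{x_0,\bfT}=\Lambda^{\bar k}\circ\mathcal N_o(\bar k/K)$, which does not involve $x_0$, while $\bar d_S\circ\mf Q\circ\theta_{x_0,\bfT}=d_S\circ\theta_{x_0,\bfT}=d_{S,\bfT}$ is independent of $x_0$ by \cite[Proposition 2.32]{okada2021wavefront}. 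Since $(\pr_1,\bar d_S)$ is injective, $\mf Q\circ\theta_{x_0,\bfT}$, and hence $\bar\theta_\bfT$, is independent of $x_0$. I do not anticipate a serious obstacle here: the proof is essentially bookkeeping combining Theorem~\ref{lem:paramNoK} with \cite[Theorem 1]{Acharduality}. The one point needing a little care is that the factorization of $d_S$ through $\mf Q$ uses the full strength of \cite[Theorem 1]{Acharduality} (that the $\sim_A$-classes \emph{coincide} with the $\mf Q$-fibres, not merely that $\mf Q$ separates them), and it is precisely this factorization that makes the $x_0$-independence transparent.
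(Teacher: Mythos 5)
Your proof is correct and follows exactly the route the paper intends: the paper states this theorem without an explicit proof, having already laid out the two key ingredients in the surrounding text, namely that the $\sim_A$-classes in $\mathcal N_{o,c}$ coincide with the fibres of $\mf Q$ (Achar's Theorem 1) and that $\theta_{x_0,\bfT}$ is an isomorphism of pre-orders, with $x_0$-independence coming from the commuting square of Theorem \ref{lem:paramNoK} and the $x_0$-independence of $d_{S,\bfT}$. Your write-up is a faithful and correctly detailed assembly of precisely these pieces.
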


By the construction of these maps, we have the following commutative diagram
\begin{equation}
    \label{eq:square}
    \begin{tikzcd}
        \mathcal N_o(K) \arrow[r,"{[\bullet]}"] \arrow[d,"\theta_{x_0,\bfT}",swap] & \mathcal N_0(K)/\sim_A \arrow[d,"\bar\theta_{\bfT}"] \\
        \mathcal N_{o,c} \arrow[r,"\mf Q"] & \mathcal N_{o,\bar c}.
    \end{tikzcd}
\end{equation}

Define 
\begin{equation}
    \mathcal I_o = \{(c,\OO) \mid c\subset \mathcal B(\bfG),\OO\in\cN_o^{\bfL_c}(\overline{\mathbb F}_q)\}.
\end{equation}
There is a partial order on $\mathcal{I}_o$, defined by
$$(c_1,\OO_1)\le(c_2,\OO_2) \iff c_1=c_2 \text{ and } \OO_1\le\OO_2$$
In \cite[Section 4]{okada2021wavefront} the third author defines a strictly increasing surjective map 
$$\mathscr L:(\mathcal I_o,\le)\to(\mathcal N_o(K),\le).$$
The composition $[\bullet]\circ \mathscr L:(\mathcal I_o,\le)\to (\mathcal N_o(K)/\sim_A,\le_A)$ is also strictly increasing \cite[Corollary 4.7,Lemma 5.3]{okada2021wavefront}.

%Since $\{J\subsetneq\tilde\Delta\}\xrightarrow{\sim}\{c_J:J\subsetneq\tilde\Delta\}\hookrightarrow\{\text{faces of }\mathcal A\}$ induces a bijection on $\widetilde W$-orbits the composition of this injection with $\Xi$ descends to a bijection
%\begin{equation}
%    \Xi_{\tilde\Delta}: \{J:J\subsetneq\tilde\Delta\}/\widetilde W\xrightarrow{\sim} \{(L,tZ_L^\circ):L\text{ a standard pseudo-Levi}, C_{G}^\circ(tZ_L^\circ)=L\}/W.
%\end{equation}
Write $L_c$ for $\pr_1\circ\Xi(c)$.
Define 
\begin{align}
    \mathcal{I}_{\tilde\Delta}&=\{(J,\OO) \mid J\subsetneq\tilde\Delta, \ \OO\in\mathcal{N}_o^{\bfL_{c(J)}}(\overline{\mathbb F}_q)\}, \\
    \mathcal{K}_{\tilde\Delta}&=\{(J,\OO) \mid J\subsetneq\tilde\Delta, \ \OO\in\mathcal{N}_o^{L_{c(J)}}(\CC)\}.
\end{align}
Then $\mathcal I_{\tilde\Delta}\xrightarrow{\sim}\mathcal K_{\tilde\Delta}$ via $(J,\OO)\mapsto(J,\Theta_{c(J)}(\OO))$.
Define
\begin{equation}
    \mathbb L:\mathcal K_{\tilde\Delta} \to \mathcal N_{c,o}
\end{equation}
to be the map that sends $(J,\OO)$ to $(Gx,tZ_{G}^\circ(x))$ where $x\in\OO$ and $(L,tZ_{L}^\circ) = \Xi(c(J))$.
Define $\overline{\mathbb L}=\mf Q\circ \mathbb L$.
\begin{prop}
    \label{prop:square}
    The diagram 
    \begin{equation}
        \begin{tikzcd}
            \mathcal I_{\tilde\Delta} \arrow[r,"\sim"] \arrow[d,"\mathscr L"] & \mathcal K_{\tilde\Delta}  \arrow[d,"\mathbb L"] \\
            \mathcal N_o(K) \arrow[r,"\theta_{x_0,\bfT}"] & \mathcal N_{o,c}
        \end{tikzcd}
    \end{equation}
    commutes.
\end{prop}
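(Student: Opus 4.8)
The plan is to show that both composites $\mathscr{L}$-then-$\theta_{x_0,\bfT}$ and (the isomorphism $\mathcal{I}_{\tilde\Delta}\xrightarrow{\sim}\mathcal{K}_{\tilde\Delta}$)-then-$\mathbb{L}$ unwind to the same explicit recipe, tracing each through the local constructions of \cite{okada2021wavefront}. Fix $(J,\OO)\in\mathcal{I}_{\tilde\Delta}$ and write $c=c(J)$, $\Xi(c)=(L,tZ_L^\circ)$, and $x\in\Theta_{c(J)}(\OO)\subset\mathfrak{g}$ under the isomorphism of root data from Remark \ref{rmk:rootdata}. By definition, $\mathbb{L}$ sends $(J,\Theta_{c(J)}(\OO))$ to $(Gx,\,tZ_G^\circ(x))\in\mathcal{N}_{o,c}$. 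On the other side, $\mathscr{L}(J,\OO)\in\mathcal{N}_o(K)$ is (by the construction in \cite[Section 4]{okada2021wavefront}) the unramified nilpotent orbit obtained by saturating the nilpotent orbit $\OO$ of the parahoric reductive quotient $\bfL_c(\overline{\mathbb{F}}_q)$ up to $\bfG(K)$; then $\theta_{x_0,\bfT}$ is precisely the parametrisation of $\mathcal{N}_o^{\bfG}(K)$ by $\mathcal{N}_{o,c}$ recalled in Theorem \ref{lem:paramNoK}.

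First I would recall precisely how $\theta_{x_0,\bfT}$ acts on an unramified orbit produced from parahoric data: in \cite[Section 2.1.5]{okada2021wavefront}, an unramified nilpotent orbit is encoded by a pair consisting of a face $c$ and a nilpotent $\bfL_c(\overline{\mathbb{F}}_q)$-orbit, and $\theta_{x_0,\bfT}$ reads off the associated complex orbit together with a conjugacy class in its component group, the latter coming from the torsion point $tZ_L^\circ$ that $\Xi$ attaches to $c$. The content of the proposition is then that this matches $\mathbb{L}$, which by construction does exactly this: saturate the $L$-orbit $\Theta_{c(J)}(\OO)$ to a $G$-orbit $Gx$, and record the class of $tZ_G^\circ(x)$ in $A(Gx)$. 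So the proof is really a bookkeeping argument: both maps produce $(Gx,\,\text{class of }tZ_G^\circ(x))$, provided the identifications of root data (Remark \ref{rmk:rootdata}, Lemma \ref{lem:Noalgclosed}, and $\Theta_{c(J)}$) are compatible, which they are by naturality.

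Concretely the steps are: (1) reduce to the subcategory of faces $c=c(J)$ with $J\subsetneq\tilde\Delta$, using the $W$-equivariance in the lemma of \cite[Proposition 4.17]{okada2021wavefront} and the equivariance of $\theta_{x_0,\bfT}$ in $x_0$ and naturality in $\bfT$ from Theorem \ref{lem:paramNoK} (this lets us work in a fixed apartment with a fixed special point $x_0$); (2) compute $\theta_{x_0,\bfT}\circ\mathscr{L}(J,\OO)$ by unwinding the definitions of $\mathscr{L}$ and $\theta_{x_0,\bfT}$ from \cite[Sections 2 and 4]{okada2021wavefront}, obtaining $(Gx,\bar{C})$ where $x$ and $tZ_L^\circ$ are as above; (3) compute $\mathbb{L}\circ(\text{iso})(J,\OO)$ directly from the definition, obtaining $(Gx,tZ_G^\circ(x))$; (4) check these agree by identifying the conjugacy class $\bar{C}$ attached in step (2) with the image of $t$ in $A(Gx)=A_{G}(x)$, which amounts to checking that the ``torsion point'' data used in the construction of $\theta_{x_0,\bfT}$ is precisely the $Z_L^\circ$-coset $tZ_L^\circ$ recorded by $\Xi$ — and this is exactly what \cite[Theorem 2.20]{okada2021wavefront} and \cite[Proposition 4.17]{okada2021wavefront} were set up to say.

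The main obstacle is step (4): keeping the various fundamental-group and component-group identifications straight. One must verify that the class in $A_G(x)$ of the semisimple element $t$ (equivalently, of the coset $tZ_L^\circ$, since $Z_L^\circ$ is connected and lies in $G(x)^\circ$ once $x$ is suitably chosen in $\mathfrak{l}$) is genuinely the conjugacy class that $\theta_{x_0,\bfT}$ assigns — and that this is independent of the auxiliary choices (the representative $x$ in its orbit, the special point $x_0$, and the torus $\bfT$) used along the way. All the needed independence statements are available: $\theta_{x_0,\bfT}$ is equivariant in $x_0$ and natural in $\bfT$ by Theorem \ref{lem:paramNoK}, and $\Xi$ is $W$-equivariant by \cite[Proposition 4.17]{okada2021wavefront}. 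So the proof reduces to carefully assembling these compatibilities; there is no new geometric input, only a diagram chase through the constructions of \cite{okada2021wavefront}.
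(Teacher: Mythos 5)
Your overall strategy (unwind both composites through the constructions of \cite{okada2021wavefront} and compare) is in the right spirit, but there is a genuine gap at the crucial point. You assert that $\theta_{x_0,\bfT}$ ``reads off'' the pair $(Gx,\,\text{class of }tZ_G^\circ(x))$ from \emph{any} encoding pair $(c,\OO)$ of an unramified orbit. But $\theta_{x_0,\bfT}$ is constructed in \cite[Section 4]{okada2021wavefront} via pairs in which the orbit is \emph{distinguished} in the reductive quotient; for a general $(J,\OO)\in\mathcal I_{\tilde\Delta}$ the value $\theta_{x_0,\bfT}(\mathscr L(c(J),\OO))$ is not given by your recipe by definition, and the assertion that the recipe is independent of which pair in the fiber of $\mathscr L$ one uses is essentially the statement being proved. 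The independence statements you invoke (equivariance in $x_0$, naturality in $\bfT$, $W$-equivariance of $\Xi$) do not address this: the relevant variation is across \emph{different faces} $c_1\subseteq c_2$ of a fixed apartment carrying the same nilpotent element, not across choices of $x_0$ or $\bfT$.

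The missing ingredient is the following reduction, which is the actual content of the paper's proof. If $J_1\subseteq J_2$ and $\OO_2$ is the saturation of $\OO_1$ in $\bfL_{c(J_2)}$, then $\mathscr L(c(J_1),\OO_1)=\mathscr L(c(J_2),\OO_2)$ by construction, and one checks that also $\mathbb L(J_1,\OO_1')=\mathbb L(J_2,\OO_2')$ for the corresponding complex orbits: writing $\Xi(c(J_i))=(L_i,t_iZ_{L_i}^\circ)$, the definition of $\Xi$ gives $t_1Z_{L_1}^\circ\mapsto t_2Z_{L_2}^\circ$ under the natural map $Z_{L_1}/Z_{L_1}^\circ\to Z_{L_2}/Z_{L_2}^\circ$, and since $Z_{L_2}^\circ\subseteq Z_{L_1}^\circ\subseteq Z_G^\circ(x)$ one gets $t_1Z_G^\circ(x)=t_2Z_G^\circ(x)$. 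This shows both composites are constant along saturation moves and reduces the proposition to the case where $\OO$ is distinguished, where the claim does hold by the construction of $\theta_{x_0,\bfT}$. Without this reduction (or an equivalent argument handling non-distinguished $\OO$), step (2) of your plan is not a computation but a restatement of the claim.
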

\begin{proof}
    It is a straightforward observation from the definition of $\Xi$ in \cite[Theorem 4.16]{okada2021wavefront} that if $c_1\subseteq c_2$ are faces of $\mathcal A$, and $(L_i,t_iZ_{L_i}^\circ) = \Xi(c_i)$ $i=1,2$, then under the natural map $Z_{L_1}/Z_{L_1}^\circ\to Z_{L_2}/Z_{L_2}^\circ$, we have that $t_1Z_{L_1}^\circ\mapsto t_2Z_{L_2}^\circ$.
    Now suppose $(J_1,\OO_1),(J_2,\OO_2)\in \mathcal I_{\tilde\Delta}$ are such that $J_1\subseteq J_2$ and the saturation of $\OO_1$ in $\bfL_{c(J_2)}$ (which contains $\bfL_{c(J_2)}$ as a Levi) is $\OO_2$.
    Let $(L_i,t_iZ_{L_i}^\circ) = \Xi(c(J_i))$ and $\OO'_i = \Theta_{c(J_i)}(\OO_i)$ for $i=1,2$.
    Then the saturation of $\OO_1'$ in $L_2$ is $\OO_2'$.
    Let $x\in \OO_1'\subseteq \OO_2'$.
    We have that $\mathbb L(J_1,\OO_1') = (Gx,t_1Z_{G}^\circ(x))$ and $\mathbb L(J_2,\OO_2') = (Gx,t_2Z_G^\circ(x))$.
    But since $Z_{L_2}^\circ\subseteq Z_{L_1}^\circ \subseteq Z_G^\circ(x)$, we have that $t_1Z_G^\circ(x) = t_2Z_G^\circ(x)$.
    Therefore $\mathbb L(J_1,\OO_1') = \mathbb L(J_2,\OO_2')$.
    Since also $\mathscr L(c(J_1),\OO_1) = \mathscr L(c(J_2),\OO_2)$ we can reduce to checking that the diagram commutes for pairs $(J,\OO)$ where $\OO$ is distinguished.
    But the distinguished case follows by construction of the map $\theta_{x_0,\bfT}$ in \cite[Section 4]{okada2021wavefront}.
\end{proof}

\subsection{$W$-representations}\label{subsec:Wreps}
Let $\sim$ denote the equivalence relation on $\mathrm{Irr}(W)$ defined by Lusztig in \cite[Section 4.2]{Lusztig1984}.
The relation $\sim$ partitions $\mathrm{Irr}(W)$ into subsets called \emph{families}.
Each family contains a unique special representation which we denote by $E(\phi)$.
For $E\in\mathrm{Irr}(W)$ we write $\phi(E)$ for the family containing $E\otimes\mathrm{sgn}$.
Let
\begin{equation}\label{eq:Springer}
    \mathrm{Springer}:\mathrm{Irr}(W)\hookrightarrow \{(\OO,\rho) \mid \OO\in\mathcal N_o,\ \rho\in\mathrm{Irr}(A(\OO))\}.
\end{equation}
be the \emph{Springer correspondence}. Define the subset
\begin{equation}\label{eq:IrrA0}\mathrm{Irr}(A(\OO))_0 \subset \mathrm{Irr}(A(\OO))\end{equation}
as in (\ref{eq:defofIrr0}) (taking $S=\{n\}$, where $n \in \OO$). Then the image of the map (\ref{eq:Springer}) is the subset
$$\{(\OO,\rho) \mid \OO \in \cN_o, \ \rho \in \mathrm{Irr}(A(\OO))_0\}.$$
If $\mathrm{Springer}(E) = (\OO,\rho)$, we write $E(\OO,\rho):=E$ and $\OO(E,\CC) :=\OO$. We call $\OO(E,\CC)$ the \emph{Springer support} of $E$ (with respect to $G$). 
We write $\OO^\vee(E,\CC)$ for the Springer support of $E$ with respect to $G^\vee$, where we view $E$ as a representation of $W^\vee$ via the canonical isomorphism $W\xrightarrow{\sim} W^\vee$.
It will always be clear from context which reductive group (up to root datum) we are working with, but occasionally we will have two reductive groups with isomorphic root data, but defined over different fields (e.g. $\bfL_c$ and $L_c$) and so we include the base field in the second argument to distinguish between the two cases.
Thus if $E\in\mathrm{Irr}(W_c)$ then we write $\OO(E,\CC)$ for the Springer support with respect to $L_c$ and $\OO(E,\overline{\mathbb F}_q)$ for the Springer support with respect to $\bfL_c(\overline{\mathbb F}_q)$.

For each $J \subsetneq \widetilde{\Delta}$, write $W_J \subset W$ for the subgroup generated by the simple reflections $\dot J$.
Note that $W_J = \dot W_{c(J)}$, the Weyl group of $L_{c(J)}$ and $\bfL_{c(J)}$. Define the set
\begin{equation}
    \mathcal F_{\tilde\Delta}=\{(J,\phi) \mid J\subsetneq\tilde\Delta, \ \phi \text{ a family of $W_J$}\}.
\end{equation}
For $J\subsetneq\tilde\Delta$ and a family $\phi$ of $W_J$ write $\OO(\phi,\bullet)$ for $\OO(E(\phi),\bullet)$, where $\bullet\in\{\overline{\mathbb F}_q,\CC\}$.
Clearly $\Theta_{c(J)}(\OO(\phi,\overline{\mathbb F}_q)) = \OO(\phi,\CC)$.

\subsection{Representations with unipotent cuspidal support}\label{s:unip-cusp}

For every face $c \subset \mathcal{B}(\mathbf{G})$, we get
a parahoric subgroup $\mathbf{P}_c(\mf o) \subset \mathbf{G}(\mf o)$ with pro-unipotent radical $\bfU_c(\mf o)$ and reductive quotient $\bfL_c(\mathbb F_q)$ (see the paragraph preceding (\ref{eq:parahoricses}). If $X$ is a smooth admissible $\mathbf{G}(\sfk)$-representation, the space of invariants $X^{\bfU_c(\mf o)}$ is a finite-dimensional $\bfL_c(\mathbb F_q)$-representation.

\begin{definition} Let $X$ be an irreducible $\mathbf{G}(\mathsf{k})$-representation. We say that $X$ has \emph{unipotent cuspidal support} if there is a parahoric subgroup $\bfP_c \subset \bfG$ such that $X^{\mathbf U_c(\mf o)}$ contains an irreducible Deligne-Lusztig cuspidal unipotent representation of ${\mathbf L}_c(\mathbb F_q)$. Write $\Pi^{\mathsf{Lus}}(\bfG(\mathsf k))$ for the subset of $\Pi(\bfG(\mathsf k))$ consisting of all such representations.
\end{definition}

Recall that an irreducible $\mathbf{G}(\mathsf{k})$-representation $V$ is \emph{Iwahori-spherical} if $V^{\mathbf{I}(\mathfrak {o})} \neq 0$ for an Iwahori subgroup $\bfI$ of $\bfG$. We note that all such representations have unipotent cuspidal support, corresponding to the case $\mathbf P_c=\mathbf I$ and the trivial representation of $\mathbf T(\mathbb F_q)$.

We will now recall the classification of irreducible representations of unipotent cuspidal support. Write $\Phi(\bfG(\mathsf k))$ for the set of $G^\vee$-orbits (under conjugation) of triples $(s,n,\rho)$ such that
\begin{itemize}
    \item $s\in G^\vee$ is semisimple,
    \item $n\in \mathfrak g^\vee$ such that $\operatorname{Ad}(s) n=q n$,
    \item $\rho\in \mathrm{Irr}(A_{G^{\vee}}(s,n))$ such that $\rho|_{Z(G^\vee)}$ is a multiple of the identity.
\end{itemize}
Without loss of generality, we may assume that $s\in T^\vee$. Note that $n\in\mathfrak g^\vee$ is necessarily nilpotent. The group $G^\vee(s)$ acts with finitely many orbits on the $q$-eigenspace of $\Ad(s)$
$$\mathfrak g_q^\vee=\{x\in\mathfrak g^\vee\mid \operatorname{Ad}(s) x=qx\}$$
In particular, there is a unique open $G^\vee(s)$-orbit in $\mathfrak g_q^\vee$.

Fix an $\mathfrak{sl}(2)$-triple $\{n^-,h,n\} \subset \fg^{\vee}$ with $h\in \mathfrak t^\vee_{\mathbb R}$ and set
$$s_0:=sq^{-\frac{h}{2}}.$$
Then $\operatorname{Ad}(s_0)n=n$.

The following theorem is a combination of several results: \cite[Theorems 7.12, 8.2, 8.3]{KL} for $\bfG$ adjoint and Iwahori-spherical representations, \cite[Corollary 6.5]{Lu-unip1} and \cite[Theorem 10.5]{Lu-unip2} for $\bfG$ adjoint and representations with unipotent cuspidal support, \cite[Theorem 3.5.4]{Re-isogeny} for $\bfG$ of arbitrary isogeny and Iwahori-spherical representations, and \cite{Sol-LLC} for $\bfG$ of arbitrary isogeny and representations with unipotent cuspidal support. See \cite[\S2.3]{AMSol} for a discussion of the compatibility between these  classifications. Define the subset $\mathrm{Irr}(A(s,n))_0 \subset \mathrm{Irr}(A(s,n))$ as in (\ref{eq:defofIrr0}) (taking $S=\{s,n\}$).

\begin{theorem}[{Deligne-Langlands-Lusztig correspondence}]\label{thm:Langlands} Suppose that $\bfG$ is  $\mathsf k$-split. There is a bijection
$$\Phi(\bfG(\mathsf k))\xrightarrow{\sim} \Pi^{\mathsf{Lus}}(\bfG(\mathsf k)),\qquad (s,n,\rho)\mapsto X(s,n,\rho),$$
such that
\begin{enumerate}
    \item $X(s,n,\rho)$ is tempered if and only if $s_0\in T_c^\vee$ and $\overline {G^\vee(s)n}=\mathfrak g_q^\vee$,
    \item $X(s,n,\rho)$ is square integrable (modulo the center) if and only if it is tempered and $Z_{G^{\vee}}(s,n)$ contains no nontrivial torus.
\item $X(s,n,\rho)^{\mathbf I(\mf o)}\neq 0$ if and only if $\rho\in \mathrm{Irr}(A(s,n))_0$.
\end{enumerate}
\end{theorem}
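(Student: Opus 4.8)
The plan is to assemble Theorem~\ref{thm:Langlands} from the constituent results cited in its statement; since each ingredient is already available in the literature, the real work is to put them into mutually compatible form and to read off the three enumerated properties on the dual side. \textbf{Step 1: the adjoint Iwahori-spherical case.} When $\bfG$ is adjoint, the irreducible Iwahori-spherical $\bfG(\sfk)$-representations are in bijection with the simple modules over the Iwahori--Hecke algebra of $\bfG(\sfk)$, which is a specialization at $q$ of the corresponding generic affine Hecke algebra. By Kazhdan--Lusztig \cite[Theorems 7.12, 8.2, 8.3]{KL}, for $q$ not a root of unity these simple modules are in bijection with $G^\vee$-conjugacy classes of triples $(s,n,\rho)$ with $\Ad(s)n=qn$ and $\rho$ occurring in the top cohomology $H^{\mathrm{top}}(\mathcal B^\vee_{\{s,n\}})$, i.e. $\rho\in\mathrm{Irr}(A(s,n))_0$. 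This establishes the bijection and property~(3) in this case, and properties~(1) and~(2) are the temperedness and square-integrability criteria proved there on the Hecke-algebra side, phrased in terms of ellipticity of $s_0$ and maximality of the orbit $G^\vee(s)n$ in $\mathfrak g^\vee_q$.

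\textbf{Step 2: all representations with unipotent cuspidal support.} For $\bfG$ adjoint, the category of representations with unipotent cuspidal support decomposes into Bernstein blocks indexed by pairs $(\bfL_c,\sigma)$ with $\sigma$ a cuspidal unipotent representation of $\bfL_c(\mathbb F_q)$, and each block is equivalent to the module category of an affine Hecke algebra with (possibly unequal) parameters; by \cite[Corollary 6.5]{Lu-unip1} and \cite[Theorem 10.5]{Lu-unip2} its simple modules are classified geometrically by triples $(s,n,\rho)$ with $\Ad(s)n=qn$, now with $\rho$ ranging over the constituents of $A(s,n)$ appearing in the generalized Springer sheaf attached to $(\bfL_c,\sigma)$. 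The principal block is the trivial cuspidal pair and recovers Step~1; the families of $\rho$ attached to distinct cuspidal pairs are disjoint, so the Iwahori-spherical representations are precisely those in the principal block, hence precisely the $(s,n,\rho)$ with $\rho$ of Springer type --- this is property~(3) in general. \textbf{Step 3: arbitrary isogeny.} For an arbitrary split $\bfG$ one descends along the central isogenies $\bfG_{\mathrm{sc}}\to\bfG\to\bfG_{\mathrm{ad}}$, using \cite[Theorem 3.5.4]{Re-isogeny} for the Iwahori-spherical case and \cite{Sol-LLC} for the general unipotent case; the net effect on the dual side is to impose the condition that $Z(G^\vee)$ act on $\rho$ by a scalar, and the three criteria are inherited through the isogeny.

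\textbf{Step 4: compatibility and conclusion.} It remains to check that the bijections produced in Steps~1--3 agree, which amounts to matching the normalization of $\mathsf{Fr}$, the identification $u\leftrightarrow n$ via $q^{n}=u$, and the Kazhdan--Lusztig versus Lusztig geometric parametrizations; this comparison is carried out in \cite[\S2.3]{AMSol}. Once it is in place, the three properties --- already verified within each of the above frameworks --- transport consistently to the uniform statement, completing the proof.

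\textbf{Main obstacle.} The genuine difficulty is not any isolated argument but the bookkeeping in Steps~2 and~4: the various sources set up the relevant affine Hecke algebras, their parameters, and the geometric bijections with mildly different conventions, and one must track carefully how the conditions $\rho\in\mathrm{Irr}(A(s,n))_0$, $s_0\in T^\vee_c$, and $\overline{G^\vee(s)n}=\mathfrak g^\vee_q$ behave under the comparison isomorphisms --- and, for the non-principal blocks, under the generalized Springer correspondence --- so as to know that they are preserved.
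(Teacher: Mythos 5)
Your proposal is correct and follows essentially the same route as the paper: the paper does not prove this theorem but assembles it from exactly the sources you cite --- \cite[Theorems 7.12, 8.2, 8.3]{KL} for the adjoint Iwahori-spherical case, \cite[Corollary 6.5]{Lu-unip1} and \cite[Theorem 10.5]{Lu-unip2} for adjoint unipotent cuspidal support, \cite[Theorem 3.5.4]{Re-isogeny} and \cite{Sol-LLC} for arbitrary isogeny, with the compatibility of conventions deferred to \cite[\S2.3]{AMSol}. Your step-by-step organization and your identification of the convention-matching in Steps 2 and 4 as the main difficulty accurately reflect how the paper treats this result.
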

 Denote by $\Phi(\bfG(\mathsf k))_0$ the subset of $ \Phi(\bfG(\mathsf k))$ for which $\rho\in \mathrm{Irr}(A(s,n))_0$.
\begin{rmk}
We note that the condition $\overline {G^\vee(s)n}=\mathfrak g_q^\vee$ in (1) is superfluous---it is a consequence of the condition $s_0 \in T_c^{\vee}$, see Lemma \ref{lem:orbitclosure}. It is included in Theorem \ref{thm:Langlands} for expository purposes.
\end{rmk}

\begin{rmk}\label{r:real}
The semisimple parameter $s$ in Theorem \ref{thm:Langlands} plays a similar role in the representation theory of $\mathbf{G}(\sfk)$ as the \emph{infinitesimal character} $\lambda \in \mathfrak{t}^*/W$ of an irreducible $\mathbf{G}(\RR)$-representation. If $X = X(s,n,\rho)$, it is thus customary to call $s$ the \emph{infinitesimal character} of $X$. Pursuing this analogy, we say, following \cite{BM1}, that $s$ is \emph{real} (or that $X$ has \emph{real infinitesimal character}) if $s \in T_{\RR}^{\vee}$, see (\ref{eq:real}). 

\end{rmk}

For $s\in W\backslash T^\vee$, write
\begin{equation}\label{e:inf-char-packet}
\begin{aligned}
    \Pi^{\mathsf{Lus}}_{s}(\bfG(\mathsf k)) &:= \{X(s,n,\rho) \mid (s,n,\rho) \in \Phi(\mathbf{G}(\mathsf{k})) \} \subset \Pi^{\mathsf{Lus}}(\mathbf{G}(\mathsf{k})),\\
     \Pi^{\mathsf{Lus}}_{s}(\bfG(\mathsf k))_0 &:= \{X(s,n,\rho)\in  \Pi^{\mathsf{Lus}}_{s}(\bfG(\mathsf k)) \mid \rho\in \mathrm{Irr}(A(s,n))_0\}.
    \end{aligned}
\end{equation}
By Theorem \ref{thm:Langlands}, there is a bijection between $\Pi^{\mathsf{Lus}}_{s}(\bfG(\mathsf k))$ and pairs $(\mathcal C,\mathcal E)$, where $\mathcal C$ ranges over the finite set of $G^\vee(s)$-orbits on $\mathfrak g^\vee_q$, and $\mathcal E$ is an irreducible $G^\vee(s)$-equivariant local system on $\mathcal C$ with trivial $Z(G^\vee)$-action.

%Let $\bfI$ be an Iwahori subgroup of $\bfG$.
Recall the \emph{Iwahori-Hecke algebra} associated to $\mathbf{G}(\sfk)$
$$\mathcal H_{\mathbf{I}}=\{f\in C^\infty_c(\bfG(\mathsf k))\mid f(i_1gi_2)=f(g),\ i_1,i_2\in \mathbf I(\mf o)\}.$$
Multiplication in $\mathcal H_{\mathbf{I}}$ is given by convolution with respect to a fixed Haar measure of $\bfG(\mathsf k)$. Let $\mathcal C_{\mathbf{I}}(\bfG(\mathsf k))$ denote the Iwahori category, i.e. the full subcategory of $\mathcal C(\bfG(\mathsf k))$ consisting of representations $X$ such that $X$ is generated by $X^{\mathbf I(\mf o)}$. The simple objects in this category are the (irreducible) Iwahori-spherical representations. By the Borel-Casselman Theorem \cite[Corollary 4.11]{Bo}, there is an exact equivalence of categories
\begin{equation}\label{eq:mI}
    m_{\mathbf{I}}: \mathcal C_{\mathbf{I}}(\bfG(\mathsf k))\to \mathrm{Mod}(H_{\mathbf{I}}), \qquad m_{\mathbf{I}}(V) = V^{\mathbf I(\mf o)}.
\end{equation}
This equivalence induces a group isomorphism
\begin{equation}\label{eq:mIhom}
m_{\mathbf{I}}: R_{\mathbf{I}}(\mathbf{G}(\sfk)) \xrightarrow{\sim}  R(\mathcal{H}_{\mathbf{I}}),
\end{equation}
where $R_{\mathbf{I}}(\mathbf{G}(\sfk))$ (resp. $R(\mathcal{H}_{\mathbf{I}})$) is the Grothendieck group of $\mathcal{C}_{\mathbf{I}}(\mathbf{G}(\sfk))$ (resp. $\mathrm{Mod}(\mathcal{H}_{\mathbf{I}})$). The irreducible representations occurring in the sets  $\Pi^{\mathsf{Lus}}_{s}(\bfG(\mathsf k))_0$ are precisely the irreducible objects in $ \mathcal C_{\mathbf{I}}(\bfG(\mathsf k))$.

We note that the classification of $\bfG(\mathsf k)$-representations with real infinitesimal character is invariant under isogenies. More precisely, let \[f:\bfG'\to\bfG\] be an isogeny. Then $f$ induces an isogeny $T^\vee\to T'^\vee$ and hence a map $W\backslash T^\vee\to W\backslash T'^\vee$, see \cite[\S 5.3]{Re-euler}. Suppose $\tilde s\in W\backslash T'^\vee_{\mathbb R}$ and $s\in  W\backslash T^\vee_{\mathbb R}$ correspond. We may identify $\mathfrak g^\vee= \mathfrak g'^\vee$ and $ \mathfrak g^\vee_q= \mathfrak g'^\vee_q$. Let $n\in \mathfrak g^\vee_q$ and $x\in \mathfrak t^\vee_{\mathbb R}$, $[x,n]=n$, such that $s$ (resp. $\tilde s$) equals the $q$-exponential of $x$ in $G^\vee$ (resp. $G'^\vee$). Then $\mathcal B^\vee_{s,n}=\mathcal B'^\vee_{\tilde s,n}=\mathcal B'^\vee_{x,n}.$ Since the center of $G^\vee$ (resp. $G'^\vee$) acts trivially on this variety, it follows that $\mathrm{Irr}(A(s,n))_0=\mathrm{Irr}(A(\tilde s,n))_0$. Hence there is a bijection
\begin{equation}\label{e:real-identify}
\Pi^{\mathsf{Lus}}_{\tilde s}(\bfG'(\mathsf k))_0\cong  \Pi^{\mathsf{Lus}}_{s}(\bfG(\mathsf k))_0, \quad X'(\tilde s,n,\rho)\leftrightarrow X(s,n,\rho).
\end{equation}
More precisely, in terms of the underlying representation theory, let $H_{\mathbf{I}}$ and $H_{\mathbf{I}}'$ be the Iwahori-Hecke algebras of $\bfG$ and $\bfG'$, respectively. The isogeny $f$ induces an embedding of Iwahori-Hecke algebras \cite[\S1.4,1.5]{Re-isogeny}
\[f_*: H_{\mathbf{I}}'\hookrightarrow H_{\mathbf{I}}.
\]

\begin{prop}[{\cite[Lemma 5.3.1]{Re-euler}}]\label{p:real-identify} With the notation above, the inclusion $f_*$ induces a bijection $\Pi^{\mathsf{Lus}}_{s}(\bfG(\mathsf k))_0\cong  \Pi^{\mathsf{Lus}}_{\tilde s}(\bfG'(\mathsf k))_0$: the restriction of the irreducible $H_{\mathbf{I}}$-module $X(s,n,\rho)^{\mathbf{I}(\mathfrak o)}$ to $H_{\mathbf{I}}'$ is irreducible and it equals $X'(\tilde s,n,\rho)$.

\end{prop}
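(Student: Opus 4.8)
The plan is to recognise $f_*$ as the inclusion of the identity component of a grading by a finite abelian group and to run Clifford theory, the hypothesis that $s$ is real entering only through a polar-decomposition argument. Write $\widetilde W=W\ltimes X_*(\bfT,\sfk)$ and $\widetilde W'=W\ltimes X_*(\bfT',\sfk)$ for the extended affine Weyl groups of $\bfG$ and $\bfG'$. The isogeny $f$ induces a finite-index inclusion $X_*(\bfT',\sfk)\hookrightarrow X_*(\bfT,\sfk)$; since $(1-w)X_*(\bfT,\sfk)\subseteq Q^\vee\subseteq X_*(\bfT',\sfk)$ for every $w\in W$ (here $Q^\vee$ is the common coroot lattice), the subgroup $\widetilde W'$ is normal in $\widetilde W$, with $\widetilde W/\widetilde W'\cong \Omega:=X_*(\bfT,\sfk)/X_*(\bfT',\sfk)$, a finite abelian group. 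Passing to length-zero elements exhibits $H_{\mathbf I}$ as a crossed product $H_{\mathbf I}'*\Omega$: it is free of rank $|\Omega|$ over $H_{\mathbf I}'$ on invertible coset representatives normalising $H_{\mathbf I}'$, and $f_*$ identifies $H_{\mathbf I}'$ with the identity-graded piece of the resulting $\Omega$-grading. The Pontryagin dual $\widehat\Omega$ then acts on $H_{\mathbf I}$ by algebra automorphisms $\{\chi_z\}$ that are the identity on $H_{\mathbf I}'$; dualising $0\to X_*(\bfT',\sfk)\to X_*(\bfT,\sfk)\to\Omega\to 0$ and using Langlands duality identifies $\widehat\Omega$ with $Z:=\ker(T^\vee\to T'^\vee)$, which is finite and central in $G^\vee$, and on the Bernstein centre $Z(H_{\mathbf I})=\CC[X_*(\bfT,\sfk)]^W=\CC[T^\vee]^W$ the automorphism $\chi_z$ is translation by $z$, i.e. $[\sigma]\mapsto[z\sigma]$ on $T^\vee/W$.

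The core point is irreducibility of the restriction, and this is the only place reality of $s$ is used. Put $M:=X(s,n,\rho)^{\mathbf I(\mathfrak o)}$. Then $\chi_z\cdot M$ is a simple $H_{\mathbf I}$-module with central character $[zs]$, and for $s\in T^\vee_{\RR}$ these are pairwise non-isomorphic as $z$ ranges over $Z$: if $w(zs)=z's$ with $w\in W$, then---since $W$ acts on $X^*(\bfT,\sfk)$ and hence preserves each factor of the polar decomposition $T^\vee=T^\vee_c\times T^\vee_{\RR}$ of (\ref{eq:real}), while $z,z'$ are torsion, hence lie in $T^\vee_c$, and $s\in T^\vee_{\RR}$---uniqueness of the polar decomposition forces $w(z)=z'$ and $w(s)=s$; but $z\in Z(G^\vee)$ is $W$-fixed, so $z=z'$. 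Thus the $|Z|=|\Omega|$ modules $\chi_z\cdot M$ are pairwise distinct, each of $\CC$-dimension $\dim M$, and each restricts to the common $H_{\mathbf I}'$-module $M':=\mathrm{Res}_{H_{\mathbf I}'}M$, which by Clifford theory is semisimple, say $M'\cong\bigoplus_{i=1}^t N_i^{\oplus m_i}$ with the $N_i$ pairwise non-isomorphic; hence $\dim\mathrm{End}_{H_{\mathbf I}'}(M')=\sum_i m_i^2$. For each $z$, $\mathrm{Hom}_{H_{\mathbf I}}(\mathrm{Ind}_{H_{\mathbf I}'}^{H_{\mathbf I}}M',\chi_z\cdot M)\cong\mathrm{Hom}_{H_{\mathbf I}'}(M',M')\ne0$ by Frobenius reciprocity, so $\mathrm{Ind}_{H_{\mathbf I}'}^{H_{\mathbf I}}M'$ surjects onto each $\chi_z\cdot M$, hence onto $\bigoplus_{z\in Z}\chi_z\cdot M$ (the targets being pairwise non-isomorphic simples); comparing $\CC$-dimensions, $\dim\mathrm{Ind}_{H_{\mathbf I}'}^{H_{\mathbf I}}M'=|\Omega|\dim M'=|\Omega|\dim M=\dim\bigoplus_{z\in Z}\chi_z\cdot M$, so this surjection is an isomorphism. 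Applying $\mathrm{Hom}_{H_{\mathbf I}}(-,M)$ and Frobenius reciprocity once more gives $\sum_i m_i^2=\dim\mathrm{End}_{H_{\mathbf I}'}(M')=\dim\mathrm{Hom}_{H_{\mathbf I}}(\mathrm{Ind}_{H_{\mathbf I}'}^{H_{\mathbf I}}M',M)=1$, so $t=1$, $m_1=1$, and $M'$ is simple.

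It remains to identify $M'$. Its central character is the image of $[s]$ under $Z(H_{\mathbf I})\to Z(H_{\mathbf I}')$, i.e. the class $[\tilde s]$, so $M'\cong X'(\tilde s,n'',\rho'')^{\mathbf I'(\mathfrak o)}$ for some triple. To see that $(n'',\rho'')=(n,\rho)$ I would use the geometric realisation of standard modules: $M$ is the unique irreducible quotient of the standard module attached to $(s,n,\rho)$, built from the variety $\mathcal B^\vee_{s,n}$ and the $A_{G^\vee}(s,n)$-representation $\rho$, and under $f_*$ this restricts to the standard $H_{\mathbf I}'$-module attached to $(\tilde s,n,\rho)$, using the identifications $\mathcal B^\vee_{s,n}=\mathcal B'^\vee_{\tilde s,n}$ and $A_{G^\vee}(s,n)=A_{G'^\vee}(\tilde s,n)$ (whence $\mathrm{Irr}(A(s,n))_0=\mathrm{Irr}(A(\tilde s,n))_0$) recorded in the discussion preceding (\ref{e:real-identify}), together with Reeder's compatibility of $f_*$ with this construction \cite{Re-isogeny,Re-euler}. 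Since $M'$ is simple, it is then the unique irreducible quotient $X'(\tilde s,n,\rho)^{\mathbf I'(\mathfrak o)}$. Finally, $X(s,n,\rho)\mapsto X'(\tilde s,n,\rho)$ is injective---if $\mathrm{Res}_{H_{\mathbf I}'}M_1\cong\mathrm{Res}_{H_{\mathbf I}'}M_2$ then by Clifford theory $M_2$ lies in the $\widehat\Omega$-orbit of $M_1$, hence has central character $[zs]$ for some $z\in Z$, forcing $z=1$ and $M_1=M_2$---and hence, comparing cardinalities via the bijection (\ref{e:real-identify}), it is a bijection $\Pi^{\mathsf{Lus}}_{s}(\bfG(\sfk))_0\cong\Pi^{\mathsf{Lus}}_{\tilde s}(\bfG'(\sfk))_0$.

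I expect the last step to be the main obstacle: the dimension count delivers irreducibility of the restriction almost for free once the central twists $[zs]$ are separated, but matching the restricted module to the triple $(n,\rho)$---rather than merely to some triple with infinitesimal character $\tilde s$---requires the compatibility of $f_*$ with the geometric parametrisation, where one must invoke Reeder's analysis rather than formal properties of the inclusion $H_{\mathbf I}'\subseteq H_{\mathbf I}$. The crossed product $H_{\mathbf I}'*\Omega$ may carry a nontrivial $2$-cocycle, but this plays no role in the argument, which only uses that $H_{\mathbf I}$ is free of rank $|\Omega|$ over $H_{\mathbf I}'$ on invertible elements and that $|\Omega|$ is invertible in $\CC$. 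It is worth emphasising that reality of $s$ is essential, not merely convenient: it is exactly what forces the twists $[zs]$ to be distinct, consistent with the failure of Theorem \ref{t:main}(1) for non-real $s$ noted in the introduction.
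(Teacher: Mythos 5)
The paper does not actually prove this proposition --- it is quoted verbatim from Reeder \cite[Lemma 5.3.1]{Re-euler} --- so there is no internal proof to compare against; what you have written is a self-contained reconstruction, and as far as I can check it is correct. Your argument is sound at every step: the normality of $\widetilde W'$ in $\widetilde W$ via $(1-w)\CX\subseteq \ZZ\Phi^\vee\subseteq X_*(\bfT',\sfk)$, the resulting $\Omega$-grading of $H_{\mathbf I}$ on invertible length-zero elements, the identification of $\widehat\Omega$ with the central kernel of $T^\vee\to T'^\vee$ acting on $Z(H_{\mathbf I})=\CC[T^\vee]^W$ by translation, and the polar-decomposition argument showing the characters $[zs]$ are pairwise distinct for $s\in T^\vee_{\RR}$ (this is exactly where reality enters, and your remark that it must is consistent with the paper's warning that Theorem \ref{t:main}(1) fails verbatim for non-real $s$). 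The Frobenius-reciprocity dimension count then cleanly yields $\sum_i m_i^2=1$. You are also right to flag the one step that cannot be made formal: identifying the simple restriction with the triple $(\tilde s,n,\rho)$ rather than merely with some module of central character $[\tilde s]$ genuinely requires the compatibility of $f_*$ with the geometric (Kazhdan--Lusztig) construction of standard modules --- note that the paper partly sidesteps this by \emph{defining} $Y'(\tilde s,n,\rho)$ as the restriction of $Y(s,n,\rho)^{\bfI}$, but matching against the independently defined Deligne--Langlands--Lusztig classification for $\bfG'$ still needs Reeder's analysis, as you say. In short: your route is essentially the intended one, and the only external input you retain is the same input the paper itself delegates to \cite{Re-isogeny,Re-euler}.
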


For each parameter $(s,n,\rho) \in \Phi(\mathbf{G}(\mathsf{k}))_0$, there is an associated \emph{standard representation} $Y(s,n,\rho) \in \mathcal{C}(\mathbf{G}(\mathsf{k}))$. %For Iwahori-spherical representations (which is the only case we will consider in this paper)
If $\bfG$ is adjoint, the relevant results are \cite[Theorems 7.12, 8.2, 8.3]{KL}, see also \cite[\S8.1]{Chriss-Ginzburg}.
%and \cite[\S3]{Re-isogeny}. For the more general setting of representations with unipotent cuspidal support, see \cite[\S10]{Lu-gradedII} and \cite[Theorems 1.15, 1.21, 1.22]{Lu-gradedIII} (in the equivalent setting of geometric graded Hecke algebras) and \cite[Theorems 10.4, 10.5]{Lu-unip2}. 
There is an identity in the Grothendieck group $R(\mathbf{G}(\mathsf{k}))$ (see \cite[Theorem 8.6.15]{Chriss-Ginzburg} or \cite[Proposition 10.5]{Lu-gradedII}):
\begin{equation}\label{e:multi}
    Y(s,n,\rho)=X(s,n,\rho)+\sum_{(n',\rho')} m_{(n,\rho),(n',\rho')} X(s,n',\rho'),
\end{equation}
where $m_{(n,\rho),(n',\rho')}\in \mathbb Z_{\ge 0}$ and $n'$ ranges over the set of representatives of $G^\vee(s)$-orbits in $\mathfrak g^\vee_q$ such that $n\in \partial(G^\vee(s)n')$, and $\rho'\in \mathrm{Irr}(A(s,n'))_0.$

If $s$ is real, in light of Proposition \ref{p:real-identify}, we can extend (\ref{e:multi}) to $\bfG'$ by defining the standard $\bfG'(\sfk)$-module \[Y'(\tilde s,n,\rho):=m_{\bfI'}^{-1}( Y(s,n,\rho)^{\bfI}|_{\mathcal H_{\bfI}'}).\]

\subsection{The Iwahori-Hecke algebra}\label{s:Hecke}

Let $c_0$ be the chamber of $\mathcal A(\bfT,k)$ cut out by $\tilde \Delta$ and let $\bfI$ be the Iwahori subgroup corresponding to $c_0$.
Suppose $\mathbf{P}_c$ is a parahoric subgroup containing $\mathbf{I}$ with pro-unipotent radical $\mathbf{U}_c$ and reductive quotient $\mathbf{L}_c$.
The finite Hecke algebra $\mathcal H_{c}$ of $\bfL_c(\mathbb F_q)$ embeds as a subalgebra of $\mathcal H_{\bfI}$.
For $X\in \mathcal C_{\bfI}(\bfG(\sfk))$, the Moy-Prasad theory of unrefined minimal $K$-types \cite{moyprasad} implies that the finite dimensional $\bfL_c(\mathbb F_q)$-representation $X^{\bfU_{c}(\mf o)}$ is a sum of principal series unipotent representations and so corresponds to an $\mathcal H_{c}$-module with underlying vector space 
$$(X^{\bfU_c(\mf o)})^{\bfI(\mf o)/\bfU_c(\mf o)} = X^{\bfI(\mf o)}.$$
The $\mathcal H_c$-module structure obtained in this manner coincides naturally with that of
$$\Res_{\mathcal H_c}^{\mathcal H_\bfI}m_\bfI(X).$$
Let 
$$\mathfrak J_c:\mathcal H_c\to \CC[W_c]$$
be the isomorphism introduced by Lusztig in \cite{lusztigdeformation}.
Given any $\mathcal H_c$-module $M$ we can use the isomorphism $\mathfrak J_c$ to obtain a $W_c$-representation which we denote by $M_{q\to1}$.
Define
\begin{equation}
    \label{eq:Wcqto1}
    X|_{W_c}:=(\Res_{\mathcal H_c}^{\mathcal H_\bfI}m_\bfI(X))_{q\to 1}.
\end{equation}

We will need to recall some structural facts about the Iwahori-Hecke algebra. Let $\CX:=X_*(\mathbf T,\bar{\mathsf k})=X^*(\mathbf{T}^\vee,\bar{\sfk})$ and consider the (extended) affine Weyl group $\widetilde{W} := W \ltimes \CX$. Let 
$$S := \{s_{\alpha} \mid \alpha \in \Delta\} \subset W$$
denote the set of simple reflections in $W$. For each $x \in \CX$, write $t_x \in \widetilde{W}$ for the corresponding translation. If $W$ is irreducible, let $\alpha_0$ be the highest root and set $s_0=s_{\alpha_0} t_{-\alpha_0^\vee}$, $S^a=S\cup\{s_0\}$. If $W$ is a product, define $S^a$ by adjoining to $S$ the reflections $s_0$, one for each irreducible factor of $W$. Consider the length function $\ell: \widetilde{W} \to \ZZ_{\geq 0}$ extending the usual length function on the affine Weyl group $W^a=W\ltimes \mathbb Z \Phi^\vee$
\[\ell(w t_x)=\sum_{\substack{\alpha\in \Phi^+\\w(\alpha)\in \Phi^-}} |\langle x,\alpha\rangle+1|+\sum_{\substack{\alpha\in \Phi^+\\w(\alpha)\in \Phi^+}} |\langle x,\alpha\rangle|.
\]
For each $w \in \widetilde{W}$, choose a representative $\dot w$ in the normalizer $N_{\bfG(\mathsf k)}(\mathbf I(\mf o))$. Recall the Bruhat decomposition
\[\bfG(\mathsf k)=\bigsqcup_{w\in \widetilde W} \mathbf I(\mf o) \dot w \mathbf I(\mf o), 
\]
For each $w \in \widetilde{W}$, write $T_w \in \mathcal{H}_{\mathbf{I}}$ for the characteristic function of $\mathbf I(\mf o) \dot w \mathbf I(\mf o) \subset \mathbf{G}(\sfk)$. Then $\{T_w\mid w\in \widetilde W\}$ forms a $\mathbb C$-basis for $\mathcal H_{\mathbf{I}}.$

The relations on the basis elements $\{T_w \mid w \in \widetilde{W}\}$ were computed in \cite[Section 3]{IM}:
\begin{equation}\label{eq:relations}
    \begin{aligned}
    &T_w\cdot T_{w'}=T_{ww'}, \qquad \text{if }\ell(ww')=\ell(w)+\ell(w'),\\
    &T_s^2=(q-1) T_s+q,\qquad s\in S^a.
    \end{aligned}
\end{equation}

Let $R$ be the ring $\CC[v,v^{-1}]$ and for $a\in \CC^*$ let $\CC_a$ be the $R$-module $R/(v-a)$.
Let $\mathcal H_{\bfI,v}$ denote the Hecke algebra with base ring $R$ instead of $\CC$ and where $q$ is replaced with $v^2$ in the relations (\ref{eq:relations}).
By specializing $v$ to $\sqrt{q}$, $1$, we obtain
\begin{equation}
    \mathcal H_{\bfI,v}\otimes_R\CC_{\sqrt q} \cong \mathcal H_{\bfI}, \qquad \mathcal H_{\bfI,v}\otimes_R\CC_1 \cong \CC \widetilde W.
\end{equation}
Suppose $Y=Y(s,n,\rho)$ is a standard Iwahori-spherical representation, see Section \ref{s:unip-cusp}, and let $M=m_\bfI(Y)$. 
By \cite[Section 5.12]{KL} there is a $\mathcal H_{\bfI,v}$ module $M_v$, free over $R$, such that 
$$M_v\otimes_R\CC_{\sqrt q} \cong M$$
as $\mathcal H_\bfI$-modules.
We can thus construct the $\widetilde W$-representation
$$Y_{q\to 1}:= M_v\otimes_R \CC_{1}.$$
Let $R(\widetilde{W})$ be the Grothendieck group of $\mathrm{Rep}(\widetilde W)$. Since the standard modules form a $\ZZ$-basis for $R_\bfI(\bfG(\sfk))$, the Grothendieck group of $\mathcal C_\bfI(\bfG(\sfk))$, there is a unique homomorphism
\begin{equation}\label{eq:qto1hom}(\bullet)_{q\to 1}: R_\bfI(\bfG(\sfk)) \to R(\widetilde W)\end{equation}
extending $Y \mapsto Y_{q \to 1}$. Moreover, since 
$$\Res_{W_c}^{\widetilde W} Y_{q\to 1} = Y|_{W_c}$$
for the Iwahori-spherical standard modules we have that
\begin{equation}
    \label{eq:heckerestriction}
    \Res_{W_c}^{\widetilde W}X_{q\to 1} = X|_{W_c}
\end{equation}
for all $X\in R_\bfI(\bfG(\sfk))$.
Finally, let $c$ be the face of $c_0$ corresponding to $\Delta\subseteq\tilde\Delta$.
Then $W_c = W$ and equation \ref{eq:Wcqto1} in this special case gives rise to a map
\begin{equation}
    \label{eq:restrictiontoW}
    |_W:R_\bfI(\bfG(\sfk))\to R(W)
\end{equation}
where $R(W)$ is the Grothendieck group of $\mathrm{Rep}(W)$.

There is a second description of $\mathcal{H}_{\mathbf{I}}$ in terms of generators and relations, called \emph{the Bernstein presentation} \cite{Lu-graded}. Set
\[\CX_+=\{x\in \CX\mid \langle\alpha,x\rangle\ge 0,\text{ for all }\alpha\in \Phi^+\}.
\]
For $x\in \CX_+$, set $\theta_x=q^{-\ell(t_x)/2} T_{t_x}$. If $x\in \CX$, write $x=x_1-x_2$ for $x_1,x_2\in \CX_+$ and define
\[\theta_x=\theta_{x_1}\theta_{x_2}^{-1}.
\]
Then $\theta_x\theta_{x'}=\theta_{x+x'}$ for all $x,x'\in \CX$ and $\theta_0=1$. Let $\mathcal A$ denote the abelian subalgebra of $\mathcal H_{\mathbf{I}}$ generated by the elements $\theta_x$. The Bernstein basis of $\mathcal H_{\mathbf{I}}$ is given by $T_w \theta_x$, for $w\in W$ and $x\in \CX$. The cross-relations are \cite{Lu-graded}
\begin{equation}
    T_{s_\alpha}\theta_x-\theta_{ s_\alpha(x)}T_{ s_\alpha}=(q-1)\frac{\theta_x-\theta_{s_\alpha(x)}}{1-\theta_{-\alpha^\vee}}, \qquad s_{\alpha} \in S
\end{equation}

\subsection{Aubert-Zelevinsky duality}\label{sec:AZduality}

There are three interrelated involutions that appear in the study of smooth $\mathbf{G}(\sfk)$-representations. The first is an algebra involution $\tau$ of the Hecke algebra $\mathcal{H}_{\mathbf{I}}$. It is defined on generators by
\[\tau(T_w)=(-1)^{\ell(w_1)}q^{\ell(w)} T_{w^{-1}}^{-1}, \qquad w=w_1 t_x,\ w_1\in W,\ x\in \CX.
\]
Let $w_0$ denote the longest element of $W$. Since 
$$\ell(w_0 t_x)=\ell(w_0)+\ell(t_x)=\ell(t_{w_0(x)})+\ell(w_0), \text{ for every } x\in \CX_+$$
we have that
\[T_{w_0(x)}=T_{w_0} T_{t_x} T_{w_0}^{-1},\qquad x\in X_+.
\]
Then if $x\in \CX_+$
\[\tau(T_{t_{w_0(x)}})=q^{\ell(t_{w_0(x)})} T_{t_{-w_0(x)}}^{-1}=q^{\ell(t_x)/2} \theta_{w_0(x)}.
\]
Hence
\[\tau(\theta_x)= T_{w_0} \theta_{w_0(x)} T_{w_0}^{-1}.
\]
So on the Bernstein generators, $\tau$ is given by
\begin{equation}\label{e:tau-Bernstein}
    \tau(T_w)=(-q)^{\ell(w)} T_{w^{-1}}^{-1},\ w\in W,\qquad \tau(\theta_x)=T_{w_0} \theta_{w_0(x)} T_{w_0}^{-1},\ x\in \CX.
\end{equation}
If $M$ is a simple $\mathcal H_{\mathbf{I}}$-module $M$, write $M^\tau$ for the module which is equal to $M$ as a vector space but with $\mathcal{H}_{\mathbf{I}}$-action twisted by $\tau$.

The center of $\mathcal H_{\mathbf{I}}$ is (\cite[Proposition 3.11]{Lu-graded})
\[Z(\mathcal H_{\mathbf{I}})=\mathcal A^W,
\]
and therefore by (\ref{e:tau-Bernstein})
\begin{equation}
    \tau(z)=z, \text{ for all }z\in Z(\mathcal H_{\mathbf{I}}).
\end{equation}
Every simple $\mathcal H_{\mathbf{I}}$-module $M$ is necessarily finite dimensional. So by Schur's Lemma, $Z(\mathcal H_{\mathbf{I}})$ acts on $M$ by a central character $\chi_M\in W\backslash T^\vee$. Then 
\begin{equation}\label{e:tau-cc}
\chi_{M^\tau}=\chi_{M}.
    \end{equation}

\medskip

The second involution we will consider is an involution $D$ of the Grothendieck group $R(\mathcal H_{\mathbf{I}})$ of $\mathcal H_{\mathbf{I}}$-modules. This involution can be defined in the following manner. For every $J\subset S$, let $\mathcal H_{\mathbf{I}}(J)$ denote the (parabolic) subalgebra of $\mathcal H_{\mathbf{I}}$ generated by $\mathcal A$ and $\{T_{s} \mid s\in J\}$. If $M$ is an $\mathcal H_{\mathbf{I}}$-module, let $\mathsf{Res}_J(M)$ denote the restriction of $M$ to $\mathcal H_{\mathbf{I}}(J)$. If $N$ is an $\mathcal H_{\mathbf{I}}(J)$-module, let $\mathsf{Ind}_J(N)=\mathcal H_{\mathbf{I}}\otimes_{\mathcal H_{\mathbf{I}}(J)} N$ denote the induced $\mathcal{H}_{\mathbf{I}}$-module. Then $D$ is the involution
\begin{equation}
    D: R(\mathcal{H}_{\mathbf{I}}) \to R(\mathcal{H}_{\mathbf{I}}), \qquad  D(M)=\sum_{J\subset S} (-1)^{|J|} ~\mathsf{Ind}_J(\mathsf{Res}_J(M)).
\end{equation}
By \cite[Theorem 2]{Kato}
\begin{equation}
    D(M)=M^\tau, \text{for all }\mathcal H_{\mathbf{I}}\text{-modules }M.
\end{equation}

The involution $D$ has nice behavior with respect to unitarity. Let $*:\mathcal H_{\mathbf{I}}\to \mathcal H_{\mathbf{I}}$ be the conjugate-linear anti-involution defined on generators by
\begin{equation}
    T_w^*=T_{w^{-1}},\qquad w\in \widetilde W.
\end{equation}
A non-degenerate sesquilinear form $(~,~)_M$ on $M$ is \emph{Hermitian} if
\[(h\cdot m,m')_M=(m,h^*\cdot m'),\qquad m,m'\in M,\ h\in \mathcal H_{\mathbf{I}}.
\]

\medskip

The final involution we will consider is an involution $\AZ$ on the Grothendieck group $R(\mathbf{G}(\sfk))$ of smooth $\mathbf{G}(\sfk)$-representation, called \emph{Aubert-Zelevinsky duality} \cite[\S1]{Au}. This involution can defined in the following manner. Let $\mathcal Q$ denote the set of parabolic subgroups of $\bfG$ defined over $\mathsf k$ and containing $\mathbf B$. For every $\mathbf Q\in\mathcal Q$, let $i_{\mathbf Q(\mathsf k)}^{\mathbf{G}(\mathsf k)}$ and $\mathsf{r}_{\mathbf Q(\mathsf k)}^{\mathbf{G}(\mathsf k)}$ denote the normalized parabolic induction functor and the normalized Jacquet functor, respectively. Then $\AZ$ is defined by
\begin{equation}
    \mathsf{AZ}: R(\mathbf{G}(\mathsf{k})) \to R(\mathbf{G}(\mathsf{k})), \qquad \mathsf{AZ}(X)=\sum_{\mathbf Q\in \mathcal Q} (-1)^{r_{\mathbf Q}} ~i_{\mathbf Q(\mathsf k)}^{\bfG(\mathsf k)}(\mathsf{r}_{\mathbf Q(\mathsf k)}^{\bfG(\mathsf k)}(X)),
\end{equation}
where $r_{\mathbf Q}$ is the semisimple rank of the reductive quotient of $\mathbf Q$. 
If a class $X \in R(\mathbf{G}(\sfk))$ is irreducible, irreducible with unipotent cuspidal support, or Iwahori-spherical, so is $\AZ(X)$ (up to multiplication by a sign). More generally, $\AZ$ is an involution on the set of irreducible representations (up to sign) in any Bernstein component, see for example \cite[\S3.2]{BBK}. For the basic properties of parabolic induction and Jacquet functors relative to Berstein components, see \cite[\S1]{Au} or \cite{Roc-parabolic}.

Under the Borel-Casselman equivalence (\ref{eq:mI}) the induction/restriction functors correspond
\[i_{\mathbf Q(\mathsf k)}^{\mathbf{G}(\mathsf k)}\leftrightarrow \mathsf{Ind}_J,\qquad \mathsf{r}_{\mathbf Q(\mathsf k)}^{\mathbf{G}(\mathsf k)}\leftrightarrow \mathsf{Res}_J,
\]
Here, $J$ is the subset of $S$ corresponding to the parabolic $\mathbf Q$. In particular,
\begin{equation}
    m_{\mathbf{I}}(\mathsf{AZ}(X))=D(m_{\mathbf{I}}(X))=(m_{\mathbf{I}}(X))^\tau,
\end{equation}
for all irreducible objects $X\in \mathcal C_{\mathbf{I}}(\bfG(\mathsf k))$.

%\begin{rmk}Since $\tau$ does not preserve central character in the category %of $\mathcal H_{\mathbf{I}}$-modules (see (\ref{e:tau-cc})), one can modify %$\mathsf{AZ}$ (and $\tau$) by taking contragredients, i.e. 
%\[\widetilde{\mathsf{AZ}}(X)=\mathsf{AZ}(\widetilde %X)=\widetilde{\mathsf{AZ}(X)},\]
%Here $\widetilde X$ denotes the contragredient of $X$. Note that
%$   \chi_{\widetilde{(m_{\mathbf{I}}(X))^\tau}}=\chi_{m_{\mathbf{I}}(X)}$, 
%for all irreducible representations $X\in \mathcal %C_{\mathbf{I}}(\mathbf{G}(\sfk))$.
%\end{rmk}

\subsection{Wavefront sets}\label{s:wave}
Let $X$ be an admissible smooth representation of $\bfG(\sfk)$.
Recall that for each nilpotent orbit $\OO\in \mathcal N_o(\sfk)$ there is an associated distribution $\mu_\OO$ on $C_c^\infty(\mf g(\sfk))$ called the \emph{nilpotent orbital integral} of $\OO$ \cite{rangarao}.
Write $\hat\mu_\OO$ for the Fourier transform of this distribution.
By the Harish-Chandra-Howe local character expansion \cite[Theorem 16.2]{HarishChandra1999} there is an open neighbourhood $\mathcal{V}$ of the identity $1 \in \mathbf{G}(\sfk)$ and (unique) complex numbers $c_{\OO}(X)\in \CC$ for each $\OO\in \mathcal N_o(\sfk)$ such that
\begin{equation}
    \Theta_{X}(f) = \sum_{\OO \in \mathcal N_o(\sfk)}c_{\OO}(X) \hat \mu_{\OO}(f\circ \exp)
\end{equation}
for all $f\in C_c^\infty(\mathcal V)$.
The \textit{($p$-adic) wavefront set} of $X$ is
$$\WF(X) := \max\{\OO \mid  c_{\OO}(X)\ne 0\} \subseteq \mathcal N_o(\sfk).$$
The \emph{algebraic wavefront set} of $X$ is
$$^{\bar k}\WF(X) := \max \{\mathcal N_o(\bark/\sfk)(\OO) \mid c_{\OO}(X)\ne 0\} \subseteq \mathcal N_o(\bark),$$
see \cite[p. 1108]{Wald18} (note: in \cite{Wald18}, the invariant $^{\bar k}\WF(X)$ is called simply the `wavefront set' of $X$). 

By analogy with the case of real reductive groups (\cite[Theoerem 3.10]{Joseph1985}) and finite groups of Lie type (\cite[Theorem 11.2]{lusztigunip}) it is expected that when $X$ is irreducible, $^{\bark}\WF(X)$ is a singleton $\{\OO\}$ and 
$$\mathcal N_o(\bark/\sfk)(\OO') = \OO \text{ for all } \OO'\in \WF(X).$$
In \cite[Section 5.1]{okada2021wavefront} the third author has introduced a third type of wavefront set for depth $0$ representations, called
the \emph{canonical unramified wavefront set}. This invariant is a natural refinement of $\hphantom{ }^{\bar{\sfk}}\WF(X)$. We will now define $^K\WF(X)$ and explain how to compute it. Fix the notation of Sections \ref{subsec:nilpotent} and \ref{subsec:Wreps}, e.g. $\mathcal{L}$, $\mathbb{L}$, $\sim_A$, $\OO(\bullet,\bullet)$, $\phi(\bullet)$, and so on. 

For every face $c \subset \mathcal{B}(\mathbf{G})$, the space of invariants $X^{\bfU_c(\mf o)}$ is a (finite-dimensional) $\bfL_c(\mathbb F_q)$-representation. Let $\WF(X^{\bfU_c(\mf o)}) \subseteq \cN^{\bfL_c}_o(\overline{\mathbb F}_q)$ denote the Kawanaka wavefront set \cite{kawanaka} and let 
\begin{equation}
    ^K\WF_c(X) := \{\mathscr L (c,\OO) \mid \OO\in \WF(X^{\bfU_{c}(\mf o)})\} \subseteq \cN_o(K).
\end{equation}

%For simplicity we define $^K\WF$ in the spirit of \cite[Theorem 5.4]{okada2021wavefront} instead of the more complicated definition given in the paper.
%We also define it to live on $\mathcal N_o(K)/\sim_A$ instead of on $\mathcal N_o(K)$.

\begin{definition}\label{def:CUWF}
    The \emph{canonical unramified wavefront set} of $X$ is
    \begin{equation}
        ^K\WF(X) := \max \{[\hphantom{ }^K\WF_c(X)] \mid  c\subseteq \mathcal B(\bfG)\} \subseteq \cN_o(K)/\sim_A.
    \end{equation}
\end{definition}
\begin{rmk}
    Definition \ref{def:CUWF} yields an invariant which is a bit coarser than the one given in \cite{okada2021wavefront}, but is arguably more natural, and the two are straightfowardly related by an application of the map $[\bullet]:\mathcal N_o(K)\to \mathcal N_o(K)/\sim_A$.
\end{rmk}
\begin{theorem}
    \cite[Theorem 5.4]{okada2021wavefront}\label{thm:OkadaWFset} Let $c_0$ be a chamber of $\mathcal B(\bfG)$.
    Then
    \begin{equation}
        ^K\WF(X) = \max \{\hphantom{ }^K\WF_c(X) \mid c\subseteq c_0\}
    \end{equation}
    where $c\subseteq c_0$ means that $c$ is a face of $c_0$.
\end{theorem}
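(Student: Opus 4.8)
The plan is to reduce the maximum over \emph{all} faces $c\subseteq\mathcal B(\bfG)$ appearing in Definition \ref{def:CUWF} to a maximum over just the faces of a single fixed chamber $c_0$. The key structural input is the $\bfG(\sfk)$-equivariance of everything in sight: the assignment $c\mapsto X^{\bfU_c(\mf o)}$ is $\bfG(\sfk)$-equivariant (conjugating a face by $g\in\bfG(\sfk)$ conjugates its parahoric, its pro-unipotent radical, and its reductive quotient compatibly), the Kawanaka wavefront set $\WF(X^{\bfU_c(\mf o)})$ transports accordingly, and the map $\mathscr L:(\mathcal I_o,\le)\to(\mathcal N_o(K),\le)$ together with $[\bullet]:\mathcal N_o(K)\to\mathcal N_o(K)/\sim_A$ is likewise $\bfG(K)$-equivariant (this is part of the naturality built into \cite{okada2021wavefront}). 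Consequently, for any $g\in\bfG(\sfk)$ we have $\hphantom{ }^K\WF_{gc}(X)=\hphantom{ }^K\WF_c(X)$ as subsets of $\mathcal N_o(K)/\sim_A$ — the invariant $\hphantom{ }^K\WF_c(X)$ depends only on the $\bfG(\sfk)$-orbit of the face $c$. Since $\bfG(\sfk)$ acts transitively on chambers of $\mathcal B(\bfG)$ and every face lies in some chamber, every $\hphantom{ }^K\WF_c(X)$ equals $\hphantom{ }^K\WF_{c'}(X)$ for some face $c'\subseteq c_0$. Taking the maximum over all faces therefore gives the same set as taking the maximum over faces of $c_0$, which is the claimed identity.

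First I would make precise the equivariance claim: fix $g\in\bfG(\sfk)$ and a face $c$, and check that $\Int(g)$ carries $\bfP_c^+(\mf o)$ to $\bfP_{gc}^+(\mf o)$, hence $\bfU_c(\mf o)$ to $\bfU_{gc}(\mf o)$, and induces an isomorphism of reductive quotients $\bfL_c(\mathbb F_q)\xrightarrow{\sim}\bfL_{gc}(\mathbb F_q)$ under which $X^{\bfU_c(\mf o)}\xrightarrow{\sim}X^{\bfU_{gc}(\mf o)}$ as representations (via the action of $g$ on $X$). Because the Kawanaka wavefront set is defined purely in terms of the representation-theoretic data of $\bfL_c(\mathbb F_q)$ on $X^{\bfU_c(\mf o)}$, and the isomorphism of groups respects the nilpotent cones (it is induced by an isomorphism of root data over $\overline{\mathbb F}_q$, compatibly with the identifications $\Theta_c$), we get $\WF(X^{\bfU_{gc}(\mf o)})=\Theta_{gc}\circ(\text{transport})\circ\Theta_c^{-1}\,\WF(X^{\bfU_c(\mf o)})$ inside $\mathcal N_o^{\bfL_{gc}}(\overline{\mathbb F}_q)$. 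Next I would verify $\mathscr L(gc,\Theta_{gc}(\ldots))=g\cdot\mathscr L(c,\Theta_c(\ldots))$ in $\mathcal N_o(K)$ using the $\bfG(K)$-naturality of $\mathscr L$ from \cite[Section 4]{okada2021wavefront}, and observe that since $g\in\bfG(\sfk)\subseteq\bfG(K)$ acts on $\mathcal N_o(K)$ and this action descends trivially to $\mathcal N_o(K)/\sim_A$ (the pre-order $\le_A$ and hence its equivalence classes are $\bfG(K)$-stable, as both $\mathcal N_o(\bar k/K)$ and $d_{S,\bfT}$ are natural/invariant), conclude $[\hphantom{ }^K\WF_{gc}(X)]=[\hphantom{ }^K\WF_c(X)]$.

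With equivariance in hand, the proof finishes quickly: given any face $c\subseteq\mathcal B(\bfG)$, pick a chamber $c'$ with $c\subseteq c'$ and then $g\in\bfG(\sfk)$ with $gc'=c_0$ (transitivity on chambers); then $gc$ is a face of $c_0$ and $\hphantom{ }^K\WF_{gc}(X)=\hphantom{ }^K\WF_c(X)$ in $\mathcal N_o(K)/\sim_A$. Hence the two index sets $\{[\hphantom{ }^K\WF_c(X)]\mid c\subseteq\mathcal B(\bfG)\}$ and $\{\hphantom{ }^K\WF_c(X)\mid c\subseteq c_0\}$ (the latter read modulo $\sim_A$) are \emph{equal} as subsets of $\mathcal N_o(K)/\sim_A$, so their maxima agree — provided a maximum exists, which it does since $\mathcal N_o(K)/\sim_A$ is finite. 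I expect the main obstacle to be bookkeeping rather than mathematics: one must check that the identifications $\Theta_c$, the map $\Xi$ of \cite[Proposition 4.17]{okada2021wavefront}, and the construction of $\mathscr L$ are all stated with enough functoriality in the face $c$ (equivalently, enough $\bfG(\sfk)$- and $\bfG(K)$-equivariance) to license the transport-of-structure argument; if any of these is only stated for faces of a fixed apartment or chamber, one has to first move everything into a common apartment using the transitivity of $\bfG(\sfk)$ on apartments containing a given chamber, and only then apply equivariance. This is routine given the naturality assertions recalled in Sections \ref{subsec:nilpotent}--\ref{s:Hecke}, but it is the step requiring the most care.
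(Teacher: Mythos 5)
The paper does not actually prove this statement: it is imported verbatim from \cite[Theorem 5.4]{okada2021wavefront}, so there is no in-paper argument to compare against. Your proof --- $\bfG(\sfk)$-equivariance of $c\mapsto{}^K\WF_c(X)$ together with transitivity of $\bfG(\sfk)$ on chambers, so that every face of $\mathcal B(\bfG)$ is conjugate to a face of $c_0$ and the two index sets of local wavefront sets coincide --- is correct and is the expected route, with the right caveat about first moving into a common apartment before applying the naturality of $\Xi$, $\Theta_c$ and $\mathscr L$. One small simplification: since the elements of $\mathcal N_o(K)$ are themselves $\bfG(K)$-orbits, the induced action of $g\in\bfG(\sfk)\subseteq\bfG(K)$ on $\mathcal N_o(K)$ is already trivial, so ${}^K\WF_{gc}(X)={}^K\WF_c(X)$ holds in $\mathcal N_o(K)$ itself and you do not need to argue separately that the action descends through $[\bullet]:\mathcal N_o(K)\to\mathcal N_o(K)/\sim_A$.
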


We will often want to view $^K\WF(X)$ and $[^K\WF_c(X)]$ as subsets of $\mathcal N_{o,\bar c}$ using the identification $$\bar\theta_{\bfT}:\mathcal N_o(K)/\sim_A\to \mathcal N_{o,\bar c}.$$ 
We will write $^{K}\WF(X,\CC)$ (resp. $^K\WF_c(X,\CC)$) for $\bar \theta_{\bfT}(\hphantom{ }^K\WF(X))$ (resp. $\bar\theta_{\bfT}([\hphantom{ }^K\WF_c(X)])$).
We will also want to view $\hphantom{ }^{\bar{\sfk}}\WF(X)$, which is naturally contained in $\mathcal N_o(\bar k)$, as a subset of $\mathcal{N}_o$ and so will write $\hphantom{ }^{\bar{\sfk}}\WF(X,\CC)$ for $\Theta_{\bar k}(\hphantom{ }^{\bar{\sfk}}\WF(X))$.
The canonical unramified wavefront set is a refinement of the usual wavefront set in the following sense.
\begin{theorem}
    \cite[Theorem 5.2]{okada2021wavefront} 
    Let $X$ be a depth $0$ representation.
    Then
    \begin{equation}
        \hphantom{ }^{\bar{\sfk}}\WF(X,\CC) = \max (\pr_1(\hphantom{ }^{K}\WF(X,\CC))).
    \end{equation}
\end{theorem}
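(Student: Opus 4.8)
Following \cite{okada2021wavefront}, the plan is to compute both sides of the identity in terms of the ordinary $p$-adic wavefront set $\WF(X)\subseteq\mathcal N_o(\sfk)$: on the left via the Barbasch--Moy analysis of the Harish-Chandra--Howe local character expansion, and on the right via the combinatorial apparatus of Sections~\ref{subsec:nilpotent}--\ref{s:wave}, which is precisely designed to encode the $K$-version of that analysis.

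The first ingredient is the $\sfk$-analogue of the parametrization $\mathscr L$. Following DeBacker \cite{debacker}, there is a strictly increasing surjection
\[
\mathscr L_\sfk \colon \big\{(c,\OO)\ :\ c\subseteq\mathcal B(\bfG),\ \OO\in\mathcal N_o^{\bfL_c}(\overline{\mathbb F}_q)\big\}\ \twoheadrightarrow\ \big(\mathcal N_o(\sfk),\le\big)
\]
which is compatible with $\mathscr L$ under the (strictly increasing) base-change map, i.e. $\mathcal N_o(K/\sfk)\circ\mathscr L_\sfk=\mathscr L$, and whose value $\mathscr L_\sfk(c,\OO)$ has underlying geometric orbit equal to the $G$-saturation of the complex orbit $\Theta_c(\OO)\subseteq\mathfrak l_c$. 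Combining this with Proposition~\ref{prop:square} and the commuting square of Theorem~\ref{lem:paramNoK}, one gets
\[
\Theta_{\bar k}\big(\mathcal N_o(\bar k/\sfk)(\mathscr L_\sfk(c,\OO))\big)\ =\ \pr_1\big(\bar\theta_\bfT([\mathscr L(c,\OO)])\big)\ =\ G\cdot\Theta_c(\OO),
\]
so that, on the combinatorial side, $\pr_1$ of the unramified datum attached to $(c,\OO)$ is exactly the $G$-saturation of the Springer-lifted finite-group orbit.

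The heart of the proof — and the step I expect to be the main obstacle — is the Barbasch--Moy input identifying germ coefficients with finite-group data: for a depth-zero representation $X$,
\[
\WF(X)\ =\ \max\big\{\,\mathscr L_\sfk(c,\OO)\ :\ c\subseteq\mathcal B(\bfG),\ \OO\in\WF(X^{\bfU_c(\mf o)})\,\big\},
\]
where $\WF(X^{\bfU_c(\mf o)})\subseteq\mathcal N_o^{\bfL_c}(\overline{\mathbb F}_q)$ is the Kawanaka wavefront set \cite{kawanaka} of the reductive quotient $\bfL_c(\mathbb F_q)$. Establishing this requires: DeBacker's homogeneity theorems, to guarantee the validity of the local character expansion \cite{HarishChandra1999} on a Moy--Prasad neighbourhood of $1$ large enough to be probed by the relevant congruence subgroups; the Barbasch--Moy comparison \cite{barmoy} of $\widehat\mu_{\mathscr L_\sfk(c,\OO)}$ with the trace of the generalized Gelfand--Graev representation $\Gamma_\OO$ of $\bfL_c(\mathbb F_q)$, which converts the extraction of the leading germ coefficients into the finite-group statement that a representation occurs in $\Gamma_\OO$ if and only if $\OO$ lies below its Kawanaka wavefront set; and a reduction, in the spirit of Theorem~\ref{thm:OkadaWFset}, allowing one to range over faces of a single chamber.

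Granting the above, I would conclude as follows. Applying $\mathcal N_o(\bar k/\sfk)=\mathcal N_o(\bar k/K)\circ\mathcal N_o(K/\sfk)$ to the Barbasch--Moy formula and using the compatibilities of the previous paragraphs, the left-hand side becomes $^{\bar k}\WF(X,\CC)=\max\{\,G\cdot\Theta_c(\OO)\ :\ c\subseteq\mathcal B(\bfG),\ \OO\in\WF(X^{\bfU_c(\mf o)})\,\}$. For the right-hand side, Definition~\ref{def:CUWF} reads $^K\WF(X)=\max_{\le_A}\big(\bigcup_c[\,^K\WF_c(X)\,]\big)$ with $^K\WF_c(X)=\{\mathscr L(c,\OO):\OO\in\WF(X^{\bfU_c(\mf o)})\}$; since $\pr_1\circ\bar\theta_\bfT\circ[\bullet]$ coincides with $\mathcal N_o(\bar k/K)$ followed by the identification $\Theta_{\bar k}$ of Lemma~\ref{lem:Noalgclosed} (by diagram~\eqref{eq:square}, $\pr_1\circ\mf Q=\pr_1$, and Theorem~\ref{lem:paramNoK}), and since this composite is order preserving, the general fact that $\max\pi(\max S)=\max\pi(S)$ for any order-preserving map $\pi$ of preordered sets gives $\max\pr_1(^K\WF(X,\CC))=\max\{\,G\cdot\Theta_c(\OO):c\subseteq\mathcal B(\bfG),\ \OO\in\WF(X^{\bfU_c(\mf o)})\,\}$ as well. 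Comparing the two expressions yields the asserted equality. The only delicate point in this last step is that $\pr_1$ does not preserve $\le_A$-maximality, which is exactly why the outer $\max$ appears in the statement; it is absorbed by the order-theoretic observation just quoted.
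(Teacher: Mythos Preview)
The paper does not give its own proof of this theorem: it is cited verbatim from \cite[Theorem 5.2]{okada2021wavefront}, and the only comment offered is the sentence ``The proof uses the test functions from \cite[Theorem 4.5]{barmoy}.'' Your sketch is precisely an elaboration of that hint---Barbasch--Moy test functions to identify the leading local-character-expansion coefficients with Kawanaka wavefront data on the reductive quotients, DeBacker's lifting to pass between the $\sfk$- and $K$-parametrizations, and then a comparison of the two definitions via the commuting diagrams of Sections~\ref{subsec:nilpotent}--\ref{s:wave}---so it is entirely aligned with what the paper indicates and with the argument in the cited reference.

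One small caution: the order-theoretic identity $\max\pi(\max S)=\max\pi(S)$ you invoke at the end does hold here (finite posets, $\pr_1$ order-preserving from $\le_A$ to the closure order), but it is not automatic for arbitrary preorders; in writing this up you should note that every element of the relevant sets lies below a maximal one, which is what makes the argument go through.
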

The proof uses the test functions from \cite[Theorem 4.5]{barmoy}.
If $^{K}\WF(X)$ is a singleton, $\hphantom{ }^{\bark}\WF(X)$ is also a singleton and 
\begin{equation}
    \label{eq:wfcompatibility}
    \hphantom{ }^{K}\WF(X,\CC) = (\hphantom{ }^{\bark}\WF(X,\CC),\bar C).
\end{equation}
for some conjugacy class $\bar C$ in $\bar A(\hphantom{ }^\bark\WF(X,\CC))$.

We will use the following theorem to compute the local wavefront sets.
\begin{theorem}
    \cite[Theorem 3.9]{okada2021wavefront}
    \label{thm:locwf}
    Suppose $X\in\mathcal C_\bfI(\bfG(\sfk))$ and $m_\bfI(X)$ is deformable.
    Let $J\subsetneq \tilde\Delta$ and $c = c(J)$. 
    Then 
    \begin{equation}
        \label{eq:locwf}
        [^K\WF_{c}(X)]= \max\{[\mathscr L(c,\OO_{\phi(E)}(\overline{\mathbb F}_q)] \mid E\in\mathrm{Irr}(W_{c}),\Hom(E,\Res_{W_{c}}^{\widetilde W}(X_{q\to1}))\ne 0\}.
    \end{equation}
\end{theorem}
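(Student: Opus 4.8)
The plan is to establish this formula --- which is \cite[Theorem 3.9]{okada2021wavefront} --- by unwinding the definition of $[^K\WF_c(X)]$ in three stages: reduce it to a computation of the Kawanaka wavefront set of a unipotent representation of the finite reductive group $\bfL_c(\mathbb F_q)$; carry out that computation using the Springer correspondence and Lusztig's families; and transport the answer back to $\mathcal N_o(K)/\sim_A$ along the order-preserving maps of Section \ref{subsec:nilpotent}.

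First I would use the definition $^K\WF_c(X)=\{\mathscr L(c,\OO)\mid\OO\in\WF(X^{\bfU_c(\mf o)})\}$, where on the right $\WF$ is the Kawanaka wavefront set of the $\bfL_c(\mathbb F_q)$-representation $X^{\bfU_c(\mf o)}$, thereby reducing to the identification of $\WF(X^{\bfU_c(\mf o)})$. Since $X\in\mathcal C_\bfI(\bfG(\sfk))$, the Moy--Prasad theory of unrefined minimal $K$-types (Section \ref{s:Hecke}) realizes $X^{\bfU_c(\mf o)}$ as a direct sum of unipotent principal-series representations of $\bfL_c(\mathbb F_q)$, corresponding to the $\mathcal H_c$-module $\Res_{\mathcal H_c}^{\mathcal H_\bfI}m_\bfI(X)$ on the space $X^{\bfI(\mf o)}$; applying Lusztig's isomorphism $\mathfrak J_c:\mathcal H_c\xrightarrow{\sim}\CC[W_c]$, and using deformability of $m_\bfI(X)$ together with (\ref{eq:heckerestriction}), the attached $W_c$-module is $X|_{W_c}=\Res_{W_c}^{\widetilde W}(X_{q\to 1})$.

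The crux is the second stage: for a unipotent principal-series irreducible $\pi$ of $\bfL_c(\mathbb F_q)$ attached under $\mathfrak J_c$ to an irreducible $W_c$-module $E$, one has $\WF(\pi)=\OO_{\phi(E)}(\overline{\mathbb F}_q)$, the Springer support (for $\bfL_c$) of the special representation of the family of $E\otimes\mathrm{sgn}$. I would deduce this from the identification of the Kawanaka wavefront set with Lusztig's unipotent support (Lusztig and Geck--Malle for the combinatorial recipe; Achar--Aubert and Taylor for the equality of the two invariants) together with Lusztig's parametrization of unipotent representations by families. The twist $E\mapsto E\otimes\mathrm{sgn}$ enters because the wavefront set is governed by pairings against the generalized Gelfand--Graev representations, and in the extreme case $\OO=\OO_{\mathrm{reg}}$ this recovers the classical fact that $\pi$ admits a Whittaker model exactly when $E\otimes\mathrm{sgn}$ contains the trivial representation. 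Since the Kawanaka wavefront set of a direct sum is the maximum of those of its summands and each $\WF(\pi)$ is a single nilpotent orbit, this yields $\WF(X^{\bfU_c(\mf o)})=\max\{\OO_{\phi(E)}(\overline{\mathbb F}_q)\mid E\in\mathrm{Irr}(W_c),\ \Hom(E,X|_{W_c})\neq 0\}$.

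Finally I would transport this equality along the maps $\mathscr L(c,-)$ and $[\bullet]$: the map $\mathscr L:(\mathcal I_o,\le)\to(\mathcal N_o(K),\le)$ is strictly increasing, and $[\bullet]\circ\mathscr L$ is strictly increasing onto $(\mathcal N_o(K)/\sim_A,\le_A)$ by \cite[Corollary 4.7, Lemma 5.3]{okada2021wavefront}; transporting the equality of the previous step along $[\bullet]\circ\mathscr L(c,-)$ then yields exactly (\ref{eq:locwf}). The main obstacle is the second stage: converting the abstract definition of the Kawanaka wavefront set into this explicit Springer-theoretic recipe --- in particular pinning down the $\mathrm{sgn}$-twist and the role of the special representation of the family --- which is where all the finite-group input (Lusztig's families and $a$-function, the geometry of special pieces, Taylor's theorem on generalized Gelfand--Graev representations) is concentrated.
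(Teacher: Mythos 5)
This statement is not proved in the paper at all: it is imported verbatim as \cite[Theorem 3.9]{okada2021wavefront}, so there is no in-paper argument to compare against. Judged on its own terms, your reconstruction follows the route one would expect (and the route the cited reference takes): unwind the definition of $^K\WF_c(X)$ to reduce to the Kawanaka wavefront set of the finite-group representation $X^{\bfU_c(\mf o)}$; use Moy--Prasad minimal $K$-types, the Borel--Casselman equivalence, Lusztig's isomorphism $\mathfrak J_c$ and deformability to identify the unipotent principal-series constituents of $X^{\bfU_c(\mf o)}$ with the irreducible constituents of $\Res_{W_c}^{\widetilde W}(X_{q\to 1})$; invoke the Lusztig/Geck--Malle/Taylor identification of the Kawanaka wavefront set of the unipotent representation attached to $E$ with $\OO(\phi(E),\overline{\mathbb F}_q)$ (the $\mathrm{sgn}$-twist and the special member of the family being exactly where this input lives); and transport along $[\bullet]\circ\mathscr L(c,-)$. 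You correctly isolate the second stage as the crux and cite the right sources for it.

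One small point you gloss over in the final step: you compute $\WF(X^{\bfU_c(\mf o)})=\max S$ with $S=\{\OO(\phi(E),\overline{\mathbb F}_q)\}$ and then apply $f=[\bullet]\circ\mathscr L(c,-)$, obtaining $f(\max S)$, whereas the right-hand side of (\ref{eq:locwf}) is $\max f(S)$. Strict monotonicity of $f$ gives $\max f(S)\subseteq f(\max S)$ but not the reverse inclusion in general, since two orbits incomparable in $\cN_o^{\bfL_c}(\overline{\mathbb F}_q)$ could become comparable after saturation and passage to $\sim_A$-classes. Establishing that the two sets coincide (equivalently, that $f$ carries maximal elements of $S$ to maximal elements of $f(S)$) is part of the content of the cited theorem and deserves an explicit argument rather than the phrase ``transporting the equality along $f$.'' Apart from this, the proposal is a faithful sketch of the proof.
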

The notion of `deformable' in the statement of the theorem is a technical condition on $m_\bfI(X)$ (see \cite[Section 3.3]{okada2021wavefront}).
In light of the extension of the $q\to 1$ operation (\ref{eq:qto1hom}) to the full Grothedieck group $R(\mathcal H_\bfI)$ and Equation \ref{eq:heckerestriction}, we may drop the requirement that $m_\bfI(X)$ is deformable from Theorem \ref{thm:locwf}.

In this paper we will primarily be concerned with $^K\WF_{c}(X,\CC)$. By applying $\bar\theta_{\bfT}$ to Equation (\ref{eq:locwf}) we get
\begin{equation}
    ^K\WF_c(X,\CC)= \max\{\overline{\mathbb L}(J,\OO(\phi(E),\CC)) \mid E\in\mathrm{Irr}(W_c), \ \Hom(E,\Res_{W_c}^{\widetilde W}(X_{q\to1}))\ne 0\}
\end{equation}
since $\bar\theta_{\bfT}$ is an isomorphism of partial orders and 
\begin{align}
    \bar\theta_{\bfT}([\mathscr L(c,\OO(\phi(E'),\overline{\mathbb F}_q))]) &= \mf Q(\theta(\mathscr L(c,\OO(\phi(E'),\overline{\mathbb F}_q)))) &&\text{(Equation (\ref{eq:square}))} \nonumber \\
    &= \mf Q(\mathbb L(J,\Theta_{c(J)}(\OO(\phi(E'),\overline{\mathbb F}_q))))  &&\text{(Proposition \ref{prop:square})} \nonumber \\
    &= \overline{\mathbb L}(J,\OO(\phi(E'),\CC)).
\end{align}

\section{Computation of wavefront sets}\label{sec:main-result}

Our main result is a general formula for the (canonical unramified) wavefront set of an irreducible Iwahori-spherical representation with real infinitesimal character. To prove this result, it will be convenient to define the \emph{wavefront set} of a Weyl group representation (analogous to the invariant $^K\WF(X)$ defined in Section \ref{s:wave}). Fix the notation of Sections \ref{subsec:nilpotent} and \ref{subsec:Wreps}, e.g. $\overline{\mathbb{L}}$, $\phi(\bullet)$, $\OO(\bullet,\CC)$, and so on. For $E\in\mathrm{Irr}(W)$ and $J\subsetneq\tilde\Delta$, consider the subset
    $$\WF_J(E) = \max\{\overline{\mathbb L}(J,\OO(\phi(E'),\CC)) \mid E'\in\mathrm{Irr}(W_J), \ \Hom(E',E|_{W_J})\ne 0\} \subset \cN_{o,\bar c}.$$

\begin{definition}
\label{def:wWF}
The \emph{wavefront set} of $E$ is
$$\WF(E) = \max \{\WF_J(E) \mid J\subsetneq \tilde\Delta\} \subset \cN_{o, \bar c}.$$
\end{definition}

\begin{lemma}
    \label{lem:upperbound}
    Let $E\in \mathrm{Irr}(W)$ and $\OO^\vee = \OO^{\vee}(E,\CC)$. Then for any $J\subsetneq \tilde\Delta$ we have that 
    \begin{equation}
        \WF_J(E)\le_A d_A(\OO^\vee,1).
    \end{equation}
\end{lemma}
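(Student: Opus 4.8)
The plan is to reduce the statement about an arbitrary proper face $J \subsetneq \tilde\Delta$ to a combinatorial comparison under the Achar duality $d_A$, using the two defining inequalities of the pre-order $\leq_A$ (agreement of orbits under $\pr_1$ ordering, and reverse agreement under $d_S$). Fix $E \in \mathrm{Irr}(W)$, set $\OO^\vee = \OO^\vee(E,\CC)$, and fix $J \subsetneq \tilde\Delta$; we must show $\overline{\mathbb{L}}(J, \OO(\phi(E'),\CC)) \leq_A d_A(\OO^\vee,1)$ for every $E' \in \mathrm{Irr}(W_J)$ with $\Hom(E', E|_{W_J}) \neq 0$. Since $\leq_A$ is defined through $\pr_1$ and $d_S$, it suffices to establish the two inequalities: (a) $\pr_1(\overline{\mathbb{L}}(J,\OO(\phi(E'),\CC))) \leq \pr_1(d_A(\OO^\vee,1)) = d(\OO^\vee)$, and (b) $d_S(\overline{\mathbb{L}}(J,\OO(\phi(E'),\CC))) \geq d_S(d_A(\OO^\vee,1)) = \OO^\vee$, using the stated compatibility $d_S(\OO,C) = \pr_1 \circ d_A(\OO,C)$ and the identity $d_A(\OO^\vee,1) = (d(\OO^\vee), \bar C')$.

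For inequality (b), the key input is the interaction between Lusztig's map $E \mapsto \phi(E)$ (taking $E \otimes \mathrm{sgn}$ to its family and then the special representation $E(\phi)$) and the Springer correspondence with respect to the \emph{dual} group: I expect that $d_S$ applied to $\overline{\mathbb{L}}(J, \OO(\phi(E'),\CC))$ — which, by the definition of $\mathbb{L}$ as saturation from the pseudo-Levi $L_{c(J)}$ to $G$ followed by the Sommers datum — can be computed on the dual side as the $G^\vee$-orbit $\OO^\vee(E', \CC)$ of $E'$ viewed inside $W^\vee$ (via $W_J \hookrightarrow W \cong W^\vee$). Then the condition $\Hom(E', E|_{W_J}) \neq 0$, combined with the fact that restriction of $W$-representations is compatible with the closure order on Springer supports (a standard property: $\OO^\vee(E',\CC) \leq \OO^\vee(E,\CC)$ whenever $E'$ appears in $E|_{W_J}$, or possibly the reverse depending on normalization — one must track the $\mathrm{sgn}$-twist in $\phi(\bullet)$ carefully), yields the desired comparison with $\OO^\vee = \OO^\vee(E,\CC)$. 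The $\mathrm{sgn}$-twist in the definition $\phi(E) = $ family of $E \otimes \mathrm{sgn}$ is precisely what converts the Springer support on the $G$-side into dual-group data, and this is where the order gets reversed so that $d_S(\cdots) \geq \OO^\vee$ rather than $\leq$.

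Inequality (a) should then follow from (b) together with the order-reversing property of $d$: from $d_S(\overline{\mathbb{L}}(J,\OO(\phi(E'),\CC))) \geq \OO^\vee$ one applies $d$ (order-reversing) to get $d(d_S(\cdots)) \leq d(\OO^\vee)$, and $d \circ d_S$ on a pair $(\OO,\bar C)$ recovers $\OO$ when $\OO$ is special (or in general $\pr_1$ of the pair lies below $d(d_S(\cdot))$); one needs that $\pr_1 \circ \overline{\mathbb{L}}(J, \OO(\phi(E'),\CC))$ is a special orbit, which holds because $\phi(E')$ is a family and $E(\phi(E'))$ is special, so its Springer support is special, and saturation of a special orbit along a pseudo-Levi remains special (the latter is a known stability property of special orbits under $\mathbb{L}$/induction, cf. the properties of $d_A$).

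The main obstacle I anticipate is precisely the bookkeeping in step (b): correctly identifying $d_S \circ \overline{\mathbb{L}}(J, \OO(\phi(E'),\CC))$ with the dual-group Springer support of $E'$, since this requires unwinding the definition of $\mathbb{L}$ (pseudo-Levi $\Xi(c(J)) = (L_{c(J)}, tZ^\circ_L)$, then $(Gx, tZ_G^\circ(x))$), matching it against Sommers' description of $d_S$ on such pairs, and reconciling the $W_J$-side Springer/family combinatorics with those of $W^\vee_J \subset W^\vee$ — the isogeny-type subtleties and the exact role of the canonical quotient $\bar A(\OO)$ all enter here. Once that dictionary is in place, the inequalities are formal consequences of order-reversal and the compatibilities already recorded in Section \ref{subsec:nilpotent}.
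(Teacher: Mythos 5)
Your overall skeleton---verifying the two inequalities that define $\le_A$, with the $d_S$-inequality coming from the compatibility between restriction of $W$-representations and $d_S\circ\mathbb L$---is the same route the paper takes. The paper disposes of your step (b) in one line by citing \cite[Proposition 4.3]{acharaubert}, which says precisely that if $E'$ is a constituent of $E|_{W_J}$ then $\OO^\vee\le d_S(\mathbb L(J,\OO(\phi(E'),\CC)))$, and it then gets your step (a) for free from the maximality property ``$d_S(\OO,\bar C)\ge\OO^\vee$ implies $(\OO,\bar C)\le_A d_A(\OO^\vee,1)$'' (\cite[Theorem 5.5(3)]{okada2021wavefront}). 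So you have located the right two ingredients, but neither is actually established in your write-up, and one of your justifications is false.

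The concrete error is in step (a): you assert that $\pr_1\circ\overline{\mathbb L}(J,\OO(\phi(E'),\CC))$ is special because ``saturation of a special orbit along a pseudo-Levi remains special.'' This is false: in $G_2$, take $J$ a single long-root node, so $L_{c(J)}$ has a factor $\mathfrak{sl}_2$, and take $\phi$ the family of the trivial representation, whose Springer support is the regular (special) orbit of that $\mathfrak{sl}_2$; its $G$-saturation is the minimal orbit $A_1$ of $G_2$, which is not special. (You are also conflating saturation with Lusztig--Spaltenstein induction; $\mathbb L$ uses saturation.) What your argument actually requires is the inequality $\OO\le d(d_S(\OO,\bar C))$ for an arbitrary pair---your parenthetical fallback---which is true but is a nontrivial property of the Sommers/Achar duality and is exactly the content of the cited maximality statement, not a formal consequence of $d$ being order-reversing. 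Separately, step (b), which is the real content of the lemma, is left as an explicit ``expectation'' with the direction of the inequality undetermined; note also that the correct identification of $d_S(\overline{\mathbb L}(J,\OO(\phi(E'))))$ goes through truncated induction $j_{W_J}^W$ of the special member of $\phi(E')\otimes\mathrm{sgn}$ (cf.\ Lemma \ref{lem:triviallocalsystem}), not through ``viewing $E'$ inside $W^\vee$.'' As written, the proposal is a correct plan whose two essential steps are unproved, one of them supported by a false claim.
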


\begin{proof}
If $E'$ is an irreducible constituent of $E|_{W_J}$, then by \cite[Proposition 4.3]{acharaubert} we have that $\OO^\vee\le d_S(\mathbb L(J,\OO(\phi(E'),\CC)))$.
Thus for all $(\OO,\bar C) \in \WF_J(E)$ we have that $d_S(\OO,\bar C)\ge \OO^\vee$ and so by \cite[Theorem 5.5 (3)]{okada2021wavefront} we have that $(\OO,\bar C)\le_A d_A(\OO^\vee,1)$.
\end{proof}

We now introduce the notion of a \emph{faithful nilpotent orbit}, which will play a central role in our calculation of $\WF(E)$. 

\begin{definition}\label{def:faithful}
Let $\OO^{\vee} \subset \cN^{\vee}$ be a nilpotent orbit. We say that $\OO^{\vee}$ \emph{faithful} if there is a pair $(J, \phi) \in \mathcal{F}_{\tilde\Delta}$ such that
\begin{itemize}
    \item[(i)] $\overline{\mathbb L}(J,\OO(\phi)) = d_A(\OO^{\vee},1)$.
    \item[(ii)] If $E$ is an irreducible representation of $W$ with $\OO^{\vee}(E,\CC) = \OO^{\vee}$, there is a representation $F \in \phi \otimes \mathrm{sgn}$ such that
     $\Hom_{W_J}(F,E|_{W_J}) \neq 0$.
\end{itemize}
\end{definition}

\begin{theorem}\label{thm:faithful}
Every nilpotent orbit $\OO^{\vee} \subset \cN^{\vee}$ is faithful.
\end{theorem}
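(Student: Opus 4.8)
\textbf{Proof plan for Theorem \ref{thm:faithful}.}

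The plan is to reduce the statement to a case-by-case verification that uses the structure of the affine Dynkin diagram. The key observation is that a nilpotent orbit $\OO^\vee \subset \cN^\vee$ is, via the Bala-Carter theory, associated to a Levi subalgebra $\mathfrak{l}^\vee$ and a distinguished orbit in $\mathfrak{l}^\vee$; and $\mathfrak l^\vee$ can be realized (up to conjugacy) as the Levi attached to a subset $J\subsetneq\tilde\Delta$ of the \emph{extended} simple roots, precisely because using $\tilde\Delta$ (rather than $\Delta$) gives access to pseudo-Levi subgroups, hence to all the parahoric reductive quotients $\bfL_{c(J)}$. So first I would make this reduction: given $\OO^\vee$, choose $J$ so that $W_J$ is the Weyl group of a Levi $\mathfrak l^\vee$ in which $\OO^\vee$ meets as a distinguished orbit. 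The natural candidate for $\phi$ is then the family of $W_J$ whose special representation, tensored with sign, is the Springer representation (for the dual group side) attached to the distinguished orbit $\OO^\vee \cap \mathfrak l^\vee$ with trivial local system.

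Next I would verify condition (i), namely $\overline{\mathbb L}(J, \OO(\phi)) = d_A(\OO^\vee, 1)$. The map $\overline{\mathbb L}$ is the composite $\mf Q \circ \mathbb L$, where $\mathbb L$ saturates a nilpotent orbit of $L_{c(J)}$ up to $G$ and records the canonical-quotient datum coming from the central element $tZ_L^\circ$ of $\Xi(c(J))$. On the other side, Achar's $d_A(\OO^\vee,1)$ can be computed through Sommers' duality $d_S$, which is itself defined via saturation from pseudo-Levi subalgebras (\cite[Section 6]{Sommers2001}). The point is that both sides are built from the same raw data: the Levi/pseudo-Levi $\mathfrak l^\vee$ (equivalently $\bfL_{c(J)}$), the distinguished orbit $\OO^\vee\cap\mathfrak l^\vee$, and the Spaltenstein dual of that distinguished orbit inside $L_{c(J)}$. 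So I would match them by combining: (a) the compatibility of Spaltenstein/Lusztig-Spaltenstein duality with the Springer correspondence and the $\phi$-operation (the special representation in $\phi\otimes\mathrm{sgn}$ has Springer support $d(\OO^\vee\cap\mathfrak l^\vee)$ on the $G$-side); (b) the fact that saturation followed by $d_S$ agrees with Achar's recipe for $d_A(\OO^\vee,1)$; and (c) the explicit description of the component-group datum via $tZ_L^\circ$ from Lemma \ref{lem:paramNoK} and Proposition \ref{prop:square}. This is bookkeeping, but it is the technical heart: I must track the canonical-quotient conjugacy class through $\mathbb L$ and check it matches the one Achar attaches.

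For condition (ii), I need a branching statement: for \emph{every} irreducible $E$ of $W$ with $\OO^\vee(E,\CC)=\OO^\vee$, some member of $\phi\otimes\mathrm{sgn}$ appears in $E|_{W_J}$. Equivalently, after tensoring with sign on both sides, I need the special representation $E(\phi)$ of $W_J$ (whose $L_{c(J)}$-Springer support is the distinguished orbit $\OO^\vee\cap\mathfrak l^\vee$) to occur in $E\otimes\mathrm{sgn}|_{W_J}$ for every $E\otimes\mathrm{sgn}$ with $G^\vee$-Springer support $\OO^\vee$. The tool here is the behaviour of the Springer correspondence under parabolic restriction/truncated induction: the $G$-Springer packet of representations attached to a given orbit is obtained from the Springer packet of a distinguished orbit in a Levi by Lusztig-Spaltenstein $j$-induction, and $j$-induction is a section of restriction in the relevant sense, so the distinguished-orbit representation does survive in the restriction. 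I would combine this with the classification of faithful data in the literature — this is essentially \cite[Proposition 4.3]{acharaubert} run in the reverse direction, or can be extracted from the analysis in \cite{Acharduality} — and, if a uniform argument proves elusive, fall back on a type-by-type check using the explicit tables of the Springer correspondence and of $d_A$. I expect condition (ii) to be the main obstacle: existence of \emph{some} good $(J,\phi)$ is not hard, but getting one that works simultaneously for all $E$ in the Springer packet, with the component-group bookkeeping in (i) also lining up, is where the real work lies, and it may genuinely require invoking the exceptional-group computations of Achar and of Sommers rather than a slick uniform proof.
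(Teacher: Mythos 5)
Your choice of $(J,\phi)$ via Bala--Carter data is the wrong one, and the proposal fails already at condition (i) of Definition \ref{def:faithful}. Take $\fg=\fg^\vee=\mathfrak{sl}(3)$ and $\OO^\vee=(2,1)$. Bala--Carter realizes $\OO^\vee$ as the regular (distinguished) orbit of the Levi $\mathfrak l^\vee\cong\mathfrak{gl}(2)\oplus\mathfrak{gl}(1)$, so $J=\{\alpha_1\}$, $W_J=S_2$, and your $\phi\otimes\mathrm{sgn}$ contains the trivial representation; hence $\phi=\{\mathrm{sgn}\}$ and $\OO(\phi)$ is the zero orbit of $L_{c(J)}$, giving $\overline{\mathbb L}(J,\OO(\phi))=(\,0\,,1)$, whereas $d_A(\OO^\vee,1)=((2,1),1)$. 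The structural reason is a confusion between saturation and induction: by Lemma \ref{lem:triviallocalsystem}, condition (i) forces $j_{W_J}^{W}\mathrm{sp}(\phi\otimes\mathrm{sgn})=E(\OO^\vee,1)$, i.e. the Lusztig--Spaltenstein \emph{induction} of $\OO^\vee\cap\mathfrak l^\vee$ must equal $\OO^\vee$; but Bala--Carter only says that the \emph{saturation} of $\OO^\vee\cap\mathfrak l^\vee$ is $\OO^\vee$ (in the example above the induced orbit is the regular orbit $(3)$, not $(2,1)$). Your motivation for passing to $\tilde\Delta$ is also backwards: Bala--Carter Levis correspond to subsets of $\Delta$ and never need the affine node; it is the Sommers--Achar dual $d_A(\OO^\vee,1)$ whose preimages under $\overline{\mathbb L}$ typically live on proper pseudo-Levis such as $D_k\times B_{n-k}$ in type $B$. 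The correct starting point is to take $(J,\OO(\phi))$ to be a preimage of $d_A(\OO^\vee,1)$ under $\overline{\mathbb L}$ (such preimages exist by surjectivity), and the content of the theorem is to select one for which condition (ii) also holds; in the paper this is the explicit bipartition $^{\langle\mu\rangle}d(\lambda)$ of Definition \ref{def:pimu} in classical types and a computer search in exceptional types.

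You do correctly identify that condition (ii) is the hard part and that $j$-induction plus Frobenius reciprocity only handles $E=E(\OO^\vee,1)$ (this is exactly Proposition \ref{prop:easycase}, which suffices when the Springer packet of $\OO^\vee$ is a singleton). But for the remaining $E$, which carry nontrivial local systems, "$j$-induction is a section of restriction" gives nothing, and deferring to \cite{acharaubert} or \cite{Acharduality} does not help: those references supply the upper bound (Lemma \ref{lem:upperbound}) and the combinatorics of $d_A$, not the branching statement you need. The actual argument in classical types is a symbol computation: one writes the $s$-symbol of $E(\OO^\vee,1)$ as a sum of two $a$-symbols corresponding to the two factors of $W_J$, shows (Lemmas \ref{lem:technicallemma1}--\ref{lem:technicallemma2} and \ref{lem:jump1}) that every "flip" generating the Springer packet of $\OO^\vee$ can be matched by compatible flips of the two $a$-symbols staying inside the family $\phi\otimes\mathrm{sgn}$, and then concludes via the Littlewood--Richardson rule (Proposition \ref{prop:sumtohom}). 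None of this is recoverable from the plan as written.
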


The proof of Theorem \ref{thm:faithful} will be postponed until Section \ref{sec:faithful}. Using this result, and Lemma \ref{lem:upperbound}, we can compute the the wavefront set of any irreducible $W$-representation.

\begin{cor}
\label{cor:wWF}
Let $E\in\mathrm{Irr}(W)$. Then
$$\WF(E) = d_A(\OO^{\vee}(E,\CC),1).$$
\end{cor}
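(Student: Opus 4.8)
The plan is to deduce Corollary \ref{cor:wWF} by combining the upper bound from Lemma \ref{lem:upperbound} with the lower bound furnished by Theorem \ref{thm:faithful}. Fix $E \in \mathrm{Irr}(W)$ and set $\OO^\vee = \OO^\vee(E,\CC)$. From Lemma \ref{lem:upperbound}, for every $J \subsetneq \tilde\Delta$ we have $\WF_J(E) \le_A d_A(\OO^\vee,1)$, and taking the maximum over all $J$ gives $\WF(E) \le_A d_A(\OO^\vee,1)$; in particular every element of $\WF(E)$ is $\le_A d_A(\OO^\vee,1)$. Since $d_A(\OO^\vee,1)$ is a single element of $\cN_{o,\bar c}$, it suffices to show that $d_A(\OO^\vee,1)$ actually occurs in $\WF(E)$, i.e. that it is an element of $\WF_J(E)$ for some $J$; maximality then forces $\WF(E) = \{d_A(\OO^\vee,1)\}$.

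To produce such a $J$, I would invoke Theorem \ref{thm:faithful}: the orbit $\OO^\vee$ is faithful, so there is a pair $(J,\phi) \in \mathcal F_{\tilde\Delta}$ satisfying (i) $\overline{\mathbb L}(J,\OO(\phi)) = d_A(\OO^\vee,1)$ and (ii) there is $F \in \phi \otimes \mathrm{sgn}$ with $\Hom_{W_J}(F, E|_{W_J}) \neq 0$. The key point is that $\phi \otimes \mathrm{sgn}$ is, by the conventions of Section \ref{subsec:Wreps}, precisely the family $\phi(F)$ for any $F$ in it, so $\OO(\phi(F),\CC) = \OO(\phi,\CC) = \OO(\phi)$ and hence $\overline{\mathbb L}(J,\OO(\phi(F),\CC)) = d_A(\OO^\vee,1)$. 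Condition (ii) gives an irreducible constituent $F$ of $E|_{W_J}$ realizing this value, so $d_A(\OO^\vee,1)$ lies in the set $\{\overline{\mathbb L}(J,\OO(\phi(E'),\CC)) \mid E' \in \mathrm{Irr}(W_J),\ \Hom(E',E|_{W_J}) \neq 0\}$ of which $\WF_J(E)$ is the set of maximal elements. But by Lemma \ref{lem:upperbound} every element of that set is $\le_A d_A(\OO^\vee,1)$, so $d_A(\OO^\vee,1)$ is itself maximal in it, hence $d_A(\OO^\vee,1) \in \WF_J(E)$.

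Combining the two directions: $d_A(\OO^\vee,1) \in \WF_J(E) \subseteq \{ \text{elements} \le_A d_A(\OO^\vee,1)\}$, so $\WF_J(E) = \{d_A(\OO^\vee,1)\}$, and taking the maximum over all $J'$ while using $\WF_{J'}(E) \le_A d_A(\OO^\vee,1)$ for every $J'$ yields $\WF(E) = \{d_A(\OO^\vee,1)\} = d_A(\OO^\vee(E,\CC),1)$, as claimed. The only genuine content is Theorem \ref{thm:faithful}, whose proof is deferred; everything here is a formal consequence of that statement together with Lemma \ref{lem:upperbound} and the compatibility, recorded in Section \ref{subsec:Wreps}, between the map $\phi(\bullet)$, the tensoring by $\mathrm{sgn}$, and the Springer-support assignment $\OO(\bullet,\CC)$. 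The main obstacle in the whole argument is thus isolated in Theorem \ref{thm:faithful}: one must simultaneously control the branching $E|_{W_J}$ to a pseudo-Levi Weyl group and the combinatorics of the Achar duality $d_A$ together with the saturation map $\overline{\mathbb L}$, so that a single $(J,\phi)$ can be chosen that works uniformly for all $E$ with the given Springer support.
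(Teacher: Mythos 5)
Your argument is correct and is essentially the paper's own proof: upper bound from Lemma \ref{lem:upperbound}, lower bound from faithfulness (Theorem \ref{thm:faithful}), and the observation that $F\in\phi\otimes\mathrm{sgn}$ forces $\phi(F)=\phi$ so that $\overline{\mathbb L}(J,\OO(\phi(F),\CC))=d_A(\OO^\vee,1)$ is realized in $\WF_J(E)$. One cosmetic remark: the family $\phi(F)$ equals $\phi$ itself (the family containing $F\otimes\mathrm{sgn}$), not $\phi\otimes\mathrm{sgn}$ as your wording suggests, but the equality $\OO(\phi(F),\CC)=\OO(\phi,\CC)$ you actually use is the right one.
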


\begin{proof}
    By Lemma \ref{lem:upperbound}, $\WF(E)$ is bounded above by $d_A(\OO^\vee(E,\CC),1)$. It suffices to show that this bound is achieved.
    For this let $\OO^\vee = \OO^\vee(E,\CC)$.
    By Theorem \ref{thm:faithful}, $\OO^\vee$ is faithful. So there is a pair $(J,\phi)\in \mathcal F_{\tilde\Delta}$ such that $\overline{\mathbb L}(J,\OO(\phi,\CC)) = d_A(\OO^\vee,1)$ and there is a representation $F\in \phi\otimes \mathrm{sgn}$ such that $\Hom(F,E|_{W_J})\ne0$.
    But since $F\in\phi\otimes\mathrm{sgn}$ we have that $\phi(F) = \phi$ and so $\overline{\mathbb L}(J,\OO(\phi(F),\CC)) = d_A(\OO^\vee,1)$ as required.
\end{proof}

We now turn to the task of computing wavefront sets of Iwahori-spherical $\mathbf{G}(\sfk)$-representations. Recall the group homomorphisms (\ref{eq:qto1hom}, \ref{eq:restrictiontoW})
$$(\bullet)_{q\to 1}: R_{\mathbf{I}}(\mathbf{G}(\sfk)) \to R(\widetilde{W}), \qquad |_W: R_{\mathbf{I}}(\mathbf{G}(\sfk)) \to R(W)$$
\begin{lemma}\label{l:WF-factors}
Let $X \in \mathcal{C}_{\mathbf{I}}(\mathbf{G}(\sfk))$ and suppose 
\begin{equation}
    \Res_{W_c}^{\widetilde W}X_{q\to 1} = \Res_{\dot W_c}^{W}X|_W
\end{equation}
for all $c\subseteq c_0$. Let 
\begin{equation}
    \mathscr O^\vee = \min\{\OO^\vee(E,\CC) \mid E\in \mathrm{Irr}(W),\Hom(E,X|_{W})\ne0\}.
\end{equation}
Then 
\begin{equation}
    ^K\WF(X,\CC) = \{d_A(\OO^\vee,1) \mid \OO^\vee\in\mathscr O^\vee\}.
\end{equation}
\end{lemma}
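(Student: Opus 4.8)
\textbf{Proof plan for Lemma \ref{l:WF-factors}.}

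The plan is to compute $^K\WF(X,\CC)$ via Theorem \ref{thm:OkadaWFset}, which identifies it with $\max\{{}^K\WF_c(X,\CC) \mid c \subseteq c_0\}$, and to match each term ${}^K\WF_c(X,\CC)$ against the formula coming from the hypothesis. First I would fix a face $c = c(J)$ with $J \subsetneq \tilde\Delta$, so that $\dot W_c = W_J$, and combine the displayed formula for ${}^K\WF_c(X,\CC)$ at the end of Section \ref{s:wave} with the hypothesis $\Res_{W_c}^{\widetilde W}X_{q\to 1} = \Res_{\dot W_c}^W X|_W$. This rewrites
\[
{}^K\WF_c(X,\CC) = \max\{\overline{\mathbb L}(J,\OO(\phi(E'),\CC)) \mid E'\in\mathrm{Irr}(W_J),\ \Hom(E',\Res^W_{W_J} X|_W)\ne 0\}.
\]
The next step is to express this in terms of the constituents $E$ of $X|_W$: since $\Hom(E', \Res^W_{W_J}X|_W) \neq 0$ iff $E'$ is a constituent of $E|_{W_J}$ for some constituent $E$ of $X|_W$, we may regroup and write ${}^K\WF_c(X,\CC) = \max_E \WF_J(E)$, the max taken over constituents $E$ of $X|_W$ — using the definition of $\WF_J(E)$ and the fact that $\phi(E') = \phi(F)$ whenever $F \in \phi(E')\otimes\mathrm{sgn}$, so that ranging over $E'$ or over the $F$'s in the relevant families gives the same set of orbits. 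Taking the max over all $J \subsetneq \tilde\Delta$ and over all constituents $E$, and applying Theorem \ref{thm:OkadaWFset}, gives
\[
{}^K\WF(X,\CC) = \max\{\WF(E) \mid E \in \mathrm{Irr}(W),\ \Hom(E, X|_W) \neq 0\}.
\]

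Now I would invoke Corollary \ref{cor:wWF}, which gives $\WF(E) = d_A(\OO^\vee(E,\CC),1)$ for each such $E$. So
\[
{}^K\WF(X,\CC) = \max\{d_A(\OO^\vee(E,\CC),1) \mid E \in \mathrm{Irr}(W),\ \Hom(E,X|_W) \neq 0\}.
\]
It remains to see that this maximum is exactly $\{d_A(\OO^\vee,1) \mid \OO^\vee \in \mathscr O^\vee\}$, where $\mathscr O^\vee = \min\{\OO^\vee(E,\CC) \mid \Hom(E, X|_W) \neq 0\}$. Here I would use Lemma \ref{lem:injectiveachar}(1): since $d_A(\cdot,1)$ is order-reversing on $\mathcal N^\vee_o$ and the implication $\OO^\vee_1 \leq \OO^\vee_2 \iff d_A(\OO^\vee_1,1) \geq_A d_A(\OO^\vee_2,1)$ is an equivalence, taking $\max$ of the image $d_A(\cdot,1)$ over a subset $S \subseteq \mathcal N^\vee_o$ corresponds precisely to taking $d_A(\cdot,1)$ of $\min S$. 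Applying this with $S = \{\OO^\vee(E,\CC) \mid \Hom(E,X|_W)\neq 0\}$ yields the claimed formula, since $\min S = \mathscr O^\vee$ by definition. One should note that $\mathscr O^\vee$ may have several elements (the set $S$ need not have a unique minimum), but Lemma \ref{lem:injectiveachar}(1) handles this: each minimal element contributes a maximal element $d_A(\OO^\vee,1)$, and these are pairwise incomparable, so the max is the full set $\{d_A(\OO^\vee,1)\mid \OO^\vee\in\mathscr O^\vee\}$.

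The main obstacle I anticipate is the bookkeeping in the first paragraph: carefully justifying the interchange of the two maxima (over $J$ and over constituents $E$), and verifying that restricting attention to faces $c \subseteq c_0$ (as permitted by Theorem \ref{thm:OkadaWFset}) is compatible with ranging over all $J \subsetneq \tilde\Delta$ in Definition \ref{def:wWF} — i.e. that the faces of the fixed chamber $c_0$ are exactly the $c(J)$ for $J \subsetneq \tilde\Delta$, which is how $c(J)$ was defined. A secondary subtlety is that the hypothesis is stated for $c \subseteq c_0$ while $\WF(E)$ quantifies over all $J \subsetneq \tilde\Delta$; since these index sets agree, no genuine gap arises, but this identification should be made explicit. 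The rest is a formal manipulation of maxima and minima through the order-reversing bijection supplied by Lemma \ref{lem:injectiveachar}.
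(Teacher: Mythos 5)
Your proposal is correct and follows essentially the same route as the paper's proof: rewrite $^K\WF_{c(J)}(X,\CC)$ via Theorem \ref{thm:locwf} and the hypothesis, regroup over the constituents $E$ of $X|_W$ to identify it with $\max_E \WF_J(E)$, apply Corollary \ref{cor:wWF}, and convert the max into $d_A$ of the min via Lemma \ref{lem:injectiveachar}. The subtleties you flag (interchanging the two maxima, and matching faces of $c_0$ with subsets $J\subsetneq\tilde\Delta$) are handled implicitly in the paper exactly as you describe.
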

\begin{proof}
    Let $J\subsetneq \tilde\Delta$. For $E \in \mathrm{Irr}(W)$,
    let $n_E(X)$ denote the multiplicity of $E$ in $X|_W$, so that 
    \begin{equation}\label{eq:isotypic}X|_W \simeq \bigoplus_{E\in \mathrm{Irr}(W)}n_E(X) E.\end{equation}
    Since $\Res_{W_c}^{\widetilde W}X_{q\to 1} = \Res_{\dot W_c}^{W}X|_W$, by Theorem \ref{thm:locwf} we have that
    \begin{equation}
        ^K\WF_{c(J)}(X,\CC) = \max\{\overline{\mathbb L}(J,\OO(\phi(E'),\CC)) \mid E'\in\mathrm{Irr}(W_c), \ \Hom(E',\Res_{\dot W_c}^{W}(X|_{W}))\ne 0\}.    
    \end{equation}
    But by (\ref{eq:isotypic}), we have $\Hom(E',\Res_{\dot W_c}^{W}(X|_{W}))\ne 0$ if and only if there is a representation $E\in\mathrm{Irr}(W)$ with $n_E(X)\ne0$ such that $\Hom(E',E|_{\dot W_c})\ne 0$.
    We can therefore rewrite $^K\WF_{c(J)}(\pi,\CC)$ as
    \begin{align}
    \label{eq:locWFsimplification}
    \begin{split}  
        ^K\WF_{c(J)}(X,\CC)  &= \max\{\overline{\mathbb L}(J,\OO(\phi(E'),\CC)) \mid E\in \mathrm{Irr}(W), \ E'\in\mathrm{Irr}(W_c), \ n_E(X)\ne 0, \ \Hom(E',E|_{\dot W_c})\ne 0 \} \\
        &= \max \{\WF_J(E)\mid E\in \mathrm{Irr}(W), \ n_E(X)\ne0\}.
    \end{split}
    \end{align}
    Thus
    \begin{align*}
        ^K\WF(X,\CC) &= \max\{\hphantom{ }^K\WF_{c(J)}(X,\CC) \mid J\subsetneq\tilde\Delta\}\\
        &= \max \{\WF_J(E) \mid J\subsetneq\tilde\Delta, \ n_E(X)\ne0\}&& \text{(Equation \ref{eq:locWFsimplification})}\\
        &= \max \{\WF(E) \mid n_E(X)\ne0\} && \text{(Definition \ref{def:wWF})}\\
        &= \max \{d_A(\OO^{\vee}(E,\CC),1) \mid n_E(X)\ne0\} && \text{(Corollary \ref{cor:wWF})}.
    \end{align*}
    Then by Lemma \ref{lem:injectiveachar} we get
    \begin{align*}^K\WF(X) &= \{d_A(\OO^\vee,1)\mid \OO^\vee\in \min\{\OO^{\vee}(E,\CC) \mid E\in\mathrm{Irr}(W), \ n_E(X)\ne0\}\}\\
    &= \{d_A(\OO^{\vee},1) \mid \OO^{\vee} \in \mathscr{O}^{\vee}\}
    \end{align*}
    as required.
\end{proof}

The next result is essentially contained in \cite[\S5]{Re-euler}.
\begin{lemma}\label{l:std-real}
Let $Y$ be an Iwahori-spherical standard module with real infinitesimal character. Then $\Res_{W_c}^{\widetilde W}Y_{q\to 1} = \Res_{\dot W_c}^{W}Y|_W$, in the notation of the previous lemma.
\end{lemma}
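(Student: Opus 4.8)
The plan is to deduce the claim from the structure theory of the Iwahori-Hecke algebra, realizing $\Res^{\widetilde W}_{W_c} Y_{q\to 1}$ and $\Res^W_{\dot W_c} Y|_W$ as specializations at $v=1$ of a single $\mathcal H_{\mathbf I,v}$-module, restricted to the finite parabolic subalgebra $\mathcal H_{c,v}$. Recall from Section \ref{s:Hecke} that a standard Iwahori-spherical module $Y=Y(s,n,\rho)$ with $m_{\mathbf I}(Y)=M$ lifts to an $\mathcal H_{\mathbf I,v}$-module $M_v$, free over $R=\CC[v,v^{-1}]$, with $M_v\otimes_R\CC_{\sqrt q}\cong M$ and $M_v\otimes_R\CC_1 = Y_{q\to 1}$ as $\widetilde W$-representations. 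On the other hand, $Y|_W$ is obtained by restricting $M$ to the finite Hecke algebra $\mathcal H_c$ for $c$ the face of $c_0$ with $W_c=W$, transporting along Lusztig's isomorphism $\mathfrak J_c:\mathcal H_c\xrightarrow{\sim}\CC[W]$, and specializing $q\to 1$. The key point is that all of these operations — restriction to a finite parabolic subalgebra, Lusztig's deformation isomorphism $\mathfrak J_c$, and specialization — fit into a flat family over $R$, so the two sides agree provided the generic fibre behaves well.

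First I would set up the $v$-deformed picture: introduce $\mathcal H_{c,v}\subset\mathcal H_{\mathbf I,v}$ for each face $c$ of $c_0$ (the $v$-version of the finite Hecke algebra $\mathcal H_c$), and recall that Lusztig's isomorphism $\mathfrak J_c$ is the $v\mapsto\sqrt q$ specialization of an $R$-algebra isomorphism $\mathfrak J_{c,v}:\mathcal H_{c,v}\xrightarrow{\sim}R[W_c]$ that specializes to the identity at $v=1$ (this is where the properties of the Lusztig deformation enter — see \cite{lusztigdeformation}, \cite{lusztigdeformation}). Then $\Res^{\mathcal H_{\mathbf I}}_{\mathcal H_c} M$ transported by $\mathfrak J_c$ is the $v=\sqrt q$ specialization of $(\mathfrak J_{c,v})_*\Res^{\mathcal H_{\mathbf I,v}}_{\mathcal H_{c,v}} M_v$, an $R[W_c]$-module free over $R$; its $v=1$ specialization is $\Res^{\widetilde W}_{W_c}(M_v\otimes_R\CC_1) = \Res^{\widetilde W}_{W_c} Y_{q\to 1}$, because the $\widetilde W$-action on $M_v\otimes_R\CC_1$ comes precisely from the Bernstein-presentation elements reducing to $\widetilde W$ at $v=1$ and $\mathfrak J_{c,v}$ reduces to the identity. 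Taking $c$ with $W_c=W$ identifies the $v=1$ specialization with $Y|_W$ as a $W$-representation; for general $c\subseteq c_0$, the compatibility of the two restrictions (finite parabolic of $\mathcal H_{\mathbf I}$ versus parabolic of $W$) follows because $\mathfrak J_{c,v}$ for varying $c$ are compatible under inclusion of parabolics — this is the content invoked in \cite[\S5]{Re-euler}.

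The main obstacle, and the step requiring care, is the claim that $M_v$ is $R$-free and that the two restrictions agree as $W_c$-modules, \emph{not merely in the Grothendieck group}: a priori $\Res^{\widetilde W}_{W_c} Y_{q\to 1}$ and $\Res^{\dot W_c}_W Y|_W$ could differ if the family $M_v$ were not flat, or if the $v=1$ fibre of the generic $\mathcal H_{\mathbf I,v}$-module structure were not the ``naive'' $\widetilde W$-module one gets from $Y|_W$. For standard modules with \emph{real} infinitesimal character this is exactly the situation analyzed by Reeder: the central character $s\in T^\vee_{\mathbb R}$ lies in the image of the exponential, so the family $M_v$ can be constructed geometrically (via the Kazhdan-Lusztig/Springer construction over $R$) with no torsion, and the $v=1$ degeneration of the geometric module is computed by the same Borel-Moore homology that computes $Y|_W$. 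I would therefore invoke \cite[\S5]{Re-euler} — in particular the results there on the $q\to 1$ limit of standard modules and the compatibility of Hecke-algebra and Weyl-group parabolic restriction — to conclude. If one wants a self-contained argument, the fallback is to verify directly, using the Bernstein presentation \eqref{eq:relations} and the explicit cross-relations, that on $M_v$ the operators $T_{s_\alpha}$ for $s_\alpha\in\dot W_c$ specialize at $v=1$ to the reflections acting on $Y|_W$, using that $\langle\alpha^\vee,\log s\rangle$ is real so no pole $1-\theta_{-\alpha^\vee}$ obstructs the specialization — but this reproduces Reeder's computation and I would not grind through it here.

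\smallskip

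\noindent\textbf{Summary of the proof.} Lift $Y$ to a flat $R$-family $M_v$ over $\mathcal H_{\mathbf I,v}$. Restriction to the $v$-deformed finite parabolic $\mathcal H_{c,v}$ followed by Lusztig's deformation isomorphism (which is the identity at $v=1$) exhibits both $\Res^{\widetilde W}_{W_c} Y_{q\to 1}$ and $\Res^{\dot W_c}_W Y|_W$ as the $v=1$ fibre of one and the same $R[W_c]$-module, so they coincide; realness of $s$ guarantees the family has no torsion and that the $v=1$ degeneration is the expected Weyl-group module. This is essentially \cite[\S5]{Re-euler}.
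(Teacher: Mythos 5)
There is a genuine gap, and it comes from mislocating where the difficulty of the lemma lies. Most of your argument establishes the identity $\Res_{W_c}^{\widetilde W}Y_{q\to 1}=Y|_{W_c}$, where $Y|_{W_c}$ is the restriction of $m_{\bfI}(Y)$ to the finite Hecke algebra $\mathcal H_c$ of the parahoric $\bfP_c$, transported by $\mathfrak J_c$. That identity is indeed a flat-family/Tits-deformation statement; it holds for every Iwahori-spherical standard module with no hypothesis on $s$, and it is already recorded in the paper between (\ref{eq:Wcqto1}) and (\ref{eq:heckerestriction}). It is not what the lemma asserts. The lemma compares $\Res_{W_c}^{\widetilde W}Y_{q\to 1}$ with $\Res_{\dot W_c}^{W}Y|_{W}$, i.e.\ with the restriction of the $W$-representation $Y|_W$ (coming from the special maximal parahoric) to the subgroup $\dot W_c\subset W$. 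When the face $c=c(J)$ involves the affine node, $W_c$ and $\dot W_c$ are genuinely different subgroups of $\widetilde W$: $W_c$ contains affine reflections such as $s_0=s_{\alpha_0}t_{-\alpha_0^\vee}$ while $\dot W_c$ contains $\dot s_0=s_{\alpha_0}$, and the two differ by translations; correspondingly $\mathcal H_{c}\not\subset\mathcal H_{c(\Delta)}$, so there is no ``inclusion of parabolics'' along which the isomorphisms $\mathfrak J_{c,v}$ could be compatible, and that step of your argument does not apply. The substance of the lemma is precisely the comparison of the $W_c$- and $\dot W_c$-actions on $Y_{q\to 1}$, and this is where $s\in T^\vee_{\mathbb R}$ enters: realness forces the Bernstein elements $\theta_x$ to act in the family with eigenvalues $v^{r}$, $r\in\RR$ (integral in the tempered case), hence unipotently at $v=1$, and only then do the affine reflections and their linear parts act isomorphically. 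Your remarks about torsion and about poles of $1-\theta_{-\alpha^\vee}$ are not the relevant mechanism.

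Even with the difficulty correctly located, the citation of Reeder is too coarse. The input from \cite{Re-euler} (the eigenvalue statement in \S5.7 and Lemma 5.8.1) concerns \emph{tempered} modules with real infinitesimal character. To treat an arbitrary standard module $Y$ with real infinitesimal character one needs an additional step, which your proposal omits: deform $Y$ continuously through standard modules $Y^\epsilon$ with real infinitesimal character to a (possibly reducible) tempered module $Y^0$, and observe that both $\Res_{W_c}^{\widetilde W}Y^\epsilon_{q\to 1}$ and $\Res_{\dot W_c}^{W}Y^\epsilon|_W$ are locally constant in $\epsilon$, being continuous families of finite-dimensional representations of finite groups. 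With the tempered case supplied by Reeder and this deformation added, the argument closes; as written, the proposal proves the wrong identity in detail and leaves the actual content to an imprecise citation.
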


\begin{proof}
For every $x\in \CX$ and irreducible tempered representation $X$ with real infinitesimal character, the Bernstein operator $\theta_x$ acts on $m_{\mathbf{I}}(X)$ with eigenvalues of the form $v^m$, $m\in \mathbb Z$, see \cite[\S5.7]{Re-euler}. Then \cite[Lemma 5.8.1]{Re-euler} implies that the claim holds when $Y=X$ is a tempered Iwahori-spherical standard module with real infinitesimal character. It is well known, see for example \cite[Proof of Theorem 5.3]{BM1} that for any (Iwahori-spherical) standard module $Y$ there exists a continuous deformation $Y^\epsilon$, $\epsilon\in [0,1]$, $Y^1=Y$, such that $Y^0$ is a (possibly reducible, Iwahori-spherical) tempered module. Moreover if $Y$ has real infinitesimal character, so does $Y^0$. Then both $\Res_{W_c}^{\widetilde W}Y^\epsilon_{q\to 1}$ and $\Res_{\dot W_c}^{W}Y^\epsilon|_W$ are independent of $\epsilon$, being continuous deformations of finite dimensional representations of finite groups. This completes the proof.

\begin{comment}
So for any standard representation $Y$ with real infinitesimal character, $\theta_x$ acts on $m_{\mathbf{I}}(Y)$ with eigenvalues of the form $v^r$, $r\in \mathbb R$. (Here we are thinking of the semisimple Kazhdan-Lusztig parameter $s$ of $Y$ as a specialisation of $v^y$, for some $y\in \mathfrak t^\vee_{\mathbb R}$.) Hence $x$ acts on $Y_{q\to1}$ by unipotent transformations. The claim now follows from \cite[Lemma 5.8.1]{Re-euler}. 

Alternatively, if $Y=Y(s,n,\rho)$, since $s_0\in T^\vee_{\mathbb R}$, one may form the family $Y^\epsilon=Y(s_0^\epsilon q^{h/2},n,\rho)$, for $\epsilon\in [0,1]$. Then both $\Res_{W_c}^{\widetilde W}Y^\epsilon_{q\to 1}$ and $\Res_{\dot W_c}^{W}Y^\epsilon|_W$ are independent of $\epsilon$, being continuous deformations of finite dimensional representations of finite groups. At $\epsilon=0$, we are in the situation of \cite[Lemma 5.8.1]{Re-euler}, and the claim follows.
\end{comment}
\end{proof}

\begin{theorem}
\label{thm:realwf}
Suppose $X=X(s,n,\rho)$ is  Iwahori-spherical with $s\in T^\vee_\mathbb R$. Write $\OO^{\vee}_X = G^{\vee}n \subset \mathfrak{g}^{\vee}$. Then $^K\WF(\AZ(X))$, $\hphantom{ }^{\bar{\sfk}}\WF(\AZ(X))$ are singletons, and
\begin{align*}
^K\WF(\mathsf{AZ}(X)) &= d_A(\OO^{\vee}_X,1)\\
\hphantom{ }^{\bar{\sfk}}\WF(\AZ(X)) &= d(\OO^{\vee}_X).
\end{align*}
\end{theorem}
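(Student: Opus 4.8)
\textbf{Proof plan for Theorem \ref{thm:realwf}.}

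The plan is to reduce the computation of $^K\WF(\AZ(X))$ to the combinatorial statement in Lemma \ref{l:WF-factors}, applied to the representation $\AZ(X)$, and then to identify the relevant minimal Springer support with the orbit $\OO^\vee_X$. First I would check the hypothesis of Lemma \ref{l:WF-factors} for $Y = \AZ(X)$: namely that $\Res_{W_c}^{\widetilde W}\AZ(X)_{q\to 1} = \Res_{\dot W_c}^{W}\AZ(X)|_W$ for every face $c \subseteq c_0$. Since $X$ has real infinitesimal character, so does $\AZ(X)$ (Aubert-Zelevinsky duality preserves the semisimple parameter $s$, by the discussion in Section \ref{sec:AZduality} and Equation (\ref{e:tau-cc})). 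Now $\AZ(X)$ need not be a standard module, but in the Grothendieck group $R_\bfI(\bfG(\sfk))$ it is a $\ZZ$-combination of Iwahori-spherical standard modules $Y$, each of which has real infinitesimal character (the standard modules with a fixed real central character span the corresponding subspace of the Grothendieck group). The identity $\Res_{W_c}^{\widetilde W}(-)_{q\to 1} = \Res_{\dot W_c}^{W}(-)|_W$ is $\ZZ$-linear in the argument, so Lemma \ref{l:std-real} applied to each such $Y$ gives the hypothesis for $\AZ(X)$.

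Next I would identify $\AZ(X)|_W$. By the compatibility $m_{\mathbf I}(\AZ(X)) = D(m_{\mathbf I}(X)) = m_{\mathbf I}(X)^\tau$ from Section \ref{sec:AZduality}, together with the fact (Equation (\ref{e:tau-Bernstein})) that on the finite Hecke subalgebra $\tau$ acts by $T_w \mapsto (-q)^{\ell(w)} T_{w^{-1}}^{-1}$ — the involution whose $q \to 1$ specialization is the sign twist $\otimes\,\mathrm{sgn}$ on $W$ — one gets $\AZ(X)|_W = (X|_W) \otimes \mathrm{sgn}$ in $R(W)$, at least up to sign. For a \emph{spherical} standard module $X$, $X|_W$ contains the trivial $W$-representation (and by Kazhdan-Lusztig theory $X|_W$ is, up to lower terms, governed by the Springer fibre $\mathcal B^\vee_{s,n}$, so its irreducible constituents all have Springer support $\geq \OO^\vee_X$, with equality achieved exactly by the Springer representation attached to $(\OO^\vee_X, \mathrm{triv})$). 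More precisely: for a general Iwahori-spherical standard module $Y = Y(s,n,\rho)$, the set of $E \in \mathrm{Irr}(W)$ occurring in $Y|_W$ is exactly the set appearing in $H^\bullet(\mathcal B^\vee_{s,n})$ (by \cite{KL}, \cite{Lu-gradedII}), and among these the unique minimal Springer support (with respect to $G^\vee$) is $\OO^\vee_X = G^\vee n$, attained by the representation $E(\OO^\vee_X, \rho_0)$ for the appropriate $\rho_0$; hence after the sign twist, $\min\{\OO^\vee(E,\CC) \mid \Hom(E, \AZ(X)|_W) \neq 0\} = \{\OO^\vee_X\}$, a singleton. I would assemble this from the Springer correspondence and the relation $\OO^\vee(E\otimes\mathrm{sgn},\CC)$ vs. $\OO^\vee(E,\CC)$.

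Finally, feeding $\mathscr O^\vee = \{\OO^\vee_X\}$ into Lemma \ref{l:WF-factors} gives $^K\WF(\AZ(X),\CC) = \{d_A(\OO^\vee_X,1)\}$, a singleton; then by \cite[Theorem 5.2]{okada2021wavefront} and Equation (\ref{eq:wfcompatibility}), $\hphantom{ }^{\bar\sfk}\WF(\AZ(X),\CC) = \pr_1(d_A(\OO^\vee_X,1)) = d_S(\OO^\vee_X,1) = d(\OO^\vee_X)$, also a singleton. Transporting back along $\Theta_{\bar k}$ and $\bar\theta_{\bfT}$ gives the stated equalities. I expect the main obstacle to be the second step: making precise and correct the claim that the minimum over Springer supports of constituents of $\AZ(X)|_W = (X|_W)\otimes\mathrm{sgn}$ is the single orbit $\OO^\vee_X$. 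This requires (a) controlling exactly which $W$-types occur in the $q\to1$ limit of an Iwahori-spherical standard module — the Kazhdan-Lusztig geometric description of $Y|_W$ via the equivariant cohomology of the Springer fibre $\mathcal B^\vee_{s,n}$, and the fact that its "lowest" layer realizes the Springer representation of $\OO^\vee_X$ — and (b) the interaction of the sign twist with the Springer correspondence and with the duality $d_A$ (here the identity $d_A(\OO^\vee,1) = (d(\OO^\vee), \bar C')$ from the introduction, and order-reversal of $d_S$, are what convert a minimum of orbits into the correct single element of $\mathcal N_{o,\bar c}$). Some care is also needed with the sign ambiguity in $\AZ$ on irreducibles, but this does not affect the underlying $W$-module in $R(W)$ up to sign and hence not the support computation.
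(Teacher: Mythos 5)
Your overall architecture is the same as the paper's: verify the hypothesis of Lemma \ref{l:WF-factors} for $\AZ(X)$ by expanding it into Iwahori-spherical standard modules with real infinitesimal character and invoking Lemma \ref{l:std-real}, then identify the minimal Springer support among the constituents of $\AZ(X)|_W$ and feed it back into Lemma \ref{l:WF-factors}. Those reduction steps, and the use of $\tau$/$D$ to see that $\AZ$ acts on $|_W$ by $\otimes\,\mathrm{sgn}$, are all sound and match the paper.

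The step you flag as "the main obstacle" is, however, exactly the crux, and it is left open in your plan: you must show that the minimum $\OO^{\vee}_X$ is actually \emph{attained}, i.e.\ that some $E$ with $\OO^{\vee}(E,\CC)=\OO^{\vee}_X$ occurs in $\AZ(X)|_W$ with nonzero multiplicity; otherwise you only recover the inequality of Corollary \ref{cor:wfbound}. Two points are needed to close this. First, the correct geometric input is $Y(s,n',\rho')|_W = H^{\bullet}(\mathcal{B}^{\vee}_{n'})^{\rho'}\otimes\mathrm{sgn}$, with the \emph{full} Springer fibre $\mathcal{B}^{\vee}_{n'}$ and the $\rho'$-isotypic component taken for $A(s,n')$ acting through $A(n')$ --- not the fixed-point variety $\mathcal{B}^{\vee}_{s,n'}$ as written in your second step; it is for the full fibre that Borho--MacPherson gives $H^{\bullet}(\mathcal{B}^{\vee}_{n'})=H^{\mathrm{top}}(\mathcal{B}^{\vee}_{n'})+(\text{terms with strictly larger Springer support})$, which forces every contribution from the auxiliary standard modules (those with $n\in\partial(G^{\vee}n')$) to have support strictly greater than $\OO^{\vee}_X$, so that everything reduces to the nonvanishing of $H^{\mathrm{top}}(\mathcal{B}^{\vee}_{n})^{\rho}$. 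Second, that nonvanishing is where the reality of $s$ is used a second time (beyond Lemma \ref{l:std-real}): since $s\in T^{\vee}_{\mathbb R}$, also $s_0\in T^{\vee}_{\mathbb R}$, so $Z_{G^{\vee}}(s_0)$ is a Levi and $A(s,n)=A(s_0,n)$ embeds into $A(n)$; hence $\rho$ occurs in $\psi|_{A(s,n)}$ for some $\psi\in\mathrm{Irr}(A(n))_0$, and the Springer representation $E(n,\psi)$ then appears in $\AZ(X)|_W$ with multiplicity exactly $[\psi:\rho]_{A(s,n)}>0$. Supplying this component-group argument (and replacing $\mathcal{B}^{\vee}_{s,n}$ by $\mathcal{B}^{\vee}_{n}$ throughout) turns your plan into the paper's proof.
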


\begin{proof}
By Equation (\ref{eq:wfcompatibility}) and the identity
\begin{equation}
    \pr_1(d_A(\OO^\vee_X,1)) = d(\OO^\vee_X)
\end{equation}
the formula for $\hphantom{ }^{\bar{\sfk}}\WF(\AZ(X))$ follows from the formula for $^K\WF(\AZ(X))$. So it suffices to show that $^K\WF(\AZ(X)) = d_A(\OO^{\vee}_X,1)$. 

By (\ref{e:multi}), there is an equality in $R(\mathbf{G}(\sfk))$
\[X = Y(s,n,\rho) + \sum_{n \in \partial (G^{\vee}n')} m_{s,n',\rho'}Y(s,n',\rho')
\]
for standard modules $Y(s,n',\rho')$ and $m_{s,n',\rho'} \in \ZZ$. By Lemma \ref{l:std-real}, 
\[\Res_{W_c}^{\widetilde W}Y(s,n',\rho')_{q\to 1} = \Res_{\dot W_c}^{W}Y(s,n',\rho')|_W.
\]
for each $Y(s,n',\rho')$ and hence
$$\Res_{W_c}^{\widetilde W}X_{q\to 1} = \Res_{\dot W_c}^{W}X|_W $$
so $X$ satisfies the hypotheses of Lemma \ref{l:WF-factors}.
By the argument in \cite[\S10.13]{Lu-gradedII}
\begin{equation}\label{e:Y-coh1}
Y(s,n',\rho')|_W  = H^{\bullet}(\mathcal{B}^{\vee}_{n'})^{\rho'} \otimes \mathrm{sgn}.
\end{equation}
So as (virtual) $W$-representations
\[\mathsf{AZ}(X)|_W =  H^{\bullet}(\mathcal{B}^{\vee}_n)^{\rho} +  \sum_{n \in \partial (G^{\vee}n')} m_{s,n',\rho'}H^{\bullet}(\mathcal{B}_{n'}^{\vee})^{\rho'}
\]
By \cite[Corollaire 2]{BoMac1981}, every cohomology group $H^\bullet(\mathcal{B}_{n'}^{\vee})$ decomposes as a $W$-representation
\begin{equation}\label{e:Y-coh2}
H^\bullet(\mathcal{B}_{n'}^{\vee})=H^{\mathrm{top}}(\mathcal{B}^{\vee}_{n'})+\sum_{\substack{E \in \mathrm{Irr}(W)\\,~n' \in \partial \mathbb O^\vee(E,\mathbb C)}} m_{E} E,
\end{equation}
where $m_E \in \ZZ_{\geq 0}$ for every term in the sum. Combining (\ref{e:Y-coh1}) and (\ref{e:Y-coh2}), we get that
\[\mathsf{AZ}(X)|_W = H^{\mathrm{top}}(\mathcal{B}^{\vee}_n)^{\rho} + \sum_{\substack{E \in \mathrm{Irr}(W)\\~n \in \partial \mathbb O^\vee(E,\mathbb C)}} m_{E}'E,
\]
for some integers $m_E'$. Since $s\in T^\vee_\mathbb R$, it follows that $s_0\in T^\vee_\mathbb R$ as well. Because $Z_{G^\vee}(s_0)$ is a Levi subgroup, the inclusion $Z_{G^\vee}(s_0,n)\to Z_{G^\vee}(n)$ induces an inclusion $A(s_0,n)\hookrightarrow A(n)$. Finally $A(s_0,n)=A(s,n)$, see for example \cite[Lemma 4.3.1]{Re-isogeny}. Hence we obtain
\[\mathsf{AZ}(X)|_W =
\sum_{\psi \in \mathrm{Irr}(A(n))_0} [\psi:\rho]_{A(s,n)} E(n,\psi) + \sum_{\substack{E \in \mathrm{Irr}(W)\\~n \in \partial \mathbb O^\vee(E,\mathbb C)}} m_{E} E
\]
where $E(n,\psi)$ is the irreducible Springer representation in $H^{\mathrm{top}}(\mathcal{B}_n^{\vee})^{\psi}$ and $[\psi:\rho]_{A(s,n)}$ denotes the multiplicity of $\rho$ in $\psi|_{A(s,n)}$. Since there exists at least one $\psi$ such that $[\psi:\rho]_{A(s,n)}\neq 0$, this implies the result by Lemma \ref{l:WF-factors}.
\end{proof}

Now let $\OO^{\vee} \subset \mathfrak{g}^{\vee}$ be a nilpotent orbit. Choose an $\mathfrak{sl}(2)$-triple $(e^{\vee},f^{\vee},h^{\vee})$ with $e^{\vee} \in \OO^{\vee}$. The semisimple operator $\ad(h^{\vee})$ induces a Lie algebra grading
$$\fg^{\vee} = \bigoplus_{n \in \ZZ}\fg^{\vee}[n], \qquad \fg^{\vee}[n] := \{x \in \fg^{\vee} \mid [h^{\vee},x] = nx\}.$$
Write $L^{\vee}$ for the connected (Levi) subgroup corresponding to the centralizer $\fg^{\vee}_0$ of $h^{\vee}$. If we set $s := q^{\frac{1}{2}h^{\vee}}$, then
$$L^{\vee} = G^{\vee}(s), \qquad \fg^{\vee}[2] = \fg^{\vee}_q$$
where $G^{\vee}(s)$ and $\fg^{\vee}_q$ are as defined in Section \ref{s:unip-cusp}. Note that $L^{\vee}$ acts by conjugation on each $\fg^{\vee}[n]$, and in particular on $\fg^{\vee}[2]$. We will need the following well-known facts, see \cite[Section 4]{Kostant1959} or \cite[Prop 4.2]{Lusztigperverse}.

\begin{lemma}\label{lem:orbitclosure}
The following are true:
\begin{itemize}
\item[(i)] $L^{\vee}e^{\vee}$ is an open subset of $\fg^{\vee}[2]$ (and hence the unique open $L^{\vee}$-orbit therein).
\item[(ii)] $L^{\vee}e = \OO^{\vee} \cap \fg^{\vee}[2]$.
\item[(iii)] $G^{\vee}\fg^{\vee}[2] \subseteq \overline{\OO^{\vee}}$.
\end{itemize}
\end{lemma}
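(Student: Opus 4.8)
The final statement is Lemma~\ref{lem:orbitclosure}, which asserts the three standard facts about the $\mathfrak{sl}(2)$-grading attached to a nilpotent orbit $\OO^\vee$. The plan is to deduce everything from Kostant's analysis of $\mathfrak{sl}(2)$-triples together with the fact that $\fg^\vee_0$ is a reductive Lie algebra acting on $\fg^\vee[2]$.

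\begin{proof}[Proof sketch of Lemma~\ref{lem:orbitclosure}]
First I would observe that, since $(e^\vee,f^\vee,h^\vee)$ is an $\mathfrak{sl}(2)$-triple and $h^\vee \in \fg^\vee_0 = \mathrm{Lie}(L^\vee)$, the grading $\fg^\vee = \bigoplus_n \fg^\vee[n]$ is $L^\vee$-stable, and $e^\vee \in \fg^\vee[2]$. For (i): the tangent space to $L^\vee e^\vee$ at $e^\vee$ is $[\fg^\vee_0, e^\vee]$, and I would show this equals all of $\fg^\vee[2]$. This is exactly Kostant's computation: $\ker(\ad e^\vee) \subseteq \bigoplus_{n\geq 0}\fg^\vee[n]$ (since $e^\vee$ spans the highest weight line in each $\mathfrak{sl}(2)$-isotypic summand, so $\ad e^\vee$ is injective on strictly-negative-degree pieces... more precisely, $\ad e^\vee : \fg^\vee[n] \to \fg^\vee[n+2]$ is injective for $n\leq -1$ and surjective for $n\geq -1$), hence by a dimension count $[\fg^\vee_0, e^\vee] = \fg^\vee[2]$. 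Thus $L^\vee e^\vee$ is open in the irreducible variety $\fg^\vee[2]$, which also forces it to be the unique open $L^\vee$-orbit, and to be dense.

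For (ii): the inclusion $L^\vee e^\vee \subseteq \OO^\vee \cap \fg^\vee[2]$ is clear since $L^\vee \subseteq G^\vee$. For the reverse inclusion, I would argue that $\OO^\vee \cap \fg^\vee[2]$ is a union of $L^\vee$-orbits all of the same dimension: if $x \in \OO^\vee \cap \fg^\vee[2]$ then $\dim G^\vee x = \dim \OO^\vee$, and using the cocharacter $q^{\frac12 h^\vee}$ (or rather $\mathrm{Ad}(q^{t h^\vee/2})$) one shows the $G^\vee$-orbit through $x$ meets $\fg^\vee[2]$ in an $L^\vee$-stable subset whose irreducible components all have dimension $\dim\OO^\vee - \dim(\fg^\vee_{\geq 4}) = \dim L^\vee e^\vee$ by the orbit–dimension formula applied to the grading; since $L^\vee e^\vee$ is already open and dense in $\fg^\vee[2]$, any other $L^\vee$-orbit of the same dimension in $\fg^\vee[2]$ would also be open, contradicting irreducibility of $\fg^\vee[2]$. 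Hence $\OO^\vee \cap \fg^\vee[2] = L^\vee e^\vee$. (Alternatively, cite \cite[Prop 4.2]{Lusztigperverse} directly.)

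For (iii): $G^\vee \fg^\vee[2] = G^\vee \cdot \overline{L^\vee e^\vee} \subseteq \overline{G^\vee \cdot L^\vee e^\vee} = \overline{G^\vee e^\vee} = \overline{\OO^\vee}$, using part (i) that $\fg^\vee[2] = \overline{L^\vee e^\vee}$ inside $\fg^\vee[2]$ and that $G^\vee \cdot (-)$ of a closed set need not be closed but its image lies in the closure of the image of the dense subset. More carefully: $\fg^\vee[2] = \overline{L^\vee e^\vee}$ (closure in $\fg^\vee[2]$), so $G^\vee\fg^\vee[2]$ is contained in the closure of $G^\vee L^\vee e^\vee = \OO^\vee$ in $\fg^\vee$. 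The main obstacle is bookkeeping the dimension counts in (i) and (ii) cleanly; none of it is deep, and all of it is in Kostant \cite[Section 4]{Kostant1959}, so in the write-up I would likely just cite that reference and \cite[Prop 4.2]{Lusztigperverse} rather than reproduce the argument.
\end{proof}
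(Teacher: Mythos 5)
Your proposal is correct and follows essentially the same route as the paper: parts (i) and (ii) are deferred to Kostant \cite[Section 4]{Kostant1959} and \cite[Prop 4.2]{Lusztigperverse}, and (iii) is the same density-plus-continuity argument ($\fg^{\vee}[2]=\overline{L^{\vee}e^{\vee}}$, so $G^{\vee}\fg^{\vee}[2]\subseteq\overline{G^{\vee}L^{\vee}e^{\vee}}=\overline{\OO^{\vee}}$). One small caution: the dimension identity you quote in (ii), $\dim\OO^{\vee}-\dim\fg^{\vee}_{\geq 4}=\dim L^{\vee}e^{\vee}$, is not correct (already false for the principal orbit in $\mathfrak{sl}(2)$); the clean version of that step is that $\dim\fg^{\vee}(x)=\dim\fg^{\vee}(e^{\vee})$ forces $\ad(x):\fg^{\vee}[0]\to\fg^{\vee}[2]$ to be surjective for every $x\in\OO^{\vee}\cap\fg^{\vee}[2]$, so every such $L^{\vee}$-orbit is open and hence equals $L^{\vee}e^{\vee}$ --- which is exactly what the cited references supply, so your decision to fall back on them is sound.
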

\begin{comment}
\begin{proof}
(i) follows from a well-known result of Mal'cev (\cite{Malcev}), see also \cite[Theorem 4.2]{Kostant1959}. (ii) is a consequence of \cite[Theorem 4.3]{Kostant1959}\todo{Lusztig-graded perverse sheaves, adv in math}. For (iii), consider the map $f: G^{\vee} \times \fg^{\vee}[2] \to \fg^{\vee}$ defined by $f(g,x) = \Ad(g)x$. By (i), $U:=G^{\vee} \times L^{\vee}e$ is an open subset of $G^{\vee} \times \fg^{\vee}[2]$. Since $f$ is continuous
%
$$G^{\vee}\fg^{\vee}[2] = f(\overline{U}) \subseteq \overline{f(U)} =  \overline{G^{\vee}e^{\vee}} = \overline{\OO^{\vee}}.$$
%
(iii) follows immediately.
\end{proof}
\end{comment}

\begin{comment}
\begin{lemma}
\label{lem:orbitclosure}
Let $\{e,f,h\} \subset \fg$ be an $\mathfrak{sl}(2)$-triple and let $x \in \fg$ be a nilpotent element such that $[h,x]=2x$. Then
%
$$Gx \subseteq \overline{Ge}.$$
\end{lemma}

\begin{proof}
The semisimple operator $\ad(h)$ induces a Lie algebra grading
%
$$\fg = \bigoplus_{n \in \CC} \fg_n, \qquad \fg_n := \{y \in \fg \mid [h,y] = ny\}.$$
%
By the representation theory of $\mathfrak{sl}(2)$, $\fg_n = 0$ for $n \notin \ZZ$. For each $m \in \ZZ$, define 
%
$$\fg_{\geq m}:= \bigoplus_{n \geq m} \fg_n.$$
%
By \cite[Theorem 4.3]{Kostant1959}, we have $G(\fg_{\geq 2}) = \overline{Ge}$. But $x \in \fg_2 \subset \fg_{\geq 2}$, so $Gx \subseteq G(\fg_{\geq 2}) = \overline{Ge}$, as asserted.
\end{proof}
\end{comment}

\begin{cor}\label{cor:wfbound}
Suppose $X \in \Pi^{\mathsf{Lus}}_{q^{\frac{1}{2}h^{\vee}}}(\mathbf{G}(\sfk))$ is Iwahori-spherical. Then
\begin{align}\label{eq:WFsetbound}
\begin{split}
d_A(\OO^{\vee}, 1) &\leq_A \hphantom{ } ^K\WF(X)\\
d(\OO^{\vee}) &\leq \hphantom{ }^{\bar{\sfk}}\WF(X)
\end{split}
\end{align}
\end{cor}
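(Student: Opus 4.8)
The plan is to reduce the lower bound for $\hphantom{ }^K\WF(X)$ to the case $X = \AZ(X')$ already handled in Theorem \ref{thm:realwf}, by exploiting the fact that $\AZ$ preserves the semisimple parameter $s = q^{\frac12 h^\vee}$ and carries the set $\Pi^{\mathsf{Lus}}_s(\mathbf G(\sfk))_0$ to itself. Write $X = X(s,n,\rho)$ with $s = q^{\frac12 h^\vee}$ and let $\AZ(X) = X(s,n',\rho')$ (up to sign). First I would invoke Lemma \ref{lem:orbitclosure}(i): since $s = q^{\frac12 h^\vee}$, the orbit $L^\vee e^\vee = G^\vee(s)e^\vee$ is the unique \emph{open} $G^\vee(s)$-orbit in $\fg^\vee_q = \fg^\vee[2]$. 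By the parametrisation of $\Pi^{\mathsf{Lus}}_s(\mathbf G(\sfk))$ by pairs $(\mathcal C, \mathcal E)$ (a $G^\vee(s)$-orbit $\mathcal C$ on $\fg^\vee_q$ together with a local system), and by the behaviour of $\AZ$ described in Section \ref{sec:AZduality} — in particular that $\AZ$ takes the spherical representation with parameter $s$ to the unique generic one, whose nilpotent part is the open orbit — the representation $Z := \AZ(X)$ has $G^\vee$-saturated nilpotent part $\OO^\vee_Z = G^\vee e^\vee = \OO^\vee$. More precisely, what I actually need is only that $n'$ lies in the closure of the open orbit, which is automatic, together with $\OO^\vee_{\AZ(Z)} = \OO^\vee_X = G^\vee n$; but since $\AZ$ is an involution, $\AZ(Z) = X$ and so $\OO^\vee_{\AZ(Z)} = \OO^\vee_X = G^\vee n \subseteq \overline{\OO^\vee}$ by Lemma \ref{lem:orbitclosure}(iii).

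Next I would apply Theorem \ref{thm:realwf} to the representation $Z = \AZ(X)$ (which is again Iwahori-spherical with real infinitesimal character, since $s \in T^\vee_{\mathbb R}$): this gives
\[
\hphantom{ }^K\WF(X) = \hphantom{ }^K\WF(\AZ(Z)) = d_A(\OO^\vee_Z, 1),
\]
a singleton. It therefore remains to compare $d_A(\OO^\vee, 1)$ with $d_A(\OO^\vee_Z, 1)$. By Lemma \ref{lem:orbitclosure}(iii), $G^\vee \fg^\vee[2] \subseteq \overline{\OO^\vee}$, and since $n' \in \fg^\vee_q = \fg^\vee[2]$ we get $\OO^\vee_Z = G^\vee n' \subseteq \overline{\OO^\vee}$, i.e. $\OO^\vee_Z \le \OO^\vee$. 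Then Lemma \ref{lem:injectiveachar}(1) yields $d_A(\OO^\vee, 1) \le_A d_A(\OO^\vee_Z, 1) = \hphantom{ }^K\WF(X)$, which is the first inequality. Applying $\pr_1$ and using $\pr_1 \circ d_A(-,1) = d(-)$ together with the order-reversing property of $d$ gives $d(\OO^\vee) \le \pr_1(\hphantom{ }^K\WF(X,\CC)) $; since $\hphantom{ }^{\bar\sfk}\WF(X,\CC) = \max(\pr_1(\hphantom{ }^K\WF(X,\CC)))$ by \cite[Theorem 5.2]{okada2021wavefront}, and the right side is a singleton here, we obtain $d(\OO^\vee) \le \hphantom{ }^{\bar\sfk}\WF(X)$, the second inequality.

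The main obstacle I anticipate is not any of these order comparisons — which are formal once Lemma \ref{lem:orbitclosure} and Lemma \ref{lem:injectiveachar} are in hand — but rather pinning down precisely which nilpotent orbit $\OO^\vee_Z$ underlies $\AZ(X)$ and confirming it satisfies $\OO^\vee_Z \subseteq \fg^\vee[2]$ (so that Lemma \ref{lem:orbitclosure}(iii) applies). The cleanest route, which avoids identifying $\OO^\vee_Z$ explicitly, is the involutivity argument above: apply Theorem \ref{thm:realwf} with the roles reversed, i.e. to $Z = \AZ(X)$ so that $\AZ(Z) = X$ and the relevant orbit appearing in the statement is $\OO^\vee_X = G^\vee n$, which lies in $\fg^\vee_q = \fg^\vee[2]$ by construction of the parameter; then Lemma \ref{lem:orbitclosure}(iii) gives $G^\vee n \subseteq \overline{\OO^\vee}$ directly, and Theorem \ref{thm:realwf} gives $\hphantom{ }^K\WF(X) = d_A(G^\vee n, 1)$, so that Lemma \ref{lem:injectiveachar}(1) with $\OO^\vee_1 = G^\vee n \le \OO^\vee = \OO^\vee_2$ produces exactly $d_A(\OO^\vee,1) \le_A \hphantom{ }^K\WF(X)$.
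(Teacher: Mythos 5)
Your proof is correct and is essentially the paper's own argument: write $X=\AZ(X')$ with $X'=X(q^{\frac12 h^\vee},n',\rho')$ (your $Z=\AZ(X)$), note $n'\in\fg^\vee_q=\fg^\vee[2]$ so Lemma \ref{lem:orbitclosure}(iii) gives $G^\vee n'\le\OO^\vee$, apply Theorem \ref{thm:realwf} to get ${}^K\WF(X)=d_A(G^\vee n',1)$, conclude via the order-reversal of $d_A$ (Lemma \ref{lem:injectiveachar}), and obtain the second inequality by applying $\pr_1$. One small caution: in your final paragraph you assert ${}^K\WF(X)=d_A(G^\vee n,1)$ with $n$ the nilpotent parameter of $X$ itself, whereas the orbit computed by Theorem \ref{thm:realwf} is $G^\vee n'$ for $n'$ the parameter of $\AZ(X)$ — the bound survives this slip only because both $n$ and $n'$ lie in $\fg^\vee[2]$, and your middle paragraph already states it correctly.
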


\begin{proof}
The second inequality follows from the first by applying $\mathrm{pr}_1$ to both sides. So it suffices to show that $d_A(\OO^{\vee}, 1) \leq_A \hphantom{ } ^K\WF(X)$.

Let $X\in \Pi^{\mathsf{Lus}}_{q^{\frac{1}{2}h^{\vee}}}(\mathbf{G}(\sfk))$ be Iwahori-spherical. Since $\AZ$ is an involution on the Iwahori-spherical representations in $\Pi^{\mathsf{Lus}}_{q^{\frac{1}{2}h^{\vee}}}(\mathbf{G}(\sfk))$, we have
$$X=\mathsf{AZ}(X') \text{ for some Iwahori-spherical }  X'=X(q^{\frac12h^\vee},n',\rho')\in \Pi^{\mathsf{Lus}}_{q^{\frac{1}{2}h^{\vee}}}(\mathbf{G}(\sfk))$$
By definition, $n' \in \fg^{\vee}_q = \fg^{\vee}[2]$. Now (iii) of Lemma \ref{lem:orbitclosure} implies that $\OO^\vee_{X'}:=G^\vee n'\le \OO^\vee$. In particular $(\OO^\vee_{X'},1)\le_A(\OO^\vee,1)$ and so $d_A(\OO^\vee,1)\le_A d_A(\OO^\vee_{X'},1)$ since $d_A$ is order-reversing. But by Theorem \ref{thm:realwf}, $d_A(\OO^\vee_{X'},1) = \hphantom{ }^K\WF(X)$. This completes the proof.
\end{proof}

\section{Proof of faithfulness}
\label{sec:faithful}
Fix the notation of Sections \ref{subsec:nilpotent}-\ref{subsec:Wreps}, i.e. $G$, $G^{\vee}$,$\cN$, $\cN^{\vee}$, $\mathcal{F}_{\tilde{\Delta}}$, $\bar{s}$, and so on. Recall that to each irreducible representation $E$ of $W$, we associate nilpotent orbits $\OO(E,\CC) \subset \cN$ and $\OO^{\vee}(E,\CC) \subset \cN^{\vee}$. In this section all of our objects will be defined over $\CC$---so we will simplify the notation by writing $\OO(E) := \OO(E,\CC)$, $\OO^{\vee}(E) :=\OO^{\vee}(E,\CC)$. Similarly, for a family $\phi$ in $\mathrm{Irr}(W)$, we will write $\OO(\phi)$, $\OO^{\vee}(\phi)$ for $\OO(\phi,\CC)$, $\OO^{\vee}(\phi,\CC)$.

Recall the notion of a \emph{faithful} nilpotent orbit, see Definition \ref{def:faithful}. In this section we will prove that all nilpotent orbits are faithful (this is Theorem \ref{thm:faithful}). 

First, note that conditions (i) and (ii) of Definition \ref{def:faithful} depend only on the Lie algebra $\fg = \mathrm{Lie}(G)$. Moreover, if $\OO^{\vee}_1, \OO^{\vee}_2$ are faithful (for $(J_1,\phi_1)$, $(J_2,\phi_2)$ respectively), then $\OO^{\vee}_1 \times \OO^{\vee}_2$ is faithful (for $(J_1 \times J_2, \phi_1 \otimes \phi_2)$). Thus, it suffices to prove Theorem \ref{thm:faithful} for $\mathfrak{g}$ a simple Lie algebra. In many cases, we can appeal to the following proposition.

\begin{lemma}\label{lem:triviallocalsystem}
Suppose $\overline{\mathbb L}(J, \OO(\phi)) = d_A(\OO^{\vee},1)$, $E = E(\OO^{\vee},1)$ (as representations of $W$), and $F = \mathrm{sp}(\phi \otimes \mathrm{sgn})$. Then $j^W_{W_J} F = E$. 
\end{lemma}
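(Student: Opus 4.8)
The plan is to pin down $j^W_{W_J}(F)$ by feeding the hypothesis into the compatibility of truncated induction with the Springer correspondence and the Barbasch--Lusztig--Spaltenstein--Vogan and Achar dualities.

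I would first unwind both sides in terms of Springer data carrying the \emph{trivial} local system. The target $E=E(\OO^\vee,1)$ is by definition the $G^\vee$-Springer representation of $\OO^\vee$ with trivial local system. On the source side set $\mathfrak l:=\mathrm{Lie}(L_{c(J)})$; since the special representation of a family is exactly the Springer representation of a special orbit carrying the trivial local system, $E(\phi)$ is the $W_J=W(L_{c(J)})$-Springer representation of the special orbit $\OO(\phi)\subset\mathfrak l$ with trivial local system, and hence $F=\mathrm{sp}(\phi\otimes\mathrm{sgn})$ is the $W_J$-Springer representation of the Spaltenstein dual of $\OO(\phi)$ in $\mathfrak l$, again with trivial local system. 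Finally I would run the hypothesis through $\mathrm{pr}_1$: using $\mathrm{pr}_1\circ d_A=d_S$, $d_S(\OO^\vee,1)=d(\OO^\vee)$, and the description of $\mathbb L$ (it sends $(J,\OO(\phi))$ to the $G$-orbit through $\OO(\phi)$, decorated by a canonical-quotient class read off from the face $c(J)$), this $G$-orbit equals the special orbit $d(\OO^\vee)$; a finer analysis keeping the decoration shows $d_A(\overline{\mathbb L}(J,\OO(\phi)))=(\OO^\vee,1)$, consistently with Lemma \ref{lem:injectiveachar}.

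The core step is then the identity $j^W_{W_J}(F)=E(\OO^\vee,1)$. For $W_J$ a genuine parabolic this is classical, following from the Lusztig--Spaltenstein theory of induced nilpotent classes and the behaviour of the Springer correspondence under induction. For the pseudo-parabolic subgroups $W_{c(J)}$ that occur here I would instead invoke \cite[Proposition 4.3]{acharaubert} --- the same input used in Lemma \ref{lem:upperbound} --- \emph{in sharp, decorated form}: the bound $\OO^\vee\le d_S(\mathbb L(J,\OO(\phi(E'))))$ of that lemma becomes an equality, compatibly with the canonical-quotient decoration, precisely when $E'=F$, and in that case $F$ genuinely occurs in $E|_{W_J}$ as the $j$-induced constituent, so that $j^W_{W_J}(F)=E$. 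As a consistency check one can verify $b(j^W_{W_J}(F))=b(F)=b(E(\OO^\vee,1))$ directly from the Springer-fibre dimension formulas.

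The main obstacle is exactly this sharp, decorated strengthening of \cite[Proposition 4.3]{acharaubert}: showing that the upper bound $d_A(\OO^\vee,1)$ for $\WF_J(E)$ is realized by a constituent of $E|_{W_J}$ that $j$-induces to $E$, while tracking the component-group / canonical-quotient information through the identification. The hypothesis is tailored so that the constituent in question is $F=\mathrm{sp}(\phi\otimes\mathrm{sgn})$, which stays on the trivial-local-system branch of the Springer dictionary at every stage; this is why the lemma singles out the special representation of $\phi\otimes\mathrm{sgn}$ rather than an arbitrary member of the family.
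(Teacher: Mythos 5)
There is a gap at exactly the step you flag as ``the main obstacle,'' and the reason is that you are reaching for the wrong tool. The identity you need, namely that $j^W_{W_J}F$ is the Springer representation with trivial local system attached to $d_S(\mathbb L(J,\OO(\phi)))$, does not require any ``sharp, decorated strengthening'' of \cite[Proposition 4.3]{acharaubert} --- it is the \emph{definition} of Sommers' duality map $d_S$. Writing $(\OO,C):=\mathbb L(J,\OO(\phi))$, the pair $(\OO,C)$ encodes precisely the pseudo-Levi $L_{c(J)}$ together with the orbit $\OO(\phi)$ inside it, and Sommers defines $d_S(\OO,C)$ by taking the Spaltenstein dual of $\OO(\phi)$ inside $L_{c(J)}$ --- whose Springer representation with trivial local system is exactly $F=\mathrm{sp}(\phi\otimes\mathrm{sgn})$ --- and $j$-inducing to $W$; thus $E(d_S(\OO,C),1)=j^W_{W_J}F$ holds tautologically, for pseudo-parabolic $W_J$ just as for parabolic ones. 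Combined with $d_S(d_A(\OO^\vee,1))=\OO^\vee$ and the hypothesis $\overline{\mathbb L}(J,\OO(\phi))=d_A(\OO^\vee,1)$ (note $d_S$ factors through $\mathfrak Q$, so it may be evaluated on the $\sim_A$-class), this is the entire proof. As written, your argument never closes this step: you reduce the lemma to an assertion you then declare to be the open difficulty.

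Two further inaccuracies are worth correcting. First, you claim the bound of Lemma \ref{lem:upperbound} becomes an equality ``precisely when $E'=F$''; this is false, since any $E'$ in the family $\phi\otimes\mathrm{sgn}$ yields the same $\OO(\phi(E'))$ and hence the same value of $d_S\circ\mathbb L$. Second, you fold into this lemma the claim that $F$ occurs in $E|_{W_J}$. That is not what the lemma asserts and does not follow from it alone in the generality you suggest; in the paper it is deduced (in Proposition \ref{prop:easycase}) from $j^W_{W_J}F=E$ by Frobenius reciprocity, and in general it is condition (ii) of faithfulness, whose proof occupies all of Section \ref{sec:faithful}. Conflating the two makes your plan appear to require the hard combinatorial input at a point where only a definition is needed.
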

\begin{proof}
    Let $(\OO,C) := \mathbb L(J,\OO_{\phi})$.
    Then, by definition of $d_S$ we have that $E(d_S(\OO,C),1) = j_{W_J}^W F$.
    But $d_S(d_A(\OO^\vee,1)) = \OO^\vee$ as required.
\end{proof}

\begin{prop}\label{prop:easycase}
Suppose there is a unique irreducible representation $E$ of $W$ such that $\OO^{\vee}(E) = \OO^{\vee}$. Then $\OO^{\vee}$ is faithful.
\end{prop}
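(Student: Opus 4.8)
We are told that there is exactly one irreducible representation $E$ of $W$ with $\OO^{\vee}(E) = \OO^{\vee}$. Since $(\OO^{\vee},1)$ always lies in the image of the Springer correspondence for $G^{\vee}$ (the trivial local system being a constituent of the top cohomology of the relevant Springer fibre), and this fibre is a singleton, we must have $E = E(\OO^{\vee},1)$. In particular, condition (ii) of Definition \ref{def:faithful} only needs to be verified for this single representation.

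The plan has two parts. The first, and harder, part is to produce a pair $(J,\phi) \in \mathcal{F}_{\tilde{\Delta}}$ satisfying condition (i), i.e.\ $\overline{\mathbb{L}}(J,\OO(\phi)) = d_A(\OO^{\vee},1)$. Write $d_A(\OO^{\vee},1) = (\OO,\bar C)$; by the compatibility $\pr_1 \circ d_A(\OO^{\vee},1) = d(\OO^{\vee})$ recalled in Section \ref{subsec:nilpotent} we have $\OO = d(\OO^{\vee})$. Achar's duality, extending Sommers' construction, realizes $(\OO,\bar C)$ by saturating a special orbit out of a standard pseudo-Levi: there is a standard pseudo-Levi $L \subseteq G$ with a central coset $tZ_L^{\circ}$ satisfying $Z_G^{\circ}(tZ_L^{\circ}) = L$, and a special nilpotent orbit $\OO_L \subset \mathfrak{l}$, such that $\OO$ is the $G$-saturation of $\OO_L$ and, for $x \in \OO_L$, the image in $\bar A(\OO)$ of the class of $tZ_G^{\circ}(x)$ is $\bar C$. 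By Borel--de Siebenthal, $L = L_{c(J)}$ for some $J \subsetneq \tilde{\Delta}$, and the $\Xi$-formalism of Section \ref{subsec:nilpotent} (cf.\ Proposition \ref{prop:square}) identifies $tZ_L^{\circ}$ with the element occurring in $\Xi(c(J)) = (L, tZ_L^{\circ})$. Taking $\phi$ to be the family of $W_J$ whose special representation corresponds to $\OO_L$ under the Springer correspondence for $L$ --- so that $\OO(\phi,\CC) = \OO_L$ --- and unwinding the definition of $\mathbb{L}$, we obtain $\mathbb{L}(J,\OO(\phi,\CC)) = (\OO, C)$ with $\mf Q(\OO,C) = (\OO,\bar C) = d_A(\OO^{\vee},1)$. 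Thus $(J,\phi)$ satisfies (i). (When $\OO^{\vee}$ is special, $\bar C$ is trivial and one may simply take $J = \Delta$, $L = G$ and $\phi = \phi(E(\OO^{\vee},1))$; the substance is entirely in the non-special case.)

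The second part is immediate given $(J,\phi)$. Put $F := \mathrm{sp}(\phi \otimes \mathrm{sgn}) \in \phi \otimes \mathrm{sgn}$. Applying Lemma \ref{lem:triviallocalsystem} with this $(J,\phi)$, with $E = E(\OO^{\vee},1)$, and with this $F$, we get $j^W_{W_J}F = E$. Since $j^W_{W_J}F$ is a constituent of the ordinary induced module $\mathrm{Ind}_{W_J}^{W}F$, Frobenius reciprocity yields $0 \neq \Hom_W(E, \mathrm{Ind}_{W_J}^{W}F) = \Hom_{W_J}(E|_{W_J}, F)$, so $F$ is a constituent of $E|_{W_J}$ and hence $\Hom_{W_J}(F, E|_{W_J}) \neq 0$. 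This verifies condition (ii) for $E$, and since $E$ was the only representation to check, $\OO^{\vee}$ is faithful.

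The crux of the argument, and the step I expect to be the real obstacle, is the realization of $d_A(\OO^{\vee},1)$ in the image of $\overline{\mathbb{L}}$: one has to know that the pseudo-Levi and the central twist coming out of Achar's recipe for $d_A(\OO^{\vee},1)$ are exactly of the ``standard'' form tracked by $\Xi$, and that the orbit being induced is the special orbit attached to a family of $W_J$. These are facts about Borel--de Siebenthal subsystems together with the compatibilities between the Springer correspondence and the dualities $d$, $d_S$, $d_A$ recalled in Section \ref{subsec:nilpotent}; once they are in place, the faithfulness of $\OO^{\vee}$ under the uniqueness hypothesis drops out formally from Lemma \ref{lem:triviallocalsystem} and Frobenius reciprocity.
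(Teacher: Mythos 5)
Your proof is correct and follows essentially the same route as the paper: identify $E = E(\OO^{\vee},1)$ by uniqueness, find $(J,\phi)$ with $\overline{\mathbb L}(J,\OO(\phi)) = d_A(\OO^{\vee},1)$, and conclude via Lemma \ref{lem:triviallocalsystem} and Frobenius reciprocity. The only difference is that where the paper simply invokes the surjectivity of $\overline{\mathbb L}$ to produce $(J,\phi)$, you unwind Achar's pseudo-Levi recipe to exhibit it explicitly --- a more detailed justification of the same step, not a different argument.
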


\begin{proof}
By the uniqueness condition, we must have $E = E(\OO^{\vee},1)$. Recall that $\overline{\mathbb L}: \mathcal{K}_{\widetilde{\Delta}} \to \cN_{o,\bar c}$ is surjective. Choose any $(J,\phi)$ such that $\overline{\mathbb L}(J,\OO_{\phi}) = d_A(\OO^{\vee},1)$ and let $F = \mathrm{sp}(\phi \otimes \mathrm{sgn})$. By Lemma \ref{lem:triviallocalsystem}, we have $j^W_{W_J}F = E$, and hence $\Hom_{W_J}(F, E|_{W_J}) \neq 0$ by Frobenius reciprocity.
\end{proof}

Proposition \ref{prop:easycase} implies Theorem \ref{thm:faithful} for $\fg = \mathfrak{sl}(n)$ and for $\fg = \mathfrak{so}(2n)$ and $\OO^{\vee}$ a very even orbit (see Section \ref{subsec:orbits}). The simple exceptional Lie algebras are handled in Section \ref{subsec:exceptional} (where we make repeated use of Proposition \ref{prop:easycase} to simplify the arguments). It remains to prove Theorem \ref{thm:faithful} for $\fg$ a simple Lie algebra of type B/C/D (and $\OO^{\vee}$ not very even in the type D case). The proof will require a fairly lengthy detour into the combinatorics of partitions and symbols.

\subsection{Preliminaries on partitions}\label{subsec:partitions}

Let $\mathcal{P}(n)$ denote the set of partitions of $n$. For $\lambda \in \mathcal{P}(n)$, we will typically write $\lambda = (\lambda_1 \geq ... \geq \lambda_k)$, and we assume $\lambda_k \neq 0$ (unless otherwise stated). We will write $\#\lambda$ for the number of (nonzero) parts of $\lambda$ and $|\lambda|$ for their sum. If $x \in \ZZ_{\geq 0}$, we will write $m_{\lambda}(x)$ for the multiplicity of $x$ in $\lambda$ and $\mathrm{ht}_{\lambda}(x)$ for its `height,' i.e. $\mathrm{ht}_{\lambda}(x) = \sum_{y \geq x} m_{\lambda}(y)$. A partition $\lambda$ is \emph{very even} if all parts are even, occuring with even multiplicity. 

There is a partial order on partitions, defined by the formula
$$\lambda \leq \mu \iff \sum_{i \leq j} \lambda_i \leq \sum_{i \leq j} \mu_i.$$
Given an arbitrary partition $\lambda = (\lambda_1,...,\lambda_k)$, we define two new partitions (of $|\lambda|+1$ and $|\lambda|-1$)
$$\lambda^+ := (\lambda_1+1,\lambda_2,...,\lambda_k), \qquad \lambda^- := (\lambda_1,\lambda_2,...,\lambda_k-1)$$
and we write $\lambda^t$ for the transpose of $\lambda$. Given $\lambda \in \mathcal{P}(n)$ and $\mu \in \mathcal{P}(m)$, we define $\lambda \cup \mu \in \mathcal{P}(m+n)$ by `adding multiplicities'
$$m_{\lambda \cup \mu}(x) = m_{\lambda}(x) + m_{\mu}(x), \qquad \forall x \in \ZZ_{\geq 0}.$$
We write $\alpha = \lambda \cup_{\geq} \mu$ if $\alpha = \lambda \cup \mu$ and the smallest part of $\lambda$ is at least as large as the largest part of $\mu$. We write $\mu \subseteq \lambda$ if $m_{\mu}(x) \leq m_{\lambda}(x)$ for every $x \in \ZZ_{\geq 0}$. In this case, there is a unique subpartition partition $\lambda \setminus \mu \subseteq \lambda$ such that $\lambda = \mu \cup (\lambda \setminus \mu)$. 

A \emph{decorated} partition of $n$ is a partition $\lambda \in \mathcal{P}(n)$ together with a decoration $\kappa \in \{0,1\}$. We write $(\lambda,\kappa)$ as $\lambda^{\kappa}$. If $\lambda$ is \emph{not} very even, we declare $\lambda^0 = \lambda^1$. Otherwise we regard $\lambda^0$ and $\lambda^1$ as distinct. Write $\mathcal{P}^d(n)$ for the set of decorated partitions of $n$, with the equivalence relation just defined.

An \emph{(ordered) bipartition} of $n$ is a pair of partitions $(\lambda,\mu)$ such that $|\lambda|+|\mu|=n$. An \emph{unordered bipartition} of $n$ is an unordered pair $\{\lambda, \mu\}$ with the same property. Write $\overline{\mathcal{P}}(n)$ (resp. $\overline{\mathcal{P}}^u(n)$) for the set of ordered (resp. unordered) bipartitions of $n$. 

A \emph{decorated unordered bipartition} of $n$ is an unordered bipartition $\{\lambda,\nu\}$ of $n$ together with a decoration $\kappa\in\{0,1\}$.
We write $(\{\lambda, \nu\},\kappa)$ as $\{\lambda,\mu\}^\kappa$.
If $\lambda\ne \mu$ we declare $\{\lambda,\mu\}^0 = \{\lambda,\mu\}^1$. Otherwise we regard $\{\lambda,\mu\}^0$ and  $\{\lambda,\mu\}^1$ as distinct.
Write $\overline{\mathcal P}^{d,u}(n)$ for the set of decorated unordered bipartition of $n$, with the equivalence relation just defined.

\subsection{Preliminaries on classical Lie algebras}
\label{sec:classicalliealgebras}
Let $X \in \{B,C,D\}$ and let $\fg_X(n)$ denote the complex simple Lie algebra of type $X$ and rank $n$. Denote the corresponding Weyl group by $W_X(n)$. If $X \in \{B,C\}$, there is a bijection between the set of irreducible representations of $W_X(n)$ and the set $\overline{\mathcal{P}}(n)$ of ordered bipartitions. 
If $X=D$, there is a bijection between the irreducible representations of $W_X(n)$ and the set $\overline{\mathcal{P}}^{d,u}(n)$ of decorated unordered bipartitions.

We label the extended Dynkin diagrams using the standard Bourbaki conventions:

\begin{align*}
    \fg_B(n): \qquad \qquad \qquad &  \dynkin[extended,labels={\alpha_0,\alpha_1,\alpha_2,\alpha_3,\alpha_{n-2},\alpha_{n-1},\alpha_n},edge length = 1 cm, root radius = .075cm] B{} \\
    \fg_C(n): \qquad \qquad \qquad& \dynkin[extended,labels={\alpha_0,\alpha_1,\alpha_2,\alpha_3,\alpha_{n-1},\alpha_n},edge length =  1 cm, root radius = .075cm] C{} \\
    \fg_D(n): \qquad \qquad \qquad& \dynkin[extended,labels={\alpha_0,\alpha_1,\alpha_2,\alpha_3,\alpha_{n-3},\alpha_{n-2},\alpha_{n-1},\alpha_n},edge length = 1 cm, root radius = .075cm] D{}
\end{align*}
\vspace{3mm}
For $0 \leq k \leq n$, define the subset
$$J_X(k,n) := \{\alpha_0,...,\alpha_{k-1},\alpha_{k+1},...,\alpha_n\} \subsetneq \widetilde{\Delta},$$
and write $\mathfrak{l}_X(k,n) \subset \fg$ for the corresponding (standard) pseudolevi subalgebra. Then
\begin{align}\label{eq:pseudolevis}
\begin{split}
    \fl_B(k,n) &\simeq \begin{cases}
        \fg_D(k) \times \fg_B(n-k), & k \notin \{1\}\\
        \fg_B(n), & k \in \{1\}
    \end{cases} \\
    \fl_C(k,n) &\simeq \fg_C(k) \times \fg_C(n-k)\\
    \fl_D(k,n) &\simeq \begin{cases}
        \fg_D(k) \times \fg_D(n-k), & k \notin \{1,n-1\} \\
        \fg_D(n), & k \in \{1,n-1\}
    \end{cases}
\end{split}
\end{align}

\subsection{Preliminaries on nilpotent orbits in classical types}\label{subsec:orbits}

Define the sets
\begin{align*}
    \mathcal{P}_B(n) &= \{\lambda \in \mathcal{P}(2n+1) \mid x \text{ even} \implies m_{\lambda}(x) \text{ even}\}\\
    \mathcal{P}_C(n) &= \{\lambda \in \mathcal{P}(2n) \mid x \text{ odd} \implies m_{\lambda}(x) \text{ odd (or 0)}\}\\
    \mathcal{P}_D(n) &= \{\lambda^{\kappa} \in \mathcal{P}^d(2n) \mid x \text{ even} \implies m_{\lambda}(x) \text{ even}\}
\end{align*}
For $\fg = \fg_X(n)$, $X \in \{B,C,D\}$, there is a well-known bijection
\begin{equation}\label{eq:orbitspartitions}\cN_o^{\fg} \xrightarrow{\sim} \mathcal{P}_X(n)\end{equation}
see \cite[Section 5.1]{CM}. The elements of $\mathcal{P}_X(n)$ are called $X$-partitions of $n$ (they are decorated in the case when $X=D$). If $X=D$ and $\OO \in \cN_0^{\fg}$ corresponds to $\lambda^{\kappa}$ , we say that $\OO$ is very even if $\lambda$ is very even, see Section \ref{subsec:partitions}.
%When $\lambda$ is not very even, $\lambda^0 = \lambda^1$ and so we will often omit writing the decoration.
 
For $X \in \{B,C,D\}$, the $X$-collapse of $\lambda$ is the unique largest $X$-partition $\lambda_X$ such that $\lambda_X \leq \lambda$ (in type $B$ (resp. $C$, $D$), this is sensible provided $|\lambda|=2n+1$ (resp. $|\lambda|=2n$)). A formula for $\lambda_X$ is provided in \cite[Lem 6.3.8]{CM}. We will recall the details for the case when $X=C$ (the other cases are analogous). First, write $\lambda$ as a union of even and odd parts
$$\lambda =  (\lambda^e_1,...,\lambda^e_p) \cup (\lambda^o_1,...,\lambda^o_{2q}), \qquad \lambda_1^e \geq ... \geq \lambda_p^e, \ \lambda_1^o \geq ... \geq \lambda_{2q}^o.$$
For $1 \leq i \leq q$, define 
$$(\lambda_{2i-1}^o,\lambda_{2i}^o)' =  \begin{cases} 
(\lambda_{2i-1}^o,\lambda_{2i}^o) &\mbox{if } \lambda^o_{2i-1}=\lambda^o_{2i}\\
(\lambda_{2i-1}^o-1,\lambda_{2i-1}^o+1) &\mbox{otherwise} \end{cases}$$
Then 
\begin{equation}\label{eq:Ccollapse}\lambda_C = (\lambda_1^e,...,\lambda_p^e) \cup \bigcup_{i=1}^q (\lambda_{2i-1}^o,\lambda_{2i}^o)'.\end{equation}
If $\lambda \in \mathcal{P}_X(n)$, define the \emph{dual} partition $d(\lambda)$ of $\lambda$
$$d(\lambda) = \begin{cases} 
((\lambda^t)^-)_C \in \mathcal{P}_C(n)& \mbox{if } X =B\\
((\lambda^t)^+)_B \in \mathcal{P}_B(n) &\mbox{if } X=C\\
(\lambda^t)_D \in \mathcal{P}_D(n) &\mbox{if } X=D.
\end{cases}$$
(If $X=D$ and $\lambda$ is very even, there is a decoration $\kappa$ to keep track of--the behavior of $\kappa$ under $d$ is explained in \cite[Cor 6.3.5]{CM}, but we will not need this result). For $X \in \{B,C,D\}$, we write $\mathcal{P}^*_X(n) \subset \mathcal{P}_X(n)$ for the set of (decorated) partitions corresponding to \emph{special nilpotent orbits}---briefly, for the set of \emph{special X-partitions}. We will use the following characterization found in \cite[Section 6.3]{CM}: a partition $\lambda \in \mathcal{P}_X(n)$ (resp. decorated partition $\lambda^{\kappa} \in \mathcal{P}_D(n)$) is special if and only if it contains:
\begin{align*}
    X=B: &\text{ an even number of odd parts between every pair of consecutive even parts}\\
    &\text{ and an odd number of odd parts greater than the largest even part.}\\
    X=C: &\text{ an even number of even parts between every pair of consecutive odd parts}\\
    &\text{ and an even number of even parts larger than the greatest odd part.}\\
    X=D: &\text{ an even number of odd parts between every pair of consecutive even parts}\\
    &\text{ and an even number of odd parts greater than the largest even part.}
\end{align*}
Note that if $X=D$ and $\lambda \in \mathcal{P}(2n)$ is very even, then $\lambda^0, \lambda^1 \in \mathcal{P}_X^*(n)$.

Define the sets
\begin{align*} 
\overline{\mathcal P}_B(p,n) &= \begin{cases}
    \mathcal P_D(p) \times \mathcal P_B(n-p) & \mbox{if } p\ne 1 \\
    \mathcal \mathcal P_B(n) & \mbox{if } p = 1
\end{cases} \\
\overline{\mathcal P}_C(p,n) &= \mathcal{P}_C(p) \times \mathcal{P}_C(n-p) \\
\overline{\mathcal P}_D(p,n) &= \begin{cases}
    \mathcal P_D(p) \times \mathcal P_D(n-p) & \mbox{if } p\ne 1,n-1 \\
    \mathcal \mathcal P_D(n) & \mbox{if } p = 1,n-1
\end{cases} 
\end{align*}
and define
$$\overline{\mathcal{P}}_X(n) = \coprod_{k=0}^n\overline{\mathcal P}_X(k,n).$$
Analogously define $\overline{\mathcal P}^*_X(k,n)$ and $\overline{P}^*_X(n)$ by replacing all instances of $\mathcal P$ with $\mathcal P^*$ in the definitions above.
Following Achar, we will often write $^{\langle\mu\rangle}(\mu \cup \nu)$ for the bipartition $(\mu,\nu) \in \overline{\mathcal{P}}_X(n)$ (except when $\mu$ or $\nu$ is very even and this notation is ambiguous).
Note that for $\lambda\in \mathcal P_X(n)$ we have $(\emptyset,\lambda)\in \overline{\mathcal P}_X(p,n)$ for 
\begin{align*}
    X = B:&\quad p\in \{0,1\} \\
    X = C:&\quad p\in \{0\} \\
    X = D:&\quad p\in \{0,1\}
\end{align*}
and $(\lambda,\emptyset)\in \overline{\mathcal P}_D(p,n)$ for $X=D$, $p\in\{n-1,n\}$.
Thus there is ambiguity when we write $(\emptyset,\lambda)\in\overline{\mathcal P}_X(n)$.
We will always interpret this statement as $(\emptyset,\lambda)\in \overline{\mathcal P}_X(0,n)$.
Similarly, we interpret $(\lambda,\emptyset)\in\overline{\mathcal P}_D(n)$ to mean $(\lambda,\emptyset)\in\overline{\mathcal P}_D(n,n)$.

For $J = J_X(p,n)$, we have natural bijections
\begin{align}\label{eq:pseudolevisorbits}
    \cN_o^{L_J} &\xrightarrow{\sim} \overline{\mathcal{P}}_X(p,n) 
\end{align}
defined via (\ref{eq:orbitspartitions}) and (\ref{eq:pseudolevis}). 

Let 
\begin{equation}
    \mathcal K_{\tilde\Delta}^{max} := \{(J,\OO)\in\mathcal K_{\tilde\Delta} \mid |\tilde\Delta|-|J|=1\}.
\end{equation}
There is a natural bijection 
\begin{equation}\label{eq:KPbijection}\mathcal K_{\tilde\Delta}^{max}\xrightarrow{\sim} \overline{\mathcal P}_X(n)\end{equation}
defined via the bijections (\ref{eq:pseudolevisorbits}).

Consider the composition
$$s: \overline{\mathcal{P}}_X(n) \xrightarrow{\sim} \mathcal{K}_{\tilde{\Delta}}^{max} \hookrightarrow \mathcal{K}_{\tilde{\Delta}} \overset{\mathbb{L}}{\twoheadrightarrow} \cN_{o,c},$$
and the further composition
$$\bar s: \overline{\mathcal{P}}_X(n) \overset{s}{\twoheadrightarrow} \cN_{o,\bar c} \overset{\mathfrak{Q}}{\twoheadrightarrow} \cN_{o,\bar c}.$$
The fibers of $\bar s$ are described in \cite[Section 3.4]{Acharduality}. His description requires some additional terminology. If $\lambda \in \mathcal{P}_X(n)$ and $x \in \ZZ_{\geq 0}$, we say that $x$ is \emph{markable in} $\lambda$ (or simply \emph{markable}, if $\lambda$ is understood) if $m_{\lambda}(x) \geq 1$ and
\begin{align*}
    X=B: & \text{ $x$ is odd and $\mathrm{ht}_{\lambda}(x)$ is odd}.\\
    X=C: & \text{ $x$ is even and $\mathrm{ht}_{\lambda}(x)$ is even}.\\
    X=D: & \text{ $x$ is odd and $\mathrm{ht}_{\lambda}(x)$ is even}.
\end{align*}
Write $x_k > x_{k-1} > ... > x_1$ for the markable parts of $\lambda$. Then for $\mu$ a partition, the \emph{reduction} of $\mu$ is a subpartition $r_{\lambda}(\mu) \subseteq \lambda$ defined by
$$m_{r_{\lambda}(\mu)}(x_i) = \begin{cases} 
1   &\mbox{if } \mathrm{ht}_{\mu}(x_i) - \mathrm{ht}_{\mu}(x_{i+1}) \text{ is odd}.\\
0 &\mbox{if } \mathrm{ht}_{\mu}(x_i) - \mathrm{ht}_{\mu}(x_{i+1}) \text{ is even}.
\end{cases}, \qquad m_{r_{\lambda}(\mu)}(y) = 0 \text{ if } y \text{ is not markable in } \lambda,$$
where we set $\mathrm{ht}_{\mu}(x_{k+1}) = 0$. 

\begin{prop}[Section 3.4, \cite{Acharduality}]\label{prop:Acharequivalence}
Let $\lambda \in \mathcal{P}_X(n)$ (if $X=D$, we assume $\lambda$ is not very even and so ignore the decoration). Choose $^{\langle\mu\rangle}\lambda, ^{\langle\mu'\rangle}\lambda \in \overline{\mathcal{P}}_X(n)$. Then $\bar{s}(^{\langle\mu\rangle}\lambda) = \bar{s}(^{\langle\mu'\rangle}\lambda)$ if and only if $r_{\lambda}(\mu) = r_{\lambda}(\mu')$.
\end{prop}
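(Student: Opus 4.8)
The plan is to reduce the assertion to a computation inside Lusztig's canonical quotient $\bar A(\OO)$ of the single nilpotent orbit $\OO\in\cN_o$ attached to $\lambda$, and then to recognize that computation as the one carried out by Achar in \cite[Section 3.4]{Acharduality}.

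First I would check that the first coordinate $\pr_1(\bar s(^{\langle\mu\rangle}\lambda))\in\cN_o$ equals $\OO$ for every admissible $\mu$. Unwinding the definitions, $\bar s(^{\langle\mu\rangle}\lambda)=\mf Q(\mathbb L(J,\OO'))$, where $J=J_X(p,n)$ for the appropriate $p$, $\Xi(c(J))=(L,tZ_L^\circ)$ with $L$ the connected pseudo-Levi whose Lie algebra is $\mathfrak l_X(p,n)$ as in (\ref{eq:pseudolevis}), and $\OO'\subset\mathfrak l_X(p,n)$ is the nilpotent orbit labelled by the bipartition $(\mu,\lambda\setminus\mu)$ via (\ref{eq:pseudolevisorbits}). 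Realizing $t$ as a form-preserving involution of the defining orthogonal/symplectic space $V$ whose $\pm1$-eigenspaces $V_\pm$ have dimensions matching $|\mu|$ and $|\lambda\setminus\mu|$, and taking $x\in\OO'$ acting on $V_+$ (resp.\ $V_-$) with Jordan type $\mu$ (resp.\ $\lambda\setminus\mu$), we get $\pr_1(\mathbb L(J,\OO'))=Gx$, the $G$-orbit of a nilpotent of Jordan type $\mu\cup(\lambda\setminus\mu)=\lambda$; this is the orbit labelled by the $X$-collapse of $\lambda$, which is $\OO$ since $\lambda$ is already an $X$-partition (and, in type $D$, unambiguously so because $\lambda$ is not very even); cf.\ \cite[\S 6.3, Ch.~7]{CM}. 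Hence both sides of the claimed equivalence lie over $\OO$, and only their $\bar A(\OO)$-components must be compared.

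Second I would compute that component. As $t$ centralizes $x$, we have $\mf Q(\mathbb L(J,\OO'))=(\OO,\ \overline{tZ_G^\circ(x)})$, the bar denoting the image in $\bar A(\OO)$. By the Springer--Steinberg description of centralizers and Sommers' analysis \cite[Section 5]{Sommers2001}, $A(\OO)$ is an elementary abelian $2$-group with canonical generators indexed by the ``relevant'' parts of $\lambda$ (distinct even parts for $X=C$; distinct odd parts, with one determinant relation, for $X=B,D$), and on the factor attached to a relevant part $d$ the class of $t$ is the sign element precisely when $m_\mu(d)$ is odd. Passing to the canonical quotient $A(\OO)\twoheadrightarrow\bar A(\OO)$, whose target is $(\ZZ/2)^k$ with coordinates indexed by the markable parts $x_1<\cdots<x_k$ of $\lambda$ and whose $x_i$-coordinate of a class records the parity of a height difference, one finds that $\overline{tZ_G^\circ(x)}$ has $x_i$-coordinate $m_{r_\lambda(\mu)}(x_i)$, by the very definition of the reduction $r_\lambda$. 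Thus $\bar s(^{\langle\mu\rangle}\lambda)=(\OO,\ \varepsilon(r_\lambda(\mu)))$ for a fixed bijection $\varepsilon$ between subpartitions of $\lambda$ supported on markable parts and $\bar A(\OO)$; since $\varepsilon$ is injective, $\bar s(^{\langle\mu\rangle}\lambda)=\bar s(^{\langle\mu'\rangle}\lambda)$ if and only if $r_\lambda(\mu)=r_\lambda(\mu')$, which is the claim.

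The step I expect to be the real obstacle is the second one: pinning down, uniformly in types $B$, $C$, $D$, both the precise structure of $A(\OO)$ (the determinant relation in the orthogonal cases, and the separation of ``relevant'' from ``markable'' parts) and the exact height formula for $A(\OO)\twoheadrightarrow\bar A(\OO)$, so that the $x_i$-coordinate telescopes to $m_{r_\lambda(\mu)}(x_i)$. This is precisely the combinatorial core of \cite[Section 3.4]{Acharduality}. Rather than reprove it, the write-up will record the compatibility of our maps $\Xi$ and $\mathbb L$ with Achar's saturation maps --- via Proposition \ref{prop:square} together with the identifications (\ref{eq:pseudolevisorbits}) and (\ref{eq:KPbijection}) --- and then invoke his fiber computation. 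A small amount of separate bookkeeping handles the decorations when some constituent is very even.
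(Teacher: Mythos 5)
The paper offers no proof of this proposition: it is quoted verbatim from Achar's fiber computation in [Section 3.4, \cite{Acharduality}], with the (implicit) compatibility of $\mathbb L$, $\Xi$ and the identifications (\ref{eq:pseudolevisorbits}), (\ref{eq:KPbijection}) with Achar's saturation maps taken for granted. Your proposal — check that the first coordinate is the orbit of $\lambda$, identify the $\bar A(\OO)$-component of $\overline{tZ_G^\circ(x)}$ with $r_\lambda(\mu)$ via Sommers' description, and invoke Achar for the combinatorial core — is correct and is essentially the same route the paper intends.
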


\subsection{Preliminaries on s-symbols}

Write $a\ll b$ to mean $a+2\le b$.
For a sequence $a = (a_1,a_2,\dots,a_k)$ define $\#a:=k$.
For an integer $x$ we write $x+a$ for the sequence $(x+a_1,\dots,x+a_k)$.
Write $a(i)$ for the $i$th entry from the right of $a$.
Define $z^{0,k}=(0,1,\dots,k-1)$.
An s-symbol $\Lambda:=(a;b)$ is a pair of sequences $a=(a_1\ll a_2 \ll \cdots \ll a_k)$, $b=(b_1 \ll b_2 \ll \dots \ll b_l)$ with $a_1,b_1\ge 0$.
Define $\#\Lambda = \#a+\#b$.
The defect of $\Lambda$ is defined to be $\#a-\#b$.
We say $\Lambda$ is an s-symbol of type X if 
\begin{align*}
    X=B: &\text{ $\Lambda$ has defect 1}\\
    X=C: &\text{ $\Lambda$ has defect 1 and $b_1>0$}\\
    X=D: &\text{ $\Lambda$ has defect 0.}
\end{align*}
We will introduce two different equivalence relations on s-symbols, denoted $\sim$ and $\approx$. For $\Lambda=((a_1,\dots,a_k);(b_1,\dots,b_l))$ and $\Lambda'=((a_1',\dots,a_{k'}');(b_1',\dots,b_{l'}'))$ we write $\Lambda\sim\Lambda'$ if $$\{a_1,\dots,a_k,b_1,\dots,b_{l}\} = \{a_1',\dots,a_{k'}',b_1',\dots,b_{l'}'\}$$ as multisets. 
On the other hand, $\approx$ is defined to be the equivalence relation generated by
\begin{align*}
    X=B,D&:((a_1,\dots,a_s);(b_1,\dots,b_t))\approx ((0,a_1+2,\dots,a_s+2);(0,b_1+2,\dots,b_t+2))\\
    X=C&:((a_1,\dots,a_s);(b_1,\dots,b_t))\approx ((0,a_1+2,\dots,a_s+2);(1,b_1+2,\dots,b_t+2)).
\end{align*}
For a set of s-symbols $S$ of type $X$ and an $\Lambda\in S$ write $[\Lambda;S]$ (resp. $\phi(\Lambda;S)$) for the set of $\Lambda'\in S$ such that $\Lambda\approx\Lambda'$ (resp. $\Lambda\sim\Lambda'$).
For a sequence $a=(a_1\ll a_2 \ll \cdots \ll a_k)$ define
\begin{align*}
    \rho_{s,0}(a) &= \sum_ia_i-2(i-1) \\
    \rho_{s,1}(a) &= \sum_ia_i-2(i-1)-1 \text{ provided $a_1> 0$}.
\end{align*}
Write $\Lambda_X(n;k)$ for the set of s-symbols $(a;b)$ of type $X$ with $\#b=k$ and 
\begin{align*}
    X=B: & \ \rho_{s,0}(a)+\rho_{s,0}(b) = n\\
    X=C: & \ \rho_{s,0}(a)+\rho_{s,1}(b) = n\\
    X=D: & \ \rho_{s,0}(a)+\rho_{s,0}(b) = n.
\end{align*}
For an s-symbol $\Lambda$, define $\bar \Lambda$ be the sequence
$$\bar \Lambda := \begin{cases}(a_1,b_1,a_2,b_2,\dots) & \mbox{ if $X \in \{B,C\}$}\\
 (b_1,a_1,b_2,a_2,\dots) & \mbox{ if $X=D$}.
    \end{cases} $$
We say $\Lambda$ is monotonic if $\bar \Lambda$ is a non-decreasing sequence.
Note every $\sim$ equivalence class in $\Lambda_X(n;k)$ contains a monotonic s-symbol.
Define $$\Lambda_X(n) = \coprod_{k\ge0} \Lambda_X(n;k)/\approx.$$
For $\Lambda\in \coprod_{k\ge0} \Lambda_X(n;k)$ we will simply write $[\Lambda]$ for $\left[\Lambda;\coprod_{k\ge0} \Lambda_X(n;k)\right]$.
Define $[\Lambda_1]\sim[\Lambda_2]$ if there exists $\Lambda_1'\in[\Lambda_1],\Lambda_2'\in[\Lambda_2]$ such that $\Lambda_1'\sim\Lambda_2'$.
Write $\phi([\Lambda])$ for the set of $[\Lambda']\in \Lambda_X(n)$ such that $[\Lambda]\sim[\Lambda']$.
For $k\ge 0$ and $\Lambda\in\Lambda_X(n;k)$ the map
\begin{equation}
    \label{eq:familybijection}
    \phi(\Lambda;\Lambda_X(n;k))\xrightarrow{\sim}\phi([\Lambda]), \qquad \Lambda'\mapsto [\Lambda']
\end{equation}
is a bijection.
%For $k\ge n$ the map 
%\begin{equation}
%    \Lambda_X(n;k)\to \Lambda_X(n), \qquad \Lambda\mapsto [\Lambda]
%\end{equation}
%is a bijection.

For $X\in\{B,C\}$, recall from section \ref{sec:classicalliealgebras} that $\mathrm{Irr}(W_X(n))$ is parameterised by $\overline{\mathcal P}(n)$.
Define a map
\begin{equation}\label{eq:LambdaE}\Lambda:\mathrm{Irr}(W_X(n))\to \coprod_{j\ge0}\Lambda_X(n;k),\qquad E\mapsto \Lambda(E)\end{equation}
as follows. 
Suppose $E\in\mathrm{Irr}(W_X(n))$ corresponds to $(\lambda,\mu)\in\overline{\mathcal P}(n)$ with $\lambda=(\lambda_1\le\lambda_2\le\cdots\\leq \lambda_k),\mu=(\mu_1\le\mu_2\le\cdots\mu_l)$.
If $k > l$ let $\lambda'=\lambda$, $\mu'=(0^{k-l-1})\cup_\le \mu$ and define 
\begin{equation}
    \Lambda(E) := \begin{cases}
        (\lambda'+2z^{0,k};\mu'+2z^{0,k-1}) & \mbox{ if $X=B$} \\
        (\lambda'+2z^{0,k};\mu'+2z^{0,k-1}+1) & \mbox{ if $X=C$}.
    \end{cases}
\end{equation}
If $k\le l$ let $\lambda=(0^{l-k+1})\cup_\le\lambda,\mu'=\mu$ and define $\Lambda(E)$ as above but with $k$ replaced by $l$. We get a bijection
$$\mathrm{Irr}(W_X(n))\xrightarrow{\sim} \Lambda_X(n), \qquad E \mapsto [\Lambda(E)].$$
This bijection has the following properties \cite[Section 13.3]{Carter1993}
\begin{enumerate}
    \item $\OO(E_1)=\OO(E_2)$ if and only if $[\Lambda(E_1)]\sim [\Lambda(E_2)]$,
    \item $\Lambda(E)$ is monotonic if and only if for all $\Lambda\in [\Lambda(E)]$, $\Lambda$ is monotonic if and only if $E = E(\OO,1)$ for some $\OO\in\cN_o$.
\end{enumerate}

The situation in type $D$ is a bit more delicate.
Firstly we must consider unordered s-symbols. 
These are s-symbols, except they consist of unordered pairs $\Lambda:=\{a;b\}$ instead of ordered pairs.
We say $\Lambda$ has defect $|\#a-\#b|$.
For an ordered pair $\Omega=(a;b)$ write $\{\Omega\}$ for the unordered pair $\{a;b\}$.
We say $\Lambda$ is of type $D$ if it has defect $0$.
We will also require a decoration for $\Lambda=\{a;b\}$.
We call $\Lambda^\kappa$ a decorated s-symbol where $\kappa\in \{0,1\}$.
If $a\ne b$ then we declare $\Lambda^0=\Lambda^1$, otherwise they are different.
We define $\sim$ for unordered s-symbols the same as for s-symbols.
We define $\sim$ for decorated unordered s-symbols by 
\begin{equation}
    \{a;b\}^{\kappa} \sim \{a';b'\}^{\kappa'} \text{ if } \begin{cases}
        \{a;b\} = \{a';b'\}, \kappa=\kappa' & \mbox{if } a=b \\
        \{a;b\} \sim \{a';b'\} & \mbox{if } a\ne b.
    \end{cases}
\end{equation}
We define $\approx$ for unordered s-symbols to be the equivalence relation generated by 
\begin{equation}
    \{(a_1,\dots,a_s);(b_1,\dots,b_t)\}\approx \{(0,a_1+2,\dots,a_s+2);(0,b_1+2,\dots,b_t+2)\}.
\end{equation}
We define $\approx$ for decorated unordered s-symbols by
\begin{equation}
    \{a;b\}^{\kappa} \approx\{a';b'\}^{\kappa'} \text{ if } \begin{cases}
        \{a;b\} \approx \{a';b'\}, \kappa=\kappa' & \mbox{if } a=b \\
        \{a;b\} \approx \{a';b'\} & \mbox{if } a\ne b.
    \end{cases}
\end{equation}
For a set $S$ of unordered s-symbols (resp. decorated unordered s-symbols) and $\Lambda\in S$ (resp. $\Lambda^\kappa\in S$)we define $[\Lambda;S]$ and $\phi(\Lambda;S)$ analagously to the ordered case.
Write $\tilde\Lambda_D(n;k)$ (resp. $\tilde\Lambda_D^\bullet(n;k)$) for the set of unordered s-symbols (resp. decorated unordered s-symbols) of type $D$ with $\#b=k$ and $\rho_{s,0}(a)+\rho_{s,0}(b) = n$.
Define 
\begin{equation}
    \tilde\Lambda_D(n) = \coprod_{k\ge0} \tilde\Lambda_D(n;k)/\approx, \qquad \tilde\Lambda_D^\bullet(n) = \coprod_{k\ge0} \tilde\Lambda_D^\bullet(n;k)/\approx.
\end{equation}
For $\Lambda \in \coprod_{k\ge0} \tilde\Lambda_D(n;k)$ (resp. $\Lambda^\kappa\in\coprod_{k\ge0} \tilde\Lambda_D^\bullet(n;k)$) we will simply write $[\Lambda]$ (resp. $[\Lambda^\kappa]$) for $[\Lambda;\coprod_{k\ge0} \tilde\Lambda_D(n;k)]$ (resp. $[\Lambda^\kappa;\coprod_{k\ge0} \tilde\Lambda^\bullet_D(n;k)]$).
Write $\phi([\Lambda])$ (resp. $\phi([\Lambda^\kappa])$) for the set of $[\Lambda']\in \tilde\Lambda_D(n)$ (resp. $[\Lambda'^{\kappa'}]\in\tilde\Lambda_D^\bullet(n)$) such that $[\Lambda]\sim[\Lambda']$ (resp. $[\Lambda^\kappa]\sim[\Lambda'^{\kappa'}]$).
For $k\ge0$ and $\Lambda\in\tilde\Lambda_D(n;k)$ (resp. $\Lambda^\kappa\in\tilde\Lambda_D^\bullet(n;k)$) the map
\begin{align}
    \phi(\Lambda;\tilde\Lambda_D(n;k)) &\to \phi([\Lambda]), \qquad \Lambda'\mapsto[\Lambda'] \\
    \text{resp. }\phi(\Lambda^\kappa;\tilde\Lambda_D^\bullet(n;k)) &\to \phi([\Lambda^\kappa]), \qquad \Lambda'^{\kappa'}\mapsto[\Lambda'^{\kappa'}]
\end{align}
is a bijection.

Recall from section \ref{sec:classicalliealgebras} that $\mathrm{Irr}(W_D(n))$ is parameterised by $\overline{\mathcal P}^{d,u}(n)$.
Define 
$$\tilde\Lambda^\bullet:\mathrm{Irr}(W_D(n))\to \coprod_{j\ge0}\tilde\Lambda_D^\bullet(n;k),\qquad E\mapsto \tilde\Lambda^\bullet(E)$$ 
as follows.
Suppose $E\in\mathrm{Irr}(W_D(n))$ corresponds to $\{\lambda,\mu\}^\kappa\in\overline{\mathcal P}^{d,u}(n)$.
Let $\lambda',\mu'$ be $\lambda,\mu$ padded with $0$'s in a similar manner to types $B$ and $C$ but to ensure they have the same number entries.
Define
\begin{equation}
    \tilde\Lambda^\bullet(E) = \{\lambda'+2z^{0,k};\mu'+2z^{0,k}\}^\kappa
\end{equation}
where $k=\#\lambda'=\#\mu'$.
Define the map
$$\tilde\Lambda:\mathrm{Irr}(W_D(n))\to \coprod_{j\ge0}\tilde\Lambda_D(n;k),\qquad E\mapsto \tilde\Lambda(E),$$
by setting $\tilde\Lambda(E)$ to be the underlying unordered symbol for $\tilde{\Lambda}^{\bullet}(E)$. 
We get a bijection
\begin{equation}
    \label{eq:ssymbD}
    \mathrm{Irr}(W_D(n)) \xrightarrow{\sim} \tilde\Lambda_D^\bullet(n), \qquad E \mapsto [\tilde{\Lambda}^{\bullet}(E)].    
\end{equation}

%
%This bijection has the property that $\OO(E_1)=\OO(E_2)$ if and only if $[\tilde\Lambda^\bullet(E_1)]\sim [\tilde\Lambda^\bullet(E_2)]$ \cite[Section 13.3]{Carter1993}.

%For $k$ sufficiently large there is a bijection $\Lambda:\mathrm{Irr}(W_D(n))\to\tilde \Lambda_D^\bullet(n;k), E\mapsto \tilde\Lambda^\bullet(E;k)$ with the property that $\supp(E_1)=\supp(E_2)$ iff $\tilde\Lambda^\bullet(E_1;k)\sim \tilde\Lambda^\bullet(E_2;k)$. 

For certain identities we will require an ordering on the rows of $\Lambda$.
Define
\begin{equation}
    \underline\Lambda = \begin{cases}
        (a;b) & \mbox{if } \sum_ia_i\ge \sum_ib_i \\ 
        (b;a) & \mbox{if } \sum_ia_i< \sum_ib_i.
    \end{cases}
\end{equation}
and
\begin{equation}
    \underline{\overline\Lambda} = \begin{cases}
        (b_1,a_1,b_2,a_2,\dots) & \mbox{if } \sum_ia_i\ge \sum_ib_i \\ 
        (a_1,b_1,a_2,b_2,\dots) & \mbox{if } \sum_ia_i< \sum_ib_i.
    \end{cases}
\end{equation}
For $E\in\mathrm{Irr}(W_D(n))$ write $\Lambda(E)$ for $\underline{\tilde\Lambda(E)}$.
We say $\Lambda^\kappa=\{a;b\}^\kappa\in \tilde\Lambda_D^\bullet(n)$ is monotonic if $\underline{\overline\Lambda}$ is a non-decreasing sequence.

The bijection in Equation (\ref{eq:ssymbD}) has the following properties \cite[Section 13.3]{Carter1993}
\begin{enumerate}
    \item $\OO(E_1)=\OO(E_2)$ if and only if $[\tilde\Lambda^\bullet(E_1)]\sim [\tilde\Lambda^\bullet(E_2)]$,
    \item $\tilde\Lambda^\bullet(E)$ is monotonic if and only if for all $\Lambda^\kappa\in [\tilde\Lambda^\bullet(E)]$, $\Lambda^\kappa$ is monotonic if and only if $E = E(\OO,1)$ for some $\OO\in\cN_o$.
\end{enumerate}

For $\Lambda\in\Lambda_X(n;k)$ monotonic we have the following convenient description for $[\Lambda;\Lambda_X(n;k)]$.
Decompose $\bar\Lambda$ into subsequences $(I_1,I_2,\dots,I_n)$ by first extracting all pairs, and then taking intervals of consecutive integers (`intervals' for short).
We call this decomposition the \textit{refinement} of $\bar\Lambda$.
Now decompose $a,b$ as $a=(A_1,A_2,\dots,A_n),b=(B_1,B_2,\dots,B_n)$ where $A_i$ (resp. $B_i$) consists (in increasing order) of the elements of $a$ (resp. $b$) in $I_i$.
We call this the \textit{refinements} of $a$ and $b$ respectively.
\begin{example}
    Suppose
    \begin{equation}
        \Lambda=\begin{pmatrix}
            0 & & 2 & & 3 & & 7 & & 10 & & 13 \\
            & 1 & & 3 & & 6 & & 8 & & 11 &
        \end{pmatrix}
    \end{equation}
    then $\bar \Lambda = (0,1,2,3,3,6,7,8,10,11,13)$ and $I_1=(0,1,2),I_2=(3,3),I_3=(6,7,8),I_4=(10,11),I_5=(13)$.
    The refinements of $a$ and $b$ are then $a=(A_1,\dots,A_4),b=(B_1,\dots,B_4)$ where
    \begin{align*}
        A_1=(0,2),\qquad A_2=(3),\qquad A_3=(7),\qquad A_4=(10),\qquad A_5=(13) \\
        B_1=(1),\qquad B_2=(3),\qquad B_3=(6,8),\qquad B_4=(11),\qquad B_5=\emptyset.
    \end{align*}
\end{example}

The following lemma is trivial and we omit the proof.

\begin{lemma}
    \label{lem:familyflips}
    Let $\Lambda=(a;b),\Lambda'\in\Lambda_X(n;k)$, $X\in\{B,C,D\}$ and suppose $\Lambda$ is monotonic.
    Form the refinements $a=(A_1,\dots,A_n),B=(B_1,\dots,B_n)$ of $a$ and $b$. 
    Then $\Lambda'\sim\Lambda$ if and only if $$\Lambda' = ((X_1,X_2,\dots,X_n);(Y_1,Y_2,\dots,Y_n))$$ where $\{X_i,Y_i\} = \{A_i,B_i\}$.
\end{lemma}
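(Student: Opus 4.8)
The plan is to prove Lemma~\ref{lem:familyflips} by extracting everything from the single structural fact that, in a (valid) s-symbol, the entries of each row increase in steps of at least $2$, so that a fixed multiset of entries admits only very few splittings into two such rows. The ``if'' direction requires nothing: if $\Lambda'=((X_1,\dots,X_n);(Y_1,\dots,Y_n))$ lies in $\Lambda_X(n;k)$ and $\{X_i,Y_i\}=\{A_i,B_i\}$ for every $i$, then the multiset of entries of $\Lambda'$ equals $\bigsqcup_i(X_i\sqcup Y_i)=\bigsqcup_i(A_i\sqcup B_i)$, which is the multiset of entries of $\Lambda$, whence $\Lambda'\sim\Lambda$. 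All of the content is in the converse.

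So suppose $\Lambda'=(a';b')\in\Lambda_X(n;k)$ has the same multiset $M$ of entries as $\Lambda$. First I would record two elementary observations valid for \emph{any} s-symbol with multiset $M$: since $x\not\ll x$, no value occurs twice in a single row, so every value occurs in $M$ with multiplicity at most $2$, and a value of multiplicity $2$ occurs exactly once in each row; moreover, if $x$ has multiplicity $2$ then $x-1$ and $x+1$ are absent from $M$, for either of them would be forced to share a row with a copy of $x$. Consequently the non-decreasing arrangement of $M$---which, $\Lambda$ being monotonic, is precisely the sequence $\bar\Lambda$---splits canonically into the blocks $I_1,\dots,I_n$ of the refinement: the ``pair'' blocks $(x,x)$, which by the above are separated from their neighbours by a gap $\geq 2$, and, among the remaining (multiplicity-one) entries, the ``interval'' blocks, i.e.\ the maximal runs of consecutive integers; any two consecutive blocks of the whole decomposition are then separated by a gap $\geq 2$. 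This decomposition depends only on $M$, hence coincides for $\Lambda$ and for $\Lambda'$.

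Next I would run through the blocks. On a pair block $(x,x)$ the two copies are forced into different rows. On an interval block $I_i=(x,x+1,\dots,x+m)$, two entries of the same row differ by at least $2$, so consecutive elements of $I_i$ must lie in different rows; hence the partition of the values of $I_i$ into (those occurring in $a'$, those occurring in $b'$) is one of the two alternating colourings of $I_i$. Applying the identical reasoning to $\Lambda$ identifies these two alternating subsets with $A_i$ and $B_i$. Thus, writing $X_i$ (resp.\ $Y_i$) for the entries of $a'$ (resp.\ $b'$) lying in $I_i$, we get $\{X_i,Y_i\}=\{A_i,B_i\}$; and because the blocks occur in increasing order of value while $a'$ and $b'$ are themselves increasing, $a'=(X_1,\dots,X_n)$ and $b'=(Y_1,\dots,Y_n)$. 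Crucially, nothing links the choice on $I_i$ to that on $I_{i+1}$, since consecutive blocks are already $\geq 2$ apart, so the per-block choices are independent.

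There is no real obstacle here---the lemma is, as the authors say, pure bookkeeping---but one point deserves attention: the condition $\{X_i,Y_i\}=\{A_i,B_i\}$ by itself allows, on an interval block of odd length, the colouring opposite to the one used in $\Lambda$, which alters the number of entries in each row and hence the defect. This is exactly why the statement assumes at the outset that $\Lambda'$ lies in $\Lambda_X(n;k)$: fixing the defect (equivalently $\#b=k$) is precisely what forbids the incompatible combinations of block-choices, and it is automatic, once $M$ and $k$ are fixed, that $\rho_{s,0}(a')+\rho_{s,0}(b')=n$ as well. The same analysis, with $\bar\Lambda$ replaced by $\underline{\overline\Lambda}$ and the decoration carried along unchanged, would handle the unordered symbols of type $D$ if that variant is needed.
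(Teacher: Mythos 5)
Your proof is correct and complete. The paper declares this lemma trivial and omits the proof entirely, so there is nothing to compare against; your argument---deduce from the step-$\geq 2$ condition within each row that every value has multiplicity at most $2$, that multiplicity-two values are isolated, that the block decomposition of the sorted multiset is therefore intrinsic to $M$, and that on each interval block the row-assignment is one of the two alternating colourings---is exactly the intended bookkeeping, and your closing remark correctly locates where the hypothesis $\Lambda'\in\Lambda_X(n;k)$ is actually used (to exclude flips that change the defect). One caution: your observation that a multiplicity-two value forces $x\pm 1\notin M$ appears to conflict with the paper's illustrative example just before the lemma (where $2$ sits next to the pair $3,3$), but that example is not a valid s-symbol in the first place (its top row contains $2$ followed by $3$, violating $\ll$), so the conflict is an artifact of the example rather than a gap in your argument.
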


In the statement of the lemma, we say that the index $i$ for $\Lambda$ has been `flipped' if $X_i=B_i,Y_i=A_i$.
So in other words one can obtain all $\Lambda'\sim\Lambda$ by 
\begin{align*}
    X=B: &\text{ flipping indices in a manner that yields a symbol of defect 1.}\\
    X=C: &\text{ flipping indices in a manner that yields a symbol of defect 1,}\\
    &\text{except that the index $1$ cannot be flipped if $A_1$ starts with a $0$} \\
    X=D: &\text{ flipping some indices in a manner that yields a symbol of defect 0.}
\end{align*}
For decorated unordered s-symbols in type $D$ we have a similar description.
Let $\Lambda^\kappa=(a;b)^\kappa$ be monotonic.
The refinements of $\overline{\underline \Lambda}, a, b$ are defined the same as in the ordered case.
\begin{lemma}
    Let $\Lambda^\kappa=(a;b)^\kappa,\Lambda'^{\kappa'}\in\Lambda_D^\bullet(n;k)$.
    Form the refinements $a=(A_1,A_2,\dots,A_n)$ and $b=(B_1,B_2,\dots,B_n)$.
    We have that $\Lambda'^{\kappa'}\sim\Lambda^\kappa$ iff $$\Lambda' = \{(X_1,X_2,\dots,X_n);(Y_1,Y_2,\dots,Y_n)\}$$ where $\{X_i,Y_i\} = \{A_i,B_i\}$ and 
    \begin{align*}
       \begin{cases}
           \kappa=\kappa' & \mbox{ if } a=b \\
           \text{no restriction on } \kappa,\kappa' & \mbox{ otherwise}.
       \end{cases} 
    \end{align*}
\end{lemma}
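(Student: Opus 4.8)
The plan is to deduce the statement from Lemma~\ref{lem:familyflips}, applied in type $D$ to \emph{ordered} s-symbols, by separating the claim into its ``unordered'' part and its ``decoration'' part. Throughout I would assume, as in the paragraph preceding the lemma, that $\Lambda^\kappa$ is monotonic, so that $\underline\Lambda$, $\underline{\overline\Lambda}$, and the refinements $a=(A_1,\dots,A_n)$, $b=(B_1,\dots,B_n)$ are all defined.

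First I would reduce to the ordered case. Recall that $\sim$, for s-symbols ordered or unordered alike, is just multiset equality of the union of the two rows, and that passing from $(a;b)$ to $(b;a)$ leaves the unordered symbol unchanged; so I may assume $\underline\Lambda=(a;b)$, in which case $(a;b)$ is a monotonic ordered s-symbol of type $D$ in $\Lambda_D(n;k)$ with exactly the refinements appearing in the statement. Given $\Lambda'^{\kappa'}\sim\Lambda^\kappa$ one gets $\{a';b'\}\sim\{a;b\}$, so for either ordering the pair $\{a';b'\}$ becomes an ordered s-symbol in $\Lambda_D(n;k)$ that is $\sim$-equivalent to $(a;b)$; Lemma~\ref{lem:familyflips} with $X=D$ then forces $(a';b')=((X_1,\dots,X_n);(Y_1,\dots,Y_n))$ with $\{X_i,Y_i\}=\{A_i,B_i\}$, i.e.\ $\Lambda'=\{(X_1,\dots,X_n);(Y_1,\dots,Y_n)\}$. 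The converse is immediate, since any such flipping preserves $\bigcup_i(X_i\cup Y_i)=\bigcup_i(A_i\cup B_i)=a\cup b$ as a multiset and so yields an unordered symbol $\sim$-equivalent to $\{a;b\}$.

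Next I would settle the decoration. If $a\ne b$ then $\Lambda^0=\Lambda^1$ and $\Lambda'^0=\Lambda'^1$ by definition, so once $\Lambda'\sim\Lambda$ holds there is no additional constraint on $\kappa,\kappa'$, matching the claim. If $a=b$, then since each row is $\ll$-increasing its entries are pairwise distinct, so every value in $a\cup b=a\cup a$ has multiplicity exactly $2$; because $a'$ and $b'$ are likewise $\ll$-increasing, $\{a';b'\}\sim\{a;a\}$ forces each such value to appear once in $a'$ and once in $b'$, i.e.\ $a'=b'=a$. Equivalently, when $a=b$ every entry of $\underline{\overline\Lambda}$ lies in a pair, so the refinement consists only of singletons $A_i=B_i=(a_i)$ and no flip is nontrivial. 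In this case the definition of $\sim$ on decorated unordered s-symbols asks for exactly $\kappa=\kappa'$, which is the stated condition. Combining the two steps gives the lemma.

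The only substantive ingredient is Lemma~\ref{lem:familyflips}, which may be taken as known; everything else is unwinding the definitions of $\sim$, $\underline\Lambda$, monotonicity, and the refinement in the decorated unordered setting. The main point to watch---though it is still routine---is the degenerate case $a=b$: one must check that the refinement collapses to singletons, so that the decoration is genuinely rigid, which is exactly what makes the ``$\kappa=\kappa'$ when $a=b$'' clause correct.
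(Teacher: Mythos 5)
Your proof is correct and follows the route the paper clearly intends (the paper omits the proof, having declared the ordered analogue, Lemma~\ref{lem:familyflips}, trivial): reduce to the ordered type-$D$ case by choosing an ordering of each unordered symbol, and then observe that when $a=b$ the refinement consists entirely of pairs with $A_i=B_i$, so all flips are trivial and the definition of $\sim$ on decorated symbols leaves exactly the condition $\kappa=\kappa'$. The one genuinely substantive check --- that $\{a';b'\}\sim\{a;a\}$ forces $a'=b'=a$ because each row is $\ll$-increasing --- is handled correctly.
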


\subsection{Preliminaries on a-symbols}

An a-symbol is a pair $\alpha:=(a;b)$ of sequences $a=(a_1< a_2 <\cdots <a_k)$, $b=(b_1 <b_2 <\dots <b_l)$ with $a_1,b_1\ge 0$.
The defect of $\alpha$ is $\#a-\#b$.
We say $\alpha$ is an a-symbol of type $X$ if 
\begin{align*}
    X=B: &\text{ $\alpha$ has defect 1}\\
    X=C: &\text{ $\alpha$ has defect 1}\\
    X=D: &\text{ $\alpha$ has defect 0}.
\end{align*}
Define $\sim$ the same as for s-symbols.
On the other hand, $\approx$ is defined for all types to be the equivalence relation generated by
\begin{align*}
    ((a_1,\dots,a_s);(b_1,\dots,b_t))\approx ((0,a_1+1,\dots,a_s+1);(0,b_1+1,\dots,b_t+1)).
\end{align*}
%Write $z^{0,k}:=(0,1,2,\dots,k-1)$.
Define $[\bullet;\bullet]$ and $\phi(\bullet;\bullet)$, for a-symbols to be the same as for s-symbols.
For a sequence $a=(a_1< a_2< \cdots < a_k)$ define
\begin{align*}
    \rho_{a,0}(a) &= \sum_ia_i-(i-1) \\
    \rho_{a,1}(a) &= \sum_ia_i-(i-1)-1 \text{ provided $a_1>0$}.
\end{align*}
We define $\bar \alpha$ the same as for s-symbols.
Define $$\alpha_X(n) = \coprod_{k\ge0} \alpha_X(n;k)/\approx.$$
For $\alpha\in \coprod_{k\ge0} \alpha_X(n;k)$ we will simply write $[\alpha]$ for $\left[\alpha;\coprod_{k\ge0} \alpha_X(n;k)\right]$.
Define $[\alpha_1]\sim[\alpha_2]$ if there exists $\alpha_1'\in[\alpha_1],\alpha_2'\in[\alpha_2]$ such that $\alpha_1'\sim\alpha_2'$.
Write $\phi([\alpha])$ for the set of $[\alpha']\in \alpha_X(n)$ such that $[\alpha]\sim[\alpha']$.
For $k\ge 0$ and $\alpha\in\alpha_X(n;k)$ the map
\begin{equation}
    \phi(\alpha;\alpha_X(n;k))\to \phi([\alpha]), \qquad \alpha'\mapsto [\alpha']
\end{equation}
is a bijection.
%For $k\ge n$ the map 
%\begin{equation}
%    \alpha_X(n;k)\to \alpha_X(n), \qquad \alpha\mapsto [\alpha]
%\end{equation}
%is a bijection.

%Write $[\alpha]$ for the $\approx$ equivalence class of $\alpha$.
%Define $[\alpha_1]\sim[\alpha_2]$ if there exists $\alpha_1'\approx\alpha_1,\alpha_2'\approx\alpha_2$ such that $\alpha_1'\sim\alpha_2'$.
For $X\in\{B,C\}$ define the map
$$\alpha:\mathrm{Irr}(W_X(n))\to \coprod_{j\ge0}\alpha_X(n;k),\qquad E\mapsto \alpha(E)$$ 
as follows. Suppose $E\in\mathrm{Irr}(W_X(n))$ corresponds to the bipartition $(\lambda,\mu)$ with $\lambda=(\lambda_1\le\lambda_2\le\cdots\lambda_k),\mu=(\mu_1\le\mu_2\le\cdots\mu_l)$.
If $k > l$ let $\lambda'=\lambda$, $\mu'=(0^{k-l-1})\cup_\le \mu$ and define 
\begin{equation}\label{eq:alphaE}
    \alpha(E) := (\lambda'+z^{0,k};\mu'+z^{0,k-1})
\end{equation}
If $k\le l$ let $\lambda=(0^{l-k+1})\cup_\le\lambda,\mu'=\mu$ and define $\alpha(E)$ as above but with $k$ replaced by $l$. We get a bijection
$$\mathrm{Irr}(W_X(n))\xrightarrow{\sim} \alpha_X(n), \qquad E \mapsto [\alpha(E)].$$
This bijection has the following properties \cite[Section 13.2]{Carter1993}
\begin{enumerate}
    \item $E_1\sim E_2$ if and only if $[\alpha(E_1)]\sim [\alpha(E_2)]$,
    \item $\alpha(E)$ is monotonic if and only if for all $\alpha\in [\alpha(E)]$, $\alpha$ is monotonic if and only if $E$ is special.
\end{enumerate}

The situation in type $D$ is much like situation in type $D$ for s-symbols.
We define unordered a-symbols, the defect, type $D$ unordered a-symbols, decorated unordered a-symbols, $\sim$ (resp. $\approx$, resp. $[\bullet;\bullet]$, resp. $\phi(\bullet;\bullet)$) for unordered a-symbols, $\sim$ (resp. $\approx$, resp. $[\bullet;\bullet]$, resp. $\phi(\bullet;\bullet)$) for decorated unordered a-symbols, monotonicity for decorated unordered a-symbols, and $\{\bullet\}$ analogously to the same notions for s-symbols, but with s replaced with a.

Write $\tilde\alpha_D(n;k)$ (resp. $\tilde\alpha_D^\bullet(n;k)$) for the set of unordered a-symbols $\{a;b\}$ (resp. decorated unordered a-symbols $\{a;b\}^\kappa$) of type $D$ with $\#b=k$ and $\rho_{a,0}(a)+\rho_{a,0}(b) = n$.
Define 
\begin{equation}
    \tilde\alpha_D(n) = \coprod_{k\ge0} \tilde\alpha_D(n;k)/\approx, \qquad \tilde\alpha_D^\bullet(n) = \coprod_{k\ge0} \tilde\alpha_D^\bullet(n;k)/\approx.
\end{equation}
For $\alpha \in \coprod_{k\ge0} \tilde\alpha_D(n;k)$ (resp. $\alpha^\kappa\in\coprod_{k\ge0} \tilde\alpha_D^\bullet(n;k)$) we will simply write $[\alpha]$ (resp. $[\alpha^\kappa]$) for $[\alpha;\coprod_{k\ge0} \tilde\alpha_D(n;k)]$ (resp. $[\alpha^\kappa;\coprod_{k\ge0} \tilde\alpha^\bullet_D(n;k)]$).
Write $\phi([\alpha])$ (resp. $\phi([\alpha^\kappa])$) for the set of $[\alpha']\in \tilde\alpha_D(n)$ (resp. $[\alpha'^{\kappa'}]\in\tilde\alpha_D^\bullet(n)$) such that $[\alpha]\sim[\alpha']$ (resp. $[\alpha^\kappa]\sim[\alpha'^{\kappa'}]$).
For $k\ge0$ and $\alpha\in\tilde\alpha_D(n;k)$ (resp. $\alpha^\kappa\in\tilde\alpha_D^\bullet(n;k)$) the map
\begin{align}
\begin{split}
    \phi(\alpha;\tilde\alpha_D(n;k)) &\to \phi([\alpha]), \qquad \alpha'\mapsto[\alpha'] \\
    \left(\text{resp. }\phi(\alpha^\kappa;\tilde\alpha_D^\bullet(n;k))\right. &\left.\to \phi([\alpha^\kappa]), \qquad \alpha'^{\kappa'}\mapsto[\alpha'^{\kappa'}]\right)
\end{split}
\end{align}
is a bijection.

Define 
$$\tilde\alpha^\bullet:\mathrm{Irr}(W_D(n))\to \coprod_{j\ge0}\tilde\alpha_D^\bullet(n;k),\qquad E\mapsto \tilde\alpha^\bullet(E)$$ 
as follows.
Suppose $E\in\mathrm{Irr}(W_D(n))$ corresponds to $\{\lambda,\mu\}^\kappa$.
Let $\lambda',\mu'$ be $\lambda,\mu$ padded with $0$'s in a similar manner to types $B$ and $C$ but to ensure they have the same number entries.
Define
\begin{equation}
    \tilde\alpha^\bullet(E) = \{\lambda'+z^{0,k};\mu'+z^{0,k}\}^\kappa
\end{equation}
where $k$ is the number of entries in $\lambda'$ (and hence also $\mu'$).
Define a map
$$\tilde\alpha:\mathrm{Irr}(W_D(n))\to \coprod_{j\ge0}\tilde\alpha_D(n;k),\qquad E\mapsto \tilde\alpha(E),$$
where $\alpha(E)$ is the underlying unordered symbol for $\tilde{\alpha}^{\bullet}(E)$. We get a bijection
$$\mathrm{Irr}(W_D(n)) \xrightarrow{\sim} \tilde\alpha_D^\bullet(n), \qquad E \mapsto [\tilde{\alpha}^{\bullet}(E)]$$
This bijection has the following properties \cite[Section 13.2]{Carter1993}
\begin{enumerate}
    \item $E_1\sim E_2$ if and only if $[\tilde\alpha^\bullet(E_1)]\sim [\tilde\alpha^\bullet(E_2)]$,
    \item $\tilde\alpha^\bullet(E)$ is monotonic if and only if for all $\alpha^\kappa\in [\tilde\alpha^\bullet(E)]$, $\alpha^\kappa$ is monotonic if and only if $E$ is special.
\end{enumerate}

For $E\in \mathrm{Irr}(W_D(n))$ write $\alpha(E)$ for $\underline{\tilde\alpha(E)}$.
For certain identities we will need to modify unordered defect $0$ a-symbols into defect $1$ a-symbols.
Write $\alpha_D^1(n;k)$ for the set of defect 1 symbols $(a;b)$ with $\#b=k$, $a_1=0, b_1>0$, and $\rho_{a,0}(a)+\rho_{a,1}(b) = n$.
%Write $\tilde\alpha_D^1(n;k)$ for the subset of $\alpha_D^1(n;k)$ consisting of symbols $(a;b)$ satisfying $\sum_ia_i\ge \sum_ib_i$.
Given a $\alpha\in \alpha_D(n;k)$ define $\alpha^!$ by 
\begin{equation}
    \alpha^! := ((0,a_1+1,a_2+1,\dots,a_k+1);(b_1+1,b_2+1,\dots,b_k+1)).
\end{equation}
This map induces a bijection $\alpha_D(n;k)\to\alpha_D^1(n;k)$.
%and $\tilde\alpha_D(n;k)\to\tilde\alpha_D^1(n;k)$.
Given a $\alpha\in \tilde\alpha_D(n;k)$ define $\alpha^!$ by $\underline\alpha^!$.
Note that for $\alpha,\beta\in\alpha_D(n;k)$ (resp. $\tilde\alpha_D(n;k)$) we have $\alpha\sim\beta$ iff $\alpha^!\sim\beta^!$.

\begin{lemma}
    \label{lem:flips}
    Let $\alpha=(a;b)$ be an a-symbol (of any defect) and suppose $\bar \alpha$ is decomposed into subsequences $(I_1,I_2,\dots,I_n)$.
    Let $a=(A_1,A_2,\dots,A_n)$ and $b=(B_1,B_2,\dots,B_n)$ be the corresponding decompositions of $a$ and $b$.
    We use the convention that $I_i=A_i=B_i=\emptyset$ for $i<1$ and $i>n$.
    Suppose for all $1\le j\le n$ and all $i<j<k$ and all $x\in I_i,y\in I_j,z\in I_k$ that $x<y<z$.
    Then flipping any number of indices yields an a-symbol (of possibly different defect).
\end{lemma}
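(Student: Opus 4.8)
The plan is to verify directly that flipping a subset of indices always produces a pair of strictly increasing sequences of nonnegative integers, which is exactly the definition of an a-symbol (the defect is left unconstrained, so the fact that it may change causes no difficulty, unlike in the type-specific statements earlier).

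First I would fix the set $T \subseteq \{1,\dots,n\}$ of indices to be flipped and let $\beta = (X;Y)$ be the result, so that $(X_i,Y_i) = (B_i,A_i)$ for $i \in T$ and $(X_i,Y_i) = (A_i,B_i)$ for $i \notin T$, with $X = (X_1,\dots,X_n)$ and $Y = (Y_1,\dots,Y_n)$ the concatenations. Since $X \cup Y = a \cup b$ as multisets and every entry of $a$ and of $b$ is $\geq 0$, the first entries of $X$ and $Y$ are automatically $\geq 0$. So the only thing to check is that $X$ and $Y$ are strictly increasing sequences.

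I would argue blockwise. Each $X_i$ equals $A_i$ or $B_i$, hence is a consecutive subsequence of $a$ or of $b$, and in particular is strictly increasing; the same holds for each $Y_i$. So it remains to control the transition between consecutive nonempty blocks. The essential point, which I would extract from the hypothesis, is that the blocks are totally ordered by value: whenever $1 \leq i < j \leq n$, every entry of $I_i$ is strictly smaller than every entry of $I_j$ (apply the hypothesis to the triple $(i,j,j+1)$, or to $(i-1,i,j)$, or — when $i=1$, $j=n$, $n \geq 3$ — to $(1,\ell,n)$ with $1 < \ell < n$, using that the blocks $I_m$ for $1 \leq m \leq n$ are nonempty). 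Granting this, if $i < j$ index two nonempty blocks of $X$ then $\max X_i \leq \max I_i < \min I_j \leq \min X_j$, since the entries of $X_i$ lie in $I_i$ and those of $X_j$ lie in $I_j$; hence $X$ is strictly increasing, and the identical argument applies to $Y$. Thus $\beta = (X;Y)$ is an a-symbol, which would complete the verification.

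The argument has no real obstacle — it is a formal consequence of the definition of an a-symbol once one unwinds the decomposition $\bar\alpha = (I_1,\dots,I_n)$ and the hypothesis it is assumed to satisfy. The only point needing a little care is the bookkeeping of the three-index condition together with the out-of-range convention $I_i = A_i = B_i = \emptyset$ for $i \notin \{1,\dots,n\}$, which is precisely what is used to pass from the stated hypothesis to the pairwise ordering of the blocks invoked above.
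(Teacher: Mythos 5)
Your proof is correct and follows essentially the same route as the paper's: observe that each $A_i$, $B_i$ is strictly increasing as a subsequence of $a$ or $b$, and that the three-index hypothesis forces every entry of an earlier block $I_i$ to be strictly smaller than every entry of a later block $I_j$, so any blockwise swap still yields two strictly increasing sequences. The only difference is that you spell out the bookkeeping (which triples to use, and the empty-block convention) that the paper's one-line argument leaves implicit.
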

\begin{proof}
    The condition guarantees that for $1\le j\le n$ and all $i<j<k$ and all $x\in A_i\cup B_i=I_i,y\in A_j\cup B_j=I_j,z\in A_k\cup B_k=I_k$ we have $x<y<z$ .
    Since each $A_i,B_i$ for $1\le i\le n$ is also increasing, flipping any number of indices yields an a-symbol.
\end{proof}
\begin{lemma}
    \label{lem:jump1}
    Let $\Lambda=(x;y)$ be a monotonic s-symbol of type $X^\vee$ and $\alpha_1=(a_1;b_1),\alpha_2=(a_2;b_2)$ be monotonic a-symbols of the same size and defect as $\Lambda$ such that $\Lambda = \alpha_1+\alpha_2$ (by which we mean $x=a_1+a_2$ and $y=b_1+b_2$).
    Let $\bar\Lambda=(I_1,\dots,I_n)$ be the refinement of $\bar\Lambda$, and let $T_i=|I_i|$.
    For $s \in \{1,2\}$, let $\bar\alpha_s=(J_1^s,\dots,J_n^s)$ be the decomposition of $\alpha_s$ corresponding to the refinement of $\bar\Lambda$ (i.e. $|J_i^s| = T_i$ for all $i$).
    Suppose for all $i$ such that $I_i,I_{i-1}$ are intervals we have that
    $$J_{i-1}^1(1) + 1 = J_{i}^1(T_{i}).$$
    Then the decomposition $\bar\alpha_s=(J_1^s,\dots,J_n^s)$ satisfies the conditions of Lemma \ref{lem:flips} for $s\in\{1,2\}$.
\end{lemma}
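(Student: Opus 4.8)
Here is the plan. The conditions of Lemma~\ref{lem:flips} for the decomposition $(J^s_1,\dots,J^s_n)$ of $\bar\alpha_s$ say, for every ``middle'' index $j$ and all $i<j<k$, that every entry of $J^s_i$ is smaller than every entry of $J^s_j$, which in turn is smaller than every entry of $J^s_k$. Since $\alpha_s$ is monotonic, $\bar\alpha_s$ is non-decreasing, so each block $J^s_i$ is non-decreasing, and these triple inequalities follow by transitivity once one knows the single \emph{consecutive separation} property
\[
J^s_i(1) \;<\; J^s_{i+1}(T_{i+1}), \qquad 1\le i\le n-1,\ s\in\{1,2\}.
\]
So the first step is to reduce to proving this. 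From $\Lambda=\alpha_1+\alpha_2$ and the definition of $\bar{(\cdot)}$ one gets $\bar\Lambda=\bar\alpha_1+\bar\alpha_2$ \emph{entrywise}. Writing $P=T_1+\dots+T_i$ for the last position occupied by the $i$-th block, and setting $p_s:=\bar\alpha_s(P)$, $q_s:=\bar\alpha_s(P+1)$, $p:=\bar\Lambda(P)=p_1+p_2$, $q:=\bar\Lambda(P+1)=q_1+q_2$, the goal becomes: $q_s>p_s$ for $s=1,2$ (we already know $q_s\ge p_s$). I would then record two structural facts about the monotonic s-symbol $\Lambda$ of type $X^\vee$. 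First, because the two rows of $\Lambda$ satisfy the $\ll$-condition, a value occurs at most twice in $\bar\Lambda$, and when it occurs twice it does so at two consecutive positions, one from each row; hence in the refinement $\bar\Lambda=(I_1,\dots,I_n)$ each block is either a length-$2$ ``pair'' $(v,v)$ or an ``interval'' of distinct consecutive integers, and two adjacent interval blocks satisfy $\min I_{i+1}\ge \max I_i+2$. Second, if the $i$-th or $(i+1)$-th block is a pair, occupying positions $P',P'+1$ with $\bar\Lambda(P')=\bar\Lambda(P'+1)=v$, then $\bar\alpha_1$ and $\bar\alpha_2$ are \emph{each} constant on $\{P',P'+1\}$: indeed $\bar\alpha_1(P')+\bar\alpha_2(P')=v=\bar\alpha_1(P'+1)+\bar\alpha_2(P'+1)$ together with $\bar\alpha_s(P')\le\bar\alpha_s(P'+1)$ forces equality termwise.

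With these in hand I would run through the four possibilities for the pair (block $i$, block $i+1$). If block $i$ is a pair, then by the second fact $p_s$ is the common value of $\bar\alpha_s$ on that pair, which is an $a$-entry $a^{(s)}_r$ or a $b$-entry $b^{(s)}_r$ of $\alpha_s$ (depending on parity), and position $P+1$ then carries the \emph{next} $a$- (resp.\ $b$-) entry of $\alpha_s$; since the rows of the a-symbol $\alpha_s$ are strictly increasing, $q_s>p_s$. If block $i+1$ is a pair, then symmetrically $q_s$ equals the common value of $\bar\alpha_s$ on that pair, which coincides with the $b$-entry $b^{(s)}_r$ (resp.\ the $a$-entry $a^{(s)}_{r+1}$), while $p_s$ is the \emph{previous} $b$-entry $b^{(s)}_{r-1}$ (resp.\ $a$-entry $a^{(s)}_r$); again strict monotonicity of the rows of $\alpha_s$ gives $q_s>p_s$. (Here one keeps track of parities of $P$, $P+1$, $P+2$ to see which row a position belongs to, and checks that the relevant entries exist, i.e.\ $P\ge1$; this is routine.) The only remaining case is that both blocks are intervals, and this is exactly where the hypothesis is used: it supplies $q_1=p_1+1$, hence $q_1>p_1$, while the first structural fact gives $q-p=\min I_{i+1}-\max I_i\ge 2$, so $q_2-p_2=(q-p)-(q_1-p_1)\ge 1>0$. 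This exhausts all boundaries and both $s$, giving the consecutive separation property and hence the conditions of Lemma~\ref{lem:flips}. In type $D$ the same argument applies verbatim after replacing $\bar\Lambda$ and $\bar\alpha_s$ by the ordered interleaves $\underline{\overline{\Lambda}}$ and the corresponding ordered interleaves of $\alpha_1,\alpha_2$.

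The main obstacle — and the reason the hypothesis is phrased only for interval--interval boundaries — is precisely the interval--interval case: the structural facts there only force the total jump $q-p\ge 2$ to be distributed between the two summands in \emph{some} way, and without the hypothesis nothing prevents $\bar\alpha_1$ from being constant across the boundary, which would destroy separation for $s=1$. The pair cases are by contrast completely rigid (the second fact pins down both summands), so no extra hypothesis is needed there; once one isolates this phenomenon the proof is a short case check, the only care needed being the parity/boundary-existence bookkeeping mentioned above.
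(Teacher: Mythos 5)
Your proof is correct and takes essentially the same route as the paper's: reduce the conditions of Lemma \ref{lem:flips} to strict separation across each consecutive block boundary, then split into the three cases (both blocks intervals, left block a pair, right block a pair), using the hypothesis only in the interval--interval case via $q-p\ge 2$ and $q_1-p_1=1$, and using rigidity of pairs (each $\bar\alpha_s$ constant on a pair, with the adjacent entry lying in the same strictly increasing row) in the other two cases. No gaps.
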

\begin{proof}
    If $n=1$ there is nothing to prove so suppose $n>1$.
    Since $\alpha_1,\alpha_2$ are both monotonic it suffices to show that $J_{i-1}^s(1)<J_i^s(T_i)$ for all $1<i\le n,s\in\{1,2\}$.
    Thus let $1<i\le n$.
    We consider three cases: $I_{i-1},I_i$ are both interval; $I_{i-1}$ is a pair; $I_i$ is a pair.
    Suppose $I_{i-1},I_i$ are both intervals.
    Then $I_{i-1}(1)\ll I_{i}(T_{i})$ since otherwise $I_{i-1}(1) + 1 = I_{i}(T_{i})$ and so $(I_{i-1},I_{i})$ would itself be an interval which would contradict the the definition of the $I_j$'s.
    Since $I_{j}(k)=J_{j}^1(k)+J_{j}^2(k)$ for all $1\le j\le n,1\le k\le T_i$ we must have that $J_{i-1}^2(1) < J_{i}^2(T_{i})$.
    Thus we have that $J_{i-1}^s(1) < J_{i}^s(T_{i})$ for $s\in\{1,2\}$.

    Now suppose $I_{i-1}$ is a pair say $I_{i-1}=(x,x)$.
    Since $\alpha_1,\alpha_2$ are monotonic we must have $J_{i-1}^s=(u^s,u^s)$ for some $u^s$ where $s\in\{1,2\}$ and $u^1+u^2 = x$.
    Write $J_i^s=(v_1^s,\dots,v_{T_i}^s)$.
    Since $\alpha_s$ is an a-symbol, $u^s=J_{i-1}^s(2)<J_i^s(T_i)=v_1^s$.
    In particular $J_{i-1}^s(1)<J_i^s(T_i)$ for $s\in\{1,2\}$ as required.

    The case when $I_i$ is a pair is analagous.
\end{proof}

\subsection{Springer correspondence via symbols}
In this section we describe how to compute the map $\lambda \mapsto [\alpha(E(\lambda,1))]$ in types $B/C/D$.
\subsubsection{Type $B$}
\label{sec:springerB}
Let $\lambda\in \mathcal P_B(n)$. 
Let $\lambda'$ be equal to $\lambda$ padded with $0$'s so that $\#\lambda'$ is odd.
Note that $\#\lambda$ is already necessarily odd, but the recipe still works if we pad $\lambda$ with $0$'s.
Write $\#\lambda'$ as $2k+1$ and $\lambda' = (\lambda_1'\le\lambda_2'\le\cdots\le\lambda_{2k+1}')$ in non-decreasing order.
Let $2\xi_1+1<2\xi_2+1<\cdots<2\xi_{k+1}+1$ and $2\eta_1<2\eta_2<\cdots<2\eta_k$ be an enumeration of the odd and even parts of $\lambda' + z^{0,2k+1}$.
Let $\xi = (\xi_1,\dots,\xi_{k+1})$ and $\eta = (\eta_1,\dots,\eta_k)$.
Then $[\alpha(E(\lambda,1))] = [(\xi;\eta)]$.
\subsubsection{Type $C$}
\label{sec:springerC}
Let $\lambda\in \mathcal P_C(n)$. 
Let $\lambda'$ be equal to $\lambda$ padded with $0$'s so that $\#\lambda'$ is odd.
Note that in type $C$, $\#\lambda$ may be either odd or even.
Write $\#\lambda'$ as $2k+1$ and $\lambda' = (\lambda_1'\le\lambda_2'\le\cdots\le\lambda_{2k+1}')$ in non-decreasing order.
Let $2\xi_1+1<2\xi_2+1<\cdots<2\xi_{k}+1$ and $2\eta_1<2\eta_2<\cdots<2\eta_{k+1}$ be an enumeration of the odd and even parts of $\lambda' + z^{0,2k+1}$.
Let $\xi = (\xi_1,\dots,\xi_{k+1})$ and $\eta = (\eta_1,\dots,\eta_k)$.
Then $[\alpha(E(\lambda,1))] = [(\eta;\xi)]$.
\subsubsection{Type $D$}
\label{sec:springerD}
Let $\lambda^\kappa\in \mathcal P_D(n)$. 
Let $\lambda'$ be equal to $\lambda$ padded with $0$'s so that $\#\lambda'$ is even.
Note that $\#\lambda$ is already necessarily even, but the recipe still works if we pad $\lambda$ with $0$'s.
Write $\#\lambda'$ as $2k$ and $\lambda' = (\lambda_1'\le\lambda_2'\le\cdots\le\lambda_{2k}')$ in non-decreasing order.
Let $2\xi_1+1<2\xi_2+1<\cdots<2\xi_{k}+1$ and $2\eta_1<2\eta_2<\cdots<2\eta_k$ be an enumeration of the odd and even parts of $\lambda' + z^{0,2k}$.
Let $\xi = (\xi_1,\dots,\xi_{k+1})$ and $\eta = (\eta_1,\dots,\eta_k)$.
Then $[\tilde\alpha^\bullet(E(\lambda^\kappa,1))] = [\{\xi;\eta\}^\kappa]$.
\begin{rmk}
    By the above constructions we have that $\Lambda(E(\lambda,1))\cap\Lambda_X(n;k)\ne\emptyset$ (resp. $\Lambda(E(\lambda^\kappa,1))\cap\Lambda_X(n;k)\ne\emptyset$) for all $k\ge \lfloor \#\lambda/2 \rfloor$ and $X\in\{B,C\}$ (resp. $X=D$).
\end{rmk}

\begin{lemma}
\label{lem:asymbolfordcollapse}
Suppose $\kappa\in\{0,1\}$, $\lambda \in \mathcal{P}_C(n)$ and $\lambda^t \in \mathcal{P}_D(n)$. 
Write $E=E((\lambda_D)^\kappa,1)$. Pad $\lambda$ with $0$'s so that $\#\lambda$ is even.
Let $2\xi_1+1<2\xi_2+1<\cdots<2\xi_{k}+1$ and $2\eta_1<2\eta_2<\cdots<2\eta_k$ be an enumeration of the odd and even parts of $\lambda + z^{0,\#\lambda}$.
Then 
\begin{equation}
    \label{eq:aSymbolComputation}
    [\alpha(E)] = [(\xi,\eta)].
\end{equation}
\end{lemma}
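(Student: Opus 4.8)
The plan is to compute $[\alpha(E)]$ by running the type-$D$ Springer recipe of Section~\ref{sec:springerD} on the honest $D$-partition $\lambda_D$, and then to show that this recipe ``does not see'' the $D$-collapse, so that the answer can be read off directly from $\lambda$. Concretely: since $\lambda\in\mathcal P_C(n)$ has $|\lambda|=2n$, its $D$-collapse $\lambda_D$ is a $D$-partition (in fact $\lambda_D=d(\lambda^t)$, since $\lambda^t\in\mathcal P_D(n)$), so $E=E((\lambda_D)^\kappa,1)$ is defined and Section~\ref{sec:springerD} yields $[\tilde\alpha^\bullet(E)]=[\{\xi_D;\eta_D\}^\kappa]$, where $2\xi_{D,i}+1$ and $2\eta_{D,i}$ enumerate the odd and even entries of $\lambda_D+z^{0,\#\lambda_D}$ with $\lambda_D$ padded by zeros to even length. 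Forgetting the decoration, $\alpha(E)=\underline{\{\xi_D;\eta_D\}}$, so the lemma reduces to two claims: (a) $\{\xi_D;\eta_D\}=\{\xi;\eta\}$ as unordered $a$-symbols; and (b) $\sum_i\xi_i\ge\sum_i\eta_i$, so that $\underline{\{\xi;\eta\}}=(\xi;\eta)$.

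Claim (a) is the heart of the argument. First I would check that the hypotheses force the even-padded sequence $\lambda+z^{0,\#\lambda}$ to have equally many odd and even entries (this is false for a general $\lambda\in\mathcal P_C(n)$; it is exactly here that $\lambda^t\in\mathcal P_D(n)$ is needed), so that $\{\xi;\eta\}$ is genuinely a defect-$0$ $a$-symbol with $\#\xi=\#\eta$. Next I would use the explicit collapse formula of \cite[Lem~6.3.8]{CM}: $\lambda_D$ is obtained from $\lambda$ by replacing certain unequal pairs of even parts $(a,b)$ with $a>b$ by $(a-1,b+1)$ (equivalently, by a sequence of single box moves), and each such modification alters $\lambda+z$ only in two consecutive positions, the upper entry dropping by $1$ and the lower one rising by $1$. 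A short case analysis --- using that, under the hypotheses, every such move is of the favourable type ``lower entry even, upper entry odd'' --- then shows that the move removes a half-value from one row of the $a$-symbol and reinserts the same half-value into the other row, i.e.\ it simply exchanges the two affected half-values between the odd row and the even row. Hence the unordered $a$-symbol is unchanged, which gives (a). For (b), I would deduce from the constraints that $\lambda\in\mathcal P_C(n)$ and $\lambda^t\in\mathcal P_D(n)$ place on the consecutive part-sizes and multiplicities of $\lambda$ that the sum of the odd entries of $\lambda+z^{0,\#\lambda}$ exceeds the sum of its even entries by at least $\#\xi$; this is precisely the inequality $\sum_i\xi_i\ge\sum_i\eta_i$. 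Combining (a) and (b), $\alpha(E)=\underline{\{\xi_D;\eta_D\}}=\underline{\{\xi;\eta\}}=(\xi;\eta)$, and therefore $[\alpha(E)]=[(\xi;\eta)]$, as desired.

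The main obstacle is the combinatorial bookkeeping inside (a): verifying that every modification occurring in the $D$-collapse of a $\mathcal P_C$-partition with $\lambda^t\in\mathcal P_D(n)$ is of the ``even/odd'' type, that it induces precisely the claimed row-exchange on the associated $a$-symbol, and that no spurious reordering of $\lambda+z$ occurs along the way. The sum inequality (b) is a secondary point that likewise genuinely relies on both hypotheses (it fails for general $\lambda\in\mathcal P_C(n)$, e.g.\ $\lambda=(5,1)$); I expect both to be dispatched by a direct, if somewhat lengthy, argument with partitions.
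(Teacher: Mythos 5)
Your overall strategy---run the Section~\ref{sec:springerD} recipe on $\lambda_D$ and track how the $D$-collapse perturbs $\lambda+z^{0,\#\lambda}$---is a legitimate alternative to the paper's route (which decomposes $\lambda$ into elementary blocks $(e_1,o_1,\dots,o_{2m},e_2)$, uses $\lambda^t\in\mathcal P_D(n)$ to show every block has this shape, reduces to a single block, and computes both symbols explicitly). But the pivotal inference in your claim (a) is a non sequitur, and the local statement you want is simply false. A favourable move sends an odd entry $a$ to $a-1$ and an even entry $b$ to $b+1$, so it transfers the half-value $\lfloor a/2\rfloor$ from the odd row to the even row and the half-value $b/2$ from the even row to the odd row; since $\lfloor a/2\rfloor\ne b/2$ in general, exchanging two \emph{distinct} elements between the rows does not preserve the unordered pair of rows. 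Concretely, $\lambda=(6,5,5,2,2,2,2)\in\mathcal P_C(12)$ satisfies both hypotheses, gives $\xi=(1,2,5,6)$, $\eta=(0,2,3,5)$, and its collapse consists of two moves; after the first move alone the symbol is $\{(1,2,3,5);(0,2,5,6)\}\ne\{\xi;\eta\}$, and only the composite of both moves recovers the original unordered symbol---by interchanging the rows wholesale ($\xi_D=\eta$, $\eta_D=\xi$), not by fixing them. Worse, favourability of every move is not sufficient for (a): for $\lambda=(3,3,2)$, which lies in $\mathcal P_C(4)$ but has $\lambda^t\notin\mathcal P_D(4)$, the unique collapse move (pairing the part $2$ with a padded $0$) is favourable, yet $\{\xi_D;\eta_D\}=\{(0,2);(1,3)\}\ne\{(1,2);(0,3)\}=\{\xi;\eta\}$. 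So the hypothesis $\lambda^t\in\mathcal P_D(n)$ must be used globally, to control the positions and pairings of the even parts, and a move-by-move invariance argument cannot close the gap.

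Two smaller points. The claim that each modification alters two \emph{consecutive} positions of $\lambda+z$ is also false (the paired even parts are separated by the $2m$ odd parts of the intervening block), though that slip is harmless. More importantly, to repair (a) you would have to prove the global statement that the collapse swaps the two rows of the $a$-symbol; that is in substance what the paper's proof establishes, by showing every elementary block of $\lambda$ is of type $(e_1,o_1,\dots,o_{2m},e_2)$ with $e_1,e_2$ even (this is exactly where $\lambda^t\in\mathcal P_D(n)$ enters), reducing to one block, and writing out both symbols entry by entry. That same entrywise computation also yields your claim (b) (the row of odd half-values dominates term by term, so $\underline{\{\xi;\eta\}}=(\xi;\eta)$), which your sketch correctly identifies as necessary but likewise leaves unproved.
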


\begin{proof}
Since $\lambda$ is a $C$-partition, it admits a decomposition
$$\lambda = \lambda^1 \cup_{\leq} \lambda^2 \cup_{\leq} ... \cup_{\leq} \lambda^k$$
where $\lambda^1,...,\lambda^k$ are `elementary' partitions of the following two types:
\begin{itemize}
    \item[(i)] $\lambda^i = (e_1,o_1,...,o_{2m},e_2)$, where $e_1$, $e_2$ are even and $o_1,...,o_{2m}$ are odd (note: it might be the case that $m=0$)
    \item[(ii)] $\lambda^i = (o_1,...,o_{2m})$, where $o_1,...,o_{2m}$ are odd.
\end{itemize}

We will show that, in fact, all $\lambda^i$ are type (i). Since $\lambda^t \in \mathcal{P}_D(n)$, the largest part of $\lambda$ must be odd. So $\lambda^k$ is type (i). Suppose, for contradiction, that $\lambda^{i-1}$ is type (ii) for $2 \leq i \leq k$. Concatenating if necessary, we can assume that $\lambda^i$ is type (i). Write
$$\lambda^{i-1} = (o_1,...,o_{2p}), \qquad \lambda^i = (e_1,o'_1,...,o'_{2q},e_2), \qquad o_{2p} < e_1.$$
Then
$$m_{\lambda^t}(\mathrm{ht}_{\lambda}(e_1)) = e_1-o_{2p}.$$
Since $\mathrm{ht}_{\lambda}(e_1)$ is even and $\lambda^t \in \mathcal{P}_D(n)$, it follows that $e_1-o_{2p}$ is even. Contradiction. So all $\lambda^i$ are type (i), as asserted.

For each $\lambda^i = (e_1,o_1,...,o_{2p},e_2)$, write
$$\overline{\lambda}^i := (e_1+x_i,o_1+x_i,...,o_{2p}+x_i,e_2+x_i) \qquad x_i = \#\lambda - \mathrm{ht}_{\lambda}(e_1).$$
Since $x_i$ is even, $\overline{\lambda}^i$ is type (i). By (the type D anologue of ) equation (\ref{eq:Ccollapse}), we have
$$\lambda_D = (\overline{\lambda}^1)_D \cup_{\leq} ... \cup_{\leq} (\overline{\lambda}^k)_D.$$
So it suffices to prove the lemma for each $\overline{\lambda}^i$. Since each $\overline{\lambda}^i$ is a type (i) partition, we can reduce to the case in which $\lambda = (e_1,o_1,...,o_{2m},e_2)$. 

In this case, we have
$$\lambda_D = (e_1+1,o_1,...,o_{2p},e_2-1).$$
The right hand side of Equation (\ref{eq:aSymbolComputation}) can then readily be computed to be
$$\begin{pmatrix} \lfloor \frac{o_1+1}{2}\rfloor & \lfloor \frac{o_3+3}{2} \rfloor & \ldots & \lfloor \frac{o_{2p-1}+2p-1}{2} \rfloor & \lfloor \frac{e_2+2p}{2} \rfloor \\
\lfloor \frac{e+1}{2}\rfloor & \lfloor \frac{o_2+2}{2} \rfloor & \ldots & \lfloor \frac{o_{2p-2}+2p-2}{2} \rfloor & \lfloor \frac{o_{2p}+2p}{2} \rfloor 
\end{pmatrix}. $$
But by the algorithm in Section \ref{sec:springerD}
$$\tilde\alpha(E) \approx \begin{Bmatrix}
\lfloor \frac{e+1}{2}\rfloor & \lfloor \frac{o_2+2}{2} \rfloor & \ldots & \lfloor \frac{o_{2p-2}+2p-2}{2} \rfloor & \lfloor \frac{o_{2p}+2p}{2} \rfloor \\
 \lfloor \frac{o_1+1}{2}\rfloor & \lfloor \frac{o_3+3}{2} \rfloor & \ldots & \lfloor \frac{o_{2p-1}+2p-1}{2} \rfloor & \lfloor \frac{e_2+2p}{2} \rfloor
\end{Bmatrix} $$
and therefore
$$\alpha(E) \approx \begin{pmatrix}
\lfloor \frac{o_1+1}{2}\rfloor & \lfloor \frac{o_3+3}{2} \rfloor & \ldots & \lfloor \frac{o_{2p-1}+2p-1}{2} \rfloor & \lfloor \frac{e_2+2p}{2} \rfloor\\
\lfloor \frac{e+1}{2}\rfloor & \lfloor \frac{o_2+2}{2} \rfloor & \ldots & \lfloor \frac{o_{2p-2}+2p-2}{2} \rfloor & \lfloor \frac{o_{2p}+2p}{2} \rfloor
\end{pmatrix} $$
as required.
\end{proof}

\subsection{Restriction and induction via symbols}

Let $J\subsetneq\tilde\Delta$ be maximal.
Then $W_J$ is of the form 
\begin{align*}
    X=B: &\qquad W_m(D)\times W_{n-m}(B)\\
    X=C: &\qquad W_m(C)\times W_{n-m}(C)\\
    X=D: &\qquad W_m(D)\times W_{n-m}(D)
\end{align*}
for some $0\le m\le n$.
Define
\begin{equation}
    Y(X) = \begin{cases}
        D & \mbox{ if } X=B,D \\
        C & \mbox{ if } X=C.
    \end{cases}
\end{equation}
\begin{prop}
    \cite[Sections 4.5, 5.3, 6.3]{lusztig09}
    \label{prop:jinduction}
    Let $E_1\otimes E_2$ be an (irreducible) special representation of $W_J$ and $E = j_{W_J}^{W}E_1\otimes E_2$.
    Then for $k$ sufficiently large so that we can find $\Lambda \in [\Lambda(E)]\cap\Lambda_{X^\vee}(n;k), \alpha_1\in [\alpha(E_1)]\cap\alpha_{Y(X)}(n;k)$ and $\alpha_2\in [\alpha(E_2)]\cap\alpha_X(n;k)$ we have that
\begin{align*}
    X=B: &\qquad \Lambda = \alpha_1^!+\alpha_2\\
    X=C: &\qquad \Lambda = \alpha_1  +\alpha_2\\
    X=D: &\qquad \Lambda = \alpha_1  +\alpha_2.
\end{align*}
\end{prop}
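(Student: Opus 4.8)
The plan is to deduce the proposition from Lusztig's combinatorial description of the truncated ($j$-)induction for classical Weyl groups in \cite[\S4.5, \S5.3, \S6.3]{lusztig09}, after matching his symbol conventions with the ones set up in the previous subsections. The conceptual point is that $j_{W_J}^W$ is transitive and is pinned down by the $b$-invariant: writing $b(E')$ for the least degree in which $E'$ occurs in the coinvariant algebra of $W_J$, the representation $j_{W_J}^W E'$ is the unique irreducible constituent $E$ of $\mathrm{Ind}_{W_J}^W E'$ with $b(E) = b(E')$, and $b(E_1 \otimes E_2) = b(E_1) + b(E_2)$. For a classical Weyl group the $b$-invariant of an irreducible is an explicit quadratic function of the entries of its symbol, and Lusztig's lemmas say that, for the maximal parabolics recorded in (\ref{eq:pseudolevis}), the symbol of $j_{W_J}^W(E_1 \otimes E_2)$ is obtained by adding the (suitably normalised) symbols of $E_1$ and $E_2$ entry by entry. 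So most of the work is already done in \cite{lusztig09}; what remains is to transcribe it into the present notation.

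First I would fix $k$ large enough that the $\approx$-classes $[\Lambda(E)]$, $[\alpha(E_1)]$ and $[\alpha(E_2)]$ each contain a representative whose bottom row has length $k$ — possible by the remark following Section \ref{sec:springerD}, since $\approx$ only inserts leading zeros — and choose $\alpha_1, \alpha_2$ to be the (unique, as $E_1, E_2$ are special) monotonic representatives of that size. This is the meaning of ``$k$ sufficiently large'', and it makes $\alpha_1 + \alpha_2$, resp.\ $\alpha_1^! + \alpha_2$, a well-defined symbol of type $X^\vee$ and size $k$; the assertion is that this symbol lies in $[\Lambda(E)]$. In types $C$ and $D$ the two factors of the Levi (\ref{eq:pseudolevis}) have the same type and $Y(X) = X$; here I would use the observation that the $a$-symbol $\alpha(\cdot)$ (rows increasing by at least $1$) and the $s$-symbol $\Lambda(\cdot)$ (rows increasing by at least $2$) are built from the underlying (bi)partition with the offsets $z^{0,k}$ and $2z^{0,k}$ respectively, so that the entry-wise sum of two monotonic $a$-symbols automatically has $\ll$-spaced rows and is an $s$-symbol; the identity $\Lambda = \alpha_1 + \alpha_2$ then drops out of Lusztig's formula via the Springer recipes of Sections \ref{sec:springerC} and \ref{sec:springerD}. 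In type $D$ one must in addition track the row ordering $\underline{\Lambda}$ and the decoration $\kappa$, for which I would invoke the description of the $\sim$-class of a monotonic (unordered, decorated) symbol given by Lemma \ref{lem:familyflips} and its decorated analogue. In type $B$ the Levi factor $W_m(D)$ makes $E_1$ carry a defect-$0$ type-$D$ $a$-symbol whereas $j_{W_J}^W(E_1 \otimes E_2)$ carries a defect-$1$ type-$C$ $s$-symbol; the map $\alpha \mapsto \alpha^!$ performs exactly the passage from defect $0$ to defect $1$ (prepend a $0$ to the top row and shift), and the compatibility $\alpha \sim \beta \iff \alpha^! \sim \beta^!$ recorded in the previous subsection shows it is well behaved on classes, yielding $\Lambda = \alpha_1^! + \alpha_2$.

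I expect the genuine obstacle to be bookkeeping rather than anything conceptual: one must check, type by type, that the paddings, the two spreadings ($z^{0,k}$ versus $2z^{0,k}$), the defect normalisations, and — in type $D$ — the row ordering and the decoration used here agree, modulo $\sim$ and $\approx$, with the normalisations of \cite{lusztig09}, and that enlarging $k$ does not change the classes $[\Lambda(E)]$, $[\alpha(E_i)]$, so that the additive identity established for one admissible $k$ survives for all larger ones. If one preferred a self-contained argument, the fallback would be to compute $b(E)$, $b(E_1)$ and $b(E_2)$ directly from the displayed symbols via the standard quadratic formula and to single out the constituent of $\mathrm{Ind}_{W_J}^W(E_1 \otimes E_2)$ with the correct $b$-invariant using the symbol arithmetic; but this essentially re-derives Lusztig's lemmas, so I would only resort to it where the convention-matching turns out to be delicate (notably the very-even case in type $D$, which is in any case handled separately in this paper).
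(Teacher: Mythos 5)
Your proposal is correct and matches the paper's treatment: the paper gives no independent proof of this proposition, simply citing \cite[Sections 4.5, 5.3, 6.3]{lusztig09} for the symbol-addition description of $j_{W_J}^W$ on the maximal pseudo-Levis, exactly as you do. Your additional framing via the $b$-invariant and the explicit checklist of convention-matching points (paddings, the $z^{0,k}$ versus $2z^{0,k}$ spreadings, the defect shift $\alpha\mapsto\alpha^!$ in type $B$, the ordering and decoration in type $D$, and stability of the $\approx$-classes under enlarging $k$) is a sound elaboration of what the citation implicitly assumes.
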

\begin{prop}
    \label{prop:sumtohom}
    Let $E_1\otimes E_2\in\mathrm{Irr}(W_J)$ and $E\in\mathrm{Irr}(W_{X^\vee}(n))$. If $X=D$, assume that $\OO^{\vee}(E)$ is not very even.
    Let $\Lambda\in\Lambda_{X^\vee}(n;k)$ and
    \begin{itemize}
        \item [(B)] let $\alpha\in\alpha_D(m;k),\beta\in\alpha_B(n-m;k)$, and suppose $\Lambda = \alpha^!+\beta$. If $[\tilde\alpha(E_1)] = [\{\alpha\}]$, $[\alpha(E_2)]=[\beta]$, $[\Lambda(E)]=[\Lambda]$ then $\Hom(E_1\otimes E_2,E^\vee|_{W_J})\ne0$.
        \item [(C)] let $\alpha\in\alpha_C(m;k),\beta\in\alpha_C(n-m;k)$, and suppose $\Lambda = \alpha+\beta$. If $[\alpha(E_1)] = [\alpha]$, $[\alpha(E_2)=\beta]$, $[\Lambda(E)]=[\Lambda]$ then $\Hom(E_1\otimes E_2,E^\vee|_{W_J})\ne0$.
        \item [(D)] let $\alpha\in\alpha_D(m;k),\beta\in\alpha_D(n-m;k)$, and suppose $\Lambda = \alpha+\beta$. If $[\tilde\alpha(E_1)] = [\{\alpha\}]$, $[\tilde\alpha(E_2)]=[\{\beta\}]$, $[\tilde\Lambda(E)]=[\{\Lambda\}]$ then $\Hom(E_1\otimes E_2,E^\vee|_{W_J})\ne0$.
    \end{itemize}
\end{prop}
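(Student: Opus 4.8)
\emph{Proof plan.} The strategy is to make the additive relation $\Lambda=\alpha^{!}+\beta$ (in type B) or $\Lambda=\alpha+\beta$ (in types C and D) explicit in terms of (bi)partitions, and then to deduce $\Hom(E_1\otimes E_2,E^{\vee}|_{W_J})\neq0$ from the branching rules for the classical Weyl groups together with one elementary fact about Littlewood--Richardson coefficients.

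\emph{Translation to partitions.} Within a fixed size class $\alpha_X(n;k)$ or $\Lambda_X(n;k)$ the relation $\approx$ is trivial, so the hypotheses $[\alpha(E_1)]=[\alpha]$, $[\alpha(E_2)]=[\beta]$ and $[\Lambda(E)]=[\Lambda]$ (or its type-D analogue) identify $\alpha,\beta,\Lambda$ with the genuine size-$k$ symbols of $E_1,E_2,E$. Decoding these by the recipes of Sections \ref{sec:classicalliealgebras}--\ref{sec:springerD}, and keeping track of (i) the difference between the a-symbol staircase $z^{0,k}$ and the s-symbol staircase $2z^{0,k}$, (ii) the effect of $(\cdot)^{!}$, which merely pads the underlying partition by a zero part while turning defect $0$ into defect $1$, and (iii) the type-D unordered/decorated conventions, I would show that $\Lambda=\alpha^{(!)}+\beta$ is equivalent to the following: writing $E\leftrightarrow(\lambda,\mu)$, $E_1\leftrightarrow(\lambda^{1},\mu^{1})$, $E_2\leftrightarrow(\lambda^{2},\mu^{2})$ for the associated (bi)partitions (unordered in type D, and with the decoration of $E$ left unconstrained), one has, after padding to a common number of parts, the coordinatewise identities $\lambda=\lambda^{1}+\lambda^{2}$ and $\mu=\mu^{1}+\mu^{2}$, with $\lambda^{1}$ paired to $\lambda$ and $\mu^{1}$ to $\mu$ (the pairing being forced by which row of $\alpha$ aligns with which row of $\Lambda$). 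I expect this to be immediate in type C and somewhat fiddly in types B and D, where $W_J$ has a type-D factor.

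\emph{From partitions to Hom.} In type C, $W_n(C)=W_n(B)=\ZZ_2\wr S_n$ and the pseudo-Levi $W_m(C)\times W_{n-m}(C)$ is the standard $(\ZZ_2\wr S_m)\times(\ZZ_2\wr S_{n-m})$, so the multiplicity of $E_1\otimes E_2$ in $E^{\vee}|_{W_J}$ equals $c^{\lambda}_{\lambda^{1}\lambda^{2}}\,c^{\mu}_{\mu^{1}\mu^{2}}$; the classical fact that $c^{\sigma+\tau}_{\sigma,\tau}=1$ for all partitions $\sigma,\tau$ (the filling of $(\sigma+\tau)/\sigma$ of content $\tau$ with $i$'s in row $i$ being the unique Littlewood--Richardson tableau), combined with the previous step, forces both coefficients to equal $1$, proving the claim. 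In types B and D I would embed $W_J$ into $W_m(B)\times W_{n-m}(B)\subseteq W_n(B)$ (and, in type D, use $W_m(D)\times W_{n-m}(D)\subseteq W_n(D)\subseteq W_n(B)$), run the same wreath-product branching and Littlewood--Richardson input there, and then push the type-D factor(s) down from type B using $\Res^{W_p(B)}_{W_p(D)}(\nu,\pi)=\{\nu,\pi\}$ (or $\{\nu,\nu\}^{0}\oplus\{\nu,\nu\}^{1}$ when $\nu=\pi$); since $E$ is not very even it is the restriction to $W_n(D)$ of a single irreducible type-B representation $(\lambda,\mu)$ with $\lambda\neq\mu$, so this descent preserves the relevant multiplicity. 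As an alternative route one may use Proposition \ref{prop:jinduction}: when $E_1\otimes E_2$ is special and $E=j^{W}_{W_J}(E_1\otimes E_2)$ the symbol relation holds and $\Hom_{W_J}(E_1\otimes E_2,E^{\vee}|_{W_J})\neq0$ automatically (Frobenius reciprocity and the definition of $j$-induction), and Lemmas \ref{lem:familyflips}, \ref{lem:flips} and \ref{lem:jump1} let one pass from the monotonic symbols to arbitrary members of the three families by flipping matched intervals, Lemma \ref{lem:jump1} being exactly the check that these flips produce legal symbols.

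The main obstacle is the type-D bookkeeping in both steps: unordered bipartitions, the two decorations, the defect-changing map $(\cdot)^{!}$, and --- in the symbol-theoretic variant --- verifying that the interval decomposition of $\overline{\Lambda}$ descends compatibly to $\alpha$ and $\beta$ so that Lemma \ref{lem:flips} applies. The ``not very even'' hypothesis is used precisely to keep the descent of $E$ from $W_n(B)$ to $W_n(D)$ multiplicity-preserving; type C is essentially routine once the translation to partitions is in place.
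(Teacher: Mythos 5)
Your proposal follows the same route as the paper's proof: translate the symbol identity $\Lambda=\alpha^{(!)}+\beta$ into coordinatewise addition of the associated bipartitions, invoke the branching rule for $W_J\subset W$ (the paper cites Geissinger--Kinch, Theorem 3.2, i.e.\ exactly the wreath-product branching you describe) to express the multiplicity as a product of Littlewood--Richardson coefficients, and conclude from $c^{\sigma+\tau}_{\sigma,\tau}\neq 0$. The paper treats only type B explicitly and declares the other cases analogous, so your extra care with the type-D descent from $W_n(B)$ to $W_n(D)$ via the not-very-even hypothesis is just a fleshed-out version of the same argument.
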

\begin{proof}
We will prove the type B case (the other cases are analogous). Write $(\mu,\nu)$,$\{\mu_1,\nu_1\}^{\kappa}$,$(\mu_2,\nu_2)$ for the bipartitions corresponding to $E$,$E_1$,$E_2$. In view of the formulas (\ref{eq:LambdaE}) and (\ref{eq:alphaE}) for the a- and s-symbols associated to a given bipartition, the assertion in (B) is equivalent to the following
$$(\mu,\nu) = (\mu_1,\nu_1) + (\mu_2,\nu_2) \implies \Hom(E_1 \otimes E_2,E^{\vee}|_{W_J}) \neq 0.$$
For a partition $\lambda$ of $k$, write $V(\lambda)$ for irreducible representation of $S_k$ corresponding to $\lambda$. Let $g(\mu_1,\mu_2,\mu)$ denote the multiplicity of $V(\mu_1) \otimes V(\mu_2)$ in the restriction of $V(\mu)$ to $S_{|\mu_1|} \times S_{|\mu_2|}$. By \cite[Theorem 3.2]{GeissengerKinch}, the multiplicity of $E_1 \otimes E_2$ in $E^{\vee}|_{W_J}$ is the product $g(\mu_1,\mu_2,\mu)g(\nu_1,\nu_2,\nu)$. Thus it suffices to show that $g(\mu_1,\mu_2,\mu),g(\nu_1,\nu_2,\nu) \neq 0$. This is an immediate consequence of the Littlewood-Richardson rule. We leave the details to the reader.
\end{proof}

\subsection{Construction of $(J,\phi)$ in classical types}\label{subsection:Jphi}

Now let $\mathfrak{g} = \mathfrak{g}_X(n)$ for $X \in \{B,C,D\}$ and fix a nilpotent orbit $\OO^{\vee} \subset \cN^{\vee}$ (which is not very even in type D). Let $\lambda \in \mathcal{P}_{X^\vee}(n)$ denote the partition corresponding to $\OO^{\vee}$. To show that $\OO^{\vee}$ is faithful, we must exhibit a pair $(J,\phi) \in \mathcal{F}_{\tilde\Delta}$ satisfying conditions (i) and (ii) of Definition \ref{def:faithful}. In this subsection, we will construct such a pair and verify condition (i) (condition (ii) will be verified in Section \ref{sec:prooffaithful}). We will construct $(J,\phi)$ by defining an explicit bipartition $^{\langle\mu\rangle}d(\lambda)$. The definition is as follows (we also define an auxilary bipartition $^{\langle\pi\rangle}d(\lambda)$ which is needed for the proofs).

\begin{definition}\label{def:pimu}
Define subpartitions $\pi(\lambda) \subseteq \lambda^t$ and $\mu(\lambda) \subseteq \lambda^t$ as follows
\begin{itemize}
    \item[(i)] If $X=C$, then 
    $$m_{\pi(\lambda)}(x) = \begin{cases} 
1 &\mbox{if } x \text{ is even and } m_{\lambda^t}(x) \text{ is}\\
& \mbox{odd}\\
0 &\mbox{otherwise} \end{cases}, \qquad m_{\mu(\lambda)}(x) = \begin{cases} 
2 &\mbox{if } x \text{ is even and } m_{\lambda^t}(x) \text{ is}\\
&\mbox{even and nonzero}\\
1 & \mbox{if } x \text{ is even and } m_{\lambda^t}(x) \text{ is}\\ 
& \mbox{odd}\\
0 &\mbox{otherwise} \end{cases} $$
    \item[(ii)] If $X=B$ or $X=D$, then 
    $$m_{\pi(\lambda)}(x) = \begin{cases} 
1 &\mbox{if } x \text{ is odd and } m_{\lambda^t}(x) \text{ is}\\
&\mbox{odd}\\
0 &\mbox{otherwise} \end{cases}, \qquad m_{\mu(\lambda)}(x) = \begin{cases} 
2 &\mbox{if } x \text{ is odd and } m_{\lambda^t}(x) \text{ is}\\
&\mbox{even and nonzero}\\
1 & \mbox{if } x \text{ is odd and } m_{\lambda^t}(x) \text{ is}\\
&\mbox{odd}\\
0 &\mbox{otherwise} \end{cases} $$
\end{itemize}
\end{definition}

We will shortly show that $\mu(\lambda)\subseteq d(\lambda)$, but note that care must be taken in the $X=B$ and $X=D$ cases since if $\mu(\lambda)\in\mathcal P(2)$ for $X\in\{B,D\}$ or $\mu(\lambda)\in\mathcal P(2n-2)$ for $X=D$, the bipartition $^{\langle \mu(\lambda) \rangle}d(\lambda)$ cannot be an element of $\overline{\mathcal P}(n)$.
These are inconvenient edge cases which we will treat separately now and ignore for the rest of the section.
\begin{prop}
    \label{prop:edgecases}
    Let $X\in\{B,D\}$ and $\lambda\in\mathcal P_{X^\vee}(n)$.
    Then
    \begin{enumerate}
        \item $\mu(\lambda) \in \mathcal P(2)$ if and only if $\lambda$ is of the form $\lambda=(\lambda_1,\lambda_2^{o_2},\lambda_3^{e_3},\dots,\lambda_k^{e_k})$ where $\lambda_1>\lambda_2\ge \lambda_3\ge \cdots\lambda_k\ge 0$, $\lambda_1-\lambda_2$ is even, $o_2>0$ is odd, and $e_3,\dots,e_k>0$ are even.
        \item if $X=D$, then $\mu(\lambda)\in\mathcal P(2n-2)$ if and only if $\lambda \in \{(1,1),(3,1)\}$.
    \end{enumerate}
\end{prop}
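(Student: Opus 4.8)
The plan is to prove both statements by a direct computation of $|\mu(\lambda)|$ from Definition \ref{def:pimu}(ii). The starting observation is that $\mu(\lambda)$ consists entirely of odd parts, with
\[
m_{\mu(\lambda)}(x)=\begin{cases}2 & \text{if $m_{\lambda^t}(x)$ is even and positive},\\ 1 & \text{if $m_{\lambda^t}(x)$ is odd},\\ 0 & \text{otherwise},\end{cases}
\]
so in particular $m_{\mu(\lambda)}(x)\le 2$ and $m_{\mu(\lambda)}(x)\le m_{\lambda^t}(x)$ for every $x$, with equality in the latter exactly when $m_{\lambda^t}(x)\le 2$. Throughout I would translate the multiplicities of $\lambda^t$ into part-differences of $\lambda$ via $m_{\lambda^t}(x)=\lambda_x-\lambda_{x+1}$ (parts indexed by position, with $\lambda_i:=0$ for $i$ beyond the length of $\lambda$), and use the equivalence $\lambda\in\mathcal P_D(n)\iff\lambda^t_v\equiv\lambda^t_{v+1}\pmod 2$ for all even $v$ (together with $|\lambda|=2n$), which is immediate from $m_\lambda(v)=\lambda^t_v-\lambda^t_{v+1}$.

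For part (1), since every part of $\mu(\lambda)$ is odd and at least $1$, the equality $|\mu(\lambda)|=2$ holds if and only if $\mu(\lambda)=(1,1)$, which by the displayed formula is equivalent to $m_{\lambda^t}(1)$ being even and positive together with $m_{\lambda^t}(x)=0$ for every odd $x\ge 3$. Rewriting via $m_{\lambda^t}(x)=\lambda_x-\lambda_{x+1}$, this says exactly that $\lambda_1-\lambda_2$ is positive and even and that $\lambda_3=\lambda_4$, $\lambda_5=\lambda_6$, and so on, i.e. that the parts of $\lambda$ past position $2$ occur in equal consecutive pairs. I would then check that, conjoined with the hypothesis $\lambda\in\mathcal P_{X^\vee}(n)$ (namely $\mathcal P_C(n)$ when $X=B$ and $\mathcal P_D(n)$ when $X=D$), these conditions are precisely the asserted normal form $\lambda=(\lambda_1,\lambda_2^{o_2},\lambda_3^{e_3},\dots,\lambda_k^{e_k})$: the pairing condition forces each value strictly below $\lambda_2$ to occur with even multiplicity, and membership in $\mathcal P_{X^\vee}(n)$ then pins down the parities of these values (while conversely any $\lambda$ of the stated shape that lies in $\mathcal P_{X^\vee}(n)$ satisfies the multiplicity conditions, as one reads off directly). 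This direction is routine bookkeeping once the dictionary between $\lambda$ and $\lambda^t$ is in place.

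For part (2) I would first record the exact deficit identity
\[
2n-|\mu(\lambda)|=\sum_{x\text{ even}}x\,m_{\lambda^t}(x)+\sum_{x\text{ odd}}x\bigl(m_{\lambda^t}(x)-m_{\mu(\lambda)}(x)\bigr),
\]
in which both summands are nonnegative, the second contributing at least $2$ as soon as some odd $x$ has $m_{\lambda^t}(x)\ge 3$ (and at least $2x\ge 6$ if that $x$ is $\ge 3$). Setting the left side equal to $2$ therefore forces: $\lambda^t$ has at most one even part, which if present equals $2$ and occurs once; $m_{\lambda^t}(x)\le 2$ for all odd $x\ge 3$; and $m_{\lambda^t}(1)\le 4$, with $m_{\lambda^t}(1)\in\{3,4\}$ only when $\lambda^t$ has no even part. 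This leaves two candidate shapes for $\lambda^t$ (one containing a single $2$ and otherwise odd parts of multiplicity $\le 2$; one all-odd with $m_{\lambda^t}(1)\in\{3,4\}$), and then I would impose $\lambda\in\mathcal P_D(n)$ in the consecutive-parity form above: inspecting the position where the parity of $\lambda^t$ changes (either at the single $2$, or at the drop from $1$ to $0$) eliminates every possibility except $\lambda^t\in\{(2),(2,1,1)\}$, i.e. $\lambda\in\{(1,1),(3,1)\}$, as claimed. The main obstacle is exactly this last step: the deficit-$2$ analysis by itself admits several configurations, and it is the $\mathcal P_D$-membership that rules out all but the two in the statement, so one must be careful to track the edge contributions from $m_{\lambda^t}(1)\in\{3,4\}$ and the interaction between the parities of the parts of $\lambda^t$ and the multiplicity condition defining $\mathcal P_D(n)$; part (1), by contrast, is comparatively mechanical.
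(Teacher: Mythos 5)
Your proposal is correct and follows essentially the same route as the paper: both reduce part (1) to the observation that $\mu(\lambda)$ must equal $(1,1)$, and both reduce part (2) to a two-case analysis of how $\lambda^t$ exceeds $\mu(\lambda)$ by total size $2$ (your deficit identity is the paper's observation that $\nu=\lambda^t\setminus\mu(\lambda)\in\mathcal P(2)$, hence $\nu=(2)$ or $(1,1)$), with both eliminating the spurious configurations via $\mathcal P_D$-membership together with the parity of $|\lambda|=2n$. The verifications you defer are routine and do go through.
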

\begin{proof}
    \begin{enumerate}
        \item $(\Rightarrow)$ If $\mu(\lambda) \in \mathcal P(2)$ then since $\mu(\lambda)$ only has odd parts, $\mu(\lambda) = (1,1)$.
        By definition this means that $\lambda^t$ has only even parts except a non-zero even number of $1$'s.
        Thus $\lambda$ must take the form given.
        $(\Leftarrow)$ The only odd part of $\lambda^t$ is $1$ and it occurs with even multiplicity and so $\mu(\lambda) = (1,1)$.
        \item $(\Rightarrow)$ Suppose that $\mu(\lambda) \in\mathcal P(2n-2)$.
        Let $\nu = \lambda^t\setminus \mu(\lambda)$.
        Then $\nu \in \mathcal P(2)$ and so $\nu = (2)$ or $(1,1)$.
        Since $\mu$ only has odd parts, if $\nu = (1,1)$ then $\lambda^t$ only has odd parts.
        So $\lambda$ is of the form $(\lambda_1^{a_1},\lambda_2^{a_2},\dots,\lambda_k^{a_k})$ where $a_1$ is odd (in fact $=1$), and $a_i$ is even for all $i\ge 2$.
        Since $\lambda$ is of type $D$ this implies that $\lambda_1$ is odd.
        But $\lambda$ is a partition of $2n$ which is impossible since $\sum_i a_i\lambda_i \equiv 1 \pmod 2$.
        If $\nu = (2)$ then there are two cases to consider: $\lambda^t$ only has entries $\ge 2$ or $1$ is a part of $\lambda^t$.
        In the former case $\lambda$ must be of the form $(\lambda_1^{2},\lambda_2^{o_2},\lambda_3^{e_3},\dots,\lambda_k^{e_k})$ where $\lambda_1>\lambda_2\ge\lambda_3\ge\cdots\ge \lambda_k\ge 0$, $o_2>0$ is odd, $e_3,\dots,e_k>0$ are even.
        By parity considerations, this can only be a partition of $2n$ in the case $\lambda = (\lambda_1^2)$.
        But the multiplicity of $2$ in $\lambda^t$ is $1$ and so $\lambda_1 = 1$.
        Finally, suppose $1$ is a part of $\lambda^t$.
        Then $\lambda$ is of the form $(\lambda_1,\lambda_2,\lambda_3^{o_3},\lambda_4^{e_4},\dots,\lambda_k^{e_k})$ where $\lambda_1>\lambda_2>\lambda_3\ge\lambda_4\ge\cdots \ge\lambda_k\ge 0$, $o_3>0$ is odd, and $e_4,\dots,e_k>0$ are even.
        Again by parity considerations this is a partition of $2n$ only in the case when $\lambda = (\lambda_1,\lambda_2)$.
        Since $2$ has multiplicity $1$ in $\lambda^2$ we must have $\lambda_2 = 1$.
        Since all the parts of $\mu$ have multiplicity at most $2$, $\lambda=(3,1)$ or $(2,1)$.
        But we can rule out the $(2,1)$ case since $2+1$ is odd.
        $(\Leftarrow)$ This is trivial to check.
    \end{enumerate}
\end{proof}

Note that since $D_1\simeq A_1$ and $D_2 \simeq A_1\times A_1$, the orbits appearing in Proposition \ref{prop:edgecases} (2) are faithful since we have already shown that nilpotent orbits of simple lie algebras of type $A$ are faithful.
We now prove the remaining case.
\begin{prop}
    Let $X\in\{B,D\}$ and $\lambda$ be of the form $\lambda=(\lambda_1,\lambda_2^{o_2},\lambda_3^{e_3},\dots,\lambda_k^{e_k})$ where $\lambda_1>\lambda_2\ge \lambda_3\ge \cdots\lambda_k\ge 0$, $\lambda_1-\lambda_2$ is even, $o_2>0$ is odd, and $e_3,\dots,e_k>0$ are even.
    Then $\OO^\vee$ is faithful.
\end{prop}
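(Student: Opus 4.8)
The plan is to run the construction of Section \ref{subsection:Jphi}, but with the subpartition $\mu(\lambda)=(1,1)$ --- which is precisely what makes ${}^{\langle\mu(\lambda)\rangle}d(\lambda)$ fail to lie in $\overline{\mathcal P}_X(n)$ --- replaced by an admissible surrogate. The starting point is two observations about $\lambda$. First, the hypotheses force $\lambda^t$ to consist of even parts together with exactly $\lambda_1-\lambda_2\ge 2$ copies of $1$; plugging this into the characterization of special partitions in \S\ref{subsec:orbits} --- crucially using that $\lambda_1-\lambda_2$ is even, which is exactly what excludes the non-special configurations in types $C$ and $D$ --- shows that $\lambda$ is a special $X^\vee$-partition. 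Hence $d_A(\OO^\vee,1)=(d(\OO^\vee),1)$ carries the trivial conjugacy class, and $d(\OO^\vee)$ is the orbit attached to the explicitly computable partition $d(\lambda)$, which retains at least two parts equal to $1$. Second, $\pi(\lambda)=\emptyset$, and a short computation with Achar's reduction map gives $r_{d(\lambda)}(\mu(\lambda))=r_{d(\lambda)}(\emptyset)=\emptyset$: the only markable part met by $(1,1)$ is $1$, whose height difference in $(1,1)$ is $2$, and every larger markable part is missed.

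I would then pick a subpartition $\mu''\subseteq d(\lambda)$ with $r_{d(\lambda)}(\mu'')=\emptyset$ and with $|\mu''|=2p$ for $p$ in the admissible range (i.e. $p\ne 1$ in type $B$, $p\notin\{1,n-1\}$ in type $D$). The choice $\mu''=\emptyset$, giving $J=J_X(0,n)$ (so $W_J=W$) and $\phi$ the family with $E(\phi)=E(d(\OO^\vee),1)$, already verifies condition (i) of Definition \ref{def:faithful}: directly, $\overline{\mathbb L}(J,\OO(\phi))=\mf Q(\OO_{d(\lambda)},1)=(d(\OO^\vee),1)=d_A(\OO^\vee,1)$. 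If a proper parabolic $W_J$ is wanted in order to verify (ii) below, one replaces $\emptyset$ by a larger admissible $\mu''$ with $r_{d(\lambda)}(\mu'')=\emptyset$ --- these exist because $d(\lambda)$ has at least two parts equal to $1$ (so suitable unions of ones serve), with the finitely many genuinely small ranks absorbed into the type-$A$ reductions $\fg_D(1)\simeq\fg_A(1)$, $\fg_D(2)\simeq\fg_A(1)\times\fg_A(1)$ already disposed of --- and Proposition \ref{prop:Acharequivalence} together with the displayed identity for $\mu''=\emptyset$ gives $\overline{\mathbb L}(J,\OO(\phi))=\bar s({}^{\langle\mu''\rangle}d(\lambda))=d_A(\OO^\vee,1)$.

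Condition (ii) is the heart of the matter. The irreducible $W$-representations $E$ with $\OO^\vee(E,\CC)=\OO^\vee$ are exactly those whose $X^\vee$-symbol $\Lambda(E)\in\Lambda_{X^\vee}(n)$ is $\sim$-equivalent to the monotonic symbol of $E(\OO^\vee,1)$, hence is obtained from it by flipping intervals (Lemma \ref{lem:familyflips}). For each such $E$ I would produce $a$-symbols $\alpha_1,\alpha_2$ of equal size for the two factors of $W_J$, with $\Lambda(E)=\alpha_1^!+\alpha_2$ in type $B$ and $\Lambda(E)=\alpha_1+\alpha_2$ in type $D$, $[\alpha_1]$ lying in the $a$-symbol class of the $\fg_{Y(X)}(p)$-factor of $\phi\otimes\mathrm{sgn}$ and $[\alpha_2]$ in that of the $\fg_X(n-p)$-factor; Lemmas \ref{lem:flips} and \ref{lem:jump1} ensure these summands are genuine $a$-symbols, and Proposition \ref{prop:sumtohom} then yields $F\in\phi\otimes\mathrm{sgn}$ with $\Hom_{W_J}(F,E|_{W_J})\ne 0$. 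The main obstacle is exactly this step: since $\mu''$ has been enlarged away from the minimal choice $\mu(\lambda)$, the decomposition $\Lambda(E)=\alpha_1+\alpha_2$ is no longer forced, and one must check --- uniformly over all flips of the monotonic symbol --- that a compatible pair $(\alpha_1,\alpha_2)$ exists with the Littlewood--Richardson multiplicities of Proposition \ref{prop:sumtohom} nonvanishing. This is where the rigid shape of $\lambda$ (one long part $\lambda_1$ sitting above a partition whose transpose is ``even-plus-ones'') is used, by restricting which intervals can be flipped and keeping the first $a$-symbol summand small.
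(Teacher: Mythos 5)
Your verification of condition (i) matches the paper's: the paper also takes $(J,\phi)=(\Delta,\phi(E(\OO^\vee,1)))$, so that $W_J=W$, and gets $\overline{\mathbb L}(J,\OO(\phi))=(d(\OO^\vee),1)=d_A(\OO^\vee,1)$ from the specialness of $\lambda$ (which your computation of $\lambda^t$ correctly establishes). The problem is condition (ii), which you yourself identify as "the heart of the matter" and then do not close. Having chosen $W_J=W$, you miss the decisive simplification: since $E|_{W_J}=E$, condition (ii) asks only that every $E\in\mathrm{Irr}(W)$ with $\OO^\vee(E)=\OO^\vee$ lie in $\phi\otimes\mathrm{sgn}$, i.e.\ that the entire fiber of the Springer correspondence over $\OO^\vee$ is contained in a single family (necessarily that of $E(\OO^\vee,1)$). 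This is a concrete, finite combinatorial statement about s-symbols and a-symbols, and it is exactly where the special shape of $\lambda$ enters; it is what the paper checks (and is the whole content of the proposition beyond specialness).

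Instead, you propose to enlarge $\mu''$ to force a proper parabolic $W_J$ and then rerun the general $\Lambda(E)=\alpha_1+\alpha_2$ decomposition of Section \ref{sec:prooffaithful}. This is both unnecessary and, as written, incomplete: you explicitly flag that "the decomposition $\Lambda(E)=\alpha_1+\alpha_2$ is no longer forced, and one must check --- uniformly over all flips of the monotonic symbol --- that a compatible pair $(\alpha_1,\alpha_2)$ exists." That check is precisely the content of Lemmas \ref{lem:technicallemma1} and \ref{lem:technicallemma2}, whose proofs rely on the specific bipartition $^{\langle\mu(\lambda)\rangle}d(\lambda)$ and do not transfer to an arbitrary enlarged $\mu''$ with $r_{d(\lambda)}(\mu'')=\emptyset$. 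So the key step of condition (ii) is left as an acknowledged obstacle rather than proved, and the argument as proposed does not establish faithfulness. The fix is to stay with $W_J=W$ and verify directly, via the symbol combinatorics, that all local systems in $\mathrm{Irr}(A(\OO^\vee))_0$ give representations in the family of $E(\OO^\vee,1)$ for partitions of this shape.
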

\begin{proof}
    Let $(J,\phi) = (\Delta,\phi(E(\OO^\vee,1)))$.
    Since $\lambda$ is special we have that $$\overline{\mathbb L}(J,\OO(\phi)) = \overline{\mathbb L}(\Delta,d(\OO^\vee)) = (d(\OO^\vee),1) = d_A(\OO^\vee,1).$$
    This verifies condition (i) for faithfulness.
    Since $W_J = W$ for $J=\Delta$, proving condition (ii) for our choice of $(J,\phi)$ is equivalent to showing that all representations $E\in\mathrm{Irr}(W)$ with $\OO^\vee(E) = \OO^\vee$ lie in the same family as $E(\OO^\vee,1)$.
    This is an easy exercise in the combinatorics of s-symbols and a-symbols and is omitted.
\end{proof}

For the remainder of this section we assume that $\lambda$ is not equal to one of the edge cases mentioned.

\begin{lemma}\label{lem:propsofmu}
The following are true:
\begin{itemize}
    \item[(i)] $\pi \subseteq d(\lambda)$.
    \item[(ii)] $\mu \subseteq d(\lambda)$.
    \item[(iii)] $^{\langle\pi\rangle}d(\lambda) \in \overline{\mathcal{P}}^*(n)$.
    \item[(iv)] $^{\langle\mu\rangle}d(\lambda) \in \overline{\mathcal{P}}^*(n)$.
\end{itemize}
\end{lemma}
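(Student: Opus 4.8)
The plan is to deduce (i) from (ii), to observe that (iii) and (iv) are nearly automatic for $\pi=\pi(\lambda)$ and $\mu=\mu(\lambda)$ themselves, and thereby to reduce what remains of (iii), (iv) to the single assertion that $d(\lambda)\setminus\pi$ and $d(\lambda)\setminus\mu$ are special of the appropriate classical type. First, comparing the two columns of Definition \ref{def:pimu} gives $m_{\pi}(x)\le m_{\mu}(x)$ for all $x$, so $\pi\subseteq\mu$ and (i) follows formally from (ii). Next, every part of $\pi$ and of $\mu$ has a single parity — even when the ambient type is $C$, odd when it is $B$ or $D$ — so each of $\pi,\mu$ is automatically a partition of the type ($C$ when the ambient type is $C$, $D$ when it is $B$ or $D$) required of the first entry of a bipartition in $\overline{\mathcal P}_X(p,n)$, and once $\pi,\mu\subseteq d(\lambda)$ is known, the complement $d(\lambda)\setminus\pi$ (resp. $d(\lambda)\setminus\mu$) lies in the required $\mathcal P$ because $d(\lambda)\in\mathcal P_X(n)$ and deleting minority-parity parts never affects the majority-parity multiplicities that the definition of $\mathcal P_X$ constrains. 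A partition with all parts even is always special (as a $C$-partition), and a partition with all parts odd is special (as a $D$-partition) precisely when it has an even number of parts. In type $C$ the parts of $\pi,\mu$ are even, while in types $B$ and $D$ they are odd and $\#\pi\equiv\#\mu\equiv\#\{\text{odd columns of }\lambda\}\equiv|\lambda|=2n\equiv 0\pmod 2$; the same computation yields $|\pi|,|\mu|$ even, so that $^{\langle\pi\rangle}d(\lambda)$ and $^{\langle\mu\rangle}d(\lambda)$ genuinely land in some $\overline{\mathcal P}_X(p,n)$. After these reductions, (iii) and (iv) amount to (ii) together with the specialness of $d(\lambda)\setminus\pi$ and $d(\lambda)\setminus\mu$.

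For (ii) I would argue type by type, using as the main tool the monotonicity of the collapse: $m_{\nu_C}(x)\ge m_{\nu}(x)$ for even $x$, and $m_{\nu_B}(x),m_{\nu_D}(x)\ge m_{\nu}(x)$ for odd $x$. Both are immediate from \eqref{eq:Ccollapse} and its $B$/$D$ analogues in \cite[Lem.~6.3.8]{CM}, since a collapse only modifies consecutive pairs of parts of the ``wrong'' parity while every ``right''-parity part survives (new ones may even be created). Now $d(\lambda)$ is the $X$-collapse of $(\lambda^t)^{-}$, $(\lambda^t)^{+}$, resp. $\lambda^t$, according as the ambient type is $C$, $B$, resp. $D$, and the parity relevant to $\mu$ is exactly the one preserved by the corresponding collapse; hence it is enough to check $m_{\mu}(x)\le m_{(\lambda^t)^{\pm}}(x)$ for $x$ of that parity. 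Away from the unique part of $\lambda^t$ altered by the $\pm$-operation — the part of value $\#\lambda$ in type $B$ and of value $m_{\lambda}(\lambda_1)$ in type $C$ — this is just $m_{\mu}(x)\le m_{\lambda^t}(x)$, read off from Definition \ref{def:pimu}. The single altered part is handled by a short separate check, using that the collapse supplies the missing multiplicity precisely in the cases where the $\pm$-operation lowers it; the finitely many $\lambda$ where this compensation fails are exactly the degenerate shapes isolated in Proposition \ref{prop:edgecases}, which are excluded here.

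The remaining point — that $d(\lambda)\setminus\pi$ and $d(\lambda)\setminus\mu$ are special — is where I expect the real work to lie. The key structural input is that $d(\lambda)$ is itself special, since the image of the duality $d$ consists of Lusztig's special orbits, so $d(\lambda)\in\mathcal P^{*}_X(n)$. One then writes $d(\lambda)$, $\pi$, $\mu$ out explicitly in terms of the multiplicities of $\lambda$ via the collapse recipe, and verifies case by case on the type that deleting $\pi$ or $\mu$ leaves the number of ``minority''-parity parts between consecutive ``majority''-parity parts, and above the largest ``majority''-parity part, of the parity demanded by the criteria of Section \ref{subsec:orbits}. The hard part will be types $B$ and $D$: there the $\pm$-operation and the $B$/$D$-collapse interact, and one must track carefully how removing the single or doubled odd parts of $\mu$ shifts the parities of the heights $\mathrm{ht}_{\lambda^t}(x)$ that control specialness; the type-$C$ case is comparatively routine. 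This computation, feeding into Definition \ref{def:faithful}, is exactly what is needed so that the pair $(J,\phi)$ built from $^{\langle\mu\rangle}d(\lambda)$ can witness the faithfulness of $\OO^{\vee}$.
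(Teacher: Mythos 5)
Your opening reductions are sound and agree with the paper's: $\pi\subseteq\mu$ gives (i) from (ii), and since $\mu\setminus\pi$ consists of parts occurring with even multiplicity, (iii) and (iv) stand or fall together for the specialness criteria. Your route to (ii) is genuinely different from the paper's and is workable: the paper instead proves the exact factorization $d(\lambda)=\mu\cup(\nu^{-})_C$ (where $\nu=\lambda^t\setminus\mu$; similarly in types $B$, $D$), from which (ii) is immediate, whereas you use only the one-sided monotonicity of the collapse on the preserved parity plus a check at the single part moved by the $\pm$-operation. Two caveats on that check: it is not as short as you suggest (in types $B$ and $C$ one must argue that the collapse necessarily recreates the destroyed copy of the extremal part, using that the wrong-parity part created by $\pm$ is strictly extremal and hence lies in an unequal pair of the collapse), and your remark that the failures of this compensation are the shapes of Proposition \ref{prop:edgecases} is off: those edge cases concern $^{\langle\mu\rangle}d(\lambda)$ failing to land in any admissible $\overline{\mathcal P}_X(p,n)$ because $p\in\{1,n-1\}$ is disallowed, not a failure of $\mu\subseteq d(\lambda)$.

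The genuine gap is (iv) (hence (iii)). You correctly flag this as where the real work lies, but you only sketch a plan, and the one structural input you propose to lean on --- that $d(\lambda)$ is itself special --- is insufficient: deleting parts of the unconstrained parity from a special partition does not preserve specialness, because the criteria of Section \ref{subsec:orbits} count such parts modulo $2$ between consecutive parts of the other parity. For instance $(4,2,1,1)$ is a special $C$-partition while $(4,1,1)$ is not. So "special minus $\mu$ is special" cannot be concluded formally; one must know exactly which parts $\mu$ removes and where they sit in $d(\lambda)$. This is precisely what the paper's identity $d(\lambda)=\mu\cup(\nu^{-})_C$ buys: it identifies the complement as $(\nu^{-})_C$, and specialness is then established by showing that $\nu^{-}$ is either special or ``quasi-special'' (the parity conditions hold except possibly at the bottom, where the smallest part is odd of multiplicity one) according to the parity of the smallest part of $\nu$, and that the $C$-collapse carries both classes to special partitions. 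Without that identity, or an equivalent explicit description of $d(\lambda)\setminus\mu$, your argument for (iv) is incomplete.
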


\begin{proof}
We will prove (i)-(iv) for $X=C$ (the proofs for $X=B,D$ are analogous, and are omitted). 

Since $\pi \subseteq \mu$, (ii) implies (i). And since all parts of $\pi \setminus \mu$ have even multiplicity, (iv) implies (iii). So it is enough to prove (ii) and (iv).

Choose $\nu$ so that $\mu \cup \nu = \lambda^t$. Note that $|\nu|$ is odd. We will first show that
\begin{equation}\label{eq:dlambdamu}
d(\lambda) = \mu \cup (\nu^-)_C
\end{equation}
There are two cases to consider. 

\vspace{3mm}

(a) $\mu \subseteq (\lambda^t)^-$. In this case we have $(\lambda^t)^- = \mu \cup \nu^-$. Since $\mu$ consists only of even parts, equation (\ref{eq:Ccollapse}) implies
$$d(\lambda) = ((\lambda^t)^-)_C = \mu \cup (\nu^-)_C,$$
as desired.

\vspace{3mm}
(b) $\mu \nsubseteq (\lambda^t)^-$. Write 
$$\nu = \nu' \cup_{\geq} (x,y_1,...,y_r),$$
where $x$ is the smallest odd part of $\nu$ (note that $r$ might be $0$). Then by (\ref{eq:Ccollapse}) we have
\begin{equation}\label{eq:nu'}(\nu^-)_C = (\nu')_C \cup_{\geq} (x-1,y_1,...,y_r)\end{equation}
Now let $z$ be the smallest part of $\mu$ and choose $\mu'$ such that $\mu = \mu' \cup (z)$. Then 
$$(\lambda^t)^- = \mu' \cup [\nu' \cup_{\geq} (x,y_1,...,y_r,z-1)].$$
So by (\ref{eq:Ccollapse}) we have
\begin{align}\label{eq:dlambda}
\begin{split}
d(\lambda) &= ((\lambda^t)^-)_C\\
&= \mu' \cup [(\nu')_C \cup_{\geq} (x-1,y_1,...,y_r,z)]\\
&= \mu \cup [(\nu')_C \cup_{\geq} (x-1,y_1,...,y_r)]
\end{split}
\end{align}
Substituting (\ref{eq:nu'}) into the right hand side of (\ref{eq:dlambda}), we obtain
$$d(\lambda) = \mu \cup (\nu^-)_C,$$
as desired.

\vspace{3mm}

Thus, we have proved (\ref{eq:dlambdamu}). (ii) follows immediately. In view of (\ref{eq:dlambdamu}), (iv) amounts to the assertion that $(\nu^-)_C$ is special. This will require some notation. 

For $\alpha \in \mathcal{P}(2n)$, define
$$e_{\alpha}(i,j) := \#\{k \mid i \leq k \leq j \text{ and } \alpha_k \text{ is even}\}$$
Let $\alpha_{t_1},...,\alpha_{t_p}$ be the odd parts of $\alpha$ (for $t_1 < t_2 < ... < t_p$). We say that
\begin{itemize}
    \item $\alpha$ is \emph{special} if $e_{\alpha}(1,t_1)$ is even and  $e_{\alpha}(t_i,t_{i+1})$ is even for $1 \leq i \leq p-1$.
    \item $\alpha$ is \emph{quasi-special} if $e_{\alpha}(1,t_1)$ is even,  $e_{\alpha}(t_i,t_{i+1})$ is even for $1 \leq i \leq p-2$, and the smallest part of $\alpha$ is odd, with multiplicity $1$.
\end{itemize}
    
By (\ref{eq:Ccollapse}), the odd parts of $\alpha_C$ are precisely those of the form $(\alpha_C)_{t_{2i-1}}$ or $(\alpha_C)_{t_{2i}}$ for $i$ satisfying $\alpha_{t_{2i-1}}=\alpha_{t_{2i}}$. Choose $i < j$ such that 
$$\alpha_{t_{2i-1}} = \alpha_{t_{2i}} \qquad \text{and} \qquad \alpha_{t_{2j-1}} = \alpha_{t_{2j}}.$$
Then by (\ref{eq:Ccollapse})
$$e_{\alpha_C}(t_{2i},t_{2j-1}) = e_{\alpha}(t_{2i},t_{2j-1}) + 2(j-i-1).$$
In particular,
$$e_{\alpha_C}(t_{2i},t_{2j-1}) \equiv e_{\alpha}(t_{2i},t_{2j-1}) \mod{2}$$
The following implications are immediate
\begin{itemize}
    \item $\alpha \text{ special} \implies \alpha_C \text{ special}$.
    \item $\alpha \text{ quasi-special} \implies \alpha_C \text{ special}.$
\end{itemize}

Returning to the situation at hand, there are two cases to consider. First, suppose the smallest part of $\nu$ is even. In this case, $\nu^-$ is quasi-special. So $(\nu^-)_C$ is special. Next, suppose the smallest part of $\nu$ is odd. In this case, $\nu^-$ is special. So $(\nu^-)_C$ is special. In either case, $(\nu^-)_C$ is special, as desired.
\end{proof}

\begin{rmk}
If $X=D$, then $\mu$ consists of only odd parts. In particular, it is never very even.
\end{rmk}

Write $(J,\OO) \in \mathcal{K}_{\tilde{\Delta}}^{max}$ (resp. $(J',\OO') \in \mathcal{K}_{\tilde{\Delta}}^{max}$) for the pairs corresponding to the bipartition $^{\langle\mu\rangle}d(\lambda) \in \overline{\mathcal{P}}_X(n)$ (resp. $^{\langle\pi\rangle}d(\lambda) \in \overline{\mathcal{P}}_X(n)$) under the bijection (\ref{eq:KPbijection}). And write
\begin{equation}\label{eq:Jphi}(J,\phi) := (J,\phi(E(\OO,1) \otimes \mathrm{sgn})) \in \mathscr{F}_{\tilde{\Delta}}, \qquad (J',\phi') := (J', \phi(E(\OO',1) \otimes \mathrm{sgn})) \in \mathscr{F}_{\tilde{\Delta}}.\end{equation}
%O(phi(E sgn)) = O(E)
%
%
It is easy to show that both $(J,\phi)$ and $(J',\phi')$ satisfy condition (i) of Definition \ref{def:faithful}.

\begin{lemma}\label{lem:conditioniclassical}
$\overline{\mathbb L}(J,\OO(\phi)) = \overline{\mathbb L}(J',\OO'(\phi')) = d_A(\OO^{\vee},1)$.
\end{lemma}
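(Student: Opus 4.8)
The plan is to pass to Achar's combinatorial model and reduce the identity to a statement about the fibres of $\bar s$. The first observation is that $\OO(\phi)=\OO$ and $\OO'(\phi')=\OO'$. Indeed, by Lemma \ref{lem:propsofmu}(iii)--(iv) the bipartitions $^{\langle\mu\rangle}d(\lambda)$ and $^{\langle\pi\rangle}d(\lambda)$ are \emph{special}, so the orbits $\OO\in\cN_o^{L_{J}}$ and $\OO'\in\cN_o^{L_{J'}}$ they index under (\ref{eq:KPbijection}) are special; hence $E(\OO,1)$ is the special representation of the family $\phi=\phi(E(\OO,1)\otimes\mathrm{sgn})$, so $\OO(\phi)=\OO(E(\OO,1),\CC)=\OO$, and likewise $\OO'(\phi')=\OO'$. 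Unwinding the definitions of $s$ and $\bar s=\mf Q\circ s$, this yields
\[\overline{\mathbb L}(J,\OO(\phi))=\bar s(^{\langle\mu\rangle}d(\lambda)),\qquad \overline{\mathbb L}(J',\OO'(\phi'))=\bar s(^{\langle\pi\rangle}d(\lambda)).\]

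Next I would show that these two values agree. Since $^{\langle\mu\rangle}d(\lambda)$ and $^{\langle\pi\rangle}d(\lambda)$ share the underlying partition $d(\lambda)$ (and, in the type-$D$ range under consideration, $\lambda$ and hence $d(\lambda)$ is not very even), Proposition \ref{prop:Acharequivalence} reduces this to the equality $r_{d(\lambda)}(\mu)=r_{d(\lambda)}(\pi)$. But Definition \ref{def:pimu} is arranged precisely so that $m_{\mu(\lambda)}(x)-m_{\pi(\lambda)}(x)\in\{0,2\}$ for every $x$; hence $\mathrm{ht}_{\mu}(x)\equiv\mathrm{ht}_{\pi}(x)\bmod 2$ for all $x$, so $\mathrm{ht}_{\mu}(x_i)-\mathrm{ht}_{\mu}(x_{i+1})\equiv\mathrm{ht}_{\pi}(x_i)-\mathrm{ht}_{\pi}(x_{i+1})\bmod 2$ at every markable part $x_i$ of $d(\lambda)$, so the two reductions coincide.

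It then remains to identify this common value with $d_A(\OO^{\vee},1)$, which I would do in two steps. First, $\pr_1(\bar s(^{\langle\pi\rangle}d(\lambda)))=d(\OO^{\vee})$: the first component of $\mathbb L(J',\OO')$ is the $G$-saturation of the pseudo-Levi orbit indexed by $^{\langle\pi\rangle}d(\lambda)$, which has partition $\pi\cup(d(\lambda)\setminus\pi)=d(\lambda)$, i.e.\ $d(\OO^{\vee})=d_S(\OO^{\vee},1)=\pr_1(d_A(\OO^{\vee},1))$. Second, $d_S(\bar s(^{\langle\pi\rangle}d(\lambda)))=\OO^{\vee}$: by Lemma \ref{lem:triviallocalsystem}, $d_S$ of $\mathbb L(J',\OO(\phi'))$ is recovered from the $j$-induced representation $j^{W}_{W_{J'}}(\mathrm{sp}(\phi'\otimes\mathrm{sgn}))$ via the Springer correspondence for $G^{\vee}$, and this induction can be evaluated explicitly with the $\alpha$- and $s$-symbol recipes of Sections \ref{sec:springerB}--\ref{sec:springerD}, Lemma \ref{lem:asymbolfordcollapse} and Proposition \ref{prop:jinduction}; one checks the output is $E(\OO^{\vee},1)$, so $d_S=\OO^{\vee}$. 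Finally, since $\bar C\mapsto d_S(d(\OO^{\vee}),\bar C)$ is injective on $\bar A(d(\OO^{\vee}))$ by \cite[Section 6]{Sommers2001}, a class of $\cN_{o,\bar c}$ with first component $d(\OO^{\vee})$ and $d_S$-image $\OO^{\vee}$ is necessarily $d_A(\OO^{\vee},1)$; combining this with the two steps above gives $\bar s(^{\langle\pi\rangle}d(\lambda))=d_A(\OO^{\vee},1)$. (Alternatively this last identification can be read off directly from Achar's explicit description of $d_A$ in classical types in \cite[Section 3]{Acharduality}.)

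The part I expect to be the real work is the computation $d_S(\bar s(^{\langle\pi\rangle}d(\lambda)))=\OO^{\vee}$, i.e.\ verifying that the relevant $j$-induction produces $E(\OO^{\vee},1)$. This is exactly where the $d$-collapse/symbol identity of Lemma \ref{lem:asymbolfordcollapse} and the additive description of $j$-induction in Proposition \ref{prop:jinduction} (together with Proposition \ref{prop:sumtohom} in the reverse direction) are needed, and where the three types $B/C/D$ must be handled by parallel but slightly different symbol recipes, with careful bookkeeping of the $0$-padding conventions and of which row of the symbol plays which role.
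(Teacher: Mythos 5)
Your proposal follows the paper's proof almost exactly: the reduction $\OO(\phi)=\OO$, $\OO'(\phi')=\OO'$ via speciality, the rewriting of both sides as $\bar s(^{\langle\mu\rangle}d(\lambda))$ and $\bar s(^{\langle\pi\rangle}d(\lambda))$, and the equality of these two values via Proposition \ref{prop:Acharequivalence} together with the observation that $\mu$ differs from $\pi$ only by parts of even multiplicity are all precisely the paper's argument. The one place you diverge is the identification of the common value with $d_A(\OO^{\vee},1)$: the paper disposes of this in one line, because $^{\langle\pi\rangle}d(\lambda)$ is constructed to be exactly the output of Achar's explicit formula for $d_A(\OO^\vee,1)$ in classical types, so $\bar s(^{\langle\pi\rangle}d(\lambda))=d_A(\OO^{\vee},1)$ holds by definition. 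Your primary route---computing $\pr_1$ and $d_S$ of the class separately, carrying out the $j$-induction via symbols, and then invoking injectivity of $\bar C\mapsto d_S(d(\OO^\vee),\bar C)$ on $\bar A(d(\OO^\vee))$---would also work, but it front-loads a substantial symbol computation that you correctly flag as ``the real work'' and do not carry out; as written it is therefore only a sketch at that point. Since your parenthetical alternative (reading the identity off from Achar's description in \cite[Section 3--4]{Acharduality}) is exactly the paper's route and closes that step immediately, the proposal as a whole is sound; I would simply promote the parenthetical to the main argument and drop the $\pr_1$/$d_S$/injectivity detour.
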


\begin{proof}
Note that $\phi(E(\OO,1)\otimes \mathrm{sgn})$ is the family containing $E(\OO,1)$ and so $\OO(\phi) = \OO$ since $\OO$ is special (and similarly for $\OO'$). So it suffices to show
$$\overline{\mathbb L}(J,\OO) = \overline{\mathbb L}(J',\OO') = d_A(\OO^{\vee},1).$$
These are equivalent to the equalities
$$\bar{s}(^{\langle\mu\rangle}d(\lambda)) = \bar{s}(^{\langle\pi\rangle}d(\lambda)) = d_A(\OO^{\vee},1).$$
The second equality follows imediately from Achar's definition of $d_A$ (see Equation (9) in \cite[Section 4]{Acharduality}). The first equality, by Proposition \ref{prop:Acharequivalence}, is equivalent to the following
$$r_{d(\lambda)}(\mu) = r_{d(\lambda)}(\pi).$$
But this is clear from definitions: $\mu$ is obtained from $\pi$ by adding parts of even multiplicity. So for each $x \in \ZZ_{\geq 0}$, we have
$$\mathrm{ht}_{\mu}(x) \equiv \mathrm{ht}_{\pi}(x) \mod{2},$$
and therefore
$$m_{r_{d(\lambda)}(\mu)}(x) = m_{r_{d(\lambda)}(\pi)}(x),$$
as required.
\end{proof}

\subsection{Proof of faithfulness in classical types}\label{sec:prooffaithful}

Continue with the notations of Subsection \ref{subsection:Jphi}, i.e. $\OO^{\vee}$, $^{\langle\mu\rangle}d(\lambda)$, $(J,\phi)$, and so on. In this subsection, we will show that $(J,\phi)$ satisfies condition (ii) of Definition \ref{def:faithful}. This will allow us to complete the proof of Theorem \ref{thm:faithful} for types B/C/D. 

The proof of condition (ii) will require a sequence of technical lemmas.

Define 
\begin{equation}
    \omega_X = \begin{cases}
        1 & \mbox{ if $X=B$} \\
        0 & \mbox{ if $X=C$} \\
        1 & \mbox{ if $X=D$}
    \end{cases}    
\end{equation}
and
\begin{equation}
    \chi(x) = \begin{cases}
        0 & \mbox{ if $x$ is even} \\
        1 & \mbox{ if $x$ is odd}.
    \end{cases}
\end{equation}
Write $\lambda=(\lambda_1^{p_1},\lambda_2^{p_2},\dots,\lambda_l^{p_l})$ where $p_i>0$ and $0<\lambda_1<\lambda_2<\cdots<\lambda_l$.
Note that if $X=C$ then $\#\lambda$ is odd, if $X=D$ then $\#\lambda$ is even, and if $X=B$ then $\#\lambda$ can be either even or odd.
However, in type $B$, when computing a/s-symbols we will want to pad $\lambda$ with 0's to ensure it has an odd number of entries.
Thus we define
\begin{equation}
    \lambda' = (\lambda_0^{p_0},\lambda_1^{p_1},\dots,\lambda_l^{p_l})
\end{equation}
where $\lambda_0 = 0$ and $p_0=0$ if $X\in\{C,D\}$ and $p_0$ is some integer $>0$ of the opposite parity to $\#\lambda$ if $X=B$.
Let $P_i = \sum_{j=0}^ip_j$ and $Q_i = \sum_{j=i}^lp_j$.
We have that $\#\lambda' = P_{i-1}+Q_{i}$ for all $0\le i\le l+1$.
\begin{lemma}\label{lem:technicallemma1}
    Let $\Lambda \in [\Lambda(E(\lambda,1))]\cap \Lambda_{X^\vee}(n;k)$ where $k>\lfloor \#\lambda/2\rfloor$.
    Let $\overline\Lambda = (I_1,I_{2},\dots,I_n)$ be the refinement for $\bar\Lambda$.
    Let $1\le a_1 < \cdots < a_u \le n$ be the indices such that $I_a$ is an interval.
    Let $0\le b_1 < \cdots < b_v \le l$ be the indices such that $\lambda_b \equiv \omega_{X^\vee}\pmod 2$.
    Then $u = v$ and $I_{a_i}=(\overline\Lambda(Q_{b_i}),\dots,\overline\Lambda(Q_{b_i+1}+1))$ for all $i$.
    When $X=B$ we additionally have $b_1 = 0$ and $I_1(\#I_1) = 0$.
\end{lemma}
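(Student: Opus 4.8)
The plan is to unwind the definitions of the s-symbol attached to $E(\lambda,1)$ via the Springer algorithm in Section~\ref{sec:springerD} (and its analogues for types $B$ and $C$), and to track which positions in $\bar\Lambda$ produce intervals versus pairs. Recall that $\Lambda(E(\lambda,1))$ is built from $\lambda'+z^{0,\#\lambda'}$ by sorting the odd and even entries into the two rows: in type $D^\vee$ the two rows have equal size, in type $B^\vee$ the defect is $1$, etc. First I would write out $\lambda'+z^{0,\#\lambda'}$ explicitly block by block: on the block of $p_i$ repeated copies of $\lambda_i$, the shifting vector $z^{0,\#\lambda'}$ is strictly increasing by $1$, so this block contributes the run of consecutive integers $\lambda_i+(\#\lambda'-Q_i),\dots,\lambda_i+(\#\lambda'-Q_i)+p_i-1$, i.e. a run of $p_i$ consecutive integers whose parities alternate. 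The key combinatorial observation is that an interval appears in the refinement of $\bar\Lambda$ exactly at the ``boundary'' between the block of $\lambda_i$ and the block of $\lambda_{i+1}$ when the last entry of one block and the first entry of the next have opposite parity in a way that forces an unpaired consecutive pair straddling the two rows — and this happens precisely when $\lambda_i$ (equivalently $Q_i$, via a parity count) is congruent to $\omega_{X^\vee}$ mod $2$.

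Concretely, I would argue as follows. After padding and shifting, each maximal block of equal parts of $\lambda'$ contributes alternating-parity consecutive integers, which get distributed alternately between the two rows $a$ and $b$; within such a block one gets a string of pairs (when the block is long enough) together with at most one leftover entry at each end. The refinement of $\bar\Lambda$ therefore consists of these pairs, plus intervals formed by gluing together the leftover entries at block boundaries. A leftover entry survives at the top of the block of $\lambda_i$ precisely according to the parity of the number of entries strictly above it, which is controlled by $Q_{i+1}$ (the count of parts $\ge \lambda_{i+1}$), and one then checks that the resulting interval $I_{a}$ consists exactly of the entries $\bar\Lambda(Q_{b_i}),\dots,\bar\Lambda(Q_{b_i+1}+1)$ where $b_i$ indexes the blocks with $\lambda_{b_i}\equiv\omega_{X^\vee}$. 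A careful bookkeeping of the defect — $0$ in types $C^\vee=B$ hmm, rather defect~$1$ in type $B^\vee$ and defect~$0$ in type $D^\vee$ — shows that the count $u$ of intervals equals the count $v$ of such blocks; in type $B$ the extra padding part $\lambda_0=0$ of positive even/odd multiplicity accounts for the forced initial interval, forcing $b_1=0$ and $I_1(\#I_1)=0$.

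The main obstacle, and where I expect to spend the most care, is the parity bookkeeping at the block boundaries: one must show that ``interval'' $\leftrightarrow$ ``$\lambda_i\equiv\omega_{X^\vee}$'' with no off-by-one or off-by-two errors, and separately that the three types $B^\vee$, $C^\vee$, $D^\vee$ all conform to the single uniform statement with the parameter $\omega_{X^\vee}$. A clean way to organize this is to induct on the number of blocks $l$, or alternatively to reduce — exactly as in the proof of Lemma~\ref{lem:asymbolfordcollapse} — to the case of a single ``elementary'' block (all parts equal, or a short elementary partition) where the symbol can be written down in closed form, and then glue. I would also need to invoke the hypothesis $k>\lfloor\#\lambda/2\rfloor$ to guarantee that the symbol $\Lambda$ genuinely lies in $\Lambda_{X^\vee}(n;k)$ with enough room (the remark after Section~\ref{sec:springerD} guarantees the intersection is nonempty), so that the monotonic representative exists and the refinement is well-defined. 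The type $B$ special features ($b_1=0$, $I_1(\#I_1)=0$) should fall out automatically once the padding convention is fixed, since the added zero part is the unique part congruent to $\omega_{X^\vee}=1$ sitting at the very bottom.
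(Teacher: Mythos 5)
Your overall strategy — unwind the symbol construction for $E(\lambda,1)$ block by block from $\lambda'+z^{0,\#\lambda'}$ and track parities — is the same as the paper's, and your remarks about the padding part $\lambda_0=0$ explaining the type-$B$ extras are on target. But the central combinatorial mechanism you describe is not what actually happens, and since that mechanism \emph{is} the content of the lemma, this is a genuine gap rather than a matter of detail. You claim that within each block one gets ``a string of pairs together with at most one leftover entry at each end,'' and that the intervals of the refinement arise by ``gluing together the leftover entries at block boundaries.'' In fact each block $\lambda_i^{p_i}$ contributes a contiguous segment $B_i$ of $\bar\Lambda$ of the form $\lfloor\lambda_i/2\rfloor+P_{i-1}+(0,1,2,3,\dots)$ or $\lfloor\lambda_i/2\rfloor+P_{i-1}+(1,1,3,3,\dots)$: it is either \emph{entirely} a single interval of $p_i$ consecutive integers (when $\lambda_i\equiv\omega_{X^\vee}$) or \emph{entirely} a union of pairs (otherwise), with nothing glued across boundaries — indeed the paper must separately verify that two adjacent interval-blocks do \emph{not} merge, using $\lambda_{i-1}\ll\lambda_i$. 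A quick test case: for $X=B$, $\lambda=(2,2,2,2)$ with padding $p_0=1$ one gets $\bar\Lambda=(0,2,3,4,5)$, whose second interval $(2,3,4,5)$ is one whole block of length $p_1=4$ containing no pairs at all; your ``glued leftovers'' picture would predict an interval of length about $2$ at the block boundary. Note that your own target formula $I_{a_i}=(\bar\Lambda(Q_{b_i}),\dots,\bar\Lambda(Q_{b_i+1}+1))$ asserts the interval has length $Q_{b_i}-Q_{b_i+1}=p_{b_i}$, which already contradicts the mechanism you propose.

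What is missing, concretely, is the proof of the block decomposition of $\bar\Lambda$ itself: one must show that the entries of the $i$-th block of $\lambda'+z^{0,\#\lambda'}$, after being sorted into the two rows and re-shifted by $2z$, land in consecutive positions of $\bar\Lambda$ in the interleaved order (this the paper does by induction on $P_i$, keyed to the parity of $P_{i-1}$), and then compute the resulting closed form for $B_i$ to read off interval versus pairs from $\lambda_i\bmod 2$. Your alternative suggestion of reducing to elementary blocks and gluing could in principle be organized to do this, but as written the proposal neither establishes the block formula nor correctly identifies which blocks become intervals, so the claimed equality $u=v$ and the position formula are not actually derived.
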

\begin{proof}
First consider the case when $X=B$.
The condition $k>\lfloor \#\lambda/2\rfloor$ translates to the $p_0>0$ condition from the paragraph above. 
Thus we need only compute $\Lambda(E(\lambda,1))$ using the algorithm in Paragraph \ref{sec:springerB} with $p_0>0$ as above.
Write $\#\lambda = 2k+1$.
We can write $\lambda + z^{0,\#\lambda}$ as $(K_0,\dots,K_l)$ where $K_i = \lambda_i + P_{i-1} + (0,1,\dots,p_i-1)$.
Then the even entries of $K_i$ are $2x_i$ and the odd entries of $K_i$ are $2y_i+1$ where
\begin{align}
    x_i &= \lfloor (\lambda_i+P_{i-1})/2\rfloor + (0,1,\dots) + \chi(\lambda_i+P_{i-1}) \\
    y_i &= \lfloor (\lambda_i+P_{i-1})/2\rfloor + (0,1,\dots)
\end{align}
and
\begin{align}
    \#x_i = \begin{cases}
        \lceil p_i/2\rceil & \mbox{ if $\lambda_i + P_{i-1}$ is even} \\
        \lfloor p_i/2\rfloor & \mbox{ if $\lambda_i + P_{i-1}$ is odd}
    \end{cases},
    \qquad
    \#y_i = \begin{cases}
        \lfloor p_i/2\rfloor & \mbox{ if $\lambda_i + P_{i-1}$ is even} \\
        \lceil p_i/2\rceil & \mbox{ if $\lambda_i + P_{i-1}$ is odd}.
    \end{cases}
\end{align}
Note that $\#x_i+\#y_i = p_i$ for all $i$.
%If $\lambda_i+P_{i-1}$ is even then the odd entries of $K_i$ are $2x_i+1$ where $x_i = (0,1,\dots,\lfloor p_i/2\rfloor)$, and the even entries are $2y_i$ where $y_i = (0,1,\dots,\lceil p_i/2\rceil)$.
%If $\lambda_i+P_{i-1}$ is odd then the odd entries of $K_i$ are $2x_i+1$ where $x_i = (0,1,\dots,\lceil p_i/2\rceil)$, and the even entries are $2y_i$ where $y_i = (0,1,\dots,\lfloor p_i/2\rfloor)+1$.
Let $x = (x_1,\dots,x_l)$ and $y = (y_1,\dots,y_l)$.
Then $\Lambda := (x+z^{0,k+1};y+z^{0,k}+1)$ is an s-symbol for $E(\lambda,1)$.
Decompose $\Lambda$ as $((x_0',\dots,x_l');(y_0',\dots,y_l'))$ where $\# x_i' = \# x_i$ and $\# y_i' = \# y_i$.
We claim that $\bar \Lambda = (B_1,\dots,B_l)$ where 
\begin{equation}
    B_i = \begin{cases}
        \overline{(x_i';y_i')} & \mbox{ if $P_{i-1}$ is even} \\
        \overline{(y_i';x_i')} & \mbox{ if $P_{i-1}$ is odd}.
    \end{cases}
\end{equation}
We prove this by inducting on the content of the first $P_i$ entries of $\bar \Lambda$. 
For $i=0$, we have $P_{i-1} = P_{-1} = 0$.
If $p_0$ is odd then $\#x_1 = \#y_1+1$ and $B_1 = \overline{(x_1';y_1')}$.
If $p_1$ is even then $\#x_1 = \#y_1$ and so again $B_1 = \overline{(x_1';y_1')}$.
Now suppose $i>0$.
If $P_{i-1}$ is even then by the inductive hypothesis the first entry of $B_i$ must be the first entry of $x_i$ and so since $p_i = \#x_i'+\#y_i'$ we only need to show that $x_i'$ has $0$ or $1$ more entries than $y_i'$.
If $p_i$ is odd then $\lambda_i$ is even and so $\#x_i = \#y_i + 1$ as required.
If $p_i$ is even then $\#x_i = \#y_i$ as required.
The case when $P_{i-1}$ is odd is analogous.

It follows from this that 
\begin{align}
    x_i' &= \lfloor P_{i-1}/2 \rfloor + (0,1,\dots) + x_i + \chi(P_{i-1}) \nonumber \\
         &= \lfloor \lambda_i/2\rfloor + P_{i-1} + (0,2,\dots) + \chi(P_{i-1})\vee \chi(\lambda_i) \\
    y_i' &= \lfloor P_{i-1}/2 \rfloor + (0,1,\dots) + y_i + 1 \nonumber \\
         &= \lfloor \lambda_i/2\rfloor + P_{i-1} + (1,3,\dots) - \chi(P_{i-1})\wedge \chi(\lambda_i+P_{i-1}).
\end{align}
and so
\begin{equation}
    \label{eq:blockform}
    B_i = \lfloor \lambda_i/2 \rfloor + P_{i-1} + \begin{cases}
        (0,1,2,3,\dots) & \mbox{ $\lambda_i$ is even} \\
        (1,1,3,3,\dots) & \mbox{ $\lambda_i$ is odd}
    \end{cases}
\end{equation}
where $\#B_i$ is even when $\lambda_i$ is odd since then $p_i = \#B_i$ is even.
Finally, if both $\lambda_{i-1},\lambda_i$ are even then $B_{i-1}(1) = \lfloor \lambda_{i-1}/2\rfloor + P_{i-1} - 1$ and $B_i(p_i) = \lfloor \lambda_i/2\rfloor + P_{i-1}$.
Since $\lambda_{i-1}\ll \lambda_i$ it follows in particular that $(B_{i-1},B_i)$ is not an interval.
Thus the intervals of $\bar\Lambda$ are exactly the $B_i$ where $\lambda_i$ is even.
Since $B_i = (\bar\Lambda(Q_{i}),\dots,\bar\Lambda(Q_{i+1}+1))$, the result follows.
For the observation that $b_1 = 0$ simply note that $\lambda_0 = 0 \equiv \omega_{C} = 0 \pmod 2$.
Then $I_1(\#I_1)=0$ follows from Equation (\ref{eq:blockform}).

The cases $X=C,D$ are similar (in fact simpler) and left to the reader.
We remark only that in these cases $\approx$ only adds/removes pairs to the start of $\bar\Lambda$ which does not affect the position (indexed from the right) of the intervals in $\bar\Lambda$.
Thus it suffices to prove the claim for a single s-symbol in the $\approx$ equivalence class.
\end{proof}

Let 
\begin{equation}
    \alpha_1 := \begin{cases}
        \alpha(E(d_{LS}(\mu),1))^! & \mbox{ if $X=B$}\\
        \alpha(E(d_{LS}(\mu),1)) & \mbox{ if $X\in\{C,D\}$}.
    \end{cases}
\end{equation}

We will now provide a more explicit description of $\alpha_1$.

We have that $\lambda^t=(Q_l^{q_l},Q_{l-1}^{q_{l-1}},\dots,Q_1^{q_1})$ where $q_i = \lambda_i-\lambda_{i-1}$.
Let $1\le c_1<c_2<\cdots<c_r\le l$ be all the indices $c$ such that $Q_c\equiv \omega_X\pmod 2$.
Let 
\begin{equation}
    \eta_i = \begin{cases}
        1 & \mbox{ if } q_i \text{ is odd} \\
        2 & \mbox{ if } q_i \text{ is even}.
    \end{cases}
\end{equation}
Then by definition $\mu = (Q_{c_r}^{\eta_{c_r}},Q_{c_{r-1}}^{\eta_{c_{r-1}}},\dots,Q_{c_1}^{\eta_{c_1}})$.
Define $H_i = \sum_{j\le i}\eta_{c_j}$ and $t_i = Q_{c_i}-Q_{c_{i+1}}$ with the convention that $c_{r+1} = l+1$.
Then $\mu^t = (H_1^{t_1},H_2^{t_2},\dots,H_r^{t_r})$.
Let $d_i = c_i-1$.
Then $t_i = \#\lambda-P_{d_i}-\#\lambda+P_{d_{i+1}} = P_{d_{i+1}}-P_{d_i}$.
Thus $p(\mu^t) = \#\lambda-P_{d_1}$.
It will be useful to pad $\mu^t$ with 0's so that it has the same number of parts as $\lambda'$ so we write it as $\mu^t = (H_0^{t_0},H_1^{t_1},\dots,H_r^{t_r})$ with the convention that $c_0=0$.
%if $X\in \{C,D\}$ and $c_0 = 0$ if $X=B$.
Write $t_i'=\lfloor t_i/2\rfloor$, $H_i'=\lfloor H_i/2\rfloor$, $P_i'=\lfloor P_i\rfloor$.
Note that we always have $P_{d_{i-1}}'+t_{i-1}'=P_{d_i}'$ for all $1\le i\le r$ since if $i=1$, $P_{d_{i-1}}=0$ and if $i>1$, $t_{i-1}$ is even.
We now record $\alpha_1$ for the various types.
We only show the calculation in type $B$ explicitly as this is the most difficult case. 
The calculations in types $C$ and $D$ are similar, but more straightforward (there is no need to define $\tilde t_i$, $\tilde P_i$ etc.).
%In types $C$ (resp. $D$) the calculation is similar, but the parity of $\#\lambda$ is always odd (resp. even) and so one does not need to split into cases.

\vspace{5mm}
\paragraph{Type $B$}
\label{par:typeB}
By Lemma \ref{lem:propsofmu} we have that $^{\langle\mu\rangle}d(\lambda)\in \overline{\mathcal P}^*(n)$ and so $\mu$ is a special type $D$ partition.
By \cite[Proposition 6.3.7]{CM} this means that $\mu^t$ is a partition of type $C$ and so we can compute $\alpha(E(d_{LS}(\mu),1))$ using Lemma \ref{lem:asymbolfordcollapse}.

Write $\mu^t$ as $\mu^t=(H_0^{\tilde t_0},H_1^{\tilde t_1},\dots,H_r^{\tilde t_r})$ where $\tilde t_0= t_0-1\ge0$ and $\tilde t_i= t_i$ for $1\le i \le r$ so that $\mu^t$ has an even number of entries.
Define $\tilde p_0 = p_0-1$ and $\tilde p_i = p_i$ for $1\le i\le l$ and set $\tilde P_{i} = \sum_{j=0}^i\tilde p_j$ and $\tilde Q_{i} = \sum_{j=i}^l\tilde p_j$.
We have $\tilde t_i = \tilde P_{d_{i+1}}-\tilde P_{d_i}$.
Write $\tilde t_i'=\lfloor \tilde t_i/2\rfloor$ and $\tilde P_i'=\lfloor \tilde P_i\rfloor$.
Note that similar to before we always have $\tilde P_{d_{i-1}}'+\tilde t_{i-1}'=\tilde P_{d_i}'$ for all $1\le i\le r$.
Then $\mu^t + z^{0,\#\mu^t}$ can be decomposed into subsequences $(K_0,K_1,\dots,K_r)$ where $K_i = H_i + \tilde P_{d_i} + (0,1,\dots,\tilde t_i-1)$.
Since $\tilde P_{d_i}+ \tilde Q_{c_i} = \#\lambda'-1$ we have that $\tilde P_{d_i}\equiv \tilde Q_{c_i} \pmod 2$. 
But $\tilde Q_{c_i} = Q_{c_i}$ and $Q_{c_i}\equiv \omega_B \equiv 1 \pmod 2$ for $1\le i \le r$ and so $\tilde P_{d_i}$ is odd for $1 \le i \le r$, $P_{d_0}=0$ is even, $\tilde t_i$ is even for $1 \le i < r$ and $\tilde t_0,\tilde t_r$ are odd.
Thus the odd entries of $K_i$ are $2x_i+1$ where 
\begin{equation}
    x_0 = (0,1,\dots), \qquad x_i = \tilde P_{d_i}' + H_i' + (0,1,\dots) + \chi(H_i), \text{ for } 1\le i\le r,
\end{equation}
the even entries are $2y_i$ where 
\begin{equation}
    y_0 = (0,1,\dots), \qquad y_i = \tilde P_{d_i}'+H_i'+(0,1,\dots)+1, \text{ for } 1\le i\le r, 
\end{equation}
and
\begin{equation}
    \# x_i = \begin{cases}
        \tilde t_i' & \mbox{ if } i<r \\
        \tilde t_r'+1 & \mbox{ if } i=r
    \end{cases}, \qquad
    \# y_i = \begin{cases}
        \tilde t_0'+1 & \mbox{ if } i=0 \\
        \tilde t_i' & \mbox{ if } i>0.
    \end{cases}
\end{equation}
Therefore
\begin{equation}
    \alpha_1 = ((a_0,a_1,\dots,a_r);(b_0,b_1,\dots,b_r))
\end{equation}
where
\begin{align*}
    a_0&=(0,1,\dots,\tilde t_0')\\
    b_0&=(0,1,\dots,\tilde t_0') + 1 \\
    a_i&=\tilde P_{d_i}'+H_i'+(1,2,\dots,\tilde t_i') + \chi(H_i)\\
    b_i&=\tilde P_{d_i}'+H_i'+(1,2,\dots,\tilde t_i') + 1 \\
    a_r&=\tilde P_{d_r}'+H_r'+(1,2,\dots,\tilde t_r',\tilde t_r'+1) + \chi(H_r)\\
    b_r&=\tilde P_{d_r}'+H_r'+(1,2,\dots,\tilde t_r') + 1
\end{align*}
for $0<i<r$.
Using the fact that $t_0' = \tilde t_0'+1$, $t_i' = \tilde t_i'$, and $P_{d_i}' = \tilde P_{d_i}'+1$ for $1\le i \le r$ we get that
\begin{align*}
    a_0&=(0,1,\dots,t_0'-1)\\
    b_0&=(0,1,\dots,t_0'-1) + 1 \\
    a_i&=P_{d_i}'+H_i'+(0,1,\dots,t_i'-1) + \chi(H_i)\\
    b_i&=P_{d_i}'+H_i'+(0,1,\dots,t_i'-1) + 1 \\
    a_r&=P_{d_r}'+H_r'+(0,1,\dots,t_r'-1,t_r') + \chi(H_r)\\
    b_r&=P_{d_r}'+H_r'+(0,1,\dots,t_r'-1) + 1.
\end{align*}
We also have
\begin{equation}
    \overline{\alpha_1} = (B_0,B_1,\dots,B_r)
\end{equation}
where $B_i = \overline{(a_i;b_i)}$ for $0\le i\le r$.

\vspace{5mm}
\paragraph{Type $C$}
\begin{equation}
    \alpha_1 = ((a_0,a_1,\dots,a_r);(b_0,b_1,\dots,b_r))
\end{equation}
where
\begin{align*}
    a_0&=(0,1,\dots,t_0')\\
    b_0&=(0,1,\dots,t_0'-1)\\
    a_i&=P_{d_i}'+H_i'+(0,1,\dots,t_i'-1) + 1\\
    b_i&=P_{d_i}'+H_i'+(0,1,\dots,t_i'-1) + \chi(H_i).
\end{align*}
for $0<i\le r$.
We also have
\begin{equation}
    \overline{\alpha_1} = (B_0,B_1,\dots,B_r)
\end{equation}
where $B_0=\overline{(a_0;b_0)}$ and $B_i=\overline{(b_i;a_i)}$ for $0<i\le r$.

\vspace{5mm}
\paragraph{Type $D$}
\begin{equation}
    \alpha_1 = ((a_0,a_1,\dots,a_r);(b_0,b_1,\dots,b_r))
\end{equation}
\begin{align*}
    a_0&=(0,1,\dots,t_0'-1) \\
    b_0&=(0,1,\dots,t_0'-1,t_0')\\
    a_i&=P_{d_i}'+H_i'+(0,1,\dots,t_i'-1) + \chi(H_i)\\
    b_i&=P_{d_i}'+H_i'+(0,1,\dots,t_i'-1) + 1\\
    a_r&=P_{d_r}'+H_r'+(0,1,\dots,t_r'-1,t_r') + \chi(H_r)\\
    b_r&=P_{d_r}'+H_r'+(0,1,\dots,t_r'-1) + 1
\end{align*}
for $0<i<r$.
We also have
\begin{equation}
    \overline{\alpha_1} = (B_0,B_1,\dots,B_r)
\end{equation}
where $B_0=\overline{(b_0;a_0)}$ and $B_i=\overline{(a_i;b_i)}$ for $0<i\le r$.

\begin{lemma}
    \label{lem:endparity}
    For all $1\le i< r$ we have $\lambda_{c_i}\equiv\lambda_{c_{i+1}-1} \pmod 2$.
\end{lemma}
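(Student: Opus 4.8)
The plan is to reduce the statement to an elementary parity observation about the multiplicities $p_c$. The only two inputs needed are the identity $p_c = Q_c - Q_{c+1}$ (with the convention $Q_{l+1} := 0$) and the defining multiplicity condition for type-$X^\vee$ partitions, which in all three cases says precisely that a part of $\lambda$ occurring with odd multiplicity must be congruent to $\omega_{X^\vee}$ modulo $2$ (for $X^\vee \in \{B,D\}$ this is the condition that even parts occur with even multiplicity, and for $X^\vee = C$ that odd parts occur with even multiplicity). Equivalently: if $p_c$ is odd, then $\lambda_c \equiv \omega_{X^\vee} \pmod 2$.

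Fix $1 \le i < r$. If $c_{i+1} = c_i + 1$, then $c_{i+1} - 1 = c_i$ and there is nothing to prove, so assume $c_{i+1} \ge c_i + 2$. The first step is to exploit the maximality of the set $\{c_1 < \dots < c_r\}$: every index $c$ with $c_i < c < c_{i+1}$ satisfies $Q_c \not\equiv \omega_X \pmod 2$, whereas $Q_{c_i} \equiv Q_{c_{i+1}} \equiv \omega_X \pmod 2$. Both indices $c_i + 1$ and $c_{i+1} - 1$ lie strictly between $c_i$ and $c_{i+1}$ (they coincide exactly when $c_{i+1} = c_i + 2$, which causes no trouble), so $Q_{c_i+1} \not\equiv \omega_X$ and $Q_{c_{i+1}-1} \not\equiv \omega_X$. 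Hence $p_{c_i} = Q_{c_i} - Q_{c_i+1}$ and $p_{c_{i+1}-1} = Q_{c_{i+1}-1} - Q_{c_{i+1}}$ are each a difference of an integer congruent to $\omega_X$ and an integer not congruent to $\omega_X$, and therefore both are odd.

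To conclude, I would invoke the parity characterization of $\mathcal{P}_{X^\vee}(n)$ recalled above: since $p_{c_i}$ and $p_{c_{i+1}-1}$ are odd, the parts $\lambda_{c_i}$ and $\lambda_{c_{i+1}-1}$ are both congruent to $\omega_{X^\vee}$ modulo $2$, and in particular $\lambda_{c_i} \equiv \lambda_{c_{i+1}-1} \pmod 2$, as claimed. (In fact this shows the slightly stronger fact that each of $\lambda_{c_i}$, $\lambda_{c_{i+1}-1}$ has the fixed parity $\omega_{X^\vee}$, a form that may be more convenient for the subsequent lemmas.) I do not anticipate a genuine obstacle here: the argument is three lines once the identity $p_c = Q_c - Q_{c+1}$ and the multiplicity condition are in hand. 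The only points requiring a little care are the degenerate boundary configurations — the case $c_{i+1} = c_i + 1$, disposed of trivially at the outset, and the case $c_{i+1} = c_i + 2$, where the two distinguished indices coincide — together with the (immediate) observation that the type-$B$ padding of $\lambda$ to $\lambda'$ is irrelevant, since the statement only involves the original data $(\lambda_1^{p_1}, \dots, \lambda_l^{p_l})$.
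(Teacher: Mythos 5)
Your proof is correct and follows essentially the same route as the paper's: dispose of the trivial case $c_{i+1}=c_i+1$, then use $p_c=Q_c-Q_{c+1}$ together with the maximality of $\{c_1<\dots<c_r\}$ to conclude that $p_{c_i}$ and $p_{c_{i+1}-1}$ are odd, and finally invoke the multiplicity condition defining $\mathcal{P}_{X^\vee}(n)$ to get $\lambda_{c_i}\equiv\lambda_{c_{i+1}-1}\equiv\omega_{X^\vee}\pmod 2$. You merely make explicit the parity bookkeeping that the paper leaves as "for this to be the case we must have that $p_{c_i},p_{c_{i+1}-1}$ are both odd."
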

\begin{proof}
    %First note that $\lambda(\theta_{i-1}+1) = \lambda(P_{c_i-1}+1) = \lambda_{c_i}$ and $\lambda(\theta_i) = \lambda(P_{c_{i+1}-1}) = \lambda_{c_{i+1}-1}$ and so we must show that $\lambda_{c_i}\equiv\lambda_{c_{i+1}-1}\pmod 2$.
    Suppose first $c_{i+1}-c_i=1$.
    Then $\lambda_{c_i}=\lambda_{c_{i+1}-1}$ so the lemma holds trivially.
    Now consider the case $c_{i+1}-c_i>1$.
    Then by definition $Q_c\equiv\omega_X\pmod 2$ for $c=c_i,c_{i+1}$ and $Q_c\nequiv \omega_X\pmod 2$ for all $c_{i}<c<c_{i+1}$.
    For this to be the case we must have that $p_{c_i},p_{c_{i+1}-1}$ are both odd.
    But then $\lambda_{c_i},\lambda_{c_{i+1}-1}\equiv \omega_{X^\vee}\pmod 2$.
    In all cases we have $\lambda_{c_i}\equiv\lambda_{c_{i+1}-1}\pmod 2$ as required.
\end{proof}
For $0\le i<r$ define 
\begin{equation}
    \Delta_i = \begin{cases}
        0 & \mbox{ if $\lambda_{c_{i+1}-1}$ is even} \\
        1 & \mbox{ if $\lambda_{c_{i+1}-1}$ is odd},
    \end{cases}
\end{equation}
and
\begin{equation}
    \Delta_r = \begin{cases}
        0 & \mbox{ if $\lambda_{c_{r}}$ is even} \\
        1 & \mbox{ if $\lambda_{c_{r}}$ is odd}.
    \end{cases}
\end{equation}
Then for all $0\le i\le r$, we have $\Delta_i\equiv \lambda_{c_i} \pmod 2$ (where we use the lemma for $1\le i<r$) and so $\eta_{c_i} \equiv \lambda_{c_{i}}-\lambda_{c_{i}-1} \equiv \Delta_i-\Delta_{i-1}\pmod 2$ for all $1\le i\le r$.
It follows that $H_i = \sum_{j\le i}\eta_{c_i}\equiv \Delta_i-\Delta_0$.
\begin{lemma}
    We have that $\Delta_0=\omega_{X^\vee}$.
\end{lemma}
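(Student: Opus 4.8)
The plan is to prove $\Delta_0 = \omega_{X^\vee}$ by a short parity computation, distinguishing the cases $c_1 = 1$ and $c_1 > 1$. (If no index $c$ satisfies $Q_c \equiv \omega_X \pmod 2$ — equivalently $\mu = \emptyset$ — then $c_1$, hence $\Delta_0$, is undefined and there is nothing to prove, so I assume $r \ge 1$ below.) By definition $\Delta_0 = \chi(\lambda_{c_1-1})$, so the assertion is that the part $\lambda_{c_1-1}$ has the same parity as $\omega_{X^\vee}$.

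The one substantive ingredient is a reformulation of the hypothesis $\lambda \in \mathcal{P}_{X^\vee}(n)$: \emph{every part of $\lambda$ occurring with odd multiplicity has parity $\omega_{X^\vee}$}. Indeed, unwinding the definition of $\mathcal{P}_X(n)$ and recalling $B^\vee = C$, $C^\vee = B$, $D^\vee = D$ together with $\omega_B = \omega_D = 1$, $\omega_C = 0$: for $X^\vee \in \{B,D\}$, membership in $\mathcal{P}_{X^\vee}(n)$ forces even parts to have even multiplicity, i.e. parts of parity $\ne \omega_{X^\vee}$ have even multiplicity; for $X^\vee = C$ it forces odd parts to have even multiplicity, again parts of parity $\ne \omega_{X^\vee}$. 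Contrapositively, $m_\lambda(x)$ odd forces $x \equiv \omega_{X^\vee} \pmod 2$, as claimed. I will also use the parity of $Q_1 = \#\lambda$ when $X \in \{C,D\}$: from $|\lambda| - \#\lambda \equiv \sum_{x\text{ even}} m_\lambda(x) \pmod 2$ and the previous sentence (even parts have even multiplicity, since $X^\vee \in \{B,D\}$ here) we get $\#\lambda \equiv |\lambda| \pmod 2$; as $|\lambda| = 2n+1$ for $X = C$ and $|\lambda| = 2n$ for $X = D$, in both cases $Q_1 \not\equiv \omega_X \pmod 2$, so $c_1 \ne 1$. Hence $c_1 = 1$ is possible only when $X = B$.

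With this in hand the two cases will be immediate. If $c_1 = 1$, then $X = B$ by the above, so $\omega_{X^\vee} = \omega_C = 0$, while $\lambda_{c_1-1} = \lambda_0 = 0$ is even; thus $\Delta_0 = 0 = \omega_{X^\vee}$. If $c_1 > 1$, then by minimality of $c_1$ we have $Q_{c_1-1} \not\equiv \omega_X$ and $Q_{c_1} \equiv \omega_X \pmod 2$, so $p_{c_1-1} = Q_{c_1-1} - Q_{c_1}$ is odd; that is, the part $\lambda_{c_1-1}$ occurs in $\lambda$ with odd multiplicity, and the reformulation above gives $\lambda_{c_1-1} \equiv \omega_{X^\vee} \pmod 2$, i.e. $\Delta_0 = \chi(\lambda_{c_1-1}) = \omega_{X^\vee}$.

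I do not foresee a serious difficulty: the content is entirely the parity reformulation, which is formal. The points that need attention are merely keeping the three dualities $X \leftrightarrow X^\vee$ and the three multiplicity conventions for $\mathcal{P}_X(n)$ aligned, and observing that the degenerate configurations ($\mu = \emptyset$, and the previously excluded cases $|\mu| \in \{2,\, 2n-2\}$) pose no problem, since in each of them the present lemma is either vacuous or not invoked.
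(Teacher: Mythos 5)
Your proof is correct and follows essentially the same route as the paper's: rule out $c_1=1$ for $X\in\{C,D\}$ by the parity of $\#\lambda$, handle $c_1=1$ (forcing $X=B$, $\lambda_0=0$) directly, and for $c_1>1$ deduce that $p_{c_1-1}$ is odd and hence $\lambda_{c_1-1}\equiv\omega_{X^\vee}$ from the multiplicity constraints defining $\mathcal{P}_{X^\vee}(n)$. The only difference is that you spell out the parity reformulation of the partition condition that the paper leaves implicit.
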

\begin{proof}
    First note that $c_1=1$ if and only if $\#\lambda = Q_1 \equiv \omega_X \pmod 2$.
    But if $X=C$ then $\#\lambda$ is odd which is $\nequiv \omega_C = 0 \pmod 2$, and if $X=D$ then $\#\lambda$ is even which is $\nequiv \omega_D = 1 \pmod 2$.
    Thus $c_1=1$ if and only if $X=B$ and $\#\lambda$ is odd.
    
    Now suppose $c_1>1$. 
    Then $Q_{c_1}\equiv\omega_X \pmod 2$ and $Q_c\nequiv\omega_X\pmod 2$ for all $1\le c<c_1$.
    It follows that $p_{c_1-1}$ must be odd and so $\lambda_{c_1-1} \equiv \omega_{X^\vee}\pmod 2$ as required.
    
    If $c_1=1$ then as argued above, we must have $X=B$.
    Thus $\lambda_{c_1-1} = \lambda_0 = 0 \equiv \omega_{C}\pmod 2$ as required.
    
\end{proof}

\begin{lemma}\label{lem:technicallemma2}
    $\alpha(Q_i+1)+1=\alpha(Q_i)$ for all $1\le i\le l$ such that $\lambda_i,\lambda_{i-1}\equiv\omega_{X^\vee}\pmod 2$.
\end{lemma}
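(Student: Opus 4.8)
The plan is to verify the identity by locating, inside the explicit block decomposition $\overline{\alpha_1}=(B_0,B_1,\dots,B_r)$ recorded in the paragraphs above, the two entries $\alpha_1(Q_i)$ and $\alpha_1(Q_i+1)$. First I would pin down the positions, counted from the right, occupied by each block: since $\#B_j=t_j$ for every $j$ (using that the $t_j$ are even for $0<j<r$ while $t_0$, after the defect--$1$ normalization $(\cdot)^!$, and $t_r$ are odd in type $B$, and the analogous parities in types $C$ and $D$) together with $t_j=Q_{c_j}-Q_{c_{j+1}}$ and $Q_{l+1}=0$, the block $B_j$ occupies exactly the positions $Q_{c_{j+1}}+1,\dots,Q_{c_j}$ of $\overline{\alpha_1}$; in particular $\sum_j\#B_j=Q_0=\#\lambda'=\#\overline{\alpha_1}$, so the decomposition is exhaustive. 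Consequently, for a given $i$ with $\lambda_i\equiv\lambda_{i-1}\equiv\omega_{X^\vee}\pmod 2$, the positions $Q_i$ and $Q_i+1$ fall into block(s) of $\overline{\alpha_1}$ determined by whether or not $i\in\{c_1,\dots,c_r\}$.

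I would then split into two cases. If $i=c_j$ for some $j$, then $Q_i=Q_{c_j}$ is the leftmost position of $B_j$ and $Q_i+1$ is the rightmost position of $B_{j-1}$, so the identity reduces to comparing the smallest entry of the first row $a_j$ of $B_j$ with the largest entry of the second row $b_{j-1}$ of $B_{j-1}$; using $P_{d_{j-1}}'+t_{j-1}'=P_{d_j}'$ this becomes $H_{j-1}'+1=H_j'+\chi(H_j)$. Since $\lambda_{c_j}\equiv\lambda_{c_j-1}\equiv\omega_{X^\vee}$ forces $q_{c_j}=\lambda_{c_j}-\lambda_{c_j-1}$ even, we get $\eta_{c_j}=2$, hence $H_j=H_{j-1}+2$, so $H_j'=H_{j-1}'+1$ and $\chi(H_j)=\chi(H_{j-1})$; and Lemma~\ref{lem:endparity} combined with $H_{j-1}\equiv\Delta_{j-1}-\Delta_0$, $\Delta_0=\omega_{X^\vee}$, $\Delta_{j-1}\equiv\lambda_{c_{j-1}}\equiv\lambda_{c_j-1}\equiv\omega_{X^\vee}$ gives $H_{j-1}$ even, i.e. $\chi(H_{j-1})=0$, so the identity holds. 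If instead $i\notin\{c_1,\dots,c_r\}$ --- equivalently $Q_i\not\equiv\omega_X\pmod 2$ --- then, by the run analysis used in the proof of Lemma~\ref{lem:endparity}, $i$ lies strictly inside a run $c_j<i<c_{j+1}$ and both $Q_i$ and $Q_i+1$ lie inside the single block $B_j$; since $\overline{B_j}$ is the interleaving of the two near--arithmetic--progression rows $a_j$, $b_j$, consecutive entries of $\overline{B_j}$ read from the right differ by $0$ or $1$, and the parity $Q_i\not\equiv Q_{c_j}\pmod 2$ places $Q_i$ exactly at one of the positions where the jump is $1$, yielding $\alpha_1(Q_i+1)+1=\alpha_1(Q_i)$ provided $\chi(H_j)=0$ --- which again follows from $H_j\equiv\lambda_{c_{j+1}-1}-\omega_{X^\vee}$ together with $\lambda_{c_{j+1}-1}\equiv\omega_{X^\vee}$, obtained from Lemma~\ref{lem:endparity} and the hypothesis on $\lambda_i,\lambda_{i-1}$.

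Finally I would run the three types in parallel: the formulas for $\alpha_1$ in types $C$ and $D$ have the same shape as the one displayed for type $B$ (with the tilded quantities $\tilde t_i,\tilde P_i$ replaced by $t_i,P_i$, the two rows possibly swapped when forming $\overline{B_j}$, and $\omega_X$, $\omega_{X^\vee}$, $\chi(H_j)$ adjusted), so the two cases above apply verbatim after these substitutions. The main obstacle is pure bookkeeping: matching the ``position from the right in $\overline{\alpha_1}$'' indexing to the block decomposition, keeping straight the end corrections at $j=0$ and $j=r$ (where the $(\cdot)^!$ normalization in type $B$ and the asymmetry of $a_0/b_0$ and $a_r/b_r$ enter), and reconciling the three parities that appear --- $Q_c\bmod 2$, $\lambda_m\bmod 2$ and $H_j\bmod 2$ --- so that Lemma~\ref{lem:endparity} and the definition of $\Delta_i$ can be invoked to force $\chi(H_j)=0$ in exactly the cases needed. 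Once this dictionary is fixed, the verification of the identity in each case is a one--line arithmetic check.
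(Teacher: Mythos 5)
Your proposal follows essentially the same route as the paper's proof: the same split into the cases $i=c_j$ versus $c_j<i<c_{j+1}$, the same identification of the positions $Q_{c_{j+1}}+1,\dots,Q_{c_j}$ occupied by $B_j$, and the same parity chain through $\eta_{c_j}=2$, $H_j\equiv\Delta_j-\Delta_0$, $\Delta_0=\omega_{X^\vee}$ and Lemma~\ref{lem:endparity} to force $\chi(H_j)=0$ and hence the unit jump at the required position. The only caveat is at the boundary $j=r$, where $\Delta_r$ is defined via $\lambda_{c_r}$ rather than $\lambda_{c_{r+1}-1}=\lambda_l$, so your congruence $H_j\equiv\lambda_{c_{j+1}-1}-\omega_{X^\vee}$ must there be replaced by $H_r\equiv\lambda_{c_r}-\omega_{X^\vee}$ (with $\lambda_{c_r}\equiv\omega_{X^\vee}$ coming from $p_{c_r}$ being odd); this is precisely the end correction you flag, and the paper handles it the same way.
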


\begin{proof}

%Define 
%\begin{align*}
    %X=B: & \qquad \lambda'=\begin{cases}
        %(0) \cup_{\le} \lambda & \mbox{ if $\#\lambda$ is even} \\
        %(0,0) \cup_{\le} \lambda & \mbox{ if $\#\lambda$ is odd}
    %\end{cases}\\
    %X=C: &\qquad \lambda' = \lambda\\
    %X=D: &\qquad \lambda' = \lambda.
%\end{align*}
%$\lambda'$ has been constructed so that it the same number of entries as $\alpha_X(\mu)$.
%By construction $\lambda(\theta_i+j) = \lambda'(\Theta_i+j)$ for $i\ge1,j\ge0$.

Let $0\le e_1 < e_2 < \cdots < e_s \le l$ be the indices $e$ such that $\lambda_e\equiv \omega_{X^\vee}\pmod 2$ (note that $e_1=0$ only in type $B$).
Let $i>0$ be such that $e_{i} = e_{i-1}+1$.
Write $e$ for $e_i$.
We must show that $\overline\alpha_1(Q_{e}+1) + 1 = \overline\alpha_1(Q_{e})$.
Let $j$ be such that $c_{j}\le e< c_{j+1}$.
We have that
\begin{align}
    \label{eq:intervalpos1}
    \overline\alpha_1(Q_e) &= B_j(Q_{e}-Q_{c_{j+1}})\\
    \overline\alpha_1(Q_e+1) &= \begin{cases}
        B_j(Q_{e}-Q_{c_{j+1}}+1) & \mbox{ if } c_j<e \\
        B_{j-1}(1) & \mbox{ if } c_j=e.
    \end{cases}
    \label{eq:intervalpos2}
\end{align}
Moreover, since $0\le e_{i-1}$ we must have $1\le e$ and so $c_0<e$.
Thus $e=c_j$ implies that $j>0$.

We consider two cases: $e=c_j$ and $e>c_j$.
Suppose first that $e = c_{j}$ (and hence $j>0$).
Then (in all cases)
\begin{equation}
    \overline\alpha_1(Q_{e}+1) = B_{j-1}(1) = P_{d_{j-1}}'+H_{j-1}'+t_{j-1}'
    %\begin{cases}
    %    P_{d_{j-1}}'+H_{j-1}'+t_{j-1}'+1 & \mbox{ if } X=B \\
    %    P_{d_{j-1}}'+H_{j-1}'+t_{j-1}' & \mbox{ if } X=C \\
    %    P_{d_{j-1}}'+H_{j-1}'+t_{j-1}' & \mbox{ if } X=D
    %\end{cases}
\end{equation}
and
\begin{equation}
    \overline\alpha_1(Q_{e}) = B_j(Q_e-Q_{c_{j+1}}) = P_{d_{j}}'+H_{j}'+\chi(H_j).
    %\begin{cases}
        %P_{d_{j}}'+H_{j}'+\chi(H_j)+1 & \mbox{ if } X=B \\
        %P_{d_{j}}'+H_{j}'+\chi(H_j) & \mbox{ if } X=C \\
        %P_{d_{j}}'+H_{j}'+\chi(H_j) & \mbox{ if } X=D.
    %\end{cases}
\end{equation}
Since $j>0$, we have $P_{d_{j-1}}'+t_{j-1}' = P_{d_{j}}'$.
Also $\eta_{c_{j}} \equiv \lambda_{c_{j}}-\lambda_{c_{j}-1} = \lambda_{e}-\lambda_{e-1}\equiv 0 \pmod 2$ and so $\eta_{c_j} = 2$ and $H_j'=H_{j-1}'+1$.
Moreover as $\lambda_{c_j-1} = \lambda_{e_{i-1}}\equiv \omega_{X^\vee}\pmod 2$ we have that $\Delta_{j-1}=\omega_{X^\vee}$ and so $H_{j-1} \equiv \Delta_{j-1}-\Delta_0 = 0$.
Thus $\chi(H_j) = 0$.
So we have that $\overline\alpha(E;p')(Q_e+1) = \overline\alpha(E;p')(Q_e)-1$ as required.

For the second case we suppose $e > c_j$.
Then $c_j\ll c_{j+1}$.
Thus as in Lemma \ref{lem:endparity} we have that $\Delta_j = \omega_{X^\vee}$.
Note this holds for $j=0,r$ too.
It follows that $H_j$ is even and so $\chi(H_j) = 0$.
Therefore, by inspecting the expressions for the a-symbol in the various cases, we have that $B_j(x) = B_j(x+1)+1$ whenever $x\equiv\sigma_j$ where $\sigma_i = 1$ for $0\le i<r$ and $\sigma_r = 1-\omega_{X}$.
It is thus sufficient to check that $Q_{e}-Q_{c_{j+1}}\equiv \sigma_j\pmod 2$ for all $0\le j\le r$.
But since $c_j<e<c_{j+1}$, we must have that $Q_e\nequiv\omega_X\pmod 2$.
Since $Q_{c_{j+1}}$ has the opposite parity of $Q_e$ if $j<r$ and $=0$ if $j=r$ we have that $Q_e-Q_{c_{j+1}}\equiv 1\pmod 2$ for $j<r$ and $\equiv \sigma_r\pmod 2$ if $j=r$ as required.
\end{proof}

Recall the pair $(J,\phi) \in \mathscr{F}_{\tilde{\Delta}}$ constructed in (\ref{eq:Jphi}). By Lemma \ref{lem:conditioniclassical}, $(J,\phi)$ satisfies condition (i) of Definition \ref{def:faithful}. We will now show that is satisfies condition (ii) as well. This will complete the proof of Theorem \ref{thm:faithful} for classical types.

\begin{lemma}
    If $E$ is any irreducible representation of $W$ with $\OO^\vee(E) = \OO^{\vee}$, there is a representation $F \in \phi \otimes \mathrm{sgn}$ such that
     $\Hom(F, E|_{W_J})\ne0$.
\end{lemma}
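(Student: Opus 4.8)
The plan is to deduce the statement from Proposition \ref{prop:sumtohom}, after disposing of the trivial local system by hand. If $E=E(\OO^\vee,1)$, then by Lemma \ref{lem:triviallocalsystem} we have $j^W_{W_J}(\mathrm{sp}(\phi\otimes\mathrm{sgn}))=E$, so $E$ occurs in $\mathrm{Ind}_{W_J}^W\mathrm{sp}(\phi\otimes\mathrm{sgn})$ and Frobenius reciprocity gives $\Hom_{W_J}(\mathrm{sp}(\phi\otimes\mathrm{sgn}),E|_{W_J})\ne0$; take $F=\mathrm{sp}(\phi\otimes\mathrm{sgn})$. The real content is the case of a general Springer representation $E=E(\OO^\vee,\rho)$ attached to $\OO^\vee$, and for this I would work entirely with $s$- and $a$-symbols.

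Write $W_J=W_1\times W_2$, so $\phi=\phi_1\otimes\phi_2$ with $\phi_i$ the family containing $E(\OO_i,1)$, where $(\OO_1,\OO_2)$ is the pair of orbits corresponding to the bipartition $^{\langle\mu\rangle}d(\lambda)$; then $\phi\otimes\mathrm{sgn}=(\phi_1\otimes\mathrm{sgn})\otimes(\phi_2\otimes\mathrm{sgn})$ and $\mathrm{sp}(\phi\otimes\mathrm{sgn})=F_1^0\otimes F_2^0$ with $F_i^0=\mathrm{sp}(\phi_i\otimes\mathrm{sgn})$. Combining Lemma \ref{lem:triviallocalsystem} with Proposition \ref{prop:jinduction} (using Lemma \ref{lem:asymbolfordcollapse} to identify the relevant $a$-symbols), one gets an identity of monotonic symbols $\Lambda_0:=\Lambda(E(\OO^\vee,1))=\alpha_1^!+\alpha_2^{(0)}$ in type $B$ (and $\alpha_1+\alpha_2^{(0)}$ in types $C$, $D$), where $\alpha_1$, $\alpha_2^{(0)}$ are the monotonic $a$-symbols recorded explicitly in the type $B/C/D$ paragraphs above, with $\alpha_1\in[\tilde\alpha(F_1^0)]$ and $\alpha_2^{(0)}\in[\alpha(F_2^0)]$; the use of $(-)^!$ in type $B$ is exactly what makes the defects on the two sides match.

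Now let $E$ be arbitrary with $\OO^\vee(E)=\OO^\vee$. Viewing $E$ as a representation of $W_{X^\vee}(n)$ and taking $k$ large, its $s$-symbol $\Lambda(E)\in\Lambda_{X^\vee}(n;k)$ satisfies $[\Lambda(E)]\sim[\Lambda_0]$, so by Lemma \ref{lem:familyflips} a representative of $\Lambda(E)$ is obtained from $\Lambda_0$ by flipping some set $S$ of indices of the refinement of $\overline{\Lambda_0}$. Lemma \ref{lem:technicallemma1} identifies exactly which indices of $\Lambda_0$ are intervals, and Lemma \ref{lem:technicallemma2} verifies that the decomposition $\Lambda_0=\alpha_1^!+\alpha_2^{(0)}$ satisfies the hypothesis of Lemma \ref{lem:jump1}; hence by Lemmas \ref{lem:jump1} and \ref{lem:flips}, flipping $S$ separately in $\alpha_1$ and in $\alpha_2^{(0)}$ yields valid $a$-symbols $\alpha_1^{(S)}$, $\alpha_2^{(S)}$. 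Flipping preserves the underlying multisets, so $\alpha_1^{(S)}\sim\alpha_1$ and $\alpha_2^{(S)}\sim\alpha_2^{(0)}$, and flipping commutes with $+$ interval-by-interval, so $(\alpha_1^{(S)})^!+\alpha_2^{(S)}=\Lambda(E)$ (resp. $\alpha_1^{(S)}+\alpha_2^{(S)}=\Lambda(E)$). Since $\sim$-classes of $a$-symbols classify the families of $W_1$ and $W_2$, there are $F_1\in\phi_1\otimes\mathrm{sgn}$ with $[\tilde\alpha(F_1)]=[\{\alpha_1^{(S)}\}]$ and $F_2\in\phi_2\otimes\mathrm{sgn}$ with $[\alpha(F_2)]=[\alpha_2^{(S)}]$. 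Proposition \ref{prop:sumtohom} then yields $\Hom_{W_J}(F_1\otimes F_2,E|_{W_J})\ne0$, and $F:=F_1\otimes F_2\in(\phi_1\otimes\mathrm{sgn})\otimes(\phi_2\otimes\mathrm{sgn})=\phi\otimes\mathrm{sgn}$, as required. In type $D$ one must additionally carry the decoration and the unordered structure through this argument, which is legitimate precisely because $\OO^\vee$ is assumed not very even (the very even case being handled by Proposition \ref{prop:easycase}).

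The main obstacle is the uniform control in the previous paragraph: one needs to know that $\Lambda_0-\alpha_1^!$ (resp. $\Lambda_0-\alpha_1$) is genuinely a monotonic $a$-symbol of the correct type and defect, and that its interval/pair structure relative to $\Lambda_0$ is compatible with \emph{every} flip $S$ that can occur for an $E$ with the given Springer support. This is exactly the point of the explicit case-by-case computation of $\alpha_1$ in each type together with Lemmas \ref{lem:technicallemma1}, \ref{lem:technicallemma2}, \ref{lem:jump1} and \ref{lem:flips}. Combined with Lemma \ref{lem:conditioniclassical} (which is condition (i) of Definition \ref{def:faithful}), this finishes the verification of faithfulness for $\OO^\vee$, and hence completes the proof of Theorem \ref{thm:faithful} in types $B$, $C$, $D$, and therefore in general.
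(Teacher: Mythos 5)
Your proposal is correct and follows essentially the same route as the paper's proof: decompose the monotonic $s$-symbol of $E(\OO^{\vee},1)$ as a sum of the $a$-symbols $\alpha_1$ (with $(-)^!$ in type $B$) and $\alpha_2$ via Proposition \ref{prop:jinduction}, describe an arbitrary $E$ with the same Springer support by flips of refinement blocks (Lemma \ref{lem:familyflips}), transfer those flips to the summands using Lemmas \ref{lem:technicallemma1}, \ref{lem:technicallemma2}, \ref{lem:jump1} and \ref{lem:flips}, and conclude with Proposition \ref{prop:sumtohom}. The only (cosmetic) divergence is your bookkeeping of $(-)^!$ in type $B$: the paper flips the already-modified symbol and then observes that the first block is pinned (so one can un-apply $!$ to name $F_1$), whereas you flip first and re-apply $!$ — equivalent provided you note, as the paper does, that the first index is never flipped.
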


\begin{proof}
    Let $E_0 = E(\OO^\vee,1)$ and $F_0 = \mathrm{sp}(\phi)$.
    By Lemma \ref{lem:conditioniclassical} we have that $\overline{\mathbb L}(J,\OO(\phi)) = d_A(\OO^\vee,1)$.
    Thus by Lemma \ref{lem:technicallemma1} and Frobenius reciprocity we have $\Hom(F_0,E|_{W_J})\ne0$.
    But $F_0 = G_1\otimes G_2$ where $G_1=E(d_{LS}(\mu),1)$ and $G_2 = E(d_{LS}(\nu),1)$.
    Let $\Lambda_0' := \Lambda(E_0)$, $\alpha_1' = \alpha(G_1)$ and $\alpha_2 := \alpha(G_2)$.
    If $X=D$ let $\kappa$ be the decoration for $\tilde\alpha_D^\bullet(G_2)$.
    Let $k>\lfloor \#\lambda/2\rfloor$ be sufficiently large so that there exists $\Lambda_0=(a;b) \in [\Lambda_0']\cap \Lambda_{X^\vee}(n;k)$, $\alpha_1''\in [\alpha_1']\cap \alpha_{Y(X)}(n;k)$ and $\alpha_2\in [\alpha_2']\cap \alpha_X(n;k)$.
    Let $\alpha_1 = \alpha''^!$ if $X=B$ and $\alpha_1 = \alpha_1''$ otherwise.
    Then by Proposition \ref{prop:jinduction}, $\Lambda_0 = \alpha_1 + \alpha_2$.
    Let $\bar\Lambda = (I_1,\dots,I_k)$ be the refinement of $\bar\Lambda_0$ and $a=(A_1,\dots,A_k),b=(B_1,\dots,B_k)$ be the refinements of $a$ and $b$.
    Let $\alpha_i = (a^i;b^i)$ and $a^i = (A_1^i,\dots,A_k^i),b^i = (B_1^i,\dots,B_k^i)$ be the corresponding decomposition of $\alpha_i$ for $i\in\{1,2\}$.
    
    Now let $E\in\mathrm{Irr}(W)$ be any representation with $\OO^\vee(E) = \OO^\vee$ and let $\Lambda \in [\Lambda(E)]\cap \Lambda_X(n;k)$ ($\ne \emptyset$ by Equation \ref{eq:familybijection}).
    Then by Lemma \ref{lem:familyflips}, $\Lambda = ((X_1,\dots,X_k);(Y_1,\dots,Y_k))$ where $\{A_i,B_i\} = \{X_i,Y_i\}$ for all $1\le i\le k$.
    For $i\in\{1,2\}$, let $\beta_i = ((X_1^i,\dots,X_k^i);(Y_1^i,\dots,Y_k^i))$ where 
    \begin{equation}
        (X_j^i,Y_j^i) = \begin{cases}
            (A_j^i,B_j^i) & \mbox{ if } (X_j,Y_j)=(A_j,B_j) \\
            (B_j^i,A_j^i) & \mbox{ if } (X_j,Y_j)=(B_j,A_j).
        \end{cases}
    \end{equation}
    By construction, $\beta_1$ and $\beta_2$ have the same defect and number of entries as $\Lambda$ and $\Lambda = \beta_1+\beta_2$.
    By Lemma \ref{lem:technicallemma2} we can apply Lemma \ref{lem:jump1} and so both $\beta_1,\beta_2$ are a-symbols of type $X$.
    Note that when $X=B$, since $k>\lfloor \#\lambda/2\rfloor$, by Lemma \ref{lem:technicallemma1}, $A_1(\#A_1) = 0$ which implies that $(X_1,Y_1)=(A_1,B_1)$ (since $\Lambda$ is an s-symbol of type $C$).
    By the expression for $\alpha_1$ given in Paragraph \ref{par:typeB}, we also have $A_1^1(\#A_1^1)=0,B_1^1(\#B_1^1) = 1$ and so $\beta_1 \in \alpha_D^1(n;k)$ and so there is a $\gamma_1\in \alpha_D(n;l)$ such that $\gamma_1^! = \beta_1$.
    Since $\mu$ is not very even for $X\in\{B,D\}$, there is a unique representation $F_1\in\mathrm{Irr}(W_{Y(X)}(|\mu|))$ such that 
    \begin{align*}
        X=B: &[\tilde\alpha(F_1)] = [\{\gamma_1\}] \\
        X=C: &[\alpha(F_1)] = [\beta_1] \\
        X=D: &[\tilde\alpha(F_1)] = [\{\beta_1\}].
    \end{align*}
    Let $F_2\in \mathrm{Irr}(W_X(|\nu|))$ be the unique representation such that
    \begin{align*}
        X&=B,C: [\alpha(F_2)] = [\beta_2] \\
        X&=D: [\tilde\alpha^\bullet(F_2)] = [\{\beta_2\}^\kappa].
    \end{align*}
    Let $F = F_1\otimes F_2$.
    Then, since $\beta_i\sim\alpha_i$ (and $\gamma_1\sim\alpha_1$ when $X=B$) we have that $F_i\sim G_i$, and so $F\sim F_0$.
    This implies $F\in \phi\otimes\mathrm{sgn}$.
    Finally, since $\Lambda = \beta_1+\beta_2$, we have by Proposition \ref{prop:sumtohom} that $\Hom(F,E|_{W_J})\ne0$ as required.
\end{proof}

\subsection{Proof of faithfulness in exceptional types}\label{subsec:exceptional}
Let $\fg$ be a simple exceptional Lie algebra and let $\OO^{\vee} \subset \fg^{\vee}$ be a nilpotent orbit. If $A(\OO^\vee)$ is trivial, then $\OO^{\vee}$ is faithful by Proposition \ref{prop:easycase}. If $A(\OO^{\vee})$ is nontrivial, we use GAP to exhibit an explicit pair $(J,\phi) \in \mathscr{F}_{\tilde{\Delta}}$ satisfying conditions (i) and (ii) of Definition \ref{def:faithful}. In many cases, we may take $(J,\phi) = (\Delta,\phi(E(\OO^\vee,1)))$ (indeed this can be done precisely when $\OO^\vee$ is special and all the representations $E\in\mathrm{Irr}(W)$ with $\OO^\vee(E) = \OO^\vee$ lie in the same family as $E(\OO^\vee,1)$). However, in some cases, a less obvious choice is required. These (less obvious) cases are listed in Tables \ref{table:1}, \ref{table:2}, and \ref{table:3}.
Note that in these tables we list $(J,\OO(\phi))$ instead of $(J,\phi)$, but the map $\phi\mapsto \OO(\phi)$ induces a bijection between families and special nilpotent orbits, so $\phi$ can be recovered from the information provided.
Note also that there are no tables for $G_2$ or $E_6$---in these cases, there are no exceptions to consider.

\begin{table}[H]
    \begin{tabular}{ |c||c|c|c|  }
    \hline
    $\OO^\vee$ & $J$ & Type & $\OO(\phi,\CC)$\\
    \hline
    $A_2$ & \dynkin[extended,labels={\times,\times,\times,\times,}] F4 & $B_4$ & $(711)$ \\
    $B_2$ & \dynkin[extended,labels={\times,\times,\times,\times,}] F4 & $B_4$ & $(531)$ \\
    $C_3(a_1)$ & \dynkin[extended,labels={\times,,\times,\times,\times}] F4 & $A_1+C_3$ & $(2)\times (42)$ \\
    $F_4(a_2)$ & \dynkin[extended,labels={\times,\times,\times,\times,}] F4 & $B_4$ & $(32211)$ \\
    \hline
    \end{tabular} 
    %\label{table:1}
    \caption{$(J,\phi)$ for $\fg = F_4$}\label{table:1}
\end{table}

\begin{table}[H]
    \begin{tabular}{ |c||c|c|c|  }
    \hline
    $\OO^\vee$ & $J$ & Type & $\OO(\phi,\CC)$\\
    \hline
    $A_3+A_2$ & \dynkin[extended,labels={\times,\times,\times,\times,\times,\times,,\times}] E7 & $D_6+A_1$ & $(7311) \times (2)$ \\
    $E_7(a_4)$ & \dynkin[extended,labels={\times,\times,\times,\times,\times,\times,,\times}] E7 & $D_6+A_1$ & $(332211) \times (2)$ \\
    \hline
    \end{tabular}    
    %\label{table:2}
    \caption{$(J,\phi)$ for $\fg =E_7$}\label{table:2}
\end{table}

%\begin{table}%[H]
 %   \begin{tabular}{ |c||c|c|c|  }
 \begin{longtable} { |c||c|c|c|  }
    \hline
    $\OO^\vee$ & $J$ & Type & $\OO(\phi,\CC)$\\
    \hline
    $A_3+A_2$ & \dynkin[extended,labels={\times,,\times,\times,\times,\times,\times,\times,\times}] E8 & $D_8$ & $(11,3,1,1)$ \\
    $D_4+A_2$ & \dynkin[extended,labels={\times,,\times,\times,\times,\times,\times,\times,\times}] E8 & $D_8$ & $(7711)$ \\
    $D_6(a_2)$ & \dynkin[extended,labels={\times,,\times,\times,\times,\times,\times,\times,\times}] E8 & $D_8$ & $(7531)$ \\
    $E_6(a_3)+A_1$ & \dynkin[extended,labels={\times,\times,\times,\times,\times,\times,\times,,\times}] E8 & $E_6+A_2$ & $E_6(a_3) \times (3)$ \\
    $E_7(a_5)$ & \dynkin[extended,labels={\times,\times,\times,\times,\times,\times,\times,\times,}] E8 & $E_7+A_1$ & $E_7(a_5) \times (2)$ \\
    $E_7(a_4)$ & \dynkin[extended,labels={\times,,\times,\times,\times,\times,\times,\times,\times}] E8 & $D_8$ & $(732211)$ \\
    $D_5+A_2$ & \dynkin[extended,labels={\times,,\times,\times,\times,\times,\times,\times,\times}] E8 & $D_8$ & $(5533)$ \\
    $E_8(b_6)$ & \dynkin[extended,labels={\times,\times,\times,\times,\times,\times,\times,,\times}] E8 & $E_6+A_2$ & $D_4(a_1) \times (3)$ \\
    $D_7(a_1)$ & \dynkin[extended,labels={\times,,\times,\times,\times,\times,\times,\times,\times}] E8 & $D_8$ & $(443311)$ \\
    $E_8(b_4)$ & \dynkin[extended,labels={\times,,\times,\times,\times,\times,\times,\times,\times}] E8 & $D_8$ & $(33222211)$ \\
    \hline
    %\end{tabular}    
   % \label{table:3}
    \caption{$(J,\phi)$ for $\fg = E_8$} \label{table:3}
    \end{longtable}
%\end{table}

%\newpage

\begin{comment}
\section{Index of Notation}\label{sec:notation}

\begin{longtable}{l l l}
$\Pi_{\psi}^{\mathsf{Art}}(\mathbf{G}(k))$ & Arthur packet attached to Arthur parameter $\psi$ \\
$\Pi_{\OO^{\vee}}^{\mathsf{Art}}(\mathbf{G}(k))$ & Unipotent Arthur packet attached to nilpotent orbit $\OO^{\vee}$ for $G^{\vee}$\\
$\Pi^{\mathsf{Lus}}(\mathbf{G}(k))$\\
$\Pi^{\mathsf{Lus}}_{s}(\mathbf{G}(k))$\\
$\mathrm{Irr}(H)$ & \\
$E(\OO,\mathcal L)$ & Springer\\

$\cN(F)$ & Set of nilpotent elements of $\fg(F)$ \\
$\cN_o(F)$ & Set of $\mathbf{G}(F)$-orbits on $\cN(F)$\\
$\cN$\\
$\cN_o$\\
$A(\OO)$\\
$\bar{A}(\OO)$\\
$\cN_{o,c}$\\
$\cN_{o,\bar c}$\\
$\mathfrak{Q}$\\
$\Theta_F$\\
$\theta$\\
$d$\\
$d_S$\\
$d_A$\\
$\leq_A$\\
$\bar \theta_{\bfT}$\\
$\mathcal{B}(\mathbf{G})$\\
$\cA(c,\cA)$\\
$W$\\
$\widetilde{W}$\\
$\mathbb{P}_c^+, \mathbb{M}_c, \mathbb{U}_c, \mathbb{P}_c$\\
$\Phi_c$\\
$\Psi$\\
$\Delta$\\
$\tilde{\Delta}$\\
$Z_G$\\
$\Xi$\\
$\Theta_c$\\
$\mathcal{I}_o$\\
$\mathcal{I}_{\tilde{\Delta}}$\\
$\mathcal{K}_{\tilde{\Delta}}$\\
$\Xi_{\tilde{\Delta}}$\\
$s$\\
$\overline{\mathbb L}$\\
$\mathcal{F}_{\tilde{\Delta}}$\\
$\OO_E(F)$\\
$\OO_{\phi}(F)$\\
$^K\WF_c(X)$\\
$^K\WF(X)$\\
$^K\WF_c(X;\CC)$\\
$^K\WF(X;\CC)$\\
$\WF(X)$ \\
$\WF(X;\CC)$ \\
$\mathbb{I}$\\

\end{longtable}

\end{comment}

\begin{sloppypar} \printbibliography[title={References}] \end{sloppypar}

\end{document}